\numberwithin{equation}{section}
\newcommand{\N}{\mathbb{N}}
\newcommand{\Z}{\mathbb{Z}}
\newcommand{\C}{\mathbb{C}}
\newcommand{\A}{\mathbb{A}}
\newcommand{\G}{\mathbb{G}}
\newcommand{\E}{\mathbb{E}}
\renewcommand{\P}{\mathbb{P}}
\newcommand{\bbS}{\mathbb{S}}
\newcommand{\OO}{\mathcal{O}}
\renewcommand{\AA}{\mathcal{A}}
\newcommand{\BB}{\mathcal{B}}
\newcommand{\CC}{\mathcal{C}}
\newcommand{\NN}{\mathcal{N}}
\newcommand{\LL}{\mathcal{L}}
\newcommand{\MM}{\mathcal{M}}
\newcommand{\RR}{\mathcal{R}}
\newcommand{\FF}{\mathcal{F}}
\newcommand{\QQ}{\mathcal{Q}}
\newcommand{\EE}{\mathcal{E}}
\newcommand{\BBB}{\mathscr{B}}
\newcommand{\CCC}{\mathscr{C}}
\newcommand{\DDD}{\mathscr{D}}
\newcommand{\EEE}{\mathscr{E}}
\newcommand{\AAA}{\mathscr{A}}
\newcommand{\XXX}{\mathscr{X}}
\newcommand{\YYY}{\mathscr{Y}}
\newcommand{\ZZZ}{\mathscr{Z}}
\newcommand{\PPP}{\mathscr{P}}
\newcommand{\GGG}{\mathscr{G}}
\newcommand{\sMap}{\mathscr{M}\mathrm{ap}}
\newcommand{\bDelta}{\mathbf{\Delta}}
\newcommand{\bullie}{{\scalebox{0.5}{$\bullet$}}}
\newcommand{\cn}{\mathrm{cn}}
\mathchardef\hyph="2D
\tikzset{%
	symbol/.style={%
		draw=none,
		every to/.append style={%
			edge node={node [sloped, allow upside down, auto=false]{$#1$}}}
	}
}
\DeclareMathOperator{\fib}{fib}
\DeclareMathOperator{\Map}{Map}
\DeclareMathOperator{\id}{id}
\DeclareMathOperator*{\colim}{colim}
\let\lim\relax 
\DeclareMathOperator*{\lim}{lim}
\DeclareMathOperator{\Mod}{Mod}
\newcommand{\op}{\mathrm{op}}
\DeclareMathOperator{\Spec}{Spec}
\DeclareMathOperator{\Proj}{Proj}
\DeclareMathOperator{\Sym}{Sym}
\DeclareMathOperator{\Free}{Free}
\DeclareMathOperator{\vDiv}{vDiv}
\newcommand{\cl}{\mathrm{cl}}
\DeclareMathOperator{\im}{im}
\newcommand{\ext}{{\operatorname{ext}}}
\newcommand{\fp}{{\operatorname{fp}}}
\DeclareMathOperator{\Res}{Res}
\newcommand{\reg}{\mathrm{reg}}
\newcommand{\Rees}{\mathscr{R}\mathrm{ees}}
\newcommand{\Bun}{\mathscr{B}\mathrm{un}}
\newcommand{\Poly}{\mathscr{P}\mathrm{oly}}
\newcommand{\dSch}{\mathrm{d}\mathscr{S}\mathrm{ch}}
\newcommand{\Sch}{\mathscr{S}\mathrm{ch}}
\newcommand{\sAlg}{\mathrm{s}\mathscr{A}\mathrm{lg}}
\newcommand{\Aff}{\mathscr{A}\mathrm{ff}}
\newcommand{\Alg}{\mathscr{A}\mathrm{lg}}
\renewcommand{\Mod}{\mathscr{M}\mathrm{od}}
\newcommand{\St}{\mathscr{S}\mathrm{t}}
\newcommand{\Cat}{\mathscr{C}\mathrm{at}}
\newcommand{\Set}{\mathscr{S}\mathrm{et}}
\newcommand{\Fun}{\mathrm{Fun}}
\newcommand{\Space}{\mathscr{S}}
\newcommand{\QCoh}{\mathrm{Q}\mathscr{C}\mathrm{oh}}
\newcommand{\Pic}{\mathrm{Pic}}
\newcommand{\GrMod}{\mathrm{Gr}\mathscr{M}\mathrm{od}}
\newcommand{\Mon}{\mathscr{M}\mathrm{on}}
\DeclareMathOperator{\Bl}{Bl}
\newcommand{\Sur}{\mathscr{S}\mathrm{ur}}
\newcommand{\Einfty}{\mathbb{E}_\infty\hyph\Alg}
\newcommand{\Sp}{\mathscr{S}\mathrm{p}}
\newcommand{\Ecninfty}{\mathbb{E}^\cn_\infty\hyph\Alg}
\newcommand{\PrL}{\mathscr{P}\mathrm{r}^{\mathrm{L}}}
\newcommand{\PrR}{\mathscr{P}\mathrm{r}^{\mathrm{R}}}
\newcommand{\Art}{\mathrm{alg}\mathscr{S}\mathrm{t}}
\mathchardef\hyph="2D
\newtheorem*{rep@theorem}{\rep@title}
\newcommand{\newreptheorem}[2]{%
	\newenvironment{rep#1}[1]{%
		\def\rep@title{#2 \ref{##1}}%
		\begin{rep@theorem}}%
		{\end{rep@theorem}}}
\theoremstyle{definition}
\newtheorem{Def}{Definition}[subsection]
\newtheorem{Not}[Def]{Notation}
\newtheorem{Rem}[Def]{Remark}
\newtheorem{Rem*}[]{Remark}
\newtheorem{Exm}[Def]{Example}
\newtheorem{Con}[Def]{Construction}
\theoremstyle{plain}
\newtheorem{Prop}[Def]{Proposition}
\newtheorem{Thm}[Def]{Theorem}
\newtheorem{Lem}[Def]{Lemma}
\newtheorem{Cor}[Def]{Corollary}
\title{Graded algebras, projective spectra and blow-ups in derived algebraic geometry}
\author{Jeroen Hekking}
\date{\today}
\begin{document}

\begin{abstract}
	We define graded, quasi-coherent $\OO_S$-algebras over a given base derived scheme $S$, and show that these are equivalent to derived $\G_{m,S}$-schemes which are  affine over $S$. We then use this $\G_{m,S}$-action to define the projective spectrum $\Proj \BB$ of a graded algebra $\BB$ as a quotient stack, show that $\Proj \BB$ is representable by a derived scheme over $S$, and describe the functor of points of $\Proj \BB$ in terms of line bundles. The theory of graded algebras and projective spectra is then used to define the blow-up of a closed immersion of derived schemes. Our construction will coincide with the existing one for the quasi-smooth case. The construction is done by generalizing the extended Rees algebra to the derived setting, using Weil restrictions. We close by also generalizing the deformation to the normal cone to the derived setting.
\end{abstract}
	
	\maketitle
	\setcounter{tocdepth}{1}
	\tableofcontents

\section{Introduction}
Blow-ups are ubiquitous in algebraic geometry. One  constructs the blow-up of a closed immersion $f:Z \to X$ by taking the projective spectrum of the Rees algebra associated to $f$. Here are thus two very basic tools of algebraic geometry that go into this construction: quasi-coherent, graded algebras and projective spectra of these algebras.

Recently, the blow-up $\Bl_ZX$ of a derived scheme $X$ in a  centre $Z$ has been constructed in \cite{KhanVirtual} in the case that $Z \to X$ is quasi-smooth, which is the derived analogue of a regular embedding. The strategy used  is to describe the functor of points of $\Bl_ZX$ and then to show that it is representable, thus circumventing the need to consider graded algebras or projective spectra. 

The goal of this article is to define the blow-up of a closed immersion $Z \to X$ of derived schemes in the general case by means of the projective spectrum of the associated Rees algebra. Since, perhaps surprisingly, graded algebras and projective spectra were not available in the derived setting at the time of this writing, an intermediate goal is to first develop these concepts, which are also interesting in their own right. A large part of the work has gone into understanding derived graded algebras, about which we will first say a few words.

\subsection{Graded algebras}
Classically, for a scheme $S$, the following categories are, possibly contravariantly, equivalent:
\begin{enumerate}
	\item $\Z$-graded, quasi-coherent $\OO_S$-algebras,
	\item Schemes affine over $S$ with a $\G_{m,S}$-action,
	\item $\OO_S[t^{\pm 1}]$-comodules in the symmetric monoidal category of quasi-coherent $\OO_S$-algebras,
	\item Quasi-coherent $\OO_{B\G_{m,S}}$-algebras,
	\item Commutative monoids in the symmetric monoidal category $\QCoh^\Z(S)$ of $\Z$-graded, quasi-coherent $\OO_S$-modules,
	\item Lax symmetric monoidal functors  $\Z \to \QCoh(S)$. 
\end{enumerate}
The equivalences between (1),(2),(3) are either done by hand, or using the Barr--Beck theorem. For (2) $\iff$ (4), one uses that $T \to B\G_{m,S}$ is affine if and only if the pullback $P \to S$ along $S \to B\G_{m,S}$ is as well. Finally, (5) and (6) are equivalent, as shown by hand or by using Day convolution, and both are easily seen to be equivalent to (1).

This list induces six  equivalent kinds of input for the $\Proj$-construction, by taking $\N$-graded objects in (1) and their counterparts in the other categories. Therefore, in generalizing the $\Proj$-construction to the derived setting, one is now faced with the question which of these inputs to take as the starting position. 

The derived versions of (2),(3) and (4) turn out to be equivalent to one another, as we shall see. In setting up the theory of $\Z$-graded, quasi-coherent $\OO_S$-algebras in the context of derived algebraic geometry, it is reasonable to ask that the result is also equivalent to the derived version of (2). Indeed, suppose that $S$ is affine, say $S=\Spec A$. Then a simplicial $A$-algebra with a levelwise $A[t^{\pm 1}]$-coaction gives rise to a simplicial graded $A$-algebra, by applying the classical construction levelwise. 

We have therefore taken the equivalence between (1) and (2) as design criterion in generalizing (1) to the derived setting. And indeed, the first main result of this paper is the following:

\begin{reptheorem}{Thm:Equiv}
	Let $S$ be a derived scheme. Then there is a contravariant equivalence between the category of quasi-coherent, $\Z$-graded $\OO_S$-algebras and the category of $\G_{m,S}$-schemes affine over $S$.
\end{reptheorem}

 Although the derived versions of (5) and (6) are also equivalent to one another, we show that in general these are not equivalent to the derived versions of (1)--(4).

We then use this equivalence to define the projective spectrum of an $\N$-graded, quasi-coherent $\OO_S$-algebra $\BB$ generated in degree 1, over a derived scheme $S$, as the quotient stack $[(\Spec \BB \smallsetminus V(\BB_+)) / \G_{m,S}]$. The second main result is then:

\begin{reptheorem}{Thm:ProjRep}
	For $S,\BB$ as above, $\Proj \BB \coloneqq [(\Spec \BB \smallsetminus V(\BB_+)) / \G_{m,S}] $ is representable by a derived scheme over $S$.
\end{reptheorem}

Furthermore, for $S,\BB$ as above, we recover three results from the classical case. First, we give an exact functor from quasi-coherent, graded $\BB$-modules to quasi-coherent $\OO_{\Proj \BB}$-modules. Next, we introduce Serre's twisting sheaves and check familiar behaviour of these sheaves. Third, we show that for a given map of derived schemes $\psi: T\to S$, the space of maps $T \to \Proj \BB$ over $S$ is the space of graded homomorphisms $\psi^*\BB \to \MM$, where $\MM$ is an $\N$-graded $\OO_T$-algebra freely generated by a line bundle in degree 1, and $\psi^*\BB \to \MM$ is a  homomorphism of graded $\OO_T$-algebras for which the induced map $\pi_0 (\psi^* \BB_1) \to \pi_0( \MM_1)$ is surjective. This recovers $\OO_{\Proj \BB}(1)$ as universal line bundle.

\subsection{The problem in the derived setting}
\label{Par:Problem}
Let $S$ be a derived scheme. Morally, a $\Z$-graded, quasi-coherent $\OO_S$-algebra $\BB$ should be a quasi-coherent $\OO_S$-algebra $\BB$ together with a decomposition $\BB \simeq \bigoplus_d \BB_d$ in $\QCoh(S)$ such that the multiplication maps $\BB^{\otimes n} \to \BB$ induce homotopy-coherent systems $\BB_{d_1} \otimes \cdots \otimes \BB_{d_n} \to \BB_{d_1+\dots+d_n}$. 

Writing down this infinite amount of coherency data by hand is not  easy. It needs to guarantee that on the one hand the concept is flexible enough so that quasi-coherent $\OO_S$-algebras $\AA$ with an $\OO_S[t^{\pm 1}]$-coaction, which are given as homotopy coherent cosimplicial diagrams of the form $\AA \rightrightarrows \AA[t^{\pm 1}] \dots$, give rise to $\Z$-graded, quasi-coherent $\OO_S$-algebras. On the other hand, it needs to be strict enough so that forgetting the grading gives us a quasi-coherent $\OO_S$-algebra, i.e.,\ locally a simplicial commutative ring and not only an $\E_\infty$-ring.

One approach is via model categories. Let $C$ be the category of simplicial objects in the category of $\Z$-graded rings. We could define a model structure on $C$ by declaring a morphism $B \to B'$ to be a fibration, resp.\ a weak equivalence, when it is so on the underlying simplicial set. Then for a simplicial commutative ring $R$, we let the category $\CCC_R$ of $\Z$-graded $R$-algebras be the coslice category $\CCC_{R/}$, where $\CCC$ is the $\infty$-category associated to $C$, and $R$ is endowed with the trivial grading. 

A $\Z$-graded $\OO_S$-algebra $\BB$ would then locally on $S$ be given by a $\Z$-graded simplicial $\OO_S(U)$-algebras, for $U \subset S$ affine open. To define an $\OO_S[t^{\pm 1}]$-coaction on such a $\BB$, we need a morphism $\BB \to \BB[t^{\pm 1}]$ of $\OO_S$-algebras which sends $\BB_d$ to $\BB_dt^d$. First assume that $S = \Spec R$, and that $\BB$ is given by a $\Z$-graded simplicial $R$-algebra $B$. To give something which is homotopically well-defined, $B[t^{\pm 1}]$ must be the derived tensor product $B \otimes_R R[t^{\pm 1}]$. The question is then whether we can compute this derived tensor product and replace $B$ in such a way that $B \to B[t^{\pm 1}]$ can be defined by the classical construction levelwise, i.e.,\ by sending a homogeneous simplex $b$ of degree $d$ to $bt^d$.

We can answer this last question affirmatively, but only after we have shown Theorem \ref{Thm:Equiv}. To use only the model-categorical approach from the start seems to be intractable even in the local picture. What is more, to glue such a construction into an $\OO_S[t^{\pm 1}]$-coaction on all of $\BB$ would involve gluing different cofibrant models on affine patches. Making this into an $\infty$-functor from $\Z$-graded $\OO_S$-algebras to $\OO_S$-algebras with  $\OO_S[t^{\pm 1}]$-coaction seems hopeless. 

What is therefore needed is a  functor, intrinsic to the language of $\infty$-categories, from the $\infty$-category $\Alg$ of simplicial commutative rings to the $\infty$-category of $\infty$-categories, sending $R \in \Alg$ to the $\infty$-category $\Alg^\Z_R$ of $\Z$-graded $R$-algebras. Better yet, this should come with a universal property for colimit-preserving functors out of $\Alg^\Z_R$. We will indeed give such a construction. The universal property of $\Alg^\Z_R$ will allow us to give a functor $F$ from $\Alg^\Z_R$ to the $\infty$-category $\Alg_R^{\G_m}$ of $R$-algebras with $R[t^{\pm 1}]$-coaction. Functoriality of $\Alg^\Z_R$ in $R$ will allow us to globalize this  to any base derived scheme.

The mapping spaces of $\Alg^\Z_R$ and $\Alg_R^{\G_m}$ are too different from one another to show that $F$ is fully faithful by hand. Luckily, it will turn out that the functor $F:\Alg^\Z_R\to \Alg_R^{\G_m}$ is a left adjoint for abstract reasons, using the universal property of $\Alg^\Z_R$, which will give us a candidate for the inverse of $F$. This is good, because writing down an inverse functor $\Alg_R^{\G_m} \to \Alg^\Z_R$ by hand is again not so easy. Indeed, an object $B$ in $\Alg_R^{\G_m}$ is given by a cosimplicial diagram $C(B)$ of the form $B \rightrightarrows B[t^{\pm 1}] \dots $ in the $\infty$-category of $R$-algebras. If this diagram was a strictly commutative diagram in the 1-category of simplicial commutative rings, then we could apply the classical construction levelwise, to get a grading on $B$. Unfortunately, standard strictification techniques do not guarantee that $C(B)$ can be strictified, without changing the objects $C^n(B)$ up to homotopy. Indeed, it will be a consequence that one can actually strictify any $\Alg_R^{\G_m}$ to a levelwise $R[t^{\pm 1}]$-coaction.

\subsection{Rees algebras and blow-ups}
\label{Par:ReesClas}
Let $Z \to X$ be a closed immersion of derived scheme. Since we will define the blow-up $\Bl_ZX$ as the projective spectrum of the Rees algebra $\RR_{Z/X}$, the behaviour of $\Bl_ZX$ is thus completely controlled by that of $\RR_{Z/X}$. Thus, the question whether our construction of $\Bl_ZX$ is `correct' is reduced to the same question  about $\RR_{Z/X}$. To this end, let us first consider a universal property of the extended Rees algebra in the classical case.

 Let $A \to B$ be a surjection of (discrete) rings with kernel $I$, and $R_{B/A}^\ext$ the extended Rees algebra $\bigoplus_{n \in \Z} I^nt^n \subset A[t^{\pm 1}]$, where $I^{-m} = A$ for $m \geq 0$. Let $Q$ be a $\Z$-graded $A[t^{-1}]$-algebra for which $t^{-1} \in Q$ is not a zero divisor. Here and in what follows, we take $\deg t^{-1} = -1$. We then have an isomorphism of sets
	\begin{align*}
	{\mathrm{Alg}^\Z_{A[t^{-1}]}}(R^\ext_{B/A},Q) \cong {\mathrm{Alg}^\Z_A}(B, Q/({t^{-1}}) )
	\end{align*}
between $\Z$-graded $A[t^{-1}]$-algebra maps and $\Z$-graded $A$-algebra maps, natural in $Q$. Indeed, there is a graded $A$-algebra map $B \to Q/(t^{-1})$ if and only if the structure map $A \to Q$ maps $I$ into $(t^{-1})$. It thus suffices to show that a map $\varphi: R^\ext_{B/A} \to Q$ of $\Z$-graded $A[t^{-1}]$-algebras is determined by its restriction to degree 0. And indeed this is so: the composition $R_{B/A}^\ext \to Q \xrightarrow{\times t^{-1}} (t^{-1})$ shows that $\varphi_{\mid It}$ is determined by $\varphi_0$, and similarly for higher degrees.

Let now $A \to B$ be a map of simplicial rings, surjective on $\pi_0$. We will define the derived version of the extended Rees algebra in such a way that it satisfies a somewhat more robust version of this universal property: it will turn out that we have an equivalence of spaces
\begin{align*}
\Alg^\Z_{A[t^{-1}]}(R^\ext_{B/A},Q) \cong \Alg_A(B,(Q/({t^{-1}}))_0)
\end{align*}
for any $\Z$-graded $A[t^{-1}]$-algebra $Q$. Observe that $Q \mapsto (Q/({t^{-1}}))_0$ is easily defined. One could therefore hope to define $R_{(-)/A}^\ext$ by means of this universal property via some adjoint functor theory. 

This universal property will however not lead to an adjunction as stated, since in general only a map $A \to B$  which is surjective on $\pi_0$ will have an extended Rees algebra. We can remedy this by restricting the category $\Alg_A$ to the category of surjections $A \to B$, and $\Alg^\Z_{A[t^{-1}]}$ to the category of those $\Z$-graded $A[t^{-1}]$-algebras $Q$ for which $A[t^{-1}] \to Q_{\leq 0}$ is an equivalence. However, by doing so we leave the world of presentable $\infty$-categories. Because for general $\infty$-categories, adjoint functor theorems are less powerful, the strategy to define $R^\ext_{(-)/A}$ as a left adjoint to $Q \mapsto (Q/({t^{-1}}))_0$ by means of an adjoint functor theorem did not succeed. 

The solution was found using Weil restrictions. To sketch the idea, let $z: X \to \A^1_X$ be the zero section. Then the theory of Weil restrictions says that pulling back stacks along $z$ has a right adjoint, written $\Res_{X/\A^1_X}$. We thus have a natural equivalence
\begin{align*}
\St_{/\A^1_X}(T,\Res_{X/\A^1_X}(Z)) \simeq \St_{/X}(T\times_{\A^1_X}X,Z)
\end{align*}
of mapping spaces of stacks. If $\Res_{X/\A^1_X}(Z)$ is affine over $\A^1_X$, say $\Res_{X/\A^1_X}(Z) = \Spec \RR$, then for $T = \Spec \QQ$ we will get
\begin{align*}
\Alg_{\OO_X[t^{-1}]}(\RR,\QQ) \simeq \Alg_{\OO_X}(\BB,\QQ/(t^{-1}))
\end{align*}
where $\BB$ is such that $Z = \Spec \BB$. This is almost the universal property that we want, save that we have lost the grading on $\RR$.

To recover the grading, one starts not with $z$ but with the induced map $\zeta:B\G_{m,X} \to [\A^1_X/\G_{m,X}]$. We define $D_{Z/X}$ as the pullback of $\Res_\zeta([Z/\G_{m,X}])$ along $\A^1_X \to [\A^1_X/\G_{m,X}]$. We then show the following:
\begin{reptheorem}{Thm:ReesRep}
	The stack $D_{Z/X}$ is representable by a scheme affine over $\A^1_X$.
\end{reptheorem}
By construction, $D_{Z/X}$ will then be of the form $\Spec  \RR_{Z/X}^\ext$, where $\RR_{Z/X}^\ext$ is a $\Z$-graded $\OO_X[t^{-1}]$ algebra which satisfies the desired universal property. We will take this $\RR_{Z/X}^\ext$ as the extended Rees algebra of $Z$ over $X$, and put $\Bl_ZX \coloneqq \Proj\RR_{Z/X}^\ext$.

We present two additional reasons why our recipe for blowing up in the derived setting is `correct'. First, for $Z\to X$ a closed immersion of classical schemes, we can recover the classical extended Rees algebra of $Z$ over $X$ by forcing $t^{-1}$ to be regular on $\pi_0\RR^\ext_{Z/X}$, thereby capturing the classical picture in a controlled fashion. 

Second, for $Z \to X$ a closed immersion of derived schemes, a \textit{virtual Cartier divisor} $(S,D)$ over $(X,Z)$ is a scheme $S$ over $X$ together with morphisms $D \xrightarrow{h} S_Z \to S$ such that $D \to S$ is locally on $S$ of the form $\Spec R \to \Spec R/(f)$, and such that $\OO_{S_Z} \to h_* \OO_D$ is an isomorphism on $\pi_0$ and surjective on $\pi_1$. Let $\vDiv_ZX$ be the space of virtual Cartier divisors over $(X,Z)$. In \cite{KhanVirtual}, the blow-up of $X$ in $Z$ is defined as $\vDiv_ZX$ in the case that $Z \to X$ is quasi-smooth, in which case it is shown to be representable by a derived scheme. We will then show:
\begin{reptheorem}{Thm:BlDiv}
	There is a virtual Cartier divisor $(\Bl_ZX, \Proj_Z( \RR_{Z/X}^\ext / (t^{-1}) ))$ over $(X,Z)$. If $Z \to X$ is quasi-smooth, then the induced map $h_{Z/X}:\Bl_ZX \to \vDiv_ZX$ is an equivalence.
\end{reptheorem}

We will now introduce some terminology and notation. After this, we give a summary of this paper in \S \ref{Par:Summ}, where we also outline the solution to the problems discussed in \S \ref{Par:Problem}.


\subsection{Elements of derived algebraic geometry}
\label{Par:DAGEls}
We use the language of $\infty$-category theory and derived algebraic geometry as in \cite{ToenDerived}, \cite{ToenHAGI}, \cite{LurieSpectral}, \cite{GaitsgoryStudy}. The $\infty$-category of spaces is denoted by $\Space$.

Write $\Alg$ for the $\infty$-category of simplicial commutative rings, simply called \textit{rings}. One way to construct $\Alg$ is by endowing the category of simplicial objects in the category of commutative rings with  a simplicial model structure, where weak equivalences and fibrations are taken on the underlying simplicial sets. Then $\Alg$ is the $\infty$-category associated to this model category. Alternatively, $\Alg$ is the $\infty$-category of product-preserving functors $\Poly^\op \to \Space$, where $\Poly$ is category  of polynomial algebras $\Z[T_1,\dots,T_n]$ and algebra maps between them.\footnote{Here, as always, we consider 1-categories as $\infty$-categories via the nerve construction.} 

For $R \in \Alg$, we let $\cn\Mod_{R}$ be the $\infty$-category associated to the model category of simplicial $R$-modules, and $\Mod_R$ the stabilization of $\cn\Mod_R$. Alternatively, $\Mod_R$ is the $\infty$-category of modules of $R$ considered as an $\E_\infty$-algebra, which is automatically stable. Then $\cn\Mod_{ R}$ consists of the connective $R$-modules $M$, i.e.,\ those for which $\pi_n M = 0$ for all $n<0$.

Still for $R \in \Alg$, we  write $\Alg_R$ for the $\infty$-category $\Alg_{R/}$. Points in $\Alg_R$ are called \textit{$R$-algebras}. Likewise, points in $\Mod_R$ are called \textit{$R$-modules}. When we work in the point-set models of these $\infty$-categories, we indicate this by talking about \textit{simplicial} $R$-algebras resp.\ \textit{simplicial} $R$-modules (in the connective case). 

We write $\Sch$ for the $\infty$-category of derived schemes. For $X$ a derived scheme, we let $\QCoh(X)$ be the $\infty$-category of quasi-coherent $\OO_X$-modules, which has a full subcategory $\cn\QCoh  (X)$ spanned by the connective $\OO_X$-modules. The category $\QCoh(X)$ is  closed symmetric monoidal, presentable and stable  \cite[Prop.\ 2.2.4.1; 2.2.4.2, Rem.\ 6.5.3.8]{LurieSpectral}, with left- and right-complete $t$-structure \cite[Prop.\ 2.2.5.4]{LurieSpectral}.  

If $X$ is affine, say $X = \Spec A$, then we have $\QCoh(X) \simeq \Mod_A$. In general, it holds that $\QCoh(X) \simeq \lim_{\Spec A \to X} \Mod_A$ as $\infty$-categories, where the indexing category runs over all open immersions $\Spec A \to X$, and the limit is taken in the $\infty$-category of stable $\infty$-categories. A point $\MM$ in $\QCoh(X)$ is thus a homotopy-coherent system of $\OO_X(U)$-modules $\MM(U)$ indexed by the affine open subsets $U \subset X$. If all of these modules are connective, such a system thus gives rise to a  sheaf (in the homotopical sense, explained below) of simplicial abelian groups on the underlying space of $X$, with action $\OO_X \times \MM \to \MM$, which makes $\MM(V)$ into an $\OO_X(V)$-module for each $V$ open in $X$. 

Similarly, we define the $\infty$-category $\Alg(X)$ of \textit{quasi-coherent $\OO_X$-algebras} for a derived scheme $X$ as the limit $\lim_{\Spec A \to X} \Alg_{A}$ over all affine open subsets of $X$. 

By \textit{stack} we mean a sheaf $\Alg \to \Space$ for the \'e{tale} topology. We write $\St$ for the full subcategory of presheaves on $\Alg^\op$ spanned by stacks.
For a stack $\XXX$ we let $\QCoh(\XXX)$ be the limit $\lim_{\Spec R \to \XXX} \Mod_R$ over all maps $\Spec R \to \XXX$. When $\XXX$ is a derived scheme, then this notion agree with the previous definition. For a map of stacks $\varphi: \XXX \to \YYY$, we have an adjunction
\begin{align*}
\varphi^*: \QCoh(\YYY) \rightleftarrows \QCoh(\XXX): \varphi_*
\end{align*}
See e.g.\ \cite[\S I.3.2]{GaitsgoryStudy}.

Roughly speaking, a derived algebraic stack is a stack $\XXX$ that allows a smooth atlas. The precise definition is not important for us. What matters is the following fact: let $\Art$ be the $\infty$-category of derived algebraic stacks, and $\PrL$ be the $\infty$-category of presentable $\infty$-categories with left-adjoint functors between them. Then any functor $F:\Alg \to \PrL$ can be right-Kan extended to a functor $\hat{F}:\Art \to \PrL$. This is because for $\XXX$ algebraic, the indexing category to compute $\hat{F}(\XXX)$ consisting of maps of the form $\Spec R \to \XXX$ can be taken to be small, and $\PrL$ is closed under small limits.

\subsection{Conventions}
The term `levelwise' will refer to the simplicial level. Since we will be dealing with graded simplicial objects, we reserve the subscript notation $B_d$ for the graded piece of degree $d$. For any simplicial object $X$, we reserve the notation $x \in X$ to mean points (i.e.,\ zero-simplices) $x$ in $X$.

From here on, everything is derived, by which we mean that all (co)limits are homotopy (co)limits c.q.\ (co)limits in the $\infty$-sense, all functors are $\infty$-functors, all schemes are derived, all diagrams commute up to homotopy, etc., unless otherwise stated. In particular, sheaves are taken in the homotopical sense, meaning that for a cover $\{U_\alpha\to V\}_\alpha$ the sheaf condition reads that
\begin{align*}
\FF(V) \to \lim_{n \in \bDelta} \left( [n] \mapsto \prod\nolimits_{(\alpha_0,\dots,\alpha_n)} \FF(U_{\alpha_0} \times_V \dots \times_V U_{\alpha_n}) \right)
\end{align*}
should be an equivalence. Observe: if this limit is taken in a 1-category, then it can be computed on the subcategory $[1] \rightrightarrows [0]$, which retrieves the classical notion of a sheaf.

Colimits of simplicial diagrams are called \textit{geometric realizations}. Dually, limits of cosimplicial diagram are called \textit{totalizations}. Functors that preserve colimits are called \textit{cocontinuous}.

Maps of rings, algebras, modules,... are called \textit{surjective} if they are so on $\pi_0$. For $\varphi:M \to M'$ a map of $R$-modules, the fibre is always taken in $\Mod_R$. If $\varphi$ is surjective, then this coincides with the fibre in $\cn\Mod_R$.

\subsection{Summary}
\label{Par:Summ}
Let $S$ be a scheme and $R$ a ring. In \S \ref{Sec:MGradAlg} we introduce the category of $M$-graded $R$-algebras, for any discrete commutative monoid $M$. 

A key insight in the proof of Theorem \ref{Thm:Equiv} is that the category $\Alg^\Z$ of $\Z$-graded rings can be constructed in two ways. It is either the category of those presheaves on the category $\Poly^\Z$ of $\Z$-graded polynomial rings over $\Z$ that send tensor products to products, or it is the category associated to a model 1-category of simplicial graded rings. Thus, in the first incarnation, one does not start with an ring $B$ and  a decomposition $B \simeq \bigoplus_d B_d$ in $\Mod_{\Z}$ as in \S \ref{Par:Problem}. Instead, one only specifies the graded pieces $B_d$, and defines the algebraic structure by giving `substitution maps' $B_{d_1} \times \dots \times B_{d_n} \to B_{d_1+\dots+d_n}$, for any graded polynomial $P(X_1,\dots,X_n)$ over $\Z$ that has $X_i$ in degree $d_i$, in a homotopy coherent way. 

We then define the category $\Alg^\Z_R$ of $\Z$-graded $R$-algebras as the slice category $\Alg^\Z_{R/}$. We show that the construction of $\Alg^\Z_R$ is functorial in $R$ in Proposition \ref{Prop:NRadj}, which allows us to globalize the notion of graded algebras in \S \ref{Par:GradOS} to a category $\Alg^\Z(S)$.

We recall group actions and principal bundles in the setting of derived algebraic geometry in \S \ref{Sec:EquivGeom}. Here we also introduce the category $\Aff^{\G_m}(S)$ of $\G_m$-schemes affine over  $S$. Objects of $\Aff^{\G_m}(S)$ will be cosimplicial diagrams in the category of quasi-coherent $\OO_S$-algebras which satisfy the derived analogue of coaction axioms. It is shown that  $\Aff^{\G_m}(S)$ is equivalent to the category $\Alg(B\G_{m,S})$.

Section \S \ref{Sec:GradAlgGm} is more technical, and contains the proof of Theorem \ref{Thm:Equiv}. The strategy is as follows. First we give an adjunction $\Phi: \Alg^\Z \rightleftarrows \Alg^{\G_m}:\Psi$. A $\Z$-graded algebra can be strictified to a simplicial object in the 1-category of discrete $\Z$-graded algebras. Although on these algebras $\Phi$ will coincide with the classical construction levelwise, we will not use this description to introduce $\Phi$. Indeed, the reason that we first do the absolute case, is that colimit-preserving functors out of $\Alg^\Z$ are classified by coproduct-preserving functors out of the 1-category $\Poly^\Z$, which is a tractable problem.

After this construction, we show that $\Phi$ is fully faithful in Proposition \ref{Prop:unitinv}, by showing that the unit is invertible. We then show that $\Psi$ is conservative, by showing that it commutes with the forgetful functors to $\Alg$. We then globalize to an adjunction equivalence $\Phi_S\dashv \Psi_S: \Alg^\Z(S) \rightleftarrows \Alg^{\G_{m,S}}(S)$ in \S \ref{Par:Glob}. 

In \S \ref{Sec:Proj} we introduce the projective spectrum of a quasi-coherent, $\N$-graded $\OO_S$-algebra $\BB$ generated in degree 1  as $\Proj \BB \coloneqq [(\Spec \BB \smallsetminus V(\BB_+)) / \G_{m,S}] $. We show that $\Proj \BB$ is representable by giving an open cover of affine schemes of the form $\Spec B_{(f)}$. Here, $B_{(f)}$ is the degree-zero part of the graded localization $B_f$, where $B$ is an $\N$-graded algebra, and $f$ is a homogeneous element of $B$ in degree 1. Lemma \ref{Lem:Deg0loc} is crucial in this approach, which says that the induced $\G_m$-action on $\Spec B_f$ is free. 

In \S \ref{Sec:Rees} we introduce the extended Rees algebra of a closed immersion $Z \to X$ as a $\Z$-graded, quasi-coherent $\OO_X[t^{-1}]$-algebra $\RR_{Z/X}^\ext$, using Weil restrictions. We show that this construction is stable under base-change, and give an explicit formula for the Rees algebra in the case that $Z \to X$ is a finite quotient. This is a closed immersion of the form $\Spec B \to \Spec A$, where $B$ is obtained from $A$ by a single finite cell attachment. Then in \S \ref{Par:ReesAdj} we recover the adjunction mentioned in \S \ref{Par:ReesClas}. We close with applying our formula to a regular embedding of classical schemes.

Finally, in \S \ref{Sec:Bl} we introduce the blow-up $\Bl_ZX \coloneqq \Proj \RR_{Z/X}$ of arbitrary closed immersions $Z \to X$, show that our description coincides with $\vDiv$ in the quasi-smooth case, and show that we recover the classical picture as the underlying classical scheme of $\Bl_ZX$. We close by proposing a generalization of the deformation to the normal cone to the derived setting in \S \ref{Par:DefCZX}.

Not everything in the body text is strictly necessary to show the main results. Most readers can safely skim over \S \ref{Sec:EquivGeom}: this section is mostly there to fix notation. There are some smaller results, like the adjunction involving the Rees algebra in \S \ref{Par:ReesAdj}, which were only relevant as intermediate results in earlier versions. However, they now serve to give a more complete picture of the theory, and are thus left in. Some results that were needed in earlier versions but became obsolete in the current version have been collected in the appendix, since they seemed interesting in their own right, but did not add to the present story.

\subsection{Acknowledgements}
I would like to thank David Rydh for fruitful discussions and for extensive comments on previous versions. The key idea in the construction of Rees algebras, namely to use Weil restrictions, is his. I would also like to thank Adeel Khan for his helpful feedback and for several encouraging Zoom sessions. Lastly, I would like to thank Tyler Lawson for our  e-mail conversation that gave me the main ideas for the proof of Proposition \ref{Prop:FPisFC}.

\section{$M$-graded algebras}
\label{Sec:MGradAlg}
Recall that $\Alg$ is the category of functors $\Poly^\op \to \Space$ that preserve products, where $\Poly$ is the 1-category of finitely generated polynomial rings of the form $\Z[x_1,\dots,x_n]$. One can strictify these objects in the sense that $\Alg$ is equivalent to the $\infty$-category associated to the model 1-category of simplicial commutative rings. We now generalize this picture to the graded setting.
\subsection{Graded rings}
Throughout, fix a commutative monoid $M$ (always discrete). Let $\Poly^M$ be the 1-category of graded polynomial rings which are of the form $\Z[x(d_1),\dots,x(d_n)]$, where $x(d_i)$ is of degree $d_i\in M$, and with graded ring homomorphisms between them.\footnote{It would be more precise to write $x(d_i)$ as $x_i(d_i)$, since the degrees are not assumed to be different. The double index will be suppressed nonetheless.} 

For any category $\CCC$, let $\PPP(\CCC)$ be the category of presheaves on $\CCC$, and $\PPP_\Sigma(\CCC)$ the full subcategory of $\PPP(\CCC)$ spanned by the finite product-preserving presheaves on $\CCC$, i.e.\ those functors $\CCC^\op \to \Space$ that send finite coproducts in $\CCC$ to finite products in $\Space$.
\begin{Def}
	The category $\Alg^M\coloneqq \PPP_\Sigma(\Poly^M)$ is the category of \textit{$M$-graded rings}.
\end{Def}

\begin{Rem}
	\label{Rem:StrictAlgMA}
	Let $\A$ be the 1-category of simplicial objects in the 1-category of $M$-graded, discrete rings. Objects of $\A$ are called \textit{simplicial, $M$-graded rings}. Then $\A$ has a simplicial model structure, where a map $B \to B'$ is a weak equivalence resp.\ a fibration if the maps $B_d \to B_d'$ are weak equivalences resp.\ fibrations of simplicial sets, for all $d \in M$. Consider $\Poly^M$ as a full subcategory of $\A$.
	
	Let $\AAA$ be the $\infty$-category associated to $\A$. Then the natural map $\AAA \to \Alg^M$ is an equivalence by \cite[Cor.\ 5.5.9.3]{LurieHTT}, since discrete $M$-graded rings are product-preserving 1-presheaves on $\Poly^M$. This map $\AAA \to \Alg^M$ is induced by the Yoneda embedding: we send a fibrant-cofibrant object $B$ in $\A$ to the functor 
	\begin{align*}
	\Z[x(d_1),\dots,x(d_n)] \mapsto \Map_{\A} (\Z[x(d_1),\dots,x(d_n)],B ) \simeq B_{d_1} \times \dots \times B_{d_n}
	\end{align*}
	which is an object of $\PPP_\Sigma(\Poly^M)$. See loc.\ cit.\ for details. 
\end{Rem}

\subsection{Graded $R$-algebras}
\label{Par:AlgMR}
Throughout, fix $R \in \Alg$. Let $\epsilon: \Poly\to \Poly^M$ be the  functor that sends $\Z[x]$ to $\Z[x(0)]$ and which preserves finite coproducts. Write $\Poly_R$ for the full subcategory of $\Alg_R$ spanned by $R$-algebras of the form $R[x_1,\dots,x_n]$, and let $\delta:\Poly \to \Poly_R$ be the functor that sends $\Z[x]$ to $R[x]$ and which preserves finite coproducts. Let $(-)_0$ be the functor $\Alg^M \to \Alg$ which is given by precomposing with $\epsilon$, i.e.,\ this functor sends an $M$-graded ring $B$ to its degree-zero part $B_0$.

\begin{Exm}
	We consider $R$ as an $M$-graded ring with trivial grading as follows. Taking a model for $R$, we may assume that $R$ is a simplicial ring. Then endow this simplicial object in the 1-category of discrete rings level-wise with the trivial grading (i.e.,\ concentrated in degree 0). This gives a simplicial object in the 1-category of discrete graded rings, hence an object of $\Alg^M$ by Remark \ref{Rem:StrictAlgMA}. 
	
	We will see in Example \ref{Exm:fjadjs} the following more intrinsic definition of the trivial grading, without involving simplicial models. Namely, we can consider $R$ as a functor $\Poly^\op \to \Space$. Then $R$ endowed with trivial grading is the left Kan extension of $R$ along  $\epsilon$. 	
\end{Exm}

\begin{Def}
	\label{Def:GradAlg}
	The category $\Alg^M_R$ of \textit{$M$-graded $R$-algebras} is the fibre product $\Alg^M \times_{\Alg} \Alg_R$ of $(-)_0$ along the forgetful functor $U:\Alg_R \to \Alg$.
\end{Def}

\begin{Rem}
	\label{Rem:AlgRMcoslice}
	Observe that $U:\Alg_R \to \Alg$ is a fibration in the Joyal model structure by \cite[Cor.\ 2.4.6.5]{LurieHTT}.	It follows that $\Alg^M_R$ is the underived fibre product of simplicial sets. Hence, $\Alg^M_R$ is the category of pairs $(B, R \to B_0)$, where $B$ is an $M$-graded ring and $R \to B_0$ a ring map. In other words, $\Alg^M_R$ is the coslice category $\Alg^M_{R/}$, where $R$ is endowed with the trivial grading.
\end{Rem}

\begin{Rem}
	\label{Rem:gradpiece}
	From Remark \ref{Rem:AlgRMcoslice} and Remark \ref{Rem:StrictAlgMA} it follows that, for $B \in \Alg^R_M$, we have a decomposition $B \simeq \bigoplus\nolimits_{d \in M} B_d$ with $B_d \in \QCoh(R)$ the graded piece of degree $d$.
\end{Rem}

Let $R[x(d_1),\dots,x(d_n)]$ be the tensor product $R \otimes_\Z \Z[x(d_1),\dots,x(d_n)]$ in $\Alg^M$, for $d_1,\dots,d_n \in M$. Then $R[x(d_1),\dots,x(d_n)]$ gives an object of $\Alg^M_R$ in an obvious way. Let $\Poly^M_R$ be the full subcategory of $\Alg^M_R$ spanned by the objects of this form.  

We need the following two notions from \cite[\S 5.5.8]{LurieHTT}. A non-empty simplicial set $K$ is \textit{sifted} if the diagonal $d:K \to K \times K$ is cofinal, that is, if precomposition with $d$ preserves colimits. Now an object $c$ in a category $\CCC$ is \textit{compact projective} if it corepresents a functor $\CCC \to \Space$ that preserves sifted colimits. For example, for a category $\DDD$ with finite coproducts, the image of $d \in \DDD$ under the map $\DDD \to \PPP_\Sigma(\DDD)$ is compact projective.

\begin{Lem}
	\label{Lem:AlgRForgetSifted}
	The functor $U:\Alg_R \to \Alg$ and the functor $(-)_0 :\Alg^M \to \Alg$ preserve sifted colimits.
\end{Lem}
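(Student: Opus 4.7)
My plan is to handle the two functors separately, each by reducing the claim to a standard fact from \cite{LurieHTT}.

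For $(-)_0 \colon \Alg^M \to \Alg$: by construction this functor is precomposition with $\epsilon \colon \Poly \to \Poly^M$ at the level of presheaves. Recall from \cite[\S 5.5.8]{LurieHTT} that, for any small $\infty$-category $\CCC$, the full subcategory $\PPP_\Sigma(\CCC) \subseteq \PPP(\CCC)$ is closed under sifted colimits, and that colimits in $\PPP(\CCC)$ are computed pointwise. Hence for a sifted diagram $(B_\alpha)$ in $\Alg^M = \PPP_\Sigma(\Poly^M)$ and any $P \in \Poly^M$, one has $(\colim B_\alpha)(P) \simeq \colim B_\alpha(P)$ in $\Space$. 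Specializing to $P = \epsilon Q$ with $Q \in \Poly$ gives $(\colim B_\alpha)_0(Q) \simeq \colim (B_\alpha)_0(Q)$, which is precisely the sifted colimit in $\Alg = \PPP_\Sigma(\Poly)$.

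For $U \colon \Alg_R \to \Alg$: by the identification of $\Alg_R$ with the coslice $\Alg_{R/}$ (cf.\ Remark \ref{Rem:AlgRMcoslice} applied with $M = 0$), the functor $U$ is the forgetful functor out of the coslice. I would then invoke the general fact that, for any $\infty$-category $\CCC$ and any $x \in \CCC$, the forgetful functor $\CCC_{x/} \to \CCC$ preserves colimits indexed by weakly contractible simplicial sets: a colimit in $\CCC_{x/}$ of $F\colon K \to \CCC_{x/}$ is computed in $\CCC$ as the colimit of the augmented diagram $K^\triangleleft \to \CCC$ sending the cone point to $x$, and weak contractibility of $K$ allows one to drop the cone point. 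Since sifted $\infty$-categories are weakly contractible by \cite[Prop.\ 5.5.8.7]{LurieHTT}, this delivers the preservation statement for $U$.

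The proof is essentially bookkeeping — the substantive content has been packaged into the two cited facts, namely that $\PPP_\Sigma$ is closed under sifted colimits (whence $(-)_0$ is handled by pointwise evaluation) and that sifted implies weakly contractible (whence $U$ is handled by the coslice principle). The main obstacle, modest as it is, lies in the $U$-part: one must be careful that the abstract preservation-from-coslice statement is invoked in the correct $\infty$-categorical form, rather than its 1-categorical cousin, since weak contractibility of the indexing diagram is genuinely needed and is not automatic from mere connectedness.
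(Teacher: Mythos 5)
Your proof is correct. For $(-)_0$ your argument is exactly the paper's: $(-)_0 = \epsilon^*$, and sifted colimits in $\PPP_\Sigma$ are computed pointwise (\cite[Prop.\ 5.5.8.10]{LurieHTT}), so restriction along a finite-coproduct-preserving functor preserves them. For $U$, however, you take a genuinely different route. The paper's proof treats $U$ uniformly with $(-)_0$: it identifies $\Alg_R$ with $\PPP_\Sigma(\Poly_R)$ (this is the content of Appendix~\ref{Par:AlgMRpres}, Lemma~\ref{Lem:phipsiM0}) and observes that $U = \delta^*$ where $\delta\colon\Poly\to\Poly_R$ preserves finite coproducts, so the same pointwise argument applies. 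You instead use only the coslice description $\Alg_R = \Alg_{R/}$ and the general fact that the forgetful functor $\CCC_{x/}\to\CCC$ preserves colimits indexed by weakly contractible simplicial sets (since $K\hookrightarrow K^\triangleleft$ is cofinal when $K$ is weakly contractible, by the criterion of \cite[Thm.\ 4.1.3.1]{LurieHTT}), together with the fact that sifted simplicial sets are weakly contractible. Your version of the $U$-half is arguably more elementary and self-contained, as it does not invoke the nontrivial presentation $\Alg_R \simeq \PPP_\Sigma(\Poly_R)$; the paper's version has the virtue of a single uniform mechanism for both functors. You are also right to flag that weak contractibility, not merely connectedness, is what the $\infty$-categorical cofinality argument actually requires.
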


\begin{proof}
			By \cite[Prop.\ 5.5.8.10]{LurieHTT}, sifted colimits in $\PPP_{\Sigma}(\CCC)$ are computed as colimits in $\PPP(\CCC)$, i.e.,\ pointwise, for any category $\CCC$. It follows that for any functor $\varphi:\CCC \to \DDD$ which preserves finite coproducts, the map $\varphi^*:\PPP_\Sigma(\DDD) \to \PPP_\Sigma(\CCC)$ preserves sifted colimits. Now the claim follows since the forgetful functor $\Alg_R \to \Alg$ is $\delta^*$, and $(-)_0:\Alg^M \to \Alg$ is $\epsilon^*$.
\end{proof}

\begin{Lem}
	\label{Lem:PolMRcompproj}
	The objects in $\Poly^M_R$ are compact projective in $\Alg^M_R$.
\end{Lem}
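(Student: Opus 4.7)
The plan is to show that each $P = R[x(d_1), \dots, x(d_n)] \in \Poly^M_R$ corepresents the functor $B \mapsto B_{d_1} \times \cdots \times B_{d_n}$ on $\Alg^M_R$, and then deduce that this functor preserves sifted colimits. I would proceed in three steps.

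First, compute the mapping space out of $P$. Using Remark \ref{Rem:AlgRMcoslice} I have $\Alg^M_R \simeq \Alg^M_{R/}$. By construction $P$ is the coproduct $R \otimes_\Z \Z[x(d_1),\dots,x(d_n)]$ in $\Alg^M$, so mapping out of it in the coslice category gives, for any $(B, R \to B_0) \in \Alg^M_R$,
\begin{align*}
\Map_{\Alg^M_R}(P, B) \simeq \Map_{\Alg^M}(\Z[x(d_1),\dots,x(d_n)], B) \simeq B_{d_1} \times \cdots \times B_{d_n},
\end{align*}
where the last equivalence is the Yoneda formula for the finite-product-preserving presheaf $B$ on $\Poly^M$, applied to $\Z[x(d_1),\dots,x(d_n)] = \Z[x(d_1)] \sqcup \cdots \sqcup \Z[x(d_n)]$ in $\Poly^M$.

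Second, show that each evaluation $B \mapsto B_d$ preserves sifted colimits as a functor $\Alg^M_R \to \Space$. Write $\pi:\Alg^M_R \to \Alg^M$ for the projection. Since $\Alg^M_R$ is the (underived) fibre product of $(-)_0$ along the Joyal fibration $U:\Alg_R \to \Alg$, and both $(-)_0$ and $U$ preserve sifted colimits by Lemma \ref{Lem:AlgRForgetSifted}, sifted colimits in $\Alg^M_R$ are computed componentwise, so $\pi$ preserves sifted colimits. The functor $\Alg^M \to \Space$, $B \mapsto B_d$, is corepresented by $\Z[x(d)] \in \Poly^M$, which is compact projective in $\Alg^M = \PPP_\Sigma(\Poly^M)$ by the general fact recalled before the lemma that objects of $\Poly^M$ are compact projective under the Yoneda embedding into $\PPP_\Sigma(\Poly^M)$.

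Third, combine: finite products in $\Space$ commute with sifted colimits (in each variable separately, so jointly via cofinality of the diagonal on a sifted simplicial set), hence $B \mapsto B_{d_1} \times \cdots \times B_{d_n}$ preserves sifted colimits on $\Alg^M_R$, and $P$ is compact projective.

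The step I expect to require the most care is the identification of sifted colimits in the pullback $\Alg^M_R = \Alg^M \times_{\Alg} \Alg_R$ with componentwise colimits; this rests squarely on $U$ being a Joyal fibration (so the pullback is the honest one, as recorded in Remark \ref{Rem:AlgRMcoslice}) together with both legs preserving the relevant colimits (Lemma \ref{Lem:AlgRForgetSifted}). Everything else is either Yoneda bookkeeping or a standard application of sifted colimits commuting with finite products.
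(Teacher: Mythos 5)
Your proof is correct and takes essentially the same route as the paper: argue that the projection $\Alg^M_R \to \Alg^M$ preserves sifted colimits because the strict pullback along the Joyal fibration $U$ computes them componentwise, use that $\Z[x(d)]$ is compact projective in $\Alg^M$ to get that $R[x(d)]$ is compact projective in $\Alg^M_R$, and then pass to finite coproducts. The only cosmetic difference is that the paper handles the last step by citing the fact that finite coproducts of compact projective objects are compact projective (\cite[Prop.\ 5.5.8.25]{LurieHTT}), whereas you spell out the mapping-space formula and invoke commutation of finite products with sifted colimits in $\Space$; these amount to the same thing.
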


\begin{proof}
%
	Consider objects in $\Alg^M_R$ as pairs $(B,B')$ with $B \in \Alg_R$ and $B' \in \Alg^M$. By \cite[Prop.\ 6.2.19]{CisinskiHigh} and Lemma \ref{Lem:AlgRForgetSifted}, the colimit of a sifted diagram in $\Alg^M_R$ of the form $\{(B_\alpha,B'_\alpha)\}_\alpha$ is $(\colim B_\alpha, \colim B'_\alpha)$. In particular, the projection $\Alg^M_R \to \Alg^M$ preserves sifted colimits. For any $d \in M$, the functor $\Alg^M \to \Space: B \mapsto B_d$ also preserves sifted colimits, since $\Z[x(d)]$ is compact projective in $\Alg^M$. It follows that the composition
	\begin{align*}
	\Alg^M_R \to \Alg^M \to \Space: B \mapsto B_d
	\end{align*}
	preserves sifted colimits, i.e.,\ that $R[x(d)]$ is compact projective in $\Alg^M_R$. By \cite[Prop.\ 5.5.8.25]{LurieHTT}, all objects in $\Poly^M_R$ are now compact projective in $\Alg^M_R$, since these objects are coproducts of the form $R[x(d_1)] \sqcup \dots \sqcup R[x(d_n)]$.
\end{proof}

\begin{Rem}
	We can in fact show that the inclusion $\Poly^M_R \to \Alg^M_R$ induces an equivalence $\PPP_\Sigma(\Poly^M_R) \simeq \Alg^M_R$. Since the proof is somewhat technical, and the result is not essential in what follows, the argument can be found in Appendix \ref{Par:AlgMRpres}.
\end{Rem}

\subsection{Change of rings and of monoids}
Let $\Mon$ be the 1-category of discrete commutative monoids and monoid homomorphisms between them. Following \cite{LurieHTT}, we write $\PrL$ resp.\ $\PrR$ for the category that has presentable categories as objects and functors which are left resp.\ right adjoints as morphisms. 

\begin{Prop}
	\label{Prop:NRadj}
	For $j: M \to M'$ a homomorphism of commutative monoids and $\varphi:R \to R'$ a ring map, we have an adjunction
	\begin{align*}
	\varphi_!j_!: \Alg^{M}_{R} \rightleftarrows \Alg^{M'}_{R'} : j^!\varphi^!
	\end{align*}
	where $j^!\varphi^!$ is given on objects by precomposing with a map $\Poly^{M}_{R} \to \Poly^{M'}_{R'}$ that sends $R[x(d)]$ to $R'[x(j(d))]$, and $\varphi_!j_!$ is given by left Kan extension.
	
	These adjunctions are natural in $M$ and $R$ in the sense that we have a functor $\Alg^{(-)}_{(-)}:\Alg^\op \times \Mon^\op \to \PrR$ which is given by $(\varphi,j) \mapsto j^!\varphi^!$ on morphisms. 
\end{Prop}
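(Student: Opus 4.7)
The plan is to construct the adjunction in two independent steps via the coslice description $\Alg^M_R \simeq (\Alg^M)_{R/}$ of Remark \ref{Rem:AlgRMcoslice}: first vary $j$ with $R$ fixed, then vary $\varphi$ with $M$ fixed.

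For the monoid part, I would define the finite-coproduct-preserving functor $F_j: \Poly^M \to \Poly^{M'}$ by $\Z[x(d_1), \ldots, x(d_n)] \mapsto \Z[x(j(d_1)), \ldots, x(j(d_n))]$, with the evident action on morphisms. By the universal property of $\Alg^M = \PPP_\Sigma(\Poly^M)$ (\cite[\S 5.5.8]{LurieHTT}), this yields an adjunction $j_! \dashv j^!: \Alg^M \rightleftarrows \Alg^{M'}$. Since $j(0) = 0$, one has $F_j \circ \epsilon^M = \epsilon^{M'}$, where $\epsilon^M: \Poly \to \Poly^M$ is as in \S \ref{Par:AlgMR}; hence $j_!$ sends the trivially $M$-graded $R$ (which is $\epsilon^M_! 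R$) to the trivially $M'$-graded $R$. Consequently, the adjunction descends to coslices, giving $j_! \dashv j^!: \Alg^M_R \rightleftarrows \Alg^{M'}_R$.

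For the ring part, the adjunction $\varphi_! \dashv \varphi^!: \Alg^{M'}_R \rightleftarrows \Alg^{M'}_{R'}$ follows directly from coslice functoriality applied to $\varphi$ viewed as a morphism of trivially $M'$-graded rings in $\Alg^{M'}$: here $\varphi_!$ is pushout along $\varphi$ and $\varphi^!$ is composition. Composing with the previous step yields the desired $\varphi_! j_! \dashv j^! \varphi^!: \Alg^M_R \rightleftarrows \Alg^{M'}_{R'}$. The concrete description of $j^! \varphi^!$ as precomposition with the functor $\Poly^M_R \to \Poly^{M'}_{R'}$ sending $R[x(d)]$ to $R'[x(j(d))]$ then follows by unwinding.

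For naturality, the construction is strictly functorial at the 1-categorical level: the $F_j$ satisfy $F_{j'} \circ F_j = F_{j' \circ j}$, and pushout and composition along $\varphi$ compose strictly. Promoting this to an $\infty$-functor $\Alg^\op \times \Mon^\op \to \PrR$ proceeds by assembling a 1-functor from $\Mon$ and the 1-category of polynomial rings over $\Z$ into small $\infty$-categories with finite coproducts, extending to a functor on $\Alg$ using sifted-colimit preservation, post-composing with $\PPP_\Sigma$ to land in $\PrL$, and finally invoking the canonical equivalence $\PrL \simeq (\PrR)^\op$ \cite[\S 5.5.3]{LurieHTT}. I expect the main obstacle to be this last $\infty$-coherence bookkeeping, since the algebraic content is handled by the strict naturality at the 1-level, but assembling everything into a bona fide $\infty$-functor requires care.
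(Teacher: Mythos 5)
Your construction of the adjunction itself is correct and follows essentially the same decomposition as the paper: first handle the monoid change over the absolute base via the finite-coproduct-preserving functor $F_j:\Poly^M\to\Poly^{M'}$ and the universal property of $\PPP_\Sigma$, observe (as you do) that $j(0)=0$ forces $j_!$ to carry trivially graded $R$ to trivially graded $R$ so that the adjunction descends to coslices, and then handle the ring change. For the $\varphi$-step you invoke coslice functoriality in $\Alg^{M'}$ directly, whereas the paper exhibits a morphism of cospans and takes the fibre product of $\varphi_!\dashv\varphi^!:\Alg_R\rightleftarrows\Alg_{R'}$ with $\Alg^{M'}$ over $\Alg$; by Remark \ref{Rem:AlgRMcoslice} these are the same thing, and your phrasing is a little slicker. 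The final identification of $j^!\varphi^!$ with precomposition along $\Poly^M_R\to\Poly^{M'}_{R'}$, which you defer to ``unwinding,'' is indeed established in the paper by the adjunction identity and the observation that $\varphi_!j_!$ restricts to the functor on polynomial categories.

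The naturality discussion is where you diverge, and this is the part with real gaps. The paper builds the strict 1-functor $H:\Mon\to\Cat$, $M\mapsto\Poly^M$, postcomposes with $\PPP_\Sigma$ and passes to right adjoints to obtain $\Mon^\op\to\PrR$, then for each $R$ takes coslices under the trivially graded $R$ and invokes naturality of the coslice construction to vary $R$. Your plan instead starts from a 1-functor on $\Mon\times\Poly$ (presumably $(M,P)\mapsto\Poly^M_P$), postcomposes with $\PPP_\Sigma$, and extends along $\Poly\hookrightarrow\Alg$ by sifted-colimit preservation before flipping via $\PrL\simeq(\PrR)^\op$. This silently imports two facts the paper deliberately avoids relying on in the main argument: (a) that $\PPP_\Sigma(\Poly^M_R)\simeq\Alg^M_R$, which is proved only in Appendix \ref{Par:AlgMRpres} and is flagged there as ``somewhat technical'' and non-essential; and (b) that the assignment $R\mapsto\Alg^M_R$, with $\varphi_!$ on morphisms, is the sifted-colimit-preserving extension of its restriction to $\Poly$ --- equivalently, that $(\Alg^M)_{R/}\simeq\lim_{P\to R}(\Alg^M)_{P/}$ in $\Cat_\infty$ over $\Poly_{/R}$ with composition as transition maps. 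Claim (b) is plausible, but it is not free: it is a statement about the interaction of coslices (or the fibre product $\Alg^M\times_\Alg\Alg_R$) with sifted colimits in $\PrL$, and you would either have to prove it or replace this step with the paper's coslice-naturality argument. As written this part of the proposal is a plan rather than a proof, and the ``extension by sifted-colimit preservation'' step in particular is the kind of thing that looks routine but can fail to produce the functor you want on objects.
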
	

\begin{proof}
	We first give the adjunction in the following two special cases:
	\begin{enumerate}[label=(\roman*)]
		\item Where $\varphi$ is the identity on $\Z$ and $j:M \to M'$ is arbitrary;
		\item Where $\varphi:R\to R'$ is arbitrary and $j$ is the identity on $M$, for some $M$.
	\end{enumerate}
	
	\begin{proof}[Case (i).]
		\let\qed\relax
		We first define the finite coproduct-preserving functor 
		\begin{align*}
		[j]:\Poly^{M} \to \Poly^{M'}: \Z[x(d)] \mapsto \Z[x(j(d))]
		\end{align*}
		It suffices to decide how $[j]$ acts on the following hom-sets
		\begin{align*}
		\Poly^{M}(\Z[x(d)],\Z[x(e_1),\dots,x(e_n)])  \to \Poly^{M'}(\Z[x(j(d))],\Z[x(j(e_1)),\dots,x(j(e_n))])
		\end{align*}
		Now since
		\begin{align*}
		\Poly^{M}(\Z[x(d)],\Z[x(e_1),\dots,x(e_n)]) \simeq \Z[x(e_1),\dots,x(e_n)]_{d}
		\end{align*}
		it  suffices to give natural maps $\Z[x(e_1),\dots,x(e_n)]_{d} \to \Z[x(j(e_1)),\dots,x(j(e_n))]_{j(d)}$. For this, we take the map of $\Z$-modules which is given by $x(e_i)^m \mapsto x(j(e_i))^m$. Since $\Poly^M,\Poly^{M'} $ are discrete categories, naturality is unproblematic.
		
		Write now $j^!$ for the functor $\PPP(\Poly^{M'}) \to \PPP(\Poly^{M})$, given by precomposing with $[j]:\Poly^{M} \to \Poly^{M'}$. Since $[j]$ preserves tensor products, $j^!$ restricts to a functor $j^!:\Alg^{M'} \to \Alg^{M}$, by definition of $\Alg^{M'}$ and $\Alg^{M}$.
		
		By the dual of \cite[Prop.\ 6.4.9]{CisinskiHigh}, we have a left adjoint $j_!: \PPP(\Poly^{M}) \to \PPP(\Poly^{M'})$, which is given on objects by pointwise left Kan extension. From the latter fact and the proof of \cite[Prop.\ 5.5.8.15]{LurieHTT}, \cite[Prop.\ 25.1.1.5]{LurieSpectral}, it follows that $j_!$ restricts to the unique cocontinuous functor $\Alg^{M} \to \Alg^{M'}$, which on $\Poly^{M}$ is given by $[j]$ followed by the inclusion $\Poly^{M'} \to \Alg^{M'}$. 
	\end{proof}
	
	\begin{proof}[Case (ii).]
		\let\qed\relax
		We have an adjunction $\varphi_!:\Alg_{R} \rightleftarrows \Alg_{R'}:\varphi^!$, where the left adjoint is given by taking tensor products and the right adjoint by precomposition with $\varphi$. The right adjoint $\varphi^!$ commutes with the forgetful functors to $\Alg$. On the other hand, we also have endofunctors $(-)\otimes_{R} R'$ on $\Alg$ and $\Alg^{M'}$. Together with $\varphi_!$, these give a morphism of cospans from $\Alg^{M} \to \Alg \leftarrow \Alg_{R}$ to $\Alg^{M} \to \Alg \leftarrow \Alg_{R'}$.
		
		We can thus take fibre products on both sides of the adjunction over $\Alg$ with $\Alg^{M}$, to get the adjunction
		\begin{align*}
		\varphi_!:\Alg^{M}_{R} \rightleftarrows \Alg^{M}_{R'}: \varphi^!
		\end{align*}  
		Observe that by Remark \ref{Rem:AlgRMcoslice}, we can still describe $\varphi^!$ via precomposition with $\varphi$.
	\end{proof}
	
	Now for the general case, consider the adjunction $j_!: \Alg^{M} \rightleftarrows \Alg^{M'}: j^!$ from the first case. Observe that $j_!$ sends $R \in \Alg$ with trivial $M$ grading to $R$ with trivial $M'$ grading. Hence, taking coslices gives an adjunction $j_!: \Alg^{M}_{R} \rightleftarrows \Alg^{M'}_{R}: j^!$ by Remark \ref{Rem:AlgRMcoslice}. Composing with the second case gives us the adjunction
	\begin{align*}
	\varphi_!j_!:\Alg^{M}_{R} \rightleftarrows \Alg^{M'}_{R} \rightleftarrows \Alg^{M'}_{R'}: j^!\varphi^!
	\end{align*}
	Since for $R \to R'$ in $\Alg$ and $j:M \to M'$ in $\Mon$ it holds for all $d \in M$ that $R'[x(j(d))] \simeq R[x(j(d))] \otimes_{R} R'$, we have that $\varphi_!j_!$ restricts to the functor $\Poly^{M}_{R} \to \Poly^{M'}_{R'}: R[x(d)] \mapsto R'[x(j(d))]$. From the adjunction $\varphi_!j_! \dashv j^!\varphi^!$ it follows that $j^!\varphi^!$ is given by precomposition with $\varphi_!j_!$.
	
	It remains to show naturality in $M$ and $R$. The functors $[j]:\Poly^{M} \to \Poly^{M'}$ from the first case give a functor $H:\Mon \to \Cat: M \mapsto \Poly^M$. Since $H$ lands in 1-categories, naturality is unproblematic. Now composition with $H$ induces a functor 
	\begin{align*}
	 \Mon^\op \to \PrR: M \mapsto \Alg^M
	\end{align*}
	which on morphisms is given by sending $j:M \to M'$ to $j^!: \Alg^{M'} \to \Alg^{M}$, as in the first case. Taking coslice categories, we get a functor
	\begin{align*}
	\Mon^\op \to \PrR: M \mapsto \Alg^M_R
	\end{align*}
	for any $R \in \Alg$. By naturality of the coslice construction, this gives us a functor $\Alg^\op \to \Fun(\Mon^\op,\PrR)$, and thus a functor
	\begin{align*}
	\Alg^\op \times \Mon^\op \to \PrR: (R,M) \mapsto \Alg_R^M
	\end{align*}
	which sends $(\varphi,j): (R,M) \to (R',M')$ to the functor $j^!\varphi^!$, which is what we wanted.
\end{proof}

\begin{Rem}
	Taking adjoints, we also get a functor $\Alg^{(-)}_{(-)}:\Alg \times \Mon\to \PrL$, which on morphisms is given by sending $(\varphi,j)$ to $\varphi_!j_!$.
\end{Rem}

\begin{Rem}
	Let $p:\CCC \to \Alg \times \Mon$ be the Cartesian fibration associated to the functor $\Alg^\op \times \Mon^\op \to \PrR$ which sends $(R,M)$ to $\Alg_R^M$ and $(\varphi,j)$ to $j^!\varphi^!$. We can describe $\CCC$ as the category  of triples $(R,M,B)$, where $M$ is a commutative monoid, $R$ a ring, and $B$ an $M$-graded $R$-algebra. A morphism $(R,M,B) \to (R',M',B')$ is given by a ring map $\varphi:R \to R'$, a monoid homomorphism $j:M \to M'$ and a morphism $h:B \to j^!\varphi^!B'$ in $\Alg^{M}_{R}$. Then $p$ is the forgetful functor.
\end{Rem}

\begin{Not}
	As in the proof of Lemma \ref{Prop:NRadj}, we write the adjunction $\id_!j_! \dashv j^!\id^!$ as $j_! \dashv j^!$, and $\varphi_!\id_! \dashv \id^!\varphi^!$ as $\varphi_! \dashv \varphi^!$.
\end{Not}

\begin{Exm} \label{Exm:fjadjs}
	We recover the following adjunctions from Proposition \ref{Prop:NRadj}.
	\begin{enumerate}
		\item For $j$ the trivial map $0 \to 0$ we recover the extension-restriction by scalars adjunction.
		\item \label{Itm:trivgrad}	For $j$ the inclusion $0 \to M'$ we have that $j^! : \Alg^{M'}_R\to \Alg_R$ is the functor $(-)_0$. From the adjunction, it follows that $j_!R[x]$ is $R[x(0)]$. Since $\Alg_R$ is generated by polynomial algebras under colimits, in general $j_!$ therefore sends an $R$-algebra $A$ to the graded algebra which is $A$ concentrated in degree 0. We say that $j_!$ endows $A$ with the \textit{trivial grading}.
		\item For $j$ the map $M \to 0$ it holds that $j_!R[x(d)] = R[x]$, which follows from the proof of Proposition \ref{Prop:NRadj}. Since $j_!$ is cocontinuous, it follows that $j_!$ is the functor which forgets the grading. Observe that $j^!A$ is the derived analogue of the monoid algebra $A[M]$. 
		\item For general $\varphi: R \to R'$ and $j:M \to M'$, given $B$ in $\Alg_{R}^{M}$, the underlying $R'$-algebra of $\varphi_!j_!B$  is $B \otimes_{R} R'$. This follows from the naturality of the adjunctions and the previous examples.
	\end{enumerate}
\end{Exm}

\subsection{Graded, quasi-coherent algebras on algebraic stacks}
\label{Par:GradOS}	
Throughout, fix an algebraic stack $\XXX$ and a scheme $S$. Using Proposition \ref{Prop:NRadj}, we now define the relative case. We are mostly interested in graded, quasi-coherent algebras on schemes: we only need to consider graded, quasi-coherent algebras in one case, namely on the algebraic stack $B\G_m$. We therefore only generalize to algebraic stacks when needed or when it is as much effort as the case of schemes.

\begin{Def}
	\label{Def:AlgMSt}
	Extend the functor $\Alg^{(-)}_{(-)}:\Alg \times \Mon \to \PrL$ to a functor $\Art^\op \times \Mon \to \PrL$, written $(\XXX,M) \mapsto \Alg^M(\XXX)$, by right Kan extension along the inclusion $\Alg \times \Mon \to \Art^\op \times \Mon$ (allowed by \cite[Prop.\ 5.5.3.13]{LurieHTT}). For a stack $\XXX$ and a monoid $M$, we call $\Alg^M(\XXX)$ the category of \textit{$M$-graded, quasi-coherent $\OO_{\XXX}$-algebras}.
\end{Def}

For $f: \XXX' \to \XXX$ a morphism of stacks, and $j: M \to M'$ a morphism of commutative monoids, we have an adjunction $f^*j_! \dashv j^!f_* \colon \Alg^M(\XXX)\rightleftarrows\Alg^{M'}(\XXX')$, by construction of $\Alg^{(-)}_{(-)}$.

\begin{Rem}
	\label{Rem:KanExtProd}
	Let $\CCC,\DDD, \EEE$ be categories, $\CCC_0$ a subcategory of $\CCC$, with $\EEE$ cocomplete, and $F_0: \CCC_0\times \DDD \to \EEE$ a functor. Write  $h$ for the inclusion $\CCC_0 \to \CCC$, and let $F$ be  the left Kan extension of $F_0$ along $(h \times \id_{\DDD})$.

	Observe that for any $(c,d) \in \CCC \times \DDD$, the functor ${\CCC_0}_{/c} \to (\CCC_0 \times \DDD)_{/(c,d)}$ that sends $f:c_0 \to c$ to the pair $(f, \id_{d})$ is a right adjoint, hence cofinal by \cite[Cor.\ 6.1.13]{CisinskiHigh}.\footnote{In \cite{CisinskiHigh}, what we call a cofinal functor is called a final functor.} The functor $F(-,d):\CCC \times \{d\} \to \EEE$ is therefore the left Kan extension of $F_0:\CCC_0 \times \{d\} \to \EEE$ along $(h \times \id_{\{d\}})$.
\end{Rem}

From Remark \ref{Rem:KanExtProd} it follows that $\Alg^M(\XXX)$  is the right Kan extension of $\Alg^M(-)$ along $\Alg \to \Art^\op$, i.e.\ $\Alg^M(\XXX)$ is the limit $\lim_{\Spec R \to \XXX} \Alg^M_R$ of categories, with transition maps $\Alg^M_{R} \to \Alg^M_{R'}$ for $\Spec R'\to \Spec R$ over $\XXX$ induced by pulling back.

\begin{Rem}
	\label{Rem:RelSpec}
	Call a morphism of algebraic stacks $\XXX' \to \XXX$ \textit{affine} if for any scheme $U$ over $\XXX$ the morphism $U_{\XXX'} \to U$ is affine. For the trivial monoid $\{0\}$, we get that $\Alg(\XXX) \coloneqq \Alg^{\{0\}}(\XXX)$ is (contravariantly) equivalent to the category $\Aff_{/\XXX}$ of algebraic stacks affine over $\XXX$. 
	
	This known fact can roughly be seen as follow. We have a functor $\Aff_{/\XXX} \to \Alg(\XXX)$ induced by the functors $\Aff_{/\XXX} \to \Alg_R$, indexed over maps $\Spec R \to \XXX$, that send $\XXX' \to \XXX$ to the $R$-algebra $A$ such that $\Spec A =  \XXX'_R$. To check that this functor is an equivalence, we use that $\St_{/\XXX} \simeq \lim_{\Spec R \to \XXX} \St_{/ \Spec R}$. Thus the statement can be checked affine-locally on $\XXX$, which is clear. 
	
	For a composition $\XXX \xrightarrow{f} \YYY \xrightarrow{g} \ZZZ$ with $g$ and $gf$ affine it holds that $f$ is affine as well. Indeed, $f$ factorizes as the closed immersion $\XXX \to \XXX \times_{\ZZZ} \YYY$, which is affine, followed by the projection $\XXX \times_{\ZZZ} \YYY \to \YYY$. The latter map is also affine, since being affine is stable under base change.
	
	For $\AA$ a quasi-coherent $\OO_{\XXX}$-algebra, we write the corresponding stack over $\XXX$ as $\Spec \AA$.
\end{Rem}
	
Let $\BB$ be an $M$-graded, quasi-coherent $\OO_{\XXX}$-algebra. Write $j$ for the map $M \to 0$. Then the underlying quasi-coherent $\OO_{\XXX}$-algebra $j_!\BB$, written simply as $\BB$, comes with a decomposition $\BB \simeq \bigoplus_{d \in M} \BB_d$ in $\QCoh(\XXX)$, such that affine-locally $\BB_d$ is the graded piece of degree $d$ of $\BB$. This follows from the fact that $j_!$ commutes with $f^*$, for any $f:\Spec R \to \XXX$, together with Remark \ref{Rem:gradpiece}.

\begin{Not}
	We call $\Z$-graded rings and $\Z$-graded algebras simply \textit{graded rings} and \textit{graded algebras}.
\end{Not}

\begin{Exm}
	\label{Exm:Ageq0}
	For $j: \N \to \Z$ the inclusion,  the functor $j^!:\Alg^\Z(\XXX) \to \Alg^\N(\XXX)$ sends $\AA$ to an $\N$-graded algebra which has underlying $\OO_{\XXX}$-module $\AA_{\geq n} \coloneqq \bigoplus_{n \geq 0 } \AA_n$. 
\end{Exm}

\begin{Exm}
	\label{Exm:Sym}
	Consider the adjunction $\Sym_R \dashv U$, where $U$ is the forgetful functor $\Alg_R \to \cn\Mod_R$. Let $j$ be the map $\N \to 0$. 
	
	Writing the presheaf $j^!A$ on $\Poly^\N_R$ as $R[x(d)] \mapsto Ax(1)^d$, we get $j^!A \simeq A[x(1)]$ in the obvious way, for any $R$-algebra $A$. Using the adjunctions, we thus get natural equivalences
	\begin{align*}
	\Alg_R(\Sym_R(UR),A) \simeq A \simeq A[x(1)]_1 \simeq \Alg^\N_R(R[x(1)],A[x(1)]) \simeq \Alg_R(j_!j^!R,A)
	\end{align*}
	for all $A$ in $\Alg_R$. It follows that $\Sym_{A}(A) \simeq j_!j^!A$ for any $R$-algebra $A$, by \cite[Prop.\ 25.2.3.1]{LurieSpectral}, by naturality of the adjunctions in Proposition \ref{Prop:NRadj}, and since $j^!$ commutes with base-change. We write this $R$-algebra as $A[t]$.
	
	Inverting $t$ gives us $A[t^{\pm 1}] \coloneqq A[t,t^{-1}]$, which is equivalent to $i_!i^!A$ where $i$ is the map $\Z \to 0$. Globalizing gives us graded, quasi-coherent $\OO_S$-algebras $\AA[t]$ and  $\AA[t^{\pm 1}]$, for any quasi-coherent $\OO_S$-algebra $\AA$.
	
	Since $A[t] \simeq A  \otimes_\Z \Z[t]$ and $\Z \to \Z[t]$ is a cofibration, we can compute $A[t]$ by adjoining the free variable $t$ levelwise to a simplicial model for $A$. This  shows that $\pi_*(A[t]) \cong \pi_*(A)[t]$ as $\pi_0(A)$-modules. From  \cite[Prop.\ 4.1.18]{LurieDAGV} it follows that also $\pi_*(A[t^{\pm 1}]) \cong \pi_*(A)[t^{\pm 1}]$ as $\pi_0(A)$-modules.
\end{Exm}

\begin{Lem}
	\label{Lem:magic}
	Let $f: \XXX \to \YYY$ be a morphism of algebraic stacks. Write $j$ for the map $\Z \to 0$, so that $j_!$ is the functor that forgets the grading. Then the following diagram commutes
	\begin{center}
		\begin{tikzcd}
			\Alg^\Z(\XXX) \arrow[r, "\varphi_*"] \arrow[d, "j_!"] & \Alg^\Z(\YYY) \arrow[d, "j_!"] \\\
			\Alg(\XXX) \arrow[r, "\varphi_*"] & \Alg(\YYY)
		\end{tikzcd}
	\end{center}
\end{Lem}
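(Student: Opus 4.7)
The plan is to reduce the commutativity to the affine case via the right Kan extension structure of $\Alg^\Z(-)$ and $\Alg(-)$ from Definition \ref{Def:AlgMSt}, where it is immediate. If $f = \Spec\varphi\colon \Spec R' \to \Spec R$ is affine, then $f_*$ is identified with the restriction of scalars $\varphi^!$ on both $\Alg^\Z$ and $\Alg$ (see Case (ii) of the proof of Proposition \ref{Prop:NRadj}), and restriction of scalars commutes with $j_!$ on the nose since it does not alter the underlying ungraded algebra.

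For a general $f\colon \XXX \to \YYY$, I would use that $\Alg(\YYY) \simeq \lim_{V \to \YYY} \Alg_{\OO(V)}$ by Remark \ref{Rem:KanExtProd} and Definition \ref{Def:AlgMSt}, so equivalences in $\Alg(\YYY)$ can be checked after restricting to each affine $V = \Spec S \to \YYY$. The forgetful $j_!$ commutes with every pullback $V^*$ by the naturality in Proposition \ref{Prop:NRadj}, which gives $(j_! f_*\AA)|_V \simeq j_!((f_*\AA)|_V)$. The Beck--Chevalley base change $V^* f_* \simeq (f_V)_* V_\XXX^*$---a formal consequence of the right Kan extension combined with the adjunctions $f^* \dashv f_*$ and $V^* \dashv V_*$---then reduces the claim to the analogous statement $j_! \circ (f_V)_* \simeq (f_V)_* \circ j_!$ for the morphism $f_V\colon V \times_\YYY \XXX \to V$ with affine target. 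Choosing an affine hypercover $\UUU^\bullet \to V \times_\YYY \XXX$ expresses $(f_V)_*$ as a totalization over $\bDelta$ of the affine-level restriction-of-scalars functors, each of which commutes with $j_!$ by the affine case.

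The main obstacle is the last step: since $j_!$ is a priori only a left adjoint, its commutation with the hypercover totalization computing $(f_V)_*$ is not formal. I would verify it directly by noting that limits in $\Alg^\Z_S$ are computed gradedwise in the presheaf model $\PPP_\Sigma(\Poly^\Z_S)$, and combining this with the compatibility of the underlying quasi-coherent sheaf $\bigoplus_d \AA_d$ with \v{C}ech descent on the qcqs algebraic stack $V \times_\YYY \XXX$. An alternative path, not available at this stage of the paper but conceptually cleaner, would be to identify $j_!$ via Theorem \ref{Thm:Equiv} with pullback along the smooth map $\XXX \to \XXX \times B\G_{m}$ and invoke flat base change for quasi-coherent algebras.
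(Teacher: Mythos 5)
Your overall strategy — construct the comparison map, use Beck–Chevalley to reduce to affine $\YYY$, then try to reduce further via a hypercover of $\XXX$ — matches the paper, which constructs a transformation $\sigma\colon j_!\varphi_* \to \varphi_*j_!$ by adjointing the unit $\id \to j^!j_!$ and naturality, and then asserts the question is ``smooth-local on both $\XXX$ and $\YYY$'' and reduces to the affine case. You have correctly pinpointed the place where this assertion is not formal: for $\YYY$ affine and $\XXX$ a general algebraic stack, $\varphi_*$ is a totalization over an affine hypercover, and $j_!$ is only a left adjoint, so its commutation with that limit requires an argument. The paper glosses over this.

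However, your first proposed repair does not close the gap. The observation that limits in $\Alg^\Z_S$ are computed gradedwise in $\PPP_\Sigma(\Poly^\Z_S)$ is true but irrelevant to the point at issue: on underlying quasi-coherent modules $j_!$ is the functor $\{\AA_d\}_d \mapsto \bigoplus_d \AA_d$, a \emph{colimit}, whereas the hypercover totalization computing $\varphi_*$ is a \emph{limit}; ``limits are computed gradedwise'' says nothing about whether $\bigoplus_d$ passes through an infinite limit, which is exactly what has to be checked. Invoking ``compatibility of $\bigoplus_d\AA_d$ with \v{C}ech descent'' is circular — the compatibility statement \emph{is} the lemma being proved. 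Your second route via Theorem~\ref{Thm:Equiv} would indeed give a clean argument by flat base change, but you correctly note it is unavailable at this stage: Lemma~\ref{Lem:magic} is an input to the proof that $\Psi$ is conservative, which is needed for Theorem~\ref{Thm:Equiv}.

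The actual missing ingredient is not about limits in the graded category at all, but about $\varphi_*$: one needs that $\varphi_*\colon \QCoh(\XXX) \to \QCoh(\YYY)$ preserves colimits (in particular direct sums), which then yields $\varphi_*\bigl(\bigoplus_d \AA_d\bigr) \simeq \bigoplus_d \varphi_*(\AA_d)$, i.e.\ $\varphi_* j_! \simeq j_! \varphi_*$, after identifying $(\varphi_*\AA)_d \simeq \varphi_*(\AA_d)$. Colimit-preservation of $\varphi_*$ holds for quasi-compact quasi-separated morphisms of algebraic stacks — and in the one place the lemma is used, $\varphi = \pi\colon B\G_{m,S} \to S$, for which $\pi_*$ on modules is even a direct summand projection — but it is not automatic for an arbitrary morphism of algebraic stacks. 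So the correct fix is to replace the hypercover-totalization argument by the colimit-preservation of $\varphi_*$ (adding a qcqs hypothesis if the lemma is to be stated in full generality), which neither your proposal nor the paper's terse proof makes explicit.
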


\begin{proof}
	Consider the unit $\eta: \id \to j^!j_!$ on $\Alg^\Z(\XXX)$. Upon applying $\varphi_*$ to both sides, we get a map $\varphi_* \to \varphi_*j^!j_! \simeq j^!\varphi_*j_!$, where we have used naturality of $\Alg^{(-)}_{(-)}$ in the equivalence. Now taking the adjoint of this map gives us a natural transformation
	\begin{align*}
		\sigma:j_! \varphi_* \to \varphi_*j_!
	\end{align*}
	between the two ways around the square. We claim that $\sigma$ is an equivalence. Since the question is smooth-local on both $\XXX$ and $\YYY$, we may assume that $f$ is of the form $\Spec B \to \Spec A$. The latter case is obvious.
\end{proof}

\subsection{Graded, quasi-coherent modules on algebraic stacks} 
\label{Par:GradModSt}
\begin{Def}
	The category of \textit{$M$-graded, quasi-coherent $\OO_{\XXX}$-modules} is defined as $\QCoh^M(\XXX) \coloneqq \QCoh(\XXX)^{\times M}$. 
\end{Def}

We often abuse notation, and introduce an $M$-graded, quasi-coherent $\OO_{\XXX}$-module $\{\NN_d\}_{d \in M}$ by only introducing the underlying quasi-coherent $\OO_{\XXX}$-module $\NN \coloneqq \bigoplus_{d \in M} \NN_d$. We say $\NN$ is \textit{connective} when the underlying $\OO_{\XXX}$-module is so, and write the category of connective objects as $\cn\QCoh^M(\XXX)$.

\begin{Lem}
	\label{Lem:Pingrad}
	For any $M$-graded, quasi-coherent, connective $\OO_{\XXX}$-module $\NN$, and all $k \geq 0$, we have a natural isomorphism $\pi_k \NN \cong \bigoplus_{d \in M} \pi_k(\NN_d)$. It follows that $\pi_*(\NN)$ is $M$-graded as $\pi_0(\OO_{\XXX})$-module, with graded pieces of degree $d$ of the form $\pi_*(\NN_d)$.
\end{Lem}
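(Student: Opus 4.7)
The plan is to reduce to the affine case and then invoke the standard fact that homotopy groups commute with direct sums of connective simplicial modules.

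First I would observe that $\QCoh(\XXX)=\lim_{\Spec R \to \XXX} \Mod_R$, where the indexing runs over the atlas (or over all maps from affine schemes). Both the direct sum $\NN=\bigoplus_{d\in M}\NN_d$ and the homotopy module $\pi_k\NN$ are computed locally, in the sense that their restriction to any $\Spec R\to \XXX$ is obtained by pulling back. Since pullback $f^*\colon \Mod_{R'}\to \Mod_R$ is cocontinuous (as a left adjoint) and $t$-exact on the connective part, it preserves direct sums and commutes with $\pi_k$. Hence the statement reduces to the case $\XXX=\Spec R$ affine, where $\QCoh(\XXX)\simeq \Mod_R$ and the assertion becomes: for a family $\{N_d\}_{d\in M}$ of connective $R$-modules,
\begin{equation*}
\pi_k\Bigl(\bigoplus_{d\in M} N_d\Bigr)\cong \bigoplus_{d\in M}\pi_k(N_d), \quad k\geq 0.
\end{equation*}

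Next I would verify this statement in $\cn\Mod_R$. For a finite indexing set, the direct sum is a biproduct in the stable $\infty$-category $\Mod_R$, and the $t$-structure is compatible with finite biproducts, so $\pi_k$ commutes with finite direct sums. Arbitrary direct sums are then obtained as filtered colimits of finite subsums, and $\pi_k$ preserves filtered colimits in $\cn\Mod_R$ (these are computed on underlying simplicial sets, where $\pi_k$ commutes with filtered colimits). Combining these two facts yields the desired isomorphism.

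For the last assertion, the action of $\OO_\XXX$ on $\NN$ in $\QCoh^M(\XXX)$ is by construction graded, i.e.\ the multiplication $\OO_\XXX\otimes \NN\to \NN$ restricts in each degree to $\OO_\XXX\otimes \NN_d\to \NN_d$ (with the factor $\OO_\XXX$ concentrated in degree $0$). Passing to $\pi_0$ on $\OO_\XXX$ and $\pi_*$ on $\NN$, the previous display promotes to an isomorphism of graded $\pi_0(\OO_\XXX)$-modules, with the degree $d$ piece equal to $\pi_*(\NN_d)$.

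The only mildly delicate point is the reduction step, since one has to confirm that $\pi_k$ of a quasi-coherent module on an algebraic stack is compatible with pullback to the atlas and that infinite direct sums in $\QCoh(\XXX)$ are likewise computed by pullback. Both facts follow from the construction of $\QCoh(\XXX)$ as a limit in $\PrL$ and the cocontinuity/$t$-exactness of the pullback functors; after this reduction, the statement is the classical fact about simplicial $R$-modules.
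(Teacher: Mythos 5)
Your proof is correct and follows essentially the same route as the paper's, which simply says ``Use the Dold--Kan correspondence'': reduce to the affine case, realize the connective module as a simplicial $R$-module (equivalently a connective chain complex via Dold--Kan), and use that homotopy/homology commutes with direct sums. Your elaboration via finite subsums and filtered colimits is just a more explicit version of that commutation fact. One small imprecision in the reduction step: you invoke that $f^*$ is ``$t$-exact on the connective part,'' but preserving connectivity alone (right $t$-exactness) does not make $f^*$ commute with $\pi_k$; for that you need full $t$-exactness, which holds precisely when the pullback is flat. The fix is standard: compute $\pi_k$ via pullback to a smooth atlas, where the relevant maps are flat, rather than along arbitrary maps $\Spec R \to \XXX$. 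With that adjustment the argument is sound.
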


\begin{proof}
	Use the Dold-Kan correspondence.
\end{proof}

\begin{Rem}
	The category $\QCoh^M(\XXX)$ is symmetric monoidal in the following way. Informally, for given $M$-graded, quasi-coherent $\OO_{\XXX}$-modules $\NN, \NN'$, we let $\NN \otimes \NN'$ be the graded module which is $\bigoplus_{e+f=d} \NN_e \otimes \NN'_f$ in degree $d$. 
	
	Formally, we let $M^\otimes$ be the symmetric monoidal category associated to $M$, where the tensor product is given by addition in $M$, and with only identity morphisms. Then $\QCoh^M(\XXX)$ is the category of functors $M \to \QCoh(\XXX)$. Thus, by Day convolution we get a symmetric monoidal structure on $\QCoh^M(\XXX)$, which is also available in the $\infty$-setting by \cite{SaulDay}. Since $M$ has no non-identity morphisms, this monoidal structure is particularly easy to compute, which recovers the informal description.
\end{Rem}

\begin{Rem}
	When $S=\Spec R$, then $\cn\Mod^M_{ R} \coloneqq \cn\QCoh^M  (\Spec R)$ can be computed as the $\infty$-category associated to the model category of $M$-graded simplicial $R$-modules.
\end{Rem}

\subsection{Graded, connective $\E_\infty$-algebras}
\label{Par:GradConEinft}
For a symmetric monoidal category $\CCC$, write $\Alg(\CCC)$ for the category of commutative algebra objects in $\CCC$ as in \cite[Def.\ 2.1.3.1]{LurieHA}. Then $\Alg(\cn\QCoh  ^M(S))$ is the category of \textit{$M$-graded, connective  $\mathbb{E}_\infty$-algebras over $\OO_S$}. By \cite[Prop.\ 2.12]{SaulDay}, this category is equivalent to the category of lax symmetric monoidal functors $M^\otimes \to \cn\QCoh  (S)$. 

If $M = \{0\}$ and $S=\Spec R$, this recovers the notion of connective $\E_\infty$-algebras over $R$ by the proof of \cite[Prop.\ 2.5.1.2]{LurieSpectral}. 
\begin{Lem}
	\label{Lem:GrE00}
	The map $\Alg(\cn\QCoh  ^M(S)) \to \lim_{\Spec R \to S} \Alg(\cn\Mod_{ R}^M)$, given by restriction, is an equivalence.
\end{Lem}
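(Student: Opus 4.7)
The strategy is to reduce the statement to two essentially formal facts: that $\cn\QCoh(S)$ satisfies Zariski descent as a symmetric monoidal $\infty$-category, and that the formation of commutative algebra objects in a symmetric monoidal $\infty$-category commutes with limits.

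First, using the Day convolution description cited in the paragraph preceding the lemma (i.e.,\ \cite[Prop.\ 2.12]{SaulDay}), I would identify $\Alg(\cn\QCoh^M(S))$ with the category of lax symmetric monoidal functors $M^\otimes \to \cn\QCoh(S)$, and similarly $\Alg(\cn\Mod_R^M)$ with lax symmetric monoidal functors $M^\otimes \to \cn\Mod_R$. Under these identifications the restriction functor in the lemma becomes postcomposition of lax functors with the pullback functors $\cn\QCoh(S) \to \cn\Mod_R$ indexed by maps $\Spec R \to S$.

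Second, I would upgrade the standard descent equivalence $\cn\QCoh(S) \simeq \lim_{\Spec R \to S} \cn\Mod_R$ to an equivalence of symmetric monoidal $\infty$-categories. The pullback functors along open immersions $\Spec R' \to \Spec R$ over $S$ are strong symmetric monoidal, so the descent diagram factors through the $\infty$-category of presentable symmetric monoidal $\infty$-categories with strong symmetric monoidal left adjoints, and the limit computed there agrees with the limit of the underlying $\infty$-categories. Finally, I would appeal to the general fact that the functor assigning to a symmetric monoidal $\infty$-category $\CCC$ the category of lax symmetric monoidal functors $M^\otimes \to \CCC$ preserves limits: this is immediate from the description of such functors as sections of a coCartesian fibration over $\mathrm{Fin}_*$ subject to conditions that only involve finite limits in $\CCC$, so that forming sections commutes with limits in the target. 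Chaining these three reductions yields the claim.

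The main obstacle I anticipate is the second step, namely carefully verifying that the descent equivalence for $\cn\QCoh(S)$ really holds at the level of symmetric monoidal $\infty$-categories rather than merely of underlying $\infty$-categories. This is folklore in derived algebraic geometry, but pinning it down rigorously requires some care with the $\infty$-operadic formalism; one clean route is to appeal directly to the fact that $\Spec R \mapsto \cn\Mod_R$ defines a sheaf of presentable symmetric monoidal stable $\infty$-categories (e.g.\ via \cite[\S 2.2]{LurieSpectral}) and to invoke the universal property of $\cn\QCoh(S)$ in that setting.
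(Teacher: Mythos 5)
Your proposal is correct and follows essentially the same route as the paper: both reduce via the Day-convolution identification of $\Alg(\cn\QCoh^M(S))$ with lax symmetric monoidal functors $M^\otimes \to \cn\QCoh(S)$ and then observe that such lax functors into a limit of symmetric monoidal $\infty$-categories are compatible families of lax functors into the pieces. The paper's proof is a single sentence that compresses your steps (2) and (3) into the phrase "in the category $\CCC$ of symmetric monoidal categories and lax monoidal functors between them," implicitly taking for granted that $\cn\QCoh(S) \simeq \lim_{\Spec R \to S}\cn\Mod_R$ holds there; your version spells out the symmetric-monoidal upgrade of descent and the limit-preservation of $\mathrm{Fun}^{\mathrm{lax}}(M^\otimes,-)$, which is a welcome expansion of the same argument rather than a different one.
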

\begin{proof} 
	A lax symmetric monoidal functor $M^\otimes \to \cn\QCoh   (S)$ is the same thing as a compatible family of lax symmetric monoidal functors $M^\otimes \to \cn\Mod_{ R}$, indexed by all maps $\Spec R\to S$, in the category $\CCC$ of symmetric monoidal categories and lax monoidal functors between them. 
\end{proof}
\begin{Rem}
	\label{Rem:Einfty}
	We have a map $F: \Alg^M(S) \to \Alg(\cn\QCoh  ^M(S))$. Indeed, by Lemma \ref{Lem:GrE00}, it suffices to give maps when $S = \Spec R$, natural in $R$. But this is straightforward: given an $M$-graded $R$-algebra $B$, we may assume that the underlying $M$-graded ring is a simplicial $M$-graded ring. Then we have a strictly commutative diagram in the category of $M$-graded, simplicial $R$-modules, which gives a commutative algebra object in $\cn\QCoh  ^M(R)$. 
	
	The map $F$ is in general  not an equivalence by \cite[\S 25.1.2]{LurieSpectral}. For example, when $M = \{0\}$, we know that $F$ in general is only an equivalence in characteristic 0.	
\end{Rem}

Composing the functor $F: \Alg^M(S) \to \Alg(\cn\QCoh  ^M(S))$ with the forgetful functor $\Alg(\cn\QCoh  ^M(S)) \to \cn\QCoh  ^M(S)$ gives us a functor $U:\Alg^M(S) \to \cn\QCoh  ^M(S)$. For $\BB \in \Alg^M(S)$ we call $U\BB$ the underlying graded, quasi-coherent $\OO_S$-algebra. As in the ungraded case, we might suppress the notation $U$.

\subsection{Homogeneous localization}
Let $R$ be a ring and $B$ an $\N$-graded $R$-algebra.  We will recover the following classical picture:

\begin{Rem}
	\label{Rem:DisGrad}
	If $B$ is discrete and $M$ is a discrete, graded $R$-module, then for $f \in R$ homogeneous of degree 1, the underived tensor product $M_f \coloneqq M \otimes_R R_f$ is graded, where the homogeneous elements of degree $d-k$ are of the form $m/f^k$, with $m \in M$ homogeneous of degree $d$. We write the graded piece of degree 0 as $M_{(f)}$. 
	
	For a non-graded discrete $R$-module $N$, the underived tensor product $N\otimes_R R[t^{\pm 1}]$ is graded in the obvious way. For $M$ as above and $f \in R$ homogeneous of degree 1, the map $M_{(f)}\otimes_R R[t^{\pm 1}] \to M_f: x\otimes t^n \mapsto xf^n$ is an isomorphism. 
\end{Rem}

Recall that the ungraded case of localization has been generalized to the derived setting in \cite[Prop.\ 4.1.18]{LurieDAGV}. Our approach will be similar.

Let $A$ be an $R$-algebra, and put $A^\times \coloneqq \Alg_R(R[t^{\pm 1}],A)$. Recall that $\pi_0(A^\times) \cong \pi_0(A)^\times$, and that $\pi_n(A^\times) \cong \pi_n(A)$ for $n>0$. Hence $A^\times \to A$ is monic in $\Space$.

\begin{Def}
	Let $f$ be a homogeneous point in $B$. A map of $\Z$-graded $R$-algebras $\lambda:B \to B'$ exhibits $B'$ as the \textit{homogeneous localization} of $B$ at $f$ if for any other $\Z$-graded $R$-algebra $C$ it holds that composition with $\lambda$ induces an equivalence
	\begin{align*}
	\Alg^\Z_R(B',C) \simeq \Alg^\Z_{R,f}(B,C),
	\end{align*}
	where $\Alg^\Z_{R,f}(B,C)$ is the subspace of $\Alg^\Z_{R}(B,C)$ spanned by those maps that send $f$ into $C^\times$.
\end{Def}

\begin{Prop}
	Let $f$ be a homogeneous point in $B$ of degree $d$. Then there is a homogeneous localization of $B$ at $f$, written $B\to B_f$. Moreover, this localization is unique in $\Alg^\Z_R$ up to contractible choice, and the underlying $R$-algebra of $B_f$ corresponds to the ungraded localization at $f$.
\end{Prop}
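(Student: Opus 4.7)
The plan is to build $B_f$ as an explicit pushout in $\Alg^\Z_R$ against a ``universal graded localization'' $R[x^{\pm 1}(d)]$ of the graded polynomial ring $R[x(d)]$, exploiting that $j_! \colon \Alg^\Z_R \to \Alg_R$ (for $j \colon \Z \to 0$) is a left adjoint by Proposition \ref{Prop:NRadj} and so preserves pushouts, identifying the underlying ungraded algebra for free. By Example \ref{Exm:Sym}, $R[x(d)]$ corepresents the functor $C \mapsto C_d$ on $\Alg^\Z_R$, so $f \in B_d$ is classified by a morphism $\phi \colon R[x(d)] \to B$ in $\Alg^\Z_R$. I define $R[x^{\pm 1}(d)]$ as the pushout
\[
R[x^{\pm 1}(d)] \coloneqq R[x(d), y(-d)] \sqcup_{R[z(0)]} R,
\]
where the maps out of $R[z(0)]$ send $z$ to the degree-zero points $xy-1$ and $0$ respectively, imposing the relation $xy=1$; then I set $B_f \coloneqq B \sqcup_{R[x(d)]} R[x^{\pm 1}(d)]$ in $\Alg^\Z_R$, so that uniqueness up to contractible choice follows from the universal property of the pushout.

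The crux is to verify
\[
\Alg^\Z_R\bigl(R[x^{\pm 1}(d)], C\bigr) \simeq C_d \times_{j_!C} (j_!C)^\times
\]
for all $C \in \Alg^\Z_R$. The pushout description delivers the left-hand side as the space of pairs $(a, b) \in C_d \times C_{-d}$ together with a homotopy $ab \simeq 1$ in $C_0$. To match this with the right-hand side, I will compare fibres over $a \in C_d$: splitting $j_!C \simeq \bigoplus_n C_n$ via Lemma \ref{Lem:Pingrad} and using that left multiplication by $a$ respects the grading (shifting by $d$), the fibre of $\times a \colon j_!C \to j_!C$ over $1$ decomposes as the product of the fibre of $\times a \colon C_{-d} \to C_0$ over $1$ with the fibres of $\times a \colon C_n \to C_{n+d}$ over $0$ for $n \ne -d$; these extra factors are contractible whenever $a$ is a unit, since each such $\times a$ is then an equivalence. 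This rigidification of an a priori ungraded inverse into its unique homogeneous component in degree $-d$ is the main obstacle.

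Combining this identification with the pushout property for $B_f$ yields $\Alg^\Z_R(B_f, C) \simeq \Alg^\Z_{R,f}(B, C)$, so $B \to B_f$ is the desired homogeneous localization. Applying the colimit-preserving $j_!$ to the pushout defining $B_f$ gives $j_! B_f \simeq j_! B \sqcup_{R[x]} R[x^{\pm 1}]$, which is the ungraded localization of $j_! B$ at $f$ by \cite[Prop.\ 4.1.18]{LurieDAGV}.
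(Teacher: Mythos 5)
Your proof is correct and follows the same basic architecture as the paper's: construct $B_f$ as a pushout of $B \leftarrow R[x(d)] \to R[x^{\pm 1}(d)]$ in $\Alg^\Z_R$, then read off the universal property from the pushout. The genuine added value in your write-up is that you supply a \emph{proof} of the identification
\[
\Alg^\Z_R\bigl(R[x^{\pm 1}(d)],C\bigr)\simeq C_d\times_{j_!C}(j_!C)^\times,
\]
which the paper simply asserts (``$\Alg^\Z_R(R[s^{\pm 1}],C)$ is equivalent to the subspace of $C_d$ spanned by those elements which are invertible in $\pi_0 C$''). You also make $R[x^{\pm 1}(d)]$ explicit as a finite quotient $R[x(d),y(-d)]/(xy-1)$, which the paper leaves as an unexplained piece of notation for arbitrary degree $d$. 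Your argument that a unit in degree $d$ has its inverse concentrated in degree $-d$ is the right homotopical lift of the familiar classical fact, comparing fibres of $\times a$ over $1$ piecewise in the grading and using that $\times a$ is an equivalence on each graded piece when $a$ is a unit. Two small quibbles: the relevant corepresentation $\Alg^\Z_R(R[x(d)],C)\simeq C_d$ is Remark~\ref{Rem:StrictAlgMA} (via Lemma~\ref{Lem:PolMRcompproj}), not Example~\ref{Exm:Sym}; and for the infinite direct sum $j_!C\simeq\bigoplus_n C_n$ the fibre over $1$ decomposes as $\fib(\times a\colon C_{-d}\to C_0,\,1)\times\bigoplus_{n\ne -d}\ker(\times a\colon C_n\to C_{n+d})$ --- a direct sum (filtered colimit of finite products), not a product --- but this does not change the conclusion, since all those kernels are contractible precisely when $a$ is a unit. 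Finally, your treatment of the underlying $R$-algebra --- applying the cocontinuous $j_!$ to the defining pushout and invoking the pushout description of ungraded localization --- is cleaner and more direct than the paper's route, which passes through the adjunction $j_!\dashv (-)[t^{\pm 1}]$ and a $\pi_0$ computation in $A[t^{\pm 1}]$.
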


\begin{proof}
	For existence, we define $B_f$ as the pushout
	\begin{center}
		\begin{tikzcd}
			R[s] \arrow[d] \arrow[r, "s\mapsto f"] & B \arrow[d] \\
			R[s^{\pm 1}] \arrow[r] & B_f
		\end{tikzcd}
	\end{center}
	in $\Alg^\Z_R$, with $\deg(s) =d$. We claim that $B_f$ is  the homogeneous localization of $B$ at $f$.
	
	For $C \in \Alg^\Z_R$ we have have that $\Alg^\Z_R(R[s^{\pm 1}],C )$ is equivalent to the subspace of $C_d$ spanned by those elements which are invertible in $\pi_0 C$, i.e.,
	\begin{align*}
	\Alg^\Z_R(R[s^{\pm 1}],C ) \simeq C_d \times_C C^\times
	\end{align*}
	By construction of $B_f$ and by composing pullback squares, we thus get that 
	\begin{center}
		\begin{tikzcd}
			\Alg^\Z_{R}(B_f,C) \arrow[r] \arrow[d] & C^\times\arrow[d] \\
			\Alg^\Z_R(B,C) \arrow[r] & C
		\end{tikzcd}
	\end{center}
	is Cartesian. The claim follows from the fact that the bottom horizontal arrow maps $\varphi$ to $\varphi(f)$, and that the right vertical arrows is monic.
	
	Unicity is clear. Let $j$ be the map $\Z \to 0$. It remains to show that $j_!(B_f) \simeq (j_! B)_f$. To this ends, by adjunction and the universal property of graded localization, we have that
	\begin{align*}
	\Alg_R(j_!(B_f),A) \simeq \Alg^\Z_{R,f}(B,A[t^{\pm 1}])
	\end{align*}
	where $\deg (t) = 1$. Now the equivalence $\Alg_R(j_! B, A ) \simeq \Alg_R^\Z(B,A[t^{\pm 1}])$ gives an equivalence $\Alg_{R,f}(j_!B, A)\simeq\Alg^\Z_{R,f}(B,A[t^{\pm 1}])   $. Indeed, for a given $\varphi: j_! B \to A$ with transpose $\bar{\varphi}:B \to A[t^{\pm 1}]$, the map $\pi_0(\bar{\varphi})$ sends $f$ to $\varphi(f)t^d$. Now since $\pi_0(A[t^{\pm 1}]) \cong \pi_0(A)[t^{\pm 1}]$ by Example \ref{Exm:Sym}, the element $\varphi(f)t^d$  is invertible in $\pi_0(A[t^{\pm 1}])$ if and only if $\varphi(f)$ is invertible in $\pi_0(A)$.
\end{proof}

For $f$ a homogeneous point in $B$ of degree 1, write $B_{(f)}$ for the degree zero part of $B_f$. We then have the following.

\begin{Lem}
	\label{Lem:Deg0loc}
	The natural map $B_{(f)} [t^{\pm 1}] \to B_f$, sending $t$ to $f$, is an equivalence of graded $R$-algebras.
\end{Lem}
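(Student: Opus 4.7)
The plan is to check that $\phi\colon B_{(f)}[t^{\pm 1}] \to B_f$ is an equivalence graded-piece by graded-piece. Using the decomposition from Remark \ref{Rem:gradpiece}, the forgetful passage $\Alg^\Z_R \to \QCoh^\Z(\Spec R)$ and the resulting splitting into graded pieces detect equivalences, so it suffices to show that $\phi_n\colon (B_{(f)}[t^{\pm 1}])_n \to (B_f)_n$ is an equivalence of $R$-modules for every $n \in \Z$.

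First I would identify the source of $\phi_n$. By Example \ref{Exm:Sym}, $B_{(f)}[t^{\pm 1}] \simeq B_{(f)} \otimes_\Z \Z[t^{\pm 1}]$, and its degree-$n$ graded piece is canonically equivalent to $B_{(f)} \cdot t^n$, hence to $B_{(f)}$ as an $R$-module. Since $\phi$ is characterized by sending $t$ to $f$, under this identification and the inclusion $B_{(f)} = (B_f)_0 \hookrightarrow B_f$, the map $\phi_n$ becomes the $n$-fold iterate $\mu_f^n\colon (B_f)_0 \to (B_f)_n$ of multiplication by $f$ (for $n<0$ we use invertibility of $f$ in $B_f$).

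The key observation is then that $\mu_f\colon (B_f)_m \to (B_f)_{m+1}$ is an equivalence for every $m \in \Z$. Indeed, by construction of the homogeneous localization, $f$ is a unit in $B_f$, and its inverse $f^{-1}$ is homogeneous of degree $-1$, so multiplication by $f^{-1}$ restricts on each graded piece to a two-sided homotopy inverse of $\mu_f$ (witnessed by $ff^{-1} \simeq 1$ together with associativity). Iterating gives that $\mu_f^n$ is an equivalence for all $n \in \Z$, so $\phi_n$ is, and therefore $\phi$ is the desired equivalence.

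The main obstacle I foresee is the bookkeeping in the second paragraph: constructing the canonical equivalence $(B_{(f)}[t^{\pm 1}])_n \simeq B_{(f)}$ and verifying that $\phi_n$ really is $\mu_f^n$ under it. This is a functoriality check using the explicit tensor-product description of $\Z[t^{\pm 1}]$ in Example \ref{Exm:Sym} together with the defining property of $\phi$ (namely $t \mapsto f$ while restricting to the identity on $B_{(f)}$), and should present no essential difficulty.
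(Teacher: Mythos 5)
Your argument is correct, and it takes a genuinely different route from the paper. You reduce to the claim that multiplication by $f$ is an equivalence $(B_f)_m \to (B_f)_{m+1}$ for every $m$, which you deduce from the existence of a degree-$(-1)$ homogeneous inverse (the image of $s^{-1}$ under $R[s^{\pm 1}] \to B_f$ from the defining pushout), and then you assemble the degree-$n$ component $\phi_n \simeq \mu_{f^n}$ from these. The paper instead passes to homotopy groups: it invokes Example \ref{Exm:Sym} to identify $\pi_*(B_{(f)}[t^{\pm 1}]) \cong \pi_*(B_{(f)})[t^{\pm 1}]$, then Lemma \ref{Lem:Pingrad} to identify $\pi_*(B_{(f)}) \cong (\pi_*(B)_f)_0$, and finally quotes the \emph{classical} isomorphism $(\pi_*(B)_f)_0[t^{\pm 1}] \cong \pi_*(B)_f$ of Remark \ref{Rem:DisGrad} together with $\pi_*(B)_f \cong \pi_*(B_f)$ from \cite[Prop.\ 4.1.18]{LurieDAGV}. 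So the paper offloads the essential content onto classical facts about graded localizations, while you reprove the relevant invertibility directly at the level of the $\infty$-category. One small precision worth adding to your write-up: rather than asserting ``two-sided homotopy inverse'' in the abstract, it is cleanest to note that $\mu_g\mu_f \simeq \mu_{gf}$ with $gf \simeq 1$ in $\pi_0((B_f)_0)$, so $\mu_{gf}$ induces the identity on all homotopy groups and is therefore an equivalence; this sidesteps having to exhibit the coherence homotopies explicitly.
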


\begin{proof}
	Consider $\pi_*(B)$ as $\pi_0(B)$-module. Then $\pi_*(B_{(f)}[t^{\pm 1}]) \cong \pi_*(B_{(f)})[t^{\pm 1}]$ by Example \ref{Exm:Sym}, and we have the isomorphisms
	\begin{align*}
	\pi_*(B_{(f)})[t^{\pm 1}] = \pi_*((B_f)_0)[t^{\pm 1}] \cong (\pi_*(B)_f)_0[t^{\pm 1}] \cong \pi_*(B)_f \cong \pi_*(B_f)
	\end{align*}
	Here, we have used: the definition of $B_{(f)}$; Lemma \ref{Lem:Pingrad}; Remark \ref{Rem:DisGrad} and \cite[Prop.\ 4.1.18]{LurieDAGV}. 
\end{proof}

\subsection{Irrelevant ideal}
\label{Par:IrrId}
Let $i$ be the map $0 \to \N$ and $j$ the map $\N \to 0$. We can summarize the adjunctions $i_! \dashv i^!$ and $j_! \dashv j^!$ as explained in Example \ref{Exm:fjadjs} in the following diagram
\begin{center}
	\begin{tikzcd}
		\Alg(S) \arrow[r, bend left, "\textrm{trivial}", "i_!"'] & \Alg^\N(S) \arrow[r, bend left, "\textrm{forget}", "j_!"']  \arrow[l, bend left, "{(-)_0}", "i^!"'] & \Alg(S) \arrow[l, bend left, "{(-)[t]}", "j^!"']
	\end{tikzcd}
\end{center}

Throughout, write $\iota: i_!i^! \to \id$ for the counit of the adjunction $i_! \dashv i^!$. In the classical case, the map $\iota_{\BB}: \BB_0 \to \BB$ has a retraction $\BB \to \BB_0$ with fibre the irrelevant ideal $\BB_+$. This map $\BB \to \BB_0$ comes about since there is also an adjunction $i^! \dashv i_!$. We will now generalize this to the derived picture.

Recall that a functor is \textit{conservative} if it reflects equivalences.

\begin{Prop}
	\label{Prop:SpclAdj}
	There is an adjunction $i^! :\Alg^\N(S) \rightleftarrows \Alg(S) : i_!$ with counit $\pi: \id \to i_!i^!$ such that $\pi \circ \iota \simeq \id$. 
\end{Prop}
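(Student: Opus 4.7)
The strategy is to reduce to the affine case $S = \Spec R$ and construct the adjunction there naturally in $R$, then globalize using the Kan-extension definition of $\Alg^\N(\XXX)$. In the affine case, the plan is to exhibit $i_!$ as a right adjoint to $i^!$ by an adjoint-functor-theorem argument and then construct $\pi$ explicitly enough to verify $\pi\circ\iota\simeq \id$.

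First I would show that $i^!\colon\Alg^\N_R\to\Alg_R$ is cocontinuous: sifted colimits are preserved by Lemma \ref{Lem:AlgRForgetSifted}, and coproducts in $\Alg^\N_R$ are tensor products $\BB\otimes_R\CC$, whose degree-$0$ part is just $\BB_0\otimes_R\CC_0$ since the grading is $\N$-valued. Since $\Alg^\N_R$ and $\Alg_R$ are presentable, the adjoint functor theorem supplies a right adjoint $R\colon\Alg_R\to\Alg^\N_R$. To identify $R\simeq i_!$, I would use that the objects of $\Poly^\N_R$ are compact projective generators (Lemma \ref{Lem:PolMRcompproj}) and compare the mapping spaces out of a generator $P=R[x(d_1),\dots,x(d_n)]$. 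By adjunction, $\Map(P,RA)\simeq\Map(i^!P,A)\simeq A^k$ with $k=|\{j:d_j=0\}|$; by direct inspection, since $(i_!A)_d=A$ if $d=0$ and $0$ otherwise, one has $\Map(P,i_!A)\simeq\prod_j(i_!A)_{d_j}\simeq A^k$. These identifications are natural in $P$ and $A$, giving $R\simeq i_!$.

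Next, I would describe the unit $\pi\colon\id\to i_!i^!$ of the new adjunction in concrete terms. Both $\id$ and $i_!i^!$ are cocontinuous endofunctors of $\Alg^\N_R\simeq\PPP_\Sigma(\Poly^\N_R)$ ($i^!$ is cocontinuous as above, and $i_!$ is a left adjoint), so by the universal property of $\PPP_\Sigma(\Poly^\N_R)$ any natural transformation between them is determined by its restriction to $\Poly^\N_R$. On a generator $P=R[x(d_1),\dots,x(d_n)]$ the map $\pi_P\colon P\to i_!i^!P=R[x_j:d_j=0]$ is the augmentation sending $x(d_j)\mapsto 0$ if $d_j>0$ and $x(d_j)\mapsto x_j$ if $d_j=0$, while $\iota_P$ is the evident inclusion. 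Hence $\pi_P\circ\iota_P$ is the identity on $i_!i^!P=P_0$. Since both $\pi\circ\iota$ and $\id_{i_!i^!}$ are natural transformations between cocontinuous endofunctors of $\Alg^\N_R$ that coincide on $\Poly^\N_R$, they are equivalent, yielding $\pi\circ\iota\simeq\id$.

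Finally, to globalize from the affine case to an algebraic stack $\XXX$, I would invoke the naturality in $R$ of the whole construction, which follows from the naturality of $\Alg^{(-)}_{(-)}$ in Proposition \ref{Prop:NRadj} together with the fact that $(-)_0$ commutes with base-change. Both $i^!\dashv i_!$ and the identity $\pi\circ\iota\simeq \id$ are then inherited by the right Kan extension defining $\Alg^\N(\XXX)$ from $\Alg$. I expect the main subtlety to be the identification $R\simeq i_!$: it requires checking that $i_!$ has the universal property of $R$ on generators and then promoting this to a natural equivalence, for which the cocontinuity of $i^!$ and the compact projective description of $\Poly^\N_R$ in $\Alg^\N_R$ are essential.
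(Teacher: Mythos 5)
Your approach is correct but genuinely different from the paper's. The paper constructs $i^!\dashv i_!$ over $\Spec\Z$ by observing that in the strict 1-categorical model $(\sAlg^\N,\sAlg)$, the inclusion $\tau$ (trivial grading) and $\sigma=(-)_0$ form a Quillen adjunction $\sigma\dashv\tau$ (in fact $\tau$ is both left and right Quillen because fibrations and weak equivalences are defined degree-wise), so the adjunction of derived functors is $i^!\dashv i_!$. It then promotes to $R$ by coslicing, and globalizes by applying the adjoint functor theorem directly to $i_!\colon\Alg(S)\to\Alg^\N(S)$ and identifying the resulting left adjoint $i^\sharp$ with $i^!$ affine-locally, using uniqueness of adjoints. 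The verification $\pi\circ\iota\simeq\id$ is reduced to the classical 1-categorical statement over $\Z$ via the Quillen adjunction. Your argument instead stays entirely in the $\infty$-categorical world, running the adjoint functor theorem affine-locally after checking cocontinuity of $i^!$, identifying the abstract right adjoint with $i_!$ via mapping spaces out of generators, and computing $\pi$ explicitly on $\Poly^\N_R$. What the paper's route buys is that the hardest identity, $\pi\circ\iota\simeq\id$, becomes a completely classical observation that both $\tau$ and $\sigma$ preserve (co)fibrations, so no homotopy-coherence needs to be tracked; what yours buys is avoiding any model-categorical input.

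Two gaps worth closing. First, when you write down $\pi_P$ as ``the augmentation'' on a generator $P=R[x(d_1),\dots,x(d_n)]$, you have not derived this: you have only constructed $\pi$ abstractly as the unit of the adjunction produced by the adjoint functor theorem, so you need to trace $\id_{i^!P}$ through the explicit adjunction equivalence $\Map(P,i_!A)\simeq\Map(i^!P,A)$ on generators to see that it really is the expected map (it is, but the claim needs the computation). Second, your identification $\Alg^\N_R\simeq\PPP_\Sigma(\Poly^\N_R)$ and the claim that $\Poly^\N_R$ consists of compact projective \emph{generators} both rely on Proposition~\ref{Prop:AlgMRYo}, which the paper relegates to an appendix precisely because the body text avoids it; Lemma~\ref{Lem:PolMRcompproj} alone only gives compact projectivity, not generation. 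Neither point is fatal, but if you take this route you are committing to the appendix result as part of the main argument.

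Finally, the globalization step is the terse part in both proofs. You assert that the adjunction and the identity $\pi\circ\iota\simeq\id$ pass to the limit defining $\Alg^\N(\XXX)$; this requires checking that both $i^!$ and $i_!$ commute with the base-change transition functors $\varphi_!$ coherently (a Beck--Chevalley condition), which is true for the evident reasons but is not quite free. The paper sidesteps the coherence by only applying the adjoint functor theorem once, globally, and then identifying the resulting left adjoint with $i^!$ affine-locally by uniqueness of adjoints, which pushes the burden onto the assertion that $i_!$ preserves limits on $\Alg^\N(S)$.
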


\begin{proof}
	We first do the case $S = \Spec \Z$.
	
	Write $\sAlg$ resp.\ $\sAlg^\N$ for the model 1-category of simplicial rings resp.\ simplicial, $\N$-graded rings. Write $\tau$ for the inclusion $\sAlg \to \sAlg^\N$ that endows $A \in \sAlg$ with the trivial grading. Then the functor $\sigma: \sAlg^\N \to \sAlg$ that sends $B$ to $B_0$ is both a left and right adjoint to $\tau$. 
	
	In fact, $\tau$ is both left and right Quillen. Indeed, it is right Quillen since it preserves fibrations and trivial fibrations, and it is left Quillen since $\sigma$ preserves fibrations and trivial fibrations. Both statements follow from the fact that fibrations and weak equivalences in $\sAlg^\N_R$ are defined degree-wise.
	
	Consequently, both $\tau$ and $\sigma$ preserve cofibrations and trivial cofibrations. Therefore, the left derived functor of $\tau$ can be computed by composition with a fibrant-cofibrant replacement, and thus coincides with the right derived functor. The same holds for $\sigma$.
	
	By \cite[Prop.\ 1.5.1]{HinichDwyer}, the derived functors of a Quillen adjunction induce an adjunction on the associated $\infty$-categories. Since the  derived functor of $\tau$ is $i_!$ and of $\sigma$ is $i^!$, we indeed have an adjunction $i^! \dashv i_!$.
	
	As in the proof of Proposition \ref{Prop:NRadj}, Claim (ii), we promote $i^! \dashv i_!$ to an adjunction $\Alg^\N_R \rightleftarrows \Alg_R$ with counit $\pi$, for any $R \in \Alg$. To globalize this to $S$, observe that $j_!i_!$ is the identity on $\Alg(S)$, and that $j_!$ is conservative, hence that $i_!$ preserves all limits and colimits. We therefore have an adjunction
	\begin{align*}
		i^\sharp \dashv i_! : \Alg^\N(S) \rightleftarrows \Alg(S)
	\end{align*}
	Now $i^\sharp$ coincides with $i^!$ because this is true affine-locally by the case $S = \Spec R$, and since left adjoints are unique  up to contractible choice.
	
	It remains to show that for $\BB \in \Alg^\N(S)$, the composition $\pi_{\BB} \iota_{\BB}: \BB_0 \to \BB \to \BB_0$ is an equivalence. This can be checked locally, so we assume that $S = \Spec R$ and that $\BB$ corresponds to $B \in \Alg^\N(R)$.
	
	Consider $B$ as a pair $(B_0,B) \in \Alg_R \times_{\Alg} \Alg^\N$. Then $\pi_B \iota_B$ is the composition
	\begin{align*}
	(B_0,B_0) \to (B_0,B) \to (B_0, B_0)
	\end{align*}
	induced by the identity on $B_0$ and by the maps $B_0 \to B \to B_0$ from the adjunctions in the case $R = \Z$. Hence, it suffices to do the case $R = \Z$. But this case follows from the classical case, together with the fact that both $\tau$ and $\sigma$ preserve trivial fibrations and trivial cofibrations.
\end{proof}
 
\begin{Def}
	For $\BB \in \Alg^\N(S)$, write $\BB_+$ for the graded, quasi-coherent $\OO_S$-module $\bigoplus_{n>0} \BB_n$. We call $\BB_+$ the \textit{irrelevant ideal}. 
\end{Def}

\begin{Prop}
	The sequence $\BB_+ \to \BB \xrightarrow{\pi_\BB} \BB_0$ is a fibre sequence.
\end{Prop}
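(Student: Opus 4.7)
The statement asserts a fibre sequence in $\QCoh(S)$, and since $\QCoh(-)$ and $\Alg^\N(-)$ both satisfy descent and $\pi$ is natural, I would first reduce to the affine case $S=\Spec R$, with $\BB$ corresponding to some $B \in \Alg^\N_R$. By Remark \ref{Rem:gradpiece} the underlying $R$-module of $B$ splits as $B \simeq B_0 \oplus B_+$ in $\Mod_R$, with the counit $\iota_B : B_0 \to B$ corresponding to the inclusion of the degree-$0$ summand, and the map $\BB_+ \to \BB$ whose fibre property we must verify is by definition the inclusion of the complementary summand.

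The decisive step is to identify $\pi_B : B \to B_0$ in $\Mod_R$ with the projection onto $B_0$ under this splitting. For this I would use the simplicial model from the proof of Proposition \ref{Prop:SpclAdj}: choose a fibrant-cofibrant representative $\tilde B$ of $B$ in the model $1$-category $\sAlg^\N_R$. Since both $\tau$ (trivial grading) and its two-sided adjoint $\sigma = (-)_0$ preserve fibrations, cofibrations, and their trivial versions, fibrant-cofibrant objects are preserved by both functors, and the derived unit of $\sigma \dashv \tau$ at $B$ is therefore represented by the strict unit $\tilde B \to \tau\sigma\tilde B = \tau(\tilde B_0)$ of the underlying Quillen adjunction. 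In the $1$-category, any $\N$-graded ring map into $\tau(C)$ is forced to vanish in every positive degree, since $\tau(C)_d = 0$ for $d > 0$; so this strict unit is literally the projection $\tilde B \to \tilde B_0$ onto the degree-$0$ summand. Forgetting the graded algebra structure then identifies $\pi_B$ in $\Mod_R$ with the split projection $B_0 \oplus B_+ \to B_0$, whose fibre in the stable $\infty$-category $\Mod_R$ is the complementary summand, with the canonical inclusion as fibre map. Reassembling over an affine cover of $S$ gives the desired fibre sequence $\BB_+ \to \BB \to \BB_0$ in $\QCoh(S)$.

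The principal obstacle is precisely this identification. Proposition \ref{Prop:SpclAdj} only yields $\pi_B \circ \iota_B \simeq \id$, i.e.\ that $\pi_B$ is \emph{some} retraction of $\iota_B$; this alone guarantees that $\fib(\pi_B)$ is equivalent to $B_+$ as an object, but does not identify the fibre map with the canonical summand inclusion, which is what the statement of the proposition requires. The extra leverage comes from the unusually strong Quillen properties of the pair $(\tau, \sigma)$ already exploited in the proof of Proposition \ref{Prop:SpclAdj} — each functor is simultaneously left and right Quillen — which make it legal to replace the derived unit by its strict $1$-categorical counterpart, where the map is visibly the projection on the nose.
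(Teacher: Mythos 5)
Your proof is correct, and you correctly identify the real pivot: knowing $\pi_B \circ \iota_B \simeq \id$ alone exhibits $\pi_B$ only as \emph{some} retraction of $\iota_B$, which does not yet say that the canonical summand inclusion $B_+ \to B$ is the fibre map. You resolve this by strictifying: pass to a fibrant--cofibrant simplicial $\N$-graded model $\tilde B$, use that $\sigma$ and $\tau$ are both left and right Quillen so the derived unit is computed by the strict one, and then read off in the $1$-category that any $\N$-graded map into a trivially graded ring vanishes in positive degrees. The paper makes the same degree-wise observation, but directly in the $\infty$-category $\Alg^\N_R$: $\pi_B$ is by construction a morphism of $\N$-graded algebras, so $(\pi_B)_d \colon B_d \to (B_0)_d = 0$ is forced to vanish for $d>0$, and $\pi_B \circ \iota_B \simeq \id$ pins down degree $0$; hence $\pi_B$ is the underlying-module projection, no model structure needed. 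Remark \ref{Rem:gradpiece} already supplies the graded decomposition respected by morphisms in $\Alg^\N_R$, so your detour through the strictification, while legitimate and careful, is heavier than necessary. After this point the two arguments converge: you read the fibre sequence directly off the split projection in a stable $\infty$-category, while the paper runs the long exact sequence in homotopy using surjectivity of $\pi_n(B) \to \pi_n(B_0)$; your version of this last step is slightly cleaner, since it gives the split fibre sequence at once rather than comparing homotopy groups.
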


\begin{proof}
	The question is local on $S$, so assume that $S = \Spec R$ and that $\BB$ corresponds to $B \in \Alg^\N_R$. Since $B_0 \to B \to B_0$ is a sequence of graded maps which composes into the identity, the map $B \to B_0$ is given by the projection on the underlying graded $R$-modules. 
	
	Let $F$ be the fibre of $B \to B_0$. Since the map
	\begin{align*}
	\pi_n(B) \cong \bigoplus\nolimits_{d \geq 0} \pi_n(B_d) \to \pi_n(B_0)
	\end{align*}
	is given by the projection, it is surjective. The claim now follows by considering the long exact sequence in homotopy groups associated to $F \to B \to B_0$.
\end{proof}

The surjection $\BB \to \BB_0$ gives us a closed immersion $\Spec \BB_0 \to \Spec \BB$, which we write as $V(\BB_+) \to \Spec \BB$. 

\section{Equivariant geometry}
\label{Sec:EquivGeom}
We review some theory on group actions, following \cite[\S 6.1.2]{LurieHTT}, \cite{NikolausPrincipal}.
\subsection{Groupoids and groups}
Let $\CCC$ be a category with fibre products. We say that a simplicial object $X:{\bDelta}^\op \to \CCC$ \textit{satisfies the Segal condition} if for all $n\geq 1$ the map $X_n \to X_1 \times_{X_0} \dots \times_{X_0} X_1$ induced by the spine inclusion is an equivalence, and that \textit{morphisms are invertible} if the map $X_2 \to X_1 \times_{X_0} X_1$ induced by $\Lambda^2[2] \to \Delta[2]$ is an equivalence. 

A \textit{groupoid object} in $\CCC$ is a functor $\GGG:\bDelta^\op \to \CCC$ which satisfies the Segal condition and such that morphisms are invertible. The \textit{category of groupoid objects in $\CCC$} is the full subcategory of $\Fun(\bDelta^\op,\CCC)$ spanned by groupoid objects. 

\begin{Def}
	A \textit{group object in $\CCC$} is a groupoid object $\GGG$ in $\CCC$ such that $\GGG_0$ is contractible.
\end{Def}
We write a group object $\GGG$ in $\CCC$ with $\GGG_1 = G$ as $B(*,G)$. We often suppress  notation and refer to $\GGG$ by $G$. 

\begin{Exm}
	\label{Exm:Ggrpd}
	Let $H$ be an abstract group. Consider $H$ as a one-point category. Then the nerve of $H$ gives us a group object in $\Set$. 
	
	Applying the same procedure to a classical group scheme $G$ gives us a simplicial diagram in $\St$ which sends $[n]$ to $G^{n}$. This is a group object in $\St$, and it has the property that the colimit of this diagram is the familiar stack $BG$.	
\end{Exm}

\subsection{Group actions}
Let $\CCC$ still be a category with fibre products, and let $G$ be a group object in $\CCC$.

\begin{Def}
	A \textit{$G$-object in $\CCC$} is a groupoid object in $\CCC$ of the form
	\begin{align*}
	B(P,G):\bDelta^\op \to \CCC: [n] \mapsto G^{n} \times P
	\end{align*}
	such that $d_1: G \times P \to P$ is the projection, and such that the levelwise projection $B(P,G) \to B(*,G)$ is a morphism of groupoid objects. We call $d_0:G \times P \to P$ the \textit{action of $G$ on $P$} and $P$ the \textit{underlying object} of $B(P,G)$.
	
	The category $\CCC^G$ of $G$-objects is the full subcategory of groupoid objects over $G$ spanned by  $G$-objects. Morphisms of $G$-objects are called \textit{$G$-equivariant maps}.
\end{Def}

We refer to a $G$-object $B(P,G)$ as $P$ when convenient, similarly as in the classical case.

\begin{Exm}
	\label{Exm:GrpActSimp}
	Let $G$ be a classical group scheme acting on a classical scheme $X$. Then consider the action category, which has the points of $X$ as objects and as morphisms $x \to y$ those $g\in G$ such that $gx = y$. The nerve of this category is the quotient groupoid $B(X,G)$, which sends $[n]$ to $G^{n} \times X$. We thus think of a one-simplex $(g,x)$ in $B(X,G)$ as an edge from $x$ to $gx$. Let us spell out the general formulas for the face and degeneracy maps for future reference and to fix the precise indexing.
	
	For $0 \leq i \leq n$, we have that
	\begin{align*}
	&&d_i: G^{n} \times X &\to G^{ (n-1)} \times X \\
	&&(g_1,\dots,g_n,x) & \mapsto \begin{cases}
	(g_1,\dots,g_{n-1},g_n x) & \text{ if } i=0 \\
	(g_1,\dots,g_{n-i}g_{n-i+1},\dots,g_n,x) & \text{ if } 0 < i < n \\
	(g_2,\dots,g_n,x) & \text{ if } i=n
	\end{cases}
	\end{align*}
	Likewise, for $0 \leq j \leq n$ the map $s_j: G^{n} \times X \to G^{ (n+1)}\times X$ sends the element $(g_1,\dots,g_n,x)$ to $(g_1,\dots,g_{n-j},e,g_{n-j+1},\dots,x)$.
	
	Of particular interest is the case when $G = \G_m$ and $X = \Spec B$. In this case $B(X,G)$ induces and is determined by the corresponding cosimplicial diagram $C(B,\Z[t^{\pm 1}])$ in the category of discrete rings that sends $[n]$ to $B[t_1^{\pm 1},\dots,t_n^{\pm 1}]$. The coface maps are given by, for $0 \leq i \leq n$
	\begin{align*}
	d^i: B[t_1^{\pm 1},\dots,t_{n-1}^{\pm 1}] & \to B[t_1^{\pm 1},\dots, t_n^{\pm 1}] \\
	bt_1^{e_1}\cdots t_{n-1}^{e_{n-1}} & \mapsto \begin{cases}
	t_1^{e_1}\cdots t_{n-1}^{e_{n-1}} \sum_d b_d t_n^d & \text{ if } i =0 \\
	bt_1^{e_1} \cdots (t_{n-i}t_{n-i+1})^{e_{n-i}} \cdots t_n^{e_{n-1}} & \text{ if } 0 < i < n \\
	bt_2^{e_1} \cdots t_n^{e_{n-1}} & \text{ if } i = n
	\end{cases}
	\intertext{where the $b_d$ are determined by $\sigma(b) = \sum_d b_d t^d$ for $\sigma: B \to B[t^{\pm 1}]$ the coaction. The codegeneracy maps $s^j:B[t_1^{\pm 1},\dots,t_{n+1}^{\pm 1}] \to B[t_1^{\pm 1},\dots,t_n^{\pm 1}]$ for $0\leq j \leq n$ are $B$-algebra maps  that send the elements $t_1,\dots,t_{n+1}$ to $t_1,\dots,t_{n-j},1,t_{n-j+1},\dots,t_n$.}
 	\end{align*}
\end{Exm}

\subsection{Principal bundles}
\label{Par:PrinBun}
Suppose now that $G$ is a group object in $\St$. For example, $G$ can be any classical group scheme. Then for $B(P,G)$ a given $G$-action in $\St$, we define the corresponding \textit{quotient stack} $[P/G]$ as the colimit of $B(P,G)$ in $\St$.

\begin{Def}
	Let $T$ be a stack. A \textit{principal $G$-bundle over $T$} is a morphism $B(P,G) \to T$ together with a $G$-action $B(P,G)$ such that $P \to T$ is colimiting, i.e., gives an equivalence $[P/G] \simeq T$. 	A \textit{morphism of principal $G$-bundles} is a morphism of $G$-objects $B(P,G) \to B(P',G)$ that fixes $T$. This gives the category $\Bun(T,G)$ of principal $G$-bundles over $T$.
\end{Def}

The main result here is that $* \to BG$ classifies principal $G$-bundles. That is, by \cite{NikolausPrincipal} we have an equivalence of categories
\begin{align*}
\St(T,BG) \to \Bun(T,G) : (T \to BG) \mapsto (T \times_{BG} B(*,G) \to T)
\end{align*}
In fact, pulling back along $B(*,G) \to BG$ gives us an equivalence $\St_{/BG} \to \St^G$.

\begin{Exm}
	Let $S$ be a  scheme. Let $G$ be a group scheme over $S$ acting on a  scheme $X$ over $S$. Then for $T \to S$ in $\Sch$ it holds that $\St_{/S}(T,[X/G])$ is equivalent to the space of diagrams
	\begin{center}
		\begin{tikzcd}
		P \arrow[d] \arrow[r] & X \arrow[d] \\
		T \arrow[r] & S
		\end{tikzcd}	
	\end{center}
	where $P \to T$ is a principal $G$-bundle and $P \to X$ is $G$-equivariant.  Indeed, for given $T \to [X/G]$ we can form the pullback diagrams
	\begin{center}
		\begin{tikzcd}
		P \arrow[r] \arrow[d] & X \arrow[r] \arrow[d] & S  \arrow[d] \\
		T \arrow[r] & {[X/G]} \arrow[r] & BG
		\end{tikzcd}
	\end{center}
\end{Exm}

\begin{Rem}
	\label{Rem:ActClas}
	Let $X$ be a $G$-object in any category, with $d_0 = \sigma$ the action. Write the face maps of $G$ as $\delta_i$. From the Segal condition it follows that $d_i: G^n \times X \to G^{n-1} \times X$ is homotopic to $(\id_{G^{n-1}} , \sigma)$ if $i=0$, and to $(\delta_i, \id_X)$ otherwise. Likewise, it follows that $s_j$ is homotopic to the map which is the unit of $G$ in the $j$-th coordinate, and the identity on the other coordinates.
\end{Rem}

\subsection{$B\G_{m,S}$-algebras}
Let $S$ be a scheme. Consider $\G_{m,S} \coloneqq \G_m \times S$ as a group object in $\St_{/S}$. For $S=\Spec R$ we write $\G_{m,S}$ as $\G_{m,R}$. For $\varphi:S' \to S$ a morphism of schemes we have that $B\G_{m,S'}$ is the pullback of $B\G_{m,S}$ along $\varphi$. This is because pullbacks commute past colimits in $\St$, as they do in any topos by universality of colimits.
\begin{Def}
	The category $\Aff^{\G_m}(S)$ of \textit{$\G_{m,S}$-schemes affine over $S$} is the full subcategory of $\St^{\G_{m,S}}_{/S}$ spanned by those $\G_{m,S}$-stacks which are schemes  affine over $S$.
\end{Def}

When $S = \Spec R$ or when $S = \Spec \Z$ we write $\Aff^{\G_m}(S)$ as $\Aff^{\G_m}_R$ or as $\Aff^{\G_m}$ respectively. Write $\Alg^{\G_m}$ for the opposite of $\Aff^{\G_m}$, i.e.,\ this is the category of rings with a $\G_m$-coaction.\footnote{It would be more correct to call this a $\Gamma(\G_m)$-coaction. We have suppressed the $\Gamma$ to ease notation.} Likewise, define $\Alg_R^{\G_{m}} \coloneqq (\Aff^{\G_m}_R)^\op $ and $\Alg^{\G_m}(S) \coloneqq \Aff^{\G_m}(S)^\op$.

\begin{Prop}
	\label{Prop:AlgBGm}
	Pulling back along the quotient map $\rho: S \to B\G_{m,S}$ induces an equivalence $\Alg(B\G_{m,S}) \to \Alg^{\G_{m,S}}(S)$.
\end{Prop}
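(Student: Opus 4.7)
The plan is to factor the claim through the known equivalence $\St_{/B\G_{m,S}} \simeq \St^{\G_{m,S}}_{/S}$ induced by pulling back along the universal $\G_{m,S}$-bundle $\rho: S \to B\G_{m,S}$, as recalled in \S \ref{Par:PrinBun}. By Remark \ref{Rem:RelSpec}, $\Alg(B\G_{m,S})$ is contravariantly equivalent to the category $\Aff_{/B\G_{m,S}}$ of algebraic stacks affine over $B\G_{m,S}$; on the other hand, by definition $\Alg^{\G_m}(S) = \Aff^{\G_m}(S)^\op$, where $\Aff^{\G_m}(S) \subset \St^{\G_{m,S}}_{/S}$ is the full subcategory spanned by those $\G_{m,S}$-stacks that are schemes affine over $S$. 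Thus it suffices to show that the ambient equivalence $\rho^*$ restricts to an equivalence of full subcategories $\Aff_{/B\G_{m,S}} \xrightarrow{\sim} \Aff^{\G_m}(S)$.

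For the forward direction, if $\XXX \to B\G_{m,S}$ is affine, then stability of affineness under base change shows that $\rho^*\XXX \to S$ is affine, so $\rho^*\XXX$ is a scheme affine over $S$; its $\G_{m,S}$-action is supplied by the target category. For the converse, given $P \in \Aff^{\G_m}(S)$, its preimage under $\rho^*$ is the quotient stack $[P/\G_{m,S}] \to B\G_{m,S}$ (see \S \ref{Par:PrinBun}), and it remains to check that this map is affine. To do so, I would invoke smooth descent of affineness for morphisms of derived stacks and test after base change along $\rho: S \to B\G_{m,S}$ itself, which is a smooth surjection since $\rho$ is a $\G_{m,S}$-torsor; the base change of $[P/\G_{m,S}] \to B\G_{m,S}$ along $\rho$ is then identified with $P \to S$, which is affine by assumption.

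The only delicate point is justifying that representability by an affine morphism of derived stacks satisfies smooth descent on the target. This is standard and follows from fpqc descent for quasi-coherent algebras in derived algebraic geometry together with the characterization of affine morphisms in Remark \ref{Rem:RelSpec}: a morphism $f:\ZZZ \to \YYY$ is affine if and only if for every scheme $U \to \YYY$, the pullback $U_\ZZZ \to U$ is affine, and this condition is smooth-local on $U$. Once this descent statement is in hand, the restriction of $\rho^*$ to the subcategories of affine objects is automatically an equivalence, since $\rho^*$ is already known to be an equivalence on the ambient slice categories.
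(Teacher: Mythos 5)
Your argument is correct and follows essentially the same route as the paper: restrict the ambient equivalence $\St_{/B\G_{m,S}} \simeq \St^{\G_{m,S}}_{/S}$ from \S\ref{Par:PrinBun} to the full subcategories of affine objects, then pass to quasi-coherent algebras via Remark~\ref{Rem:RelSpec}. The one place where you supply more detail than the paper is the direction where, given $P \in \Aff^{\G_m}(S)$, you must show $[P/\G_{m,S}] \to B\G_{m,S}$ is affine: the paper simply asserts that ``since $\rho$ is affine, $\XXX \to B\G_{m,S}$ is affine iff $\XXX_S \to S$ is,'' while you correctly identify the implicit ingredient as smooth-local descent of affineness along the smooth affine cover $\rho$ (the composition lemma in Remark~\ref{Rem:RelSpec} alone points the wrong way, so descent really is needed there). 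Making that step explicit is a useful clarification; otherwise the two proofs coincide.
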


\begin{proof}
	Recall from \S \ref{Par:PrinBun} that pulling back along $\rho$ gives us an equivalence 
	\begin{align*}
	(\St_{/S})_{/B\G_{m,S}} \to \St_{/S}^{\G_{m,S}}
	\end{align*}
	Since $\rho$ is affine, it holds that $\XXX \to B\G_{m,S}$ is affine if and only if $\XXX_S  \to S$ is, for any algebraic stack $\XXX$. Thus the claim follows from Remark \ref{Rem:RelSpec}. 
\end{proof}

When convenient, we will tacitly identify $\Alg^{\G_m}(S)$ with $\Alg(B\G_{m,S})$ via the equivalence above.

\begin{Rem}
	\label{Rem:AlgGMloc}
	We actually have a functor $\Alg^{\G_m}(-): \Sch^\op \to \PrL$, since $\St^{\G_{m,S}}_{S}$ is functorial in $S$ and since being affine is stable under pullbacks. The equivalence in Proposition \ref{Prop:AlgBGm} is natural in $S$, hence $\Alg^{\G_m}(-): \Sch^\op \to \PrL$ is the right Kan extension of the restriction $\Alg^{\G_m}(-): \Alg \to \PrL$. It follows that the map 
	$H:\Aff^{\G_m}(S) \to \lim_{\Spec R \to S} \Aff^{\G_m}_R$
	is an equivalence.
\end{Rem}

\begin{Exm}
	\label{Exm:AlgBGM}
	Let $\rho:S \to B\G_{m,S}$ be the projection map and $\pi: B\G_{m,S} \to S$ the structure map. Then $\rho_*$ sends $\RR \in \Alg(S)$ to $\RR[u^{\pm 1}] \in \Alg^{\G_{m,S}}(S)$ with diagonal coaction of $\G_{m,S}$, and $\rho^*$ is the functor which forgets the coaction. The functor $\pi^*$ endows $\RR \in \Alg(S)$ with trivial action, and $\pi_*$ sends $\AA \in \Alg^{\G_{m,S}}(S)$ to the invariants $\AA^{\G_m}$.
\end{Exm}

\section{Graded algebras and $\G_m$-actions}
\label{Sec:GradAlgGm}

Fix a base scheme $S$. The goal of this section is to show the following.
\begin{reptheorem}{Thm:Equiv}
	There is a contravariant equivalence from the category of quasi-coherent, $\Z$-graded $\OO_S$-algebras to the category of $\G_{m,S}$-schemes affine over $S$.
\end{reptheorem}

We write the simplicial diagram of a $\G_{m,S}$-scheme $X = \Spec \AAA$ affine over $S$ as $B(X) = \{B_n(X)\}_n$. Recall that $B(X)$ lives over the trivial $\G_{m,S}$-scheme $B(S) = B(S,\G_{m,S})$. Observe that $B(X)$ determines and is determined by a cosimplicial diagram in $\Alg(S)$, which we write as $C(\AAA) = \{C^n(\AAA)\}_n$. Then $C(\AAA)$ lives under the cosimplicial diagram $C(\OO_S)$ corresponding to the trivial $\G_{m,S}$-action $B(S)$ on $S$. We write $C^n(\AAA)$ as $\AAA[t_1^{\pm 1},\dots,t_n^{\pm 1}]$, where the indexing is as in Example \ref{Exm:GrpActSimp}. We use the obvious similar notation in the affine case. In particular, the cosimplicial diagram associated to the trivial $\G_m$-action on $\Spec \Z$ is written as $C(\Z)$.

As for $\G_m$-schemes, we often abuse notation by introducing objects of $\Alg^{\G_m}$ by saying that $A$ is a ring with $\G_m$-coaction, with associated cosimplicial diagram $C(A)$. 

\subsection{The affine $\G_m$-scheme associated to a graded ring}
\label{Subsec:AffGMtoGR}
Observe that $\Alg^{\G_m}$ is a full subcategory of $\Alg^{\bDelta}_{C(\Z)/}$, and that $\Alg^{\bDelta}_{C(\Z)/}$ is cocomplete by \cite[Cor.\ 5.1.2.3]{LurieHTT}. For a given diagram $F: K \to \Alg^{\G_m}$, the colimit in $\Alg^{\bDelta}_{C(\Z)/}$ is computed pointwise, and lands in $\Alg^{\G_m}$, since $(-)\otimes \Z[t^{\pm 1}]$ commutes with colimits. Hence $\Alg^{\G_m}$ is cocomplete.

We have an obvious coproduct-preserving map $\Poly^\Z  \to \Alg^{\G_m}$, which sends $\Z[x(d)]$ to the cosimplicial diagram associated to the coaction $\Z[x(d)] \to \Z[x(d),t^{\pm 1}]$ determined by $x(d) \mapsto x(d)t^d$. By left Kan extension, we get a functor
\begin{align*}
\Phi: \Alg^\Z \to \Alg^{\G_m}
\end{align*}
Since $\Phi$ preserves finite coproducts in $\Poly^\Z$, this functor $\Phi$ preserves all small colimits. See \cite[Prop.\ 5.5.8.15]{LurieHTT}, \cite[Prop.\ 25.1.1.5]{LurieSpectral} and their proofs.

\begin{Rem}
	\label{Rem:PhionGSR}
	We can describe $\Phi$ explicitly on fibrant-cofibrant simplicial graded rings. Let $B$ be such an object. Then the grading is levelwise, hence we have a levelwise coaction of $\G_m$, which extends levelwise to a cosimplicial diagram in the category of discrete rings. Combining these levels gives us a cosimplicial diagram in $\Alg$, which recovers $\Phi(B)$.
\end{Rem}

\begin{Prop}
	The cocontinuous functor $\Phi$ has a right adjoint.
\end{Prop}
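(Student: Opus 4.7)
The plan is to invoke the adjoint functor theorem for presentable $\infty$-categories (\cite[Cor.\ 5.5.2.9]{LurieHTT}), which states that any colimit-preserving functor between presentable $\infty$-categories admits a right adjoint. Colimit preservation has already been checked in the paragraph defining $\Phi$: since $\Phi$ is the left Kan extension of a functor from $\Poly^\Z$ that preserves finite coproducts, and $\Poly^\Z$ generates $\Alg^\Z = \PPP_\Sigma(\Poly^\Z)$ under sifted colimits, $\Phi$ preserves all small colimits. It therefore remains only to verify that both $\Alg^\Z$ and $\Alg^{\G_m}$ are presentable.

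First, $\Alg^\Z = \PPP_\Sigma(\Poly^\Z)$ is presentable: this follows from \cite[Prop.\ 5.5.8.10]{LurieHTT}, which ensures that $\PPP_\Sigma(\CCC)$ is presentable (in fact compactly generated) for any small $\CCC$ with finite coproducts. Since $\Poly^\Z$ is a small $1$-category with finite coproducts given by tensor products of graded polynomial rings, this applies.

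Second, $\Alg^{\G_m}$ is presentable by Proposition \ref{Prop:AlgBGm} applied to $S = \Spec \Z$, which identifies $\Alg^{\G_m}$ with $\Alg(B\G_m)$. The latter was defined in Definition \ref{Def:AlgMSt} via right Kan extension of $\Alg_{(-)}\colon \Alg \to \PrL$ along $\Alg \to \Art^\op$; since right Kan extensions in $\PrL$ are computed as limits in $\PrL$, and $\PrL$ is closed under small limits, $\Alg(B\G_m)$ is presentable. Combining these three facts, the adjoint functor theorem produces the desired right adjoint $\Psi\colon \Alg^{\G_m} \to \Alg^\Z$.

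The main subtlety, if any, is conceptual: one must be comfortable identifying $\Alg^{\G_m}$, defined very concretely as the full subcategory of $\Alg^{\bDelta}_{C(\Z)/}$ spanned by $\G_m$-coactions, with the more abstract $\Alg(B\G_m)$. The argument above routes through Proposition \ref{Prop:AlgBGm}, which is the cleanest way to inherit presentability; trying instead to check presentability directly, e.g.\ by showing that the Segal and invertibility conditions cut out an accessible subcategory closed under colimits in $\Alg^{\bDelta}_{C(\Z)/}$, is possible but less economical.
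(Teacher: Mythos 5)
Your proof is correct, and it takes a genuinely different route from the paper. The paper invokes \cite[Cor.\ 5.5.2.9 \emph{together with} Rem.\ 5.5.2.10]{LurieHTT}: the remark weakens the hypotheses of the adjoint functor theorem so that only the \emph{source} needs to be presentable, while the target need only be locally small and cocomplete. Cocompleteness of $\Alg^{\G_m}$ was checked directly in the preceding paragraph, by exhibiting $\Alg^{\G_m}$ as a full subcategory of $\Alg^{\bDelta}_{C(\Z)/}$ closed under colimits. This is deliberately economical: it stays entirely within the concrete description of $\Alg^{\G_m}$ as cosimplicial diagrams, which is the description the rest of the section works with, and it avoids verifying the stronger condition of presentability.

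You instead establish presentability of $\Alg^{\G_m}$ by passing through Proposition \ref{Prop:AlgBGm} to identify $\Alg^{\G_m}$ with $\Alg(B\G_m)$, which by Definition \ref{Def:AlgMSt} is a small limit in $\PrL$ and hence presentable. This is correct: Proposition \ref{Prop:AlgBGm} precedes this proposition in the paper, and the fact that $\PrL$ is closed under small limits (stated in \S\ref{Par:DAGEls}) is precisely what makes the right Kan extension well-defined. Your approach then applies the standard form of the adjoint functor theorem without needing the remark. The trade-off: your route is slightly more machinery-heavy (it pulls in the Weil-restriction/principal-bundle identification of $\Alg^{\G_m}$ with $\Alg(B\G_m)$, which at this stage of the paper is a somewhat heavier hammer than the direct colimit check), but it yields the strictly stronger conclusion that $\Alg^{\G_m}$ is presentable, not merely cocomplete. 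Both are sound; the paper's version is the more self-contained of the two, since it uses only the explicit cosimplicial description being developed in the section.

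One small point worth internalizing: since the paper chose not to show presentability of $\Alg^{\G_m}$ here, it \emph{must} cite Rem.\ 5.5.2.10 in addition to Cor.\ 5.5.2.9 — the corollary alone assumes both categories presentable. If one takes your route, the remark becomes unnecessary.
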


\begin{proof}
	This follows from \cite[Cor.\ 5.5.2.9, Rem.\ 5.5.2.10]{LurieHTT}, since $\Alg^\Z$ is presentable and $\Phi$ cocontinuous.
\end{proof}

From here on, take a right adjoint $\Psi$ to $\Phi$, so that we are in the situation
\begin{align}
\label{Eq:Adj}
\Phi: \Alg^\Z \rightleftarrows  \Alg^{\G_m}: \Psi
\end{align}

\subsection{Twisted coaction}
In this paragraph, fix $A$ in $\Alg^{\G_m}$ with cosimplicial diagram $C(A)$. 

\begin{Rem}
	\label{Rem:Adcl}
Recall that if $A$ is classical, then we know that it has a $\Z$-grading. The graded piece $A_d$ is the (underived) equaliser of $A \rightrightarrows A[t^{\pm 1}]$, where one map is the coaction $A \to A[t^{\pm 1}]$ multiplied with $t^{-d}$, and the other map is the inclusion. We will generalize this formula for $A_d$ to the derived setting, which will help us both in showing that $\Phi$ is fully faithful and in showing that $\Psi$ is conservative.	
\end{Rem}

\begin{Def}
	Let $C(A[u^{\pm 1}])$ be the cosimplicial diagram in $\Alg$ associated to the diagonal $\G_m$-action on $\Spec A \times \Spec \Z[u^{\pm1}]$. 
\end{Def}

We grade the rings $C^n(A[u^{\pm 1}])$ via the grading on $\Z[u^{\pm 1}]$ which has $u$ in degree $-1$. Then $C(A[u^{\pm 1}])$ is actually a cosimplicial diagram in $\Alg^\Z$.

\begin{Def}
	\label{Def:dtwist}
	For $d\in \Z$ define \textit{the $d$-th twist of $A$} as the cosimplicial space 
	\begin{align*}
	C(A(d)):\bDelta \to \Space : [n] \mapsto \Alg^\Z\Big(\Z[x(d)],C^n(A[u^{\pm 1}]) \Big)
	\end{align*}
	where the cosimplicial structure is induced by $C(A[u^{\pm 1}])$.
\end{Def}
Observe that $C(A(d))$ sends $[n]$ to $C^n(A)$. It also has the same coface and codegeneracy maps as $C(A)$, except that the maps $d^0:C^{n-1}(A(d)) \to C^n(A(d))$ are multiplied with $t_n^{-d}$, for all $n$.

Recall that a \textit{coaugmentation} of a cosimplicial diagram $X  :\bDelta \to \CCC$ in a given category $\CCC$ is an extension of $X$ to a diagram on the category $\bDelta_+$ of finite ordinals and order-preserving maps.  A coagumentation is \textit{limiting} if the induced map $X(\emptyset) \to \lim X$ is an equivalence.
\begin{Def}
	\label{Def:alpha}
	Let $d\in \Z$. The map 
	\begin{align*}
	\Alg^{\G_m}(\Phi(\Z[x(d)]),A) \to \Alg(\Z[x(d)],A) \simeq \Alg^\Z(\Z[x(d)],C^0(A[u^{\pm 1}])) 
	\end{align*}
	that sends $f$ to $f^0$ induces a coaugmentation
	\begin{align*}
	\alpha_{A,d}: \Alg^{\G_m}(\Phi(\Z[x(d)]),A) \to C(A(d))
	\end{align*}
	in $\Space$. Since the $\alpha_{A,d}$ are natural in $d$, by definition of $\Alg^\Z$ and the adjunction $\Phi \dashv \Psi$, these maps induce a coaugmentation
	\begin{align*}
	\alpha_A: \Psi(A) \to C(A[u^{\pm 1}])
	\end{align*}
	in $\Alg^\Z$.
\end{Def}

\subsection{Equivariant mapping spaces}
Let still $A \in \Alg^{\G_m}$ be given. Observe: if we know that $\Phi \dashv \Psi$ is an adjoint equivalence, then the grading on $A$ must be such that 
\begin{align*}
	A_d \simeq \Alg^\Z(\Z[x(d)],\Psi(A)) \simeq \Alg^{\G_m}(\Phi(\Z[x(d)]),A)
\end{align*}
We will show that $\Alg^{\G_m}(\Phi(\Z[x(d)]),A)$ is indeed a model for $A_d$, by showing that it is a limit of $C(A(d))$, which generalizes the classical picture by Remark \ref{Rem:Adcl}. 
\begin{Prop}
	\label{Prop:CTwist}
	For any $A$ in $\Alg^{\G_m}$ and $d\in \Z$, the coaugmentation $\alpha_{A,d}: \Alg^{\G_m}(\Phi(\Z[x(d)]),A) \to C(A(d))$ is limiting in $\Space$.
\end{Prop}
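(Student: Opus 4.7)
The plan is to express both $\Alg^{\G_m}(\Phi(\Z[x(d)]), A)$ and $\lim C(A(d))$ as limits of the same cosimplicial space of $\Space$, and identify $\alpha_{A,d}$ as the tautological comparison. The key point is that $\Phi(\Z[x(d)])$ was designed to corepresent ``elements of degree $d$'', both in the $\G_m$-equivariant sense (via the mapping space) and in the twisted-cosimplicial sense (via the totalization).

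First, using Proposition \ref{Prop:AlgBGm}, we identify $\Alg^{\G_m}$ with $\Alg(B\G_m)$. By the Cech nerve of the faithfully flat atlas $S \to B\G_m$, together with the fact that $\Alg(-)$ is right Kan extended from $\Alg$ to algebraic stacks (cf.\ \S \ref{Par:DAGEls}), we obtain
\[
\Alg(B\G_m) \simeq \lim_{[n] \in \bDelta} \Alg_{C^n(\Z)},
\]
where $C^n(\Z) = \Z[t_1^{\pm 1},\dots,t_n^{\pm 1}]$ is the ring of functions on $\G_m^n$. Since mapping spaces in a limit of $\infty$-categories are computed level-wise, this gives
\[
\Alg^{\G_m}(\Phi(\Z[x(d)]), A) \simeq \lim_{[n] \in \bDelta} \Alg_{C^n(\Z)}\bigl(C^n(\Phi(\Z[x(d)])),\, C^n(A)\bigr).
\]

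Next, we invoke the explicit description of $\Phi$ from Remark \ref{Rem:PhionGSR}: at level $n$, $C^n(\Phi(\Z[x(d)]))$ is the free $C^n(\Z)$-algebra on one generator $x(d)$. Therefore each term in the limit simplifies to the underlying space of $C^n(A)$, identified with $C^n(A(d))$, with a map $\alpha_n$ corresponding to the element $\alpha_n(x(d))$. It remains to check that the cosimplicial structure on $[n] \mapsto C^n(A)$ induced by the limit coincides with that of $C(A(d))$.

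This identification of cosimplicial structure is the main obstacle of the argument. For a morphism $f \colon [m] \to [n]$ in $\bDelta$, the transition in the limit is induced by the base change $-\otimes_{C^m(\Z)} C^n(\Z)$, transported through the structure maps $F(f): C^m(\Phi(\Z[x(d)])) \to C^n(\Phi(\Z[x(d)]))$ and the analogous ones for $A$. For the generating cofaces $d^i$ with $i > 0$ and codegeneracies $s^j$, one has $F(f)(x(d)) = x(d)$, and so the transition agrees with the corresponding map in $C(A)$. For $d^0$, however, the twist $F(d^0)(x(d)) = x(d)\,t_n^d$ in $\Phi(\Z[x(d)])$ forces the generator $x(d) \in C^n(\Phi(\Z[x(d)]))$ to correspond to $x(d)\otimes t_n^{-d}$ in the base-change, which upon applying $\alpha_m \otimes \id$ and identifying with $C^n(A)$ via the cosimplicial structure of $A$ yields precisely the rule $a \mapsto d^0_{C(A)}(a)\cdot t_n^{-d}$. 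This matches the definition of $d^0$ in $C(A(d))$. With the cosimplicial structures identified, we obtain $\Alg^{\G_m}(\Phi(\Z[x(d)]), A) \simeq \Tot C(A(d))$, and a direct inspection shows the resulting equivalence is the map $\alpha_{A,d}$ from Definition \ref{Def:alpha}.
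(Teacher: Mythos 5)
Your argument takes a genuinely different route from the paper's. Where the paper works directly inside the functor category $\Alg^{\bDelta}_{C(\Z)/}$, computes the mapping space as a limit over the twisted arrow category $T(\bDelta)$ (Lemma \ref{Lem:AlgGMmap}), and then establishes by hand that the projection $T(\bDelta)\to\bDelta$ is a final functor, you instead invoke descent for $\Alg(-)$ along the atlas $\Spec\Z \to B\G_m$ to identify $\Alg^{\G_m}$ with a $\bDelta$-indexed limit of categories $\lim_{[n]}\Alg_{C^n(\Z)}$, and then use the level-wise formula for mapping spaces in a limit of $\infty$-categories. The subsequent identification of the cosimplicial structure, in particular the $t_n^{-d}$-twist on $d^0$ coming from the fact that the preferred generator of $C^n(\Phi(\Z[x(d)]))$ corresponds to $x\otimes t_n^{-d}$ under the cartesian structure map, is handled correctly and plays the role of the paper's passage from $H$ to $H'$.

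There is, however, a genuine gap in the first step. You justify $\Alg(B\G_m)\simeq\lim_{[n]\in\bDelta}\Alg_{C^n(\Z)}$ by appealing to the fact that $\Alg(-)$ is a right Kan extension from affines. This is not sufficient: the right Kan extension gives $\Alg(B\G_m)$ as a limit over the (large) category $\Alg_{/B\G_m}$ of all maps $\Spec R\to B\G_m$, and the Cech nerve of the atlas is not cofinal in this category --- indeed it cannot be, since there are maps $\Spec R\to B\G_m$ classifying non-trivial $\G_m$-torsors, which admit no map at all to any $\G_m^n\to B\G_m$ in $\Alg_{/B\G_m}$. What you actually need is a flat-descent theorem for $\Alg(-)$ (equivalently, that the cosimplicial totalization computes the limit), which is a substantive fact proved in the literature (e.g.\ via descent for $\QCoh$) but is neither established in this paper nor a formal consequence of the Kan extension definition. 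You would also need to check that, under the resulting identification and Proposition \ref{Prop:AlgBGm}, the object $A$ maps to the cartesian section $[n]\mapsto C^n(A)$ --- this is the Beck--Chevalley compatibility of the coaction diagram with descent data, which is again believable but unaddressed. The paper's twisted-arrow-category argument can be seen precisely as a self-contained, hands-on substitute for this descent input; if you prefer your route, cite an explicit descent theorem (and verify the compatibility with $C(-)$) rather than invoking the Kan extension.

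A minor point: the final assertion that the constructed equivalence ``is the map $\alpha_{A,d}$'' deserves slightly more care, since a coaugmentation is not determined by its value at $[0]$ alone in the $\infty$-categorical setting; you should check that the limit cone you build agrees with $\alpha_{A,d}$ at every level $[n]$ (namely, that it sends $f$ to the composite $\Z[x(d)]\subset C^n(\Z[x(d)])\xrightarrow{f^n}C^n(A)$, as in the last paragraph of the paper's proof).
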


To show this, we want to describe the mapping spaces in $\Alg^{\G_m}$ as a certain end, which will be a limit over a twisted arrow category. Let us recall the terminology and result.

For a 1-category $I$, the twisted arrow category $T(I)$ of $I$ is the 1-category where the objects are arrows $x \to y$ in $I$, and a morphism from $x \to y$ to $x' \to y'$ is a pair of morphisms $x' \to x, y \to y'$ which make the obvious `twisted' square commute. More abstractly, $T(I)$ is the category of elements of the functor $I(-,-):I^\op\times I \to \Set$. 

\begin{Lem}
	\label{Lem:AlgGMmap}
	Let $I$ be a 1-category. For $F,G: I \to \CCC$ two $I$-indexed diagrams in a category $\CCC$, the mapping space $\CCC^I(F,G)$ is the limit of the diagram
	\begin{align*}
	T(I) \to \Space: (x \to y) \mapsto \CCC(F(x),G(y))
	\end{align*}
\end{Lem}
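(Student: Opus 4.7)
The plan is to recognize both sides of the claimed equivalence as computing the same end of a single bifunctor. For $F, G : I \to \CCC$, let $H_{F,G} : I^{\op} \times I \to \Space$ be the bifunctor $(x,y) \mapsto \CCC(F(x), G(y))$. I would first invoke the standard description of mapping spaces in functor $\infty$-categories as ends, which says that $\CCC^I(F,G) \simeq \int_{x \in I} \CCC(F(x), G(x))$, i.e.\ the end of $H_{F,G}$ restricted to the diagonal. Classically, this is the equalizer description of natural transformations; in the $\infty$-categorical setting it is obtained by totalizing the cosimplicial object whose $n$-th term is $\prod_{x_0 \to \cdots \to x_n} \CCC(F(x_0), G(x_n))$, with the obvious face and degeneracy maps induced by composition in $I$ and by $F$, $G$.

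Next, I would invoke the theorem (due to Glasman, and codified in modern treatments of $\infty$-categorical ends) that for any bifunctor $H : I^{\op} \times I \to \Space$, the end $\int_x H(x,x)$ coincides with the limit of $H$ pulled back along the canonical projection $p : T(I) \to I^{\op} \times I$, $(x \to y) \mapsto (x,y)$. Since $I$ is a 1-category, $T(I)$ is the ordinary twisted arrow 1-category, viewed as an $\infty$-category via its nerve, so no subtlety arises in interpreting the indexing category. Applying this with $H = H_{F,G}$ yields exactly $\lim_{(x \to y) \in T(I)} \CCC(F(x), G(y))$, which is the claimed formula.

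The only real obstacle is sourcing these two standard facts precisely; both are well-documented in the $\infty$-categorical literature. A more self-contained alternative would be to exhibit a cofinal functor relating $T(I)$ to the cosimplicial diagram defining the end, but this essentially re-proves Glasman's theorem in disguise and adds no conceptual content beyond what the black-box invocation already provides.
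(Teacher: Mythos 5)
Your sketch is correct and amounts to the same argument the paper has in mind: the paper's proof of this lemma is just a citation to a reference on lax transformations and twisted arrow categories, which codifies exactly the two facts you invoke (mapping spaces in functor categories as ends, and Glasman's identification of ends with limits over $T(I)$). Since $I$ is an ordinary 1-category here, your remark that no subtlety arises in forming $T(I)$ is also apt.
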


\begin{proof}
	See e.g.\ \cite{GepnerLax}.
\end{proof}

\begin{proof}[Proof of Proposition \ref{Prop:CTwist}]
	Write $C(\Z[x(d)])$ for the cosimplicial diagram associated to $\Phi(\Z[x(d)])$. Observe that
	\begin{align*}
		\Alg^{\G_m}(\Phi(\Z[x(d)]),A) &= \Alg^{\bDelta}_{C(\Z)/}(C(\Z[x(d)]),C(A)) \\ &= \Alg^{\bDelta}(C(\Z[x(d)]),C(A)) \times_{\Alg^{\bDelta}(C(\Z),C(A))} \{ C(\Z) \to C(A) \}
	\end{align*}
	Now we apply Lemma \ref{Lem:AlgGMmap} to the mapping spaces $\Alg^{\bDelta}(-,-)$. Using that limits commute past each other, it holds that 
	\begin{align*}
		\Alg^{\G_m}(\Phi(\Z[x(d)]),A) \simeq \lim_{[n] \to [m] \in T(\bDelta)} H
	\end{align*}
	for the functor $H$ that sends the object $[n] \to [m]$ in  $T(\bDelta)$ to
	\begin{equation}
		\label{Eq:H}
\begin{split}
		 \Alg(C^n(\Z[x(d)]),C^m(A) ) \times_{\Alg(C^n(\Z),C^m(A))} \{C^n(\Z) \to C^m(\Z) \} \\
	  \simeq \Alg(\Z[x(d)],C^m(A) ) \simeq \Alg(\Z[x],A[t_1^{\pm 1},\dots,t_m^{\pm 1}])
\end{split}
	\end{equation}
	We will now describe what $H$ does on morphisms.
	
	Let a morphism $\gamma$ in $T(\bDelta)$ from $\theta_1:[n_1] \to [m_1]$ to $\theta_2:[n_2] \to [m_2]$ be given by maps $\alpha:[n_2] \to [n_1]$ and $\beta:[m_1] \to [m_2]$. For $i=1,2$, let $\gamma_i$ be the map in $T(\bDelta)$ from $\id_{[m_i]}$ to $\theta_i$ determined by $\theta_i$. Also, let $\sigma$ be the map from $\theta_1$ to $\theta_1\alpha$ determined by $\alpha$, and let $\tau$ be the map from $\theta_1\alpha$ to $\theta_2$ determined by $\beta$. This gives a factorization $\tau\sigma = \gamma$. 
	
	Observe that $H(\gamma_i)$ is determined by precomposing a given map $C^{m_i}(\Z[x(d)]) \to C^{m_i}(A)$ with the map $(\theta_i)_*:C^{n_i}(\Z[x(d)]) \to C^{m_i}(\Z[x(d)])$. Likewise, $H(\sigma)$ is given by precomposition with $\alpha_*: C^{n_2}(\Z[x(d)]) \to C^{n_1}(\Z[x(d)])$, and $H(\tau)$ by postcomposition with $\beta_*: C^{m_1}(A) \to C^{m_2}(A)$. 
	
	The map $H(\gamma_i)$, when considered as map $C^{m_i}(A) \to C^{m_i}(A)$ by (\ref{Eq:H}), is given by multiplying with $t_{m_i}^{\theta_i(0)}$,  which is invertible. We can thus define a diagram $H': T(\bDelta) \to \Space$ that sends $\theta:[n] \to [m]$ to $H(\id_{[m]})$, and any map $\gamma: \theta_1 \to \theta_2$ to $H(\gamma_2)^{-1} H(\gamma) H(\gamma_1)$, with notation as before. Then $\lim H \simeq \lim H'$.
	
	Now let $p$ be the functor $T(\bDelta) \to \bDelta$ that sends $[n] \to [m]$ to $[m]$. Consider $H'(\gamma)$ as map $C^{m_1}(A) \to C^{m_2}(A)$. Then it holds
	\begin{align*}
		H'(\gamma) &\simeq H(\gamma_2)^{-1} \circ H(\tau) \circ H(\sigma) \circ H(\gamma_1) \\
		&\simeq  (\times t_{m_2}^{-\theta_2(0)}) \circ C(A)(\beta) \circ (\times t_{m_1}^{\alpha(0)})\circ (\times t_{m_1}^{\theta_1(0)}) \\
		&\simeq C(A)(\beta) \times t_{m_2}^{\alpha(0)+\theta_1(0)-\theta_2(0)} = C(A)(\beta) \times t_{m_2}^{-\beta(0)}=C(A(d))(\beta)
	\end{align*}
	The precomposition of $C(A(d)): \bDelta \to \Space$ with $p$ is thus homotopic to $H$. 
	
	We claim that the functor $p$ is final, meaning that composition with $p$ does not change limits. By \cite[Prop.\ 4.1.3.1]{LurieHTT}, it suffices to show that $p/[m]$ is contractible, for any $m\geq 0$. We write objects of $p/[m]$ as $[n_1] \to [m_1] \to [m]$.  Consider the functor $F:p/[m] \to p/[m]$ that sends $[n_1] \to [m_1] \to [m]$ to $[n_1] \to [m] = [m]$. Then there is a natural transformation $\id_{p/[m]} \to F$, and the essential image of $F$ is equivalent to $\bDelta_{/[m]}$. It thus suffices to show that $\bDelta_{/[m]}$ is contractible, which is clear.
		
	It now follows that $\lim H \simeq \lim C(A(d))$. This equivalence identifies 
	$f \in \lim H \simeq \Alg^{\G_m}(\Phi(\Z[x(d)]), A)$  with the point in $\lim H'$ given by the family $\{\varphi^n\}_n$, where $\varphi^n$ is the composition
	\begin{align*}
		\varphi^n: \Z[x(d)] \subset C^n(\Z[x(d)]) \xrightarrow{f^n} C^n(A)
	\end{align*}
	The equivalence is thus compatible with the coaugmentation, which remained to be shown.
\end{proof}

\subsection{From graded rings to affine $\G_m$-schemes is fully faithful}
\begin{Prop}
	\label{Prop:unitinv}
	The unit $\eta: \id_{\Alg^{\Z}} \to \Psi\Phi$ is invertible.
\end{Prop}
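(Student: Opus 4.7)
By Remark~\ref{Rem:gradpiece}, a map in $\Alg^\Z$ is an equivalence iff each of its graded pieces is, so the task reduces to verifying that the component $\eta_{B,d}\colon B_d \to \Psi\Phi(B)_d$ is an equivalence in $\Space$ for every $B\in\Alg^\Z$ and $d\in\Z$. The adjunction $\Phi\dashv\Psi$ together with Proposition~\ref{Prop:CTwist} yields natural identifications
\[
\Psi\Phi(B)_d \simeq \Alg^\Z(\Z[x(d)],\Psi\Phi(B)) \simeq \Alg^{\G_m}(\Phi(\Z[x(d)]),\Phi(B)) \simeq \lim\nolimits_{\bDelta}C(\Phi(B)(d)),
\]
under which $\eta_{B,d}$ corresponds to the coaugmentation of Definition~\ref{Def:alpha}. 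The goal thus becomes to show that this coaugmentation is limiting.

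My first step would be to verify this when $B=\Z[x(e_1),\dots,x(e_n)]$ is a graded polynomial ring. Since $\Phi$ preserves coproducts, $\Phi(B)$ has underlying ring $\Z[x_1,\dots,x_n]$ with coaction $x_i\mapsto x_it^{e_i}$. The twisted bar diagram $C(\Phi(B)(d))$ then decomposes, as a cosimplicial diagram of $\Z$-modules, monomial by monomial: for each $\vec k\in\N^n$ the $x_1^{k_1}\cdots x_n^{k_n}$-summand is a copy of the universal twisted cosimplicial diagram $C_m$ for the weight-$m$ character of $\G_m$ acting trivially on $\Spec\Z$, with $m=\vec k\cdot\vec e-d$. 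Identifying $\lim_{\bDelta}C_m$ with the global sections of $\OO(m)$ on $B\G_m$---namely $\Z$ when $m=0$ and contractible otherwise---reproduces exactly $B_d=\bigoplus_{\vec k\cdot\vec e=d}\Z\cdot x_1^{k_1}\cdots x_n^{k_n}$, and the coaugmentation is seen to be the tautological inclusion.

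To extend to an arbitrary $B\in\Alg^\Z$, I would use that $\Alg^\Z=\PPP_\Sigma(\Poly^\Z)$ is generated under sifted colimits by the polynomial rings and argue that both sides of $\eta_{-,d}$ are sifted-colimit-preserving functors on $\Alg^\Z$. The source $B\mapsto B_d$ preserves sifted colimits by Lemma~\ref{Lem:PolMRcompproj}. For the target, cocontinuity of $\Phi$ reduces the claim to compact projectivity of $\Phi(\Z[x(d)])$ in $\Alg^{\G_m}$, which via Proposition~\ref{Prop:CTwist} is the statement that the twisted totalization $A\mapsto\lim_{\bDelta}C(A(d))$ preserves sifted colimits in $A\in\Alg^{\G_m}$.

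The main obstacle is this last sifted-colimit-commutation, since totalization of cosimplicial diagrams does not in general commute with sifted colimits. I expect it to succeed here because sifted colimits in $\Alg^{\G_m}$ can be computed levelwise in its cosimplicial presentation as a full subcategory of $\Alg^{\bDelta}_{C(\Z)/}$, and because $C(A(d))$ is the twisted \v{C}ech nerve of the faithfully flat cover $\Spec A\to[\Spec A/\G_m]$, whose totalization has the required descent and cohomological-finiteness properties coming from the vanishing of higher $\G_m$-cohomology of non-trivial characters.
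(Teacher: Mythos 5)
Your plan correctly identifies the target: reduce to showing that $\alpha_{\Phi(B),d}\colon B_d\to C(\Phi(B)(d))$ is limiting, so that it matches the limiting coaugmentation of Proposition~\ref{Prop:CTwist}. Your monomial-by-monomial analysis in the polynomial case is also the right picture. But the reduction-to-polynomials-plus-sifted-colimits strategy has a genuine gap, and you have put your finger on it yourself: you need the twisted totalization $A\mapsto\lim_{\bDelta}C(A(d))$ to commute with sifted colimits, and you only offer a heuristic ("I expect it to succeed because\dots") rather than a proof. This is not a peripheral issue; it is essentially equivalent to the compact projectivity of $\Phi(\Z[x(d)])$ in $\Alg^{\G_m}$, which is not available until after one knows $\Phi$ is fully faithful (i.e., after the very result you are trying to prove). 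There is also a smaller but real concern already in the polynomial case: $B_d$ is generally an \emph{infinite} direct sum of monomial lines (e.g.\ $\Z[x(0)]_0=\Z[x]$), and totalization does not commute with infinite direct sums without further argument.

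The paper avoids both obstructions by working with an arbitrary $B\in\Alg^\Z$ from the start. After strictifying $B$ to a simplicial graded ring, it considers the Bousfield--Kan spectral sequence for $\lim C(B(d))$, with $E^2_{uv}=\pi^u(\pi_v C(B(d)))$. At the level of $\pi_v$, the cosimplicial abelian group $\pi_v C(B(d))$ decomposes as a direct sum indexed by the grading on $\pi_v B$ (your monomial decomposition, but carried out at the $\pi_v$-level where direct sums present no difficulty), and the paper writes down an explicit contracting homotopy $\sigma^n$ killing every summand of degree $e\neq d$ uniformly. This shows the cochain complex has cohomology concentrated in degree $0$, equal to $\pi_v(B_d)$, so the spectral sequence degenerates and $\pi_v\lim C(B(d))\cong\pi_v(B_d)$. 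The upshot: the uniform contracting homotopy at the level of homotopy groups replaces your sifted-colimit-commutation step, and it works because the vanishing you attribute to "$\G_m$-cohomology of non-trivial characters" is manifest degree-by-degree in the graded pieces of $\pi_v B$. To salvage your proposal you would need to supply either this contracting-homotopy argument or an independent proof that the twisted totalization commutes with sifted colimits; the former is what the paper does.
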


\begin{proof}
	Let $B$ in $\Alg^\Z$ be given, and consider the map $\eta_B: B \to \Psi\Phi(B)$. Since we can check the statement degree-wise, by adjunction it suffices to show that 
	\begin{align*}
	F:\Alg^\Z(\Z[x(d)],B) \to \Alg^{\G_m}(\Phi(\Z[x(d)]),\Phi(B))
	\end{align*}
	is an equivalence, for fixed $d \in \Z$.
	
	Write $C(B)$ for the cosimplicial diagram associated to $\Phi(B)$, let $C(B(d))$ be the $d$-th twist of $\Phi(B)$, and let $G$ be the coaugmentation $\alpha_{\Phi(B),d}$ from Proposition \ref{Prop:CTwist}. We will see that $F$ is a map of limiting coaugmentations, one of which is $G$. In other words, we will give a limiting coaugmentation $H:\Alg^\Z(\Z[x(d)],B) \to C(B(d))$ which commutes with $F$ and $G$. 
	
	Taking a model of $B$, we assume that $B$ is a simplicial graded ring. Consider the obvious coaugmentation $H: B_d \to C(B(d))$, which is just the inclusion $B_d \subset B$. Observe that $GF \simeq H$. Indeed, since $\Z[x(d)]$ is cofibrant in $\Alg^\Z$ and any object in $\Alg^\Z$ is fibrant, the mapping spaces $\Alg^\Z(\Z[x(d)],C^n(B[u^{\pm 1}]))$ can be computed as underived mapping spaces. Now $GF$ sends a point $b\in B_d$ represented by a morphism $\Z[x(d)] \to B$ to the map $\Z[x(d)] \to B[u^{\pm 1}]$ given by $x(d) \mapsto bu^{-d}$. It follows that $GF \cong H$.
	
	We claim that $H$ is limiting. To this end,	consider the Bousfield-Kan spectral sequence for computing the homotopy groups of $\lim C(B(d))$. This spectral sequence has $E^2_{uv} = \pi^u(\pi_v C(B(d)))$ on the second page. Here, for a cosimplicial abelian group $P$ we write $\pi^u(P)$ for the $u$-th cohomology group of the cochain complex $P^0 \to P^1 \to \dots$ with differentials $\partial^n: P^n \to P^{n+1}$ given by $\partial^n = \sum_{i=0}^n (-1)^i d^i$. 
	
	Let $v$ be given. Write $P$ for the cochain complex associated to the cosimplicial abelian group $\pi_v C(B(d))$. We identify $P^n$ with $\pi_v(B)[t_1^{\pm 1},\dots,t_n^{\pm 1}]$. We write the cochain complex which has $\pi_v(B_d)$ concentrated in degree 0 simply as $\pi_v(B_d)$. The projection $\pi_v(B) \to \pi_v(B_d)$ induces a map $P \to \pi_v(B_d)$, and the inclusion $\pi_v(B_d) \to \pi_v(B)$ induces a map $\pi_v(B_d) \to P$. We will show that the composition $z:P \to \pi_v(B_d) \to P$ is homotopic to the identity on $P$. This implies that the spectral sequence degenerates and that $E^2_{0v}  = \pi_v(B_d)$. Together this implies that $\pi_v \lim C(B(d)) \cong \pi_v(B_d)$, which will thus finish the proof.

	We will give maps $\sigma^n: P^n \to P^{n-1}$ of abelian groups for $n\geq 0$, where $P^{-1} \coloneqq 0$, which satisfy $\partial \sigma + \sigma \partial  = \id-z$. We define $\sigma^n$ on an element of the form $b_et_1^{k_1}\cdots t_n^{k_n}$, with $b_e \in \pi_v(B)$ homogeneous of degree $e$, as
	\begin{align*}
		\sigma^n(b_et_1^{k_1}\cdots t_n^{k_n}) = \begin{cases}
			b_et_1^{k_1}\cdots t_{n-1}^{k_{n-1}} & \text{ if } e-d=k_n \\
			0 & \text{ otherwise}
		\end{cases}
	\end{align*} 
	 Since $P^n \cong \bigoplus_e \pi_v(B_e)[t_1^{\pm 1},\dots,t_n^{\pm 1}]$, this gives a unique well-defined map. 
	
	To check that $\partial \sigma + \sigma \partial  = \id-z$, let $p=bt_1^{k_1}\cdots t_n^{k_n} \in P^n$ be given, with $n \geq 1$.  Without loss of generality, assume that $b$ is homogeneous of degree $e$. Observe that
	\begin{align*}
		\partial^n(p) = b(t_1^{k_1}\cdots t_n^{k_n}t_{n+1}^{e-d}-t_1^{k_1}\cdots (t_nt_{n+1})^{k_n} + \dots + (-1)^{n+1} t_2^{k_1} \cdots t_{n+1}^{k_n})
	\end{align*}
	We have a similar description for $\partial^{n-1}$. 
	Now a case distinction on whether $e - d = k_n$ shows that $(\partial^{n-1} \sigma^n + \sigma^{n+1} \partial^n)(p)  = p = (\id - z)(p)$. 
	
	For $b \in P^0$ homogeneous of degree $e$, it holds that 
	\begin{align*}
		(\partial^{-1} \sigma^0 + \sigma^1 \partial^0)(b) = \sigma^1\partial^0(b) = \sigma^1(bt^{e-d} - b)= (\id -z)(b)
	\end{align*}
	which again follows from a simple case distinction on whether $d=e$.
\end{proof}

Proposition \ref{Prop:unitinv} implies that $\Phi$ is fully faithful. From here on, we thus consider $\Alg^\Z$ as a full subcategory of $\Alg^{\G_m}$. In particular, we will omit $\Phi$ from our notation when convenient. Then we can describe $\Psi$ as the restricted Yoneda embedding.   Indeed, for $A \in \Alg^{\G_m}$, we have that $\Psi(A)=\Alg^{\G_m}(-,A)$ as presheaves on $\Poly^\Z$, by adjunction.

\subsection{From affine $\G_m$-schemes to graded rings is conservative} 
Let us briefly consider the classical setting. Let $A$ be a discrete ring with $\G_m$-coaction. Then we have a diagram $A[u^{\pm 1}] \rightrightarrows A[u^{\pm 1},t^{\pm 1}]$, where one map is induced by the diagonal $\G_m$-action on $\Spec A \times \G_m$ and the other map is the inclusion. This is a diagram of graded rings, with $u$ in degree $-1$. The limit of this diagram in the category of discrete, graded rings is the graded ring associated to $A$. 

We have a similar description available in the derived setting, which will imply that $\Psi$ is conservative. For this, we need $\G_m$-coactions on $\Z$-graded rings. To this end, consider $C(\Z)$ as diagram in $\Alg^\Z$ by endowing it with the trivial grading. Identify the category $\Alg^\Z(B\G_m)$ with the category of those cosimplicial diagrams in $\Alg^\Z$ under $C(\Z)$ that, after forgetting the grading, give diagrams corresponding to $\G_m$-coactions. In other words, $\Alg^\Z(B\G_m)$ is the fibre product of the inclusion $\Alg^{\G_m} \subset \Fun(\bDelta,\Alg)_{C(\Z)/}$ with the functor $\Fun(\bDelta,\Alg^\Z)_{C(\Z)/} \to \Fun(\bDelta,\Alg)_{\C(\Z)/}$ induced by the forgetful functor $U:\Alg^\Z \to \Alg$, and is thus a full subcategory of $\Fun(\bDelta,\Alg^\Z)_{C(\Z)/}$.

\begin{Exm}
	For $A \in \Alg^{\G_m}$ the object $C(A[u^{\pm 1}])$, that has $u$ in degree $-1$, lives in $\Alg^\Z(B\G_m)$.
\end{Exm}

\begin{Lem}
	\label{Lem:PsiLim}
	The functor $\Psi$ is equivalent to the composition of the functor $C(-[u^{\pm 1}]): \Alg^{\G_m} \to \Alg^\Z(B\G_m)$ with $\lim: \Alg^\Z(B\G_m) \to \Alg^\Z$.
\end{Lem}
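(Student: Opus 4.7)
The plan is to reduce the claim to Proposition \ref{Prop:CTwist} by testing both sides against the compact projective generators $\Z[x(d)]$ of $\Alg^\Z = \PPP_\Sigma(\Poly^\Z)$. Once the equivalence is confirmed on mapping spaces out of these generators, the Yoneda lemma will upgrade it to an equivalence of objects in $\Alg^\Z$.

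First, I will use the adjunction $\Phi \dashv \Psi$ to rewrite, for each $d \in \Z$ and $A \in \Alg^{\G_m}$,
\begin{align*}
\Alg^\Z(\Z[x(d)], \Psi(A)) \simeq \Alg^{\G_m}(\Phi(\Z[x(d)]), A).
\end{align*}
Proposition \ref{Prop:CTwist}, via the coaugmentation $\alpha_{A,d}$, identifies the right-hand side with $\lim C(A(d))$, where by Definition \ref{Def:dtwist} the cosimplicial space $C(A(d))$ is $[n] \mapsto \Alg^\Z(\Z[x(d)], C^n(A[u^{\pm 1}]))$. Since the representable functor $\Alg^\Z(\Z[x(d)], -)$ preserves limits, I can swap the limit past the mapping space to obtain
\begin{align*}
\lim C(A(d)) \simeq \Alg^\Z\bigl(\Z[x(d)], \lim\nolimits_{[n] \in \bDelta} C^n(A[u^{\pm 1}])\bigr),
\end{align*}
where the inner limit is taken in $\Alg^\Z$ (and coincides with the limit considered as a map out of $\Alg^\Z(B\G_m)$, since the forgetful functor $\Alg^\Z(B\G_m) \to \Fun(\bDelta,\Alg^\Z)$ is constructed so as to preserve the underlying cosimplicial diagram).

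Combining, there is an equivalence $\Alg^\Z(\Z[x(d)], \Psi(A)) \simeq \Alg^\Z(\Z[x(d)], \lim C(A[u^{\pm 1}]))$, natural in $d \in \Z$ and in $A \in \Alg^{\G_m}$. Since the objects $\Z[x(d)]$ generate $\Alg^\Z = \PPP_\Sigma(\Poly^\Z)$ under sifted colimits and finite coproducts, the Yoneda lemma promotes this to a natural equivalence $\Psi(A) \simeq \lim C(A[u^{\pm 1}])$ in $\Alg^\Z$. Naturality in $A$ then gives the claimed equivalence of functors $\Psi \simeq \lim \circ C(-[u^{\pm 1}])$.

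The main bookkeeping obstacle will be ensuring that the equivalence from Proposition \ref{Prop:CTwist} is natural in $A$ (and compatible with the maps between the generators $\Z[x(d)]$ coming from the cosimplicial structure of $C(A[u^{\pm 1}])$) in a sufficiently coherent way to make the final Yoneda step produce a natural transformation of functors $\Alg^{\G_m} \to \Alg^\Z$ rather than a family of pointwise equivalences. This should follow by inspection of the construction of $\alpha_{A,d}$ in Definition \ref{Def:alpha}, where the coaugmentation is produced functorially from the cosimplicial structure on $C(A[u^{\pm 1}])$; aside from this verification, the argument is essentially formal.
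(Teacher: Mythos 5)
Your argument is correct and invokes the same key result (Proposition \ref{Prop:CTwist} via the twisted coaugmentations), but it is a small detour compared to the paper's proof, and the "bookkeeping obstacle" you flag at the end is precisely what the paper's route avoids. The paper does not reconstruct the equivalence from mapping spaces by Yoneda: Definition \ref{Def:alpha} already assembles the maps $\alpha_{A,d}$ into a coaugmentation $\alpha_A\colon \Psi(A) \to C(A[u^{\pm 1}])$ \emph{at the level of $\Z$-graded rings}, natural in $A$ by construction. Given that candidate map in hand, the proof is just the observation that applying $\Alg^\Z(\Z[x(d)],-)$ to $\alpha_A$ recovers $\alpha_{A,d}$, which is limiting by Proposition \ref{Prop:CTwist}; since the $\Z[x(d)]$ detect equivalences in $\Alg^\Z$, $\alpha_A$ is limiting. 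Your version instead produces a pointwise equivalence on mapping spaces $\Alg^\Z(\Z[x(d)],-)$ and then has to promote it, which forces you to worry about coherence of naturality across $\Poly^\Z$ (not merely in $d$). That worry is genuine as stated — you only assert naturality in $d$ and in $A$, whereas Yoneda for $\PPP_\Sigma(\Poly^\Z)$ requires functoriality over all of $\Poly^\Z$ — but it is exactly what the pre-built map $\alpha_A$ hands you for free. So: same core idea, but you should lead with the already-constructed natural map $\alpha_A$ rather than rebuilding it, which both shortens the argument and dissolves the coherence concern.
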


\begin{proof}
	For $A \in \Alg^{\G_m}$, the coaugmentation of $\Z$-graded rings $\alpha_A:\Psi(A) \to C(A[u^{\pm 1}])$ from Definition \ref{Def:alpha} is limiting. Indeed, applying $\Alg^\Z(\Z[x(d)],-)$ to $\alpha_A$ yields the limiting coaugmentation of spaces $\alpha_{A,d}$ from Proposition \ref{Prop:CTwist}. Since all the $\alpha_{A,d}$ are also natural in $A$, the map $\alpha_A$ is natural in $A$, whence the claim follows.
\end{proof}

Recall that for $\pi: B\G_m \to \Spec \Z$ the structure map, the adjunction $\pi^* \dashv \pi_*: \Alg \rightleftarrows \Alg(B\G_m)$ corresponds to the adjunction $\Alg \rightleftarrows \Alg^{\G_m}$, where the left adjoint endows a ring with trivial action and the right adjoint is taking invariants.  

\begin{Lem}
	\label{Lem:piLim}
	The functor $\pi_*:\Alg^\Z(B\G_m) \to \Alg^\Z$ is homotopic to the functor $\lim: \Alg^\Z(B\G_m) \to \Alg^\Z$. Likewise,  $\pi_*:\Alg(B\G_m) \to \Alg$ is homotopic to $\lim: \Alg(B\G_m) \to \Alg$.	
\end{Lem}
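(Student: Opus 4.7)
Both claims are proved by the same formal argument, so we focus on the graded case; the ungraded case is identical with $\Alg$ in place of $\Alg^\Z$. The strategy is to exploit the presentation of $B\G_m$ as the geometric realization of its bar construction, and to compute $\pi_*$ via an adjunction chase against that colimit.

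Write $\rho_n\colon\G_m^n\to B\G_m$ and $\pi_n\colon\G_m^n\to\Spec\Z$ for the level-$n$ face and structure maps of the bar construction, so that $B\G_m\simeq\colim_{[n]\in\bDelta^{\op}}\G_m^n$ in $\St$ and $\pi$ is the map induced by the cocone $(\pi_n)_n$. Since $\Alg^\Z(-)\colon\Art^{\op}\to\PrL$ is defined by right Kan extension from $\Alg$ (Definition \ref{Def:AlgMSt}), Čech descent along the atlas $\rho\colon\Spec\Z\to B\G_m$ furnishes an equivalence
\[
\Alg^\Z(B\G_m)\simeq\lim_{[n]\in\bDelta}\Alg^\Z(\G_m^n),
\]
under which $\AA\in\Alg^\Z(B\G_m)$ corresponds to its cosimplicial diagram of pullbacks $[n]\mapsto\AA_n\coloneqq\rho_n^*\AA$. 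This reproduces the cosimplicial description of $\Alg^\Z(B\G_m)$ recalled immediately before the lemma statement, and for $\Alg$ is already implicit in Proposition \ref{Prop:AlgBGm}.

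Under this equivalence, $\pi^*\colon\Alg^\Z\to\Alg^\Z(B\G_m)$ sends $R$ to $(\pi_n^*R)_n$. The claim then follows by Yoneda from the chain of natural equivalences
\begin{align*}
\Alg^\Z(R,\pi_*\AA)
&\simeq \Alg^\Z(B\G_m)(\pi^*R,\AA) \\
&\simeq \lim_n\Alg^\Z(\G_m^n)(\pi_n^*R,\AA_n) \\
&\simeq \lim_n\Alg^\Z(R,\AA_n) \\
&\simeq \Alg^\Z(R,\lim_n\AA_n),
\end{align*}
where the third step uses $\pi_n^*\dashv(\pi_n)_*$ together with the fact that $\pi_n$ is affine, so that $(\pi_n)_*\AA_n$ has $\AA_n$ as its underlying graded $\Z$-algebra, and the fourth is the universal property of limits in $\Alg^\Z$.

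The only non-formal ingredient is the descent equivalence $\Alg^\Z(B\G_m)\simeq\lim_n\Alg^\Z(\G_m^n)$, i.e.\ that the right Kan extension of $\Alg^\Z(-)$ from $\Alg$ to $\Art^{\op}$ is computed by the totalization of the Čech nerve of the atlas $\rho$. This is the expected behaviour of a sheaf of categories on a quotient stack; it can be established either by a cofinality argument (the Čech nerve of $\rho$ is cofinal in the affine slice over $B\G_m$) or by transport along the corresponding statement for $\Alg$ established in Proposition \ref{Prop:AlgBGm}, using that $\Alg^\Z_{(-)}$ is itself defined by right Kan extension in its second argument.
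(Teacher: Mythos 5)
Your argument is correct in substance but takes a genuinely different route from the paper's. The paper works entirely at the level of adjunctions: writing $\CCC=\Fun(\bDelta,\Alg^\Z)_{C(\Z)/}$, it observes that the composition of $\pi^*$ with the (fully faithful) inclusion $\Alg^\Z(B\G_m)\hookrightarrow\CCC$ is the ``trivial coaction'' functor $F$, shows directly that $F\dashv\lim$ (because $FB$ is the coproduct of the constant diagram $\Delta B$ with $C(\Z)$), and then concludes $\pi_*\simeq\lim$ by uniqueness of right adjoints. You instead unwind the same identification at the level of mapping spaces, running a Yoneda argument over the Čech presentation $\Alg^\Z(B\G_m)\simeq\lim_{[n]}\Alg^\Z(\G_m^n)$. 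Both proofs rest on the same non-formal input — a concrete cosimplicial model for $\Alg^\Z(B\G_m)$ — which the paper introduces as the ``identification'' just before the lemma, and which you phrase as the descent equivalence. The one place where I would push back is your suggested justification of that input by cofinality: the Čech nerve of $\rho$ is not in general final in the slice of affines over $B\G_m$, so this is not a cofinality statement. The correct route is $\PrL$-valued smooth descent for $\Alg^\Z_{(-)}$, or — closer to what the paper actually uses — transport through Theorem \ref{Thm:Equiv} and Proposition \ref{Prop:AlgBGm} together with the fibre-product description of $\Alg^\Z(B\G_m)$ as the graded analogue. With that input properly supplied, your chain of equivalences is sound; the paper's version is a touch more economical because it never has to pass to a limit of mapping spaces over $\bDelta$ explicitly.
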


\begin{proof}
	Write $\CCC$ for the category $\Fun(\bDelta,\Alg^\Z)_{C(\Z)/}$, and let $F:\Alg^\Z \to \CCC$ be the functor which sends $B \in \Alg^\Z$ to the cosimplicial diagram $B \rightrightarrows B[t^{\pm}] \dots$ of $\Z$-graded $B$-algebras corresponding to the trivial $\G_m$-coaction on $B$. 
	
	The functor $F$ is left adjoint to $\lim: \CCC \to \Alg^\Z$. Indeed, for $B \in \Alg^\Z$ and $C \in \CCC$, by purely formal arguments, it holds that 
	\begin{align*}
		\CCC(FB, C) \simeq \Fun(\bDelta,\Alg^\Z)(\Delta B,C) \simeq \Alg^\Z(B, \lim C)
	\end{align*}
	where $\Delta B$ is the constant diagram.
	
	Now $\pi^*$ endows a $\Z$-graded ring with trivial $\G_m$-coaction. Thus  the composition of $\pi^*$ with the inclusion $\Alg^\Z(B\G_m) \to \CCC$ is $F$. From $\pi^* \dashv \pi_*$ and $F \dashv \lim$ it follows that $\pi_*$ is the restriction of $\lim$ to $\Alg^\Z(B\G_m)$. 
	
	The argument that $\pi_*:\Alg(B\G_m) \to \Alg$ is $\lim: \Alg(B\G_m) \to \Alg$ is the same.
\end{proof}

Write $U:\Alg^\Z \to \Alg$ and $V:\Alg^{\G_m} \to \Alg$ for the forgetful functors.

\begin{Prop}
	The functor $\Psi$ commutes with $U$ and $V$.
\end{Prop}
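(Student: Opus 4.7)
The plan is to show $U \Psi \simeq V$ as functors $\Alg^{\G_m} \to \Alg$ by combining Lemma \ref{Lem:PsiLim} with Lemma \ref{Lem:piLim}, reducing everything to the identity $\pi_*\rho_* \simeq \id$.

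First, by Lemma \ref{Lem:PsiLim}, $\Psi$ factors as $\Alg^{\G_m} \xrightarrow{C(-[u^{\pm 1}])} \Alg^\Z(B\G_m) \xrightarrow{\lim} \Alg^\Z$. To interchange $U$ past this totalization, I would verify that $U \colon \Alg^\Z \to \Alg$ preserves all small limits. This is the main technical point, but it is not hard: $U$ coincides with $\epsilon^*$, where $\epsilon\colon \Poly \to \Poly^\Z$ sends $\Z[x]$ to $\Z[x(0)]$; since limits in $\PPP_\Sigma(\CCC)$ agree with those in $\PPP(\CCC)$ (finite products in $\Space$ commute with small limits), and since $\epsilon^*$ on presheaf $\infty$-categories is right adjoint to left Kan extension, $U$ preserves all limits. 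Hence $U\Psi(A) \simeq \lim_{\bDelta} U C(A[u^{\pm 1}])$ in $\Alg$.

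Next, I would identify the cosimplicial diagram $UC(A[u^{\pm 1}])$ with $\rho_*V(A)$ as objects of $\Alg(B\G_m)$. Indeed, forgetting the grading from the diagonal $\G_m$-coaction on $A[u^{\pm 1}]$ gives back $V(A)[u^{\pm 1}]$ with the scaling coaction on $u$, which is exactly $\rho_*V(A)$ by Example \ref{Exm:AlgBGM}. Naturality in $A$ yields an equivalence of functors $UC(-[u^{\pm 1}]) \simeq \rho_* V$.

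Finally, applying the ungraded half of Lemma \ref{Lem:piLim}, the totalization $\lim \circ \rho_*$ equals $\pi_* \circ \rho_*$, and since $\pi \rho \simeq \id_{\Spec \Z}$ we get $\pi_*\rho_* \simeq \id$. Combining the three steps,
\begin{align*}
U\Psi(A) \;\simeq\; \lim_{\bDelta} U C(A[u^{\pm 1}]) \;\simeq\; \pi_*\rho_* V(A) \;\simeq\; V(A),
\end{align*}
naturally in $A$. The only real subtlety is the first step, verifying that $U$ preserves limits despite being the left adjoint $j_!$ in the notation of Example \ref{Exm:fjadjs}; the rest is bookkeeping about the interplay between the $\rho_*\dashv \rho^*$ adjunction, the diagonal coaction, and the totalization formula of Lemma \ref{Lem:piLim}.
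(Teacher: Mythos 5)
The core technical step---showing $U$ passes through the totalization---is where your argument breaks down. You identify $U$ with $\epsilon^*$, precomposition with $\epsilon\colon\Poly\to\Poly^\Z$, $\Z[x]\mapsto\Z[x(0)]$. But $\epsilon^*$ is the degree-zero functor $(-)_0$, \emph{not} the forgetful functor $U$: for a graded algebra $B$, $\epsilon^*(B)=B(\Z[x(0)])\simeq B_0$, whereas $U(B)\simeq\bigoplus_d B_d$. In the paper's notation $U=j_!$ for $j\colon\Z\to 0$ (Example \ref{Exm:fjadjs}), which is a left Kan extension along $[j]^\op$, not a precomposition. Your observations that limits in $\PPP_\Sigma$ agree with those in $\PPP$ and that precomposition is a right adjoint are both correct, but they establish continuity of $(-)_0$, not of $U$. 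In fact $U$ does \emph{not} preserve all small limits: since $U(B)\simeq\bigoplus_d B_d$ and direct sums fail to commute with infinite products, $U$ fails to preserve infinite products, and there is no general reason it should preserve a totalization over $\bDelta$.

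This is exactly the gap that Lemma \ref{Lem:magic} is there to fill. That lemma establishes $j_!\,\pi_*\simeq\pi_*\,j_!$, not via a general limit-preservation argument, but by a smooth-local reduction: the question is reduced to the affine case $\Spec B\to\Spec A$, where $\pi_*$ is restriction of scalars and commutation with $j_!$ is manifest. Combined with Lemma \ref{Lem:piLim}, which identifies $\pi_*$ with $\lim$ on $\Alg^\Z(B\G_m)$, this gives precisely the commutation of $U$ past the totalization that you need. The rest of your proof follows the same skeleton as the paper's (factor $\Psi$ as $\lim\circ C(-[u^{\pm 1}])$ via Lemma \ref{Lem:PsiLim}, pass $U$ inside, and compute the ungraded totalization as $\pi_*\rho_*\simeq\id$); the single missing ingredient is Lemma \ref{Lem:magic} in place of the incorrect continuity claim.
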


\begin{proof}
	Consider the following diagram
	\begin{center}
		\begin{tikzcd}
			\Alg^{\G_m} \arrow[drr, swap, "{C(-)[u^{\pm 1}]}"] \arrow[rr, "{C(-)[u^{\pm 1}]}"] && \Alg^{\Z}(B\G_m) \arrow[r, "\pi_*"] \arrow[d, "U"] & \Alg^{\Z} \arrow[d, "U"] \\
			&& \Alg(B\G_m) \arrow[r, "\pi_*"] & \Alg
		\end{tikzcd}
	\end{center}
	It is clear that the triangle on the left commutes. The square on the right commutes by Lemma \ref{Lem:magic}. Now b y Lemma \ref{Lem:PsiLim} and \ref{Lem:piLim}, the composition of the top arrows is $\Psi$, and the composition of the diagonal arrow with the bottom arrow is $V$.
\end{proof}

\begin{Cor}
	The functor $\Psi$ is conservative, and hence $\Phi \dashv \Psi$ is an adjoint equivalence.
\end{Cor}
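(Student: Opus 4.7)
The plan is to deduce conservativity of $\Psi$ as a formal consequence of the previous proposition, and then combine this with the already-established full faithfulness of $\Phi$ (Proposition \ref{Prop:unitinv}) to conclude that the adjunction is an equivalence.

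First I would establish that the forgetful functor $V : \Alg^{\G_m} \to \Alg$ is conservative. A morphism $f : A \to A'$ in $\Alg^{\G_m}$ is, by definition, a morphism of the corresponding cosimplicial diagrams $C(A) \to C(A')$ over $C(\Z)$, and such a morphism is an equivalence iff it is so at each cosimplicial level. By the Segal condition for $B(\Spec A, \G_m)$ together with the fact that morphisms of $G$-objects cover the identity on $B(*, G)$ (cf.\ Remark \ref{Rem:ActClas}), the map at level $n$ is determined by the map at level $0$ tensored with the identity on $\Z[t_1^{\pm 1}, \dots, t_n^{\pm 1}]$; since tensoring with a flat ring preserves equivalences, $f$ is an equivalence iff $V(f) = f^0$ is. Hence $V$ is conservative.

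Now, from the immediately preceding proposition we have a natural equivalence $U \Psi \simeq V$ where $U : \Alg^\Z \to \Alg$ is the forgetful functor. If $\Psi(f)$ is an equivalence in $\Alg^\Z$, then $V(f) \simeq U\Psi(f)$ is an equivalence in $\Alg$, whence $f$ is an equivalence by conservativity of $V$. This gives the first assertion.

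For the second assertion, I would run the standard triangle-identity argument. Since $\Phi$ is fully faithful by Proposition \ref{Prop:unitinv}, the unit $\eta : \id \to \Psi \Phi$ is an equivalence. The triangle identity reads $(\Psi \epsilon) \circ (\eta_\Psi) \simeq \id_\Psi$, so $\Psi \epsilon$ is an equivalence, and conservativity of $\Psi$ upgrades this to $\epsilon : \Phi \Psi \to \id$ being an equivalence. Thus both unit and counit are invertible, and $\Phi \dashv \Psi$ is an adjoint equivalence.

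The only non-formal point is the conservativity of $V$, which already rests on the Segal-type structure of $\G_m$-actions; the rest of the argument is purely adjunction-theoretic, so I do not expect a serious obstacle here — all the real content has been absorbed into Proposition \ref{Prop:unitinv} and the preceding proposition comparing $\Psi$ with the forgetful functors.
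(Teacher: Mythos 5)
Your proof is correct and follows essentially the same route as the paper: conservativity of $\Psi$ from $U\Psi \simeq V$ together with conservativity of $V$, then the triangle-identity argument to upgrade full faithfulness of $\Phi$ to an adjoint equivalence. The only difference is that you spell out why $V$ is conservative (via the Segal/levelwise structure of the cosimplicial diagrams), whereas the paper simply asserts it; this is a harmless addition that fills in a step the paper treats as evident.
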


\begin{proof}
	Since $V$ is conservative, the first statement follows from $U \Psi \simeq V$. Now the second statement is purely formal: since the unit is invertible by Proposition \ref{Prop:unitinv}, it suffices to show that the map $\Psi(\epsilon_A) : \Psi\Phi \Psi(A) \to \Psi(A)$ induced by the counit is invertible, for any $A \in \Alg^{\G_m}$. By adjunction, we have $\id_{\Psi(A)}\simeq \Psi(\epsilon_A) \circ \eta_{\Psi(A)}$. Since $\eta_{\Psi(A)}$ is invertible, the claim follows.
\end{proof}

\subsection{Globalizing the adjoint equivalence $\Phi \dashv \Psi$}
\label{Par:Glob}
Consider now the adjunctions
\begin{center}
	\begin{tikzcd}
	\Alg \arrow[r, shift left, "\mathrm{triv}"] & \Alg^\Z \arrow[l, shift left, "{(-)_0}"] \arrow[r, shift left, "\Phi"] & \Alg^{\G_m} \arrow[l, shift left, "\Psi"]
	\end{tikzcd}
\end{center}
where $\mathrm{triv} \dashv (-)_0$ is the adjunction from Example \ref{Exm:fjadjs} induced by $j:0 \to \Z$. By composing, we recover the adjunction $\pi^* \dashv \pi_*$ from Example \ref{Exm:AlgBGM}. 

\begin{Lem}
	\label{Lem:AffAbs}
	We have equivalences  $\Alg_{R}^{\G_{m,R}} \to \Alg^{\G_m} \times_{\Alg} \Alg_R$, for any ring $R$, where the fibre product is taken with respect to $\pi_*$.
\end{Lem}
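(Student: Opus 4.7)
The plan is to translate both sides into the language of stacks using Proposition \ref{Prop:AlgBGm}. The left-hand side becomes $\Alg(B\G_{m,R})$, and on the right-hand side the functor $\pi_*: \Alg^{\G_m} \to \Alg$ becomes the pushforward $\Alg(B\G_m) \to \Alg(\Spec \Z) = \Alg$ along the structure map of $B\G_m$. The Cartesian square $B\G_{m,R} \simeq B\G_m \times_{\Spec \Z} \Spec R$ then supplies two projections whose associated pushforwards $\beta_*: \Alg(B\G_{m,R}) \to \Alg(B\G_m)$ and $(\pi_R)_*: \Alg(B\G_{m,R}) \to \Alg_R$ agree after further pushforward to $\Alg$. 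These assemble into a comparison functor $F: \Alg(B\G_{m,R}) \to \Alg(B\G_m) \times_\Alg \Alg_R$ which the lemma asserts is an equivalence.

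The main content is showing $F$ is an equivalence, and I would proceed by direct unwinding. An object of $\Alg^{\G_{m,R}}_R$ is an $R$-algebra $A$ equipped with a cosimplicial coaction diagram of the form $A \rightrightarrows A \otimes_R R[t^{\pm 1}] \rightrightarrows \cdots$ in $\Alg_R$. Since $R[t^{\pm 1}] \simeq R \otimes_{\Z} \Z[t^{\pm 1}]$ by Example \ref{Exm:Sym}, we have $A \otimes_R R[t^{\pm 1}] \simeq A[t^{\pm 1}]$, and forgetting the $R$-algebra structure on each level produces a $\G_m$-coaction on the underlying $\Z$-algebra $A$. The remaining data is the $R$-algebra structure map $R \to A$, subject to the compatibility that the coaction be $R$-linear. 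This compatibility says that the composite $R \to A \to A[t^{\pm 1}]$ equals the inclusion, i.e.\ that $R \to A$ lands in the invariants $\pi_*(A) = A^{\G_m}$. This is precisely the data of an object of $\Alg^{\G_m} \times_\Alg \Alg_R$, and the resulting functor is manifestly functorial in $A$ and inverse to $F$.

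The main technical obstacle is that everything has to be interpreted $\infty$-categorically: the cosimplicial coactions are homotopy-coherent rather than strictly commutative, and the `invariants' are really the totalization of the cosimplicial diagram. To avoid relying on the informal unwinding, a more abstract route uses the formula $\Alg(\XXX) \simeq \lim_{\Spec A \to \XXX} \Alg_A$ from Definition \ref{Def:AlgMSt} together with the universal property of the pullback of stacks: indexing objects $\Spec A \to B\G_{m,R}$ correspond to compatible pairs $(\Spec A \to B\G_m,\, \Spec A \to \Spec R)$, so rewriting the limit and using the interchange of limits with fibre products in $\PrL$ expresses $\Alg(B\G_{m,R})$ directly as the claimed fibre product. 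Either route yields the result.
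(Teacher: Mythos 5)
Your first route correctly identifies the structural content of the lemma: an object of $\Alg_R^{\G_{m,R}}$ is a ring with $\G_m$-coaction together with the data of an $R$-algebra structure landing in the invariants, and you rightly flag that making the ``lands in invariants'' informal unwinding precise in the $\infty$-categorical setting is the real issue. What you are missing is that there is a clean formal statement that packages exactly this: since $C(R) = \pi^*R$ (i.e.\ $R$ with trivial coaction), the source category is the coslice $\Alg_R^{\G_{m,R}} \simeq (\Alg^{\G_m})_{\pi^*R/}$, and for any adjunction $\pi^* \dashv \pi_*$ the coslice under $\pi^*R$ is equivalent to the comma category $R \downarrow \pi_*$, which is precisely the fibre product $\Alg^{\G_m} \times_{\Alg} \Alg_{R/}$ with respect to $\pi_*$ and the forgetful functor. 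This is the entire content of the paper's proof (it cites this as \cite[Prop.\ 4.1.1]{RiehlElements}); it subsumes the informal unwinding and sidesteps the coherence issues in one stroke.

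The second, ``abstract'' route has a genuine gap. The limit formula $\Alg(\XXX) \simeq \lim_{\Spec A \to \XXX}\Alg_A$ produces a diagram whose transition maps are \emph{pullback} (extension of scalars) functors, and the map it naturally induces between the limits for $B\G_m$ and $\Spec\Z$ is $\pi^*:\Alg \to \Alg(B\G_m)$, not $\pi_*$. The fibre product in the lemma is taken over $\pi_*$, which is the right adjoint and is \emph{not} produced by the limit formula at all; consequently the comparison maps you would need to form $\Alg(B\G_m) \times_\Alg \Alg_R$ point in the wrong direction and cannot be read off from ``interchange of limits with fibre products.'' More fundamentally, a limit over a fibre product of index categories is not in general a fibre product of the limits over the factors, so the proposed rewriting does not go through. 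To salvage this route you would still have to invoke the adjunction $\pi^* \dashv \pi_*$, which brings you back to the coslice/comma argument above.
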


\begin{proof}
	Write $C(R)$ for the cosimplicial diagram associated to $R$ endowed with the trivial coaction.  Observe that $\Alg_R^{\G_{m,R}}$ is then the under-category $(\Alg^{\G_{m}})_{C(R)/}$. The equivalence follows from the adjunction $\pi^* \dashv \pi_*$, by \cite[Prop.\ 4.1.1]{RiehlElements}.
\end{proof}

\begin{Thm}
	\label{Thm:Equiv}
	For any scheme $S$, we have an adjoint equivalence 
	\begin{align*}
		\Phi_S\dashv \Psi_S: \Alg^\Z(S) \rightleftarrows \Alg^{\G_{m,S}}(S)
	\end{align*}
\end{Thm}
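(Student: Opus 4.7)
The plan is to reduce the global statement to the affine case, which in turn will follow almost formally from the absolute equivalence $\Phi \dashv \Psi : \Alg^\Z \rightleftarrows \Alg^{\G_m}$ already in hand. First, for a ring $R$, I would pass to coslice categories. By Remark \ref{Rem:AlgRMcoslice}, we have $\Alg^\Z_R = \Alg^\Z_{\mathrm{triv}(R)/}$, and by Lemma \ref{Lem:AffAbs} together with Proposition \ref{Prop:AlgBGm}, $\Alg^{\G_{m,R}}_R \simeq \Alg^{\G_m}_{\pi^*(R)/}$. The key compatibility, noted at the start of \S \ref{Par:Glob}, is that composition $\Phi \circ \mathrm{triv}$ recovers $\pi^*$, so the basepoints of both coslices correspond under the absolute equivalence. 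Restricting $\Phi \dashv \Psi$ to these coslices then yields an adjoint equivalence $\Phi_R \dashv \Psi_R : \Alg^\Z_R \rightleftarrows \Alg^{\G_{m,R}}_R$.

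Second, I would verify naturality of $\Phi_R \dashv \Psi_R$ in the variable $R$. For a ring map $\varphi: R \to R'$, naturality of the left adjoint is automatic: $\Phi$ is cocontinuous, $\varphi_!$ is cocontinuous, and they agree on the generating compact projectives $R[x(d)]$ where both are fixed by construction. Naturality of the right adjoint follows from the formula $\Psi \simeq \lim \circ C(-[u^{\pm 1}])$ of Lemma \ref{Lem:PsiLim}, since the cosimplicial object $C(-[u^{\pm 1}])$ is defined in a manifestly base-independent way and its totalization commutes with the restriction-of-scalars functors $\varphi^!$. This yields a natural equivalence between the two functors $\Alg \to \PrL$ sending $R$ to $\Alg^\Z_R$ and to $\Alg^{\G_{m,R}}_R$ respectively.

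Finally, for a general scheme $S$, both sides are pointwise right Kan extensions from $\Alg$: the left by Definition \ref{Def:AlgMSt} combined with Remark \ref{Rem:KanExtProd}, and the right by Remark \ref{Rem:AlgGMloc}. Since right Kan extension is functorial in its input, the pointwise natural equivalence from the previous step induces an equivalence on the Kan extensions, producing the desired $\Phi_S \dashv \Psi_S$. The main obstacle is really the first step, namely identifying the correct basepoint in each coslice so that the absolute equivalence descends: once one has internalized the identifications $\Phi \circ \mathrm{triv} \simeq \pi^*$ and dually $(-)_0 \circ \Psi \simeq \pi_*$, which were one of the payoffs of the $\Psi \simeq \lim \circ C(-[u^{\pm 1}])$ description, the remainder is standard bookkeeping with coslice and Kan-extension formalism.
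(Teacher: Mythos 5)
Your proposal is correct and follows essentially the same route as the paper: reduce to the affine case by identifying both sides with coslice categories under corresponding basepoints (the paper does this via Lemma \ref{Lem:AffAbs}, whose proof is exactly the observation $\Alg^{\G_{m,R}}_R \simeq \Alg^{\G_m}_{C(R)/}$ that you record, keyed to the compatibility $\Phi \circ \mathrm{triv} \simeq \pi^*$ noted at the start of \S\ref{Par:Glob}), observe that this construction is natural in $R$, and then globalize by writing $\Alg^\Z(S)$ and $\Alg^{\G_{m,S}}(S)$ as limits over the affine site of $S$. Your explicit naturality argument — cocontinuity plus agreement on generators for $\Phi$, and limit-preservation of $\varphi^!$ applied to the $\lim C(-[u^{\pm 1}])$ description of $\Psi$ — is a sound unwinding of what the paper dispatches with the one line ``This also shows that the adjunctions $\Phi_R \dashv \Psi_R$ are natural in $R$''; the paper's implicit justification rests instead on the naturality of the fibre-product and coslice constructions already established in Proposition \ref{Prop:NRadj} and Lemma \ref{Lem:AffAbs}, but the two viewpoints amount to the same thing.
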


For $S=\Spec R$ we write $\Phi_S\dashv \Psi_S$ as $\Phi_R\dashv \Psi_R$.

\begin{proof}
	The case $S = \Spec R$ follows from Lemma \ref{Lem:AffAbs} and the case $S = \Spec \Z$. This also shows that the adjunctions $\Phi_R \dashv \Psi_R$ are natural in $R \in \Alg$. Then, for general $S$, we write $\Alg^\Z(S)$ and $\Alg^{\G_{m,S}}(S)$ as a limit of the categories $\Alg^\Z_R$ and $\Alg^{\G_{m,R}}_R$, indexed over all maps $\Spec R \to S$ by Definition \ref{Def:AlgMSt} and Remark \ref{Rem:AlgGMloc}.
\end{proof}

The proof shows that $\Phi_S \dashv \Psi_S$ is natural in $S$. We will henceforth identify the functor $\Alg^\Z(-)$ with $\Alg^{\G_{m}}(-)$. 

\subsection{Graded algebras as $\G_m$-comodules}
We record a result which is interesting in its own right, concerning an application of the Barr--Beck--Lurie theorem as in \cite[\S 4.7]{LurieHA}. Since we will not use the result in what follows, we will not unpack all definitions from \cite{LurieHA} that we use.

Recall that a \textit{monad} on a category $\CCC$ is an algebra object $T$ in the monoidal category $\Fun(\CCC,\CCC)$ under composition. For such $T$, we let $\Mod_T(\CCC)$ be the category of left $T$-modules. The objects of $\Mod_T(\CCC)$ are objects $c \in \CCC$ together with a homotopy coherent action $T(c) \to c$. 

Let  $F: \CCC \rightleftarrows \DDD:G$ be a given adjunction. Then $T \coloneqq GF$ is a monad on $\CCC$, and $G$ factors as $G': \DDD \to \Mod_T(\CCC)$ followed by the forgetful functor $\Mod_T(\CCC) \to \CCC$. Now $F \dashv G$ is called \textit{monadic} if $G'$ is an equivalence. 

The Barr--Beck--Lurie theorem says that $F \dashv G$ is monadic if and only if $G$ is conservative  and geometric realizations of $G$-split simplicial objects exist in $\DDD$ and are preserved by $G$. Here, a \textit{$G$-split} simplicial object $X:\bDelta^\op \to \DDD$ means that $GX$ has a contracting homotopy in $\CCC$. 

Consider the adjunction $U \dashv (-)[t^{\pm 1}] : \Alg^\Z(S) \to \Alg(S)$ from Example \ref{Exm:fjadjs} induced by the map $j: \Z \to 0$, where $U$ is the forgetful functor. This corresponds, under the adjoint equivalence $\Phi_S \dashv \Psi_S$, to the adjunction $\rho^* \dashv \rho_* : \Alg^{\G_{m,S}}(S) \rightleftarrows \Alg(S)$ from Example \ref{Exm:AlgBGM}. Taking opposites on both sides gives us the adjunction
\begin{align*}
	(-) \times \G_m \dashv V: \Aff(S) \rightleftarrows \Aff^{\G_{m,S}}(S)
\end{align*}
where $V$ is the forgetful functor.

\begin{Lem}
	\label{Lem:Ucomonadic}
	The adjunction $(-) \times \G_m \dashv V$ is monadic.
\end{Lem}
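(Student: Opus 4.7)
The plan is to apply the Barr--Beck--Lurie monadicity theorem (e.g.\ \cite[Thm.~4.7.3.5]{LurieHA}), which requires verifying two conditions: (a) $V$ is conservative; and (b) $\Aff^{\G_{m,S}}(S)$ admits geometric realizations of $V$-split simplicial objects, and $V$ preserves them. It is convenient to pass to opposite categories throughout and work with $\rho^{*}: \Alg^{\G_{m,S}}(S) \to \Alg(S)$ from Example~\ref{Exm:AlgBGM}, which under the adjoint equivalence $\Phi_{S} \dashv \Psi_{S}$ of Theorem~\ref{Thm:Equiv} corresponds to the forgetful functor $U = j_{!}: \Alg^{\Z}(S) \to \Alg(S)$ of Example~\ref{Exm:fjadjs}: both send an equivariant object to its underlying $\OO_{S}$-algebra.

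For (a), Lemma~\ref{Lem:Pingrad} gives $\pi_{n}(U\BB) \cong \bigoplus_{d \in \Z} \pi_{n}(\BB_{d})$, so any map $f: \BB \to \BB'$ in $\Alg^{\Z}(S)$ whose image under $U$ is an equivalence induces $\pi_{n}$-isomorphisms on each graded piece, hence is itself an equivalence. Thus $U$, and therefore $V$, is conservative.

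For (b), the existence of geometric realizations in $\Aff^{\G_{m,S}}(S)$ is immediate from the presentability of $\Alg^{\G_{m,S}}(S)$, inherited via Theorem~\ref{Thm:Equiv} from each $\Alg^{\Z}_{R}$ together with Remark~\ref{Rem:AlgGMloc}. For preservation, the key fact is that split cosimplicial objects furnish absolute limits \cite[\S 4.7.2]{LurieHA}, preserved by any functor; dually, $V$-split geometric realizations in $\Aff(S)$ are absolute colimit diagrams. So the problem reduces to lifting the splitting of $\rho^{*}X^{\bullet}$ in $\Alg(S)$ to a splitting of $X^{\bullet}$ in $\Alg^{\G_{m,S}}(S)$. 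This is the main obstacle I anticipate.

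To handle it, I would localize to $S = \Spec R$ via Remark~\ref{Rem:AlgGMloc}, pass to the strict simplicial model of $\Alg^{\Z}_{R}$ from Remark~\ref{Rem:StrictAlgMA}, and compute totalizations graded piece by graded piece in $\cn\Mod_{R}$ using Lemma~\ref{Lem:Pingrad}: the splitting data of the underlying cosimplicial object then distributes over the grading, yielding a splitting in $\Alg^{\Z}_{R}$ whose image under $U$ recovers the original splitting in $\Alg_{R}$. Gluing these local splittings via the descent presentation of Remark~\ref{Rem:AlgGMloc} gives the sought preservation property, completing the verification of (b) and hence the proof.
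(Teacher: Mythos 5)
Your structure is right — apply Barr--Beck--Lurie, check conservativity, check existence and $V$-preservation of $V$-split geometric realizations — and the conservativity check via the graded decomposition of homotopy groups is a fine substitute for the paper's one-word assertion. The gap is in the preservation step. You correctly identify the crux: the splitting of $\rho^{*}X^{\bullet}$ in $\Alg(S)$ would need to lift to a splitting of $X^{\bullet}$ in $\Alg^{\G_{m,S}}(S)$. But then you claim this follows because "the splitting data of the underlying cosimplicial object distributes over the grading." That claim is unsupported and false in general: the extra (co)degeneracy maps witnessing the splitting of $UX^{\bullet}$ are only $R$-algebra maps, and there is no reason for them to be \emph{graded} $R$-algebra maps. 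Concretely, an ungraded ring map $\bigoplus_d X^{n}_d \to \bigoplus_d X^{n-1}_d$ can mix degrees, so the contracting homotopy need not restrict to each graded piece. Passing to the strict model of Remark \ref{Rem:StrictAlgMA} does not help, since the issue is about the maps, not the objects. And the final gluing step is also delicate: a splitting is extra data rather than a property, so "gluing local splittings via descent" would itself require an argument you have not supplied.

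The paper avoids this entirely. Rather than lifting the splitting, it forms the colimit $Y$ of the $V$-split simplicial object $Y_{\bullet}$ in the larger category $\St^{\G_{m,S}}_{/S}$, where colimits always exist. Because the forgetful functor $\St^{\G_{m,S}}_{/S} \to \St_{/S}$ is the pullback $\rho^{*}$ along $S \to B\G_{m,S}$ — a colimit-preserving functor by universality of colimits in a topos — the underlying stack of $Y$ is $\colim VY_{\bullet}$ computed in $\St_{/S}$. Since split geometric realizations are \emph{absolute}, this coincides with the colimit $Y_{-1}$ of $VY_{\bullet}$ in $\Aff(S)$, hence $Y$ is affine over $S$, lies in $\Aff^{\G_{m,S}}(S)$, and $V$ takes it to $Y_{-1}$. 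In short: one never needs $Y_{\bullet}$ itself to be split, only its image under $V$; the comparison is done at the level of stacks, where the forgetful functor has much better exactness properties than the forgetful functor $U: \Alg^{\Z}_R \to \Alg_R$, which is a left adjoint and does not preserve totalizations in general.
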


\begin{proof}
	Clearly $V$ is conservative, so by the Barr--Beck--Lurie theorem it suffices to show that $V$ preserves $V$-split geometric realizations. So let $Y_\bullie$ be a $V$-split simplicial diagram in $\Aff^{\G_{m,S}}(S)$. 
	
	Let $Y$ be the colimit of $Y_\bullie$ in $\St^{\G_{m,S}}_{/S}$.	Since colimits commute past each other, we have
	\begin{align*}
	[Y/\G_{m,S}] = [\colim Y_\bullie / \G_{m,S}] \simeq \colim [Y_\bullie/\G_{m,S}]
	\end{align*}
	Also, since colimits in $\St_{/S}$ are universal, the map
	\begin{align*}
	\colim ([Y_\bullie/\G_{m,S}] \times_{B\G_{m,S}} S) \to \colim ([Y_\bullie/\G_{m,S}])  \times_{B\G_{m,S}} S 
	\end{align*}
	is an equivalence, where the colimits are taken in $\St_{/S}$. We thus have an equivalence
	\begin{align}
		\label{Eq:colimY}
	g':\colim Y_\bullie \simeq \colim ([Y_\bullie/\G_{m,S}] \times_{B\G_{m,S}} S) \to [Y/\G_{m,S}] \times_{B\G_{m,S}} S \simeq Y
	\end{align}
	where again the colimit is taken in $\St_{/S}$, after forgetting the $\G_{m,S}$-action on $Y_\bullie$ and on $Y$.
	
	Now let $Y_{-1}$ be the colimit of $VY_\bullie$ in $\Aff(S)$. After forgetting the $\G_{m,S}$-action on $Y_\bullie$ and $Y$, we have a morphism $h:Y_{-1} \to Y$ under $Y_\bullie$ in $\St_{/S}$. Now since $Y_\bullie$ is $V$-split, $Y_\bullie \to Y_{-1}$ is colimiting in $\St_{/S}$. Also, $Y_\bullie \to Y$ is colimiting in $\St_{/S}$ by (\ref{Eq:colimY}). It follows that $h$ is an equivalence. Consequently, $Y$ is affine over $S$ and is thus the colimit of $Y_\bullie$ in $\Aff^{\G_{m,S}}(S)$, from which the claim follows.
\end{proof}

\section{Projective spectra}
\label{Sec:Proj}
Throughout, let $\BB$ be an $\N$-graded, quasi-coherent $\OO_S$-algebra on a scheme $S$, with underlying $\N$-graded $\OO_S$-module $\bigoplus_d \BB_d$, such that $\pi_0 \BB$ is generated as a $\pi_0\OO_S$-algebra by $\pi_0\BB_1$, locally on $S$. We refer to this latter condition by saying that $\BB$ is \textit{generated in degree 1}.  

The goal of this section is to define the projective spectrum $\Proj \BB$ and show that it is representable by a scheme. After this, we want to give a description of the functor of points of $\BB$. This will lead us to recover the universal line bundle on $\Proj \BB$, for which we will first need Serre's twisting sheaf, which in turn is based on the exact functor $(-)^\sim$ from graded, quasi-coherent $\BB$-modules to quasi-coherent $\OO_{\Proj \BB}$-modules. 

\begin{Rem}
	The assumption that $\BB$ is generated in degree 1 will give us a nice affine cover of $\Proj \BB$ of the form $\Spec B_{(f)}$, for $f$ homogeneous of degree 1. It is also used in the proof of Lemma \ref{Lem:Twist}, where we show that the twisted modules $\OO_{\Proj \BB}(n)$ are invertible. As in the classical case, the assumption is not as strict as it may seem.
	
	For $\delta\in \N$ and $R \in \Alg$, let $\sigma_\delta$ be the functor $\Poly^\Z_R \to \Poly^\Z_R$ that sends $R[x(d_1),\dots,x(d_k)]$ to $R[x(\delta d_1),\dots,x(\delta d_k)]$. Precomposing with $\sigma_\delta$ gives us a functor 
	\begin{align*}
	\Alg^\Z_R \to \Alg^\Z_R : B \mapsto B^{(\delta)}
	\end{align*}
	where $B^{(\delta)}_d = B_{d \delta}$. Since this is natural in $R$, we get a functor $\Alg^\Z(S) \to \Alg^\Z(S): \BB \mapsto \BB^{(\delta)}$, where again $\BB^{(\delta)}_d = \BB_{\delta d}$
	
	Now suppose that $\BB$ is an $\N$-graded, quasi-coherent $\OO_S$-algebra such that $\pi_0 \BB$ is a finitely generated $\pi_0 \OO_S$-algebra. For simplicity, assume that $S$ is affine. Then there is some $\delta \in \N$ such that $\BB^{(\delta)}$ is generated in degree 1, and we simply define $\Proj \BB$ as $\Proj \BB^{(\delta)}$. To see that this agrees with the classical case,  see e.g.\ \cite[Rem.\ 13.11]{GortzAlgebraic}. This strategy can be extended to quasi-compact $S$ by taking a finite cover of affine subschemes.
\end{Rem}

\subsection{Construction and representability}
Recall from \S \ref{Par:IrrId} that we defined the irrelevant ideal of $\BB$ as $\BB_+ = \bigoplus_{n>0} \BB_n$, which comes with a fibre sequence $\BB_+ \to \BB \to \BB_0$. The closed immersion $\Spec \BB_0 \to \Spec \BB$ is written as $V(\BB_+)   \to \Spec \BB$.

The $\G_{m,S}$-action on $\Spec \BB$ restricts to a ${\G_{m,S}}$-action on $\Spec\BB \smallsetminus V(\BB_+)$. Indeed, the pullback along $V(\BB_+) \to \Spec \BB$ of the action $\Spec \BB \times_S \G_{m,S} \to \Spec \BB$ is $V(\BB_+) \times_S \G_{m,S} \to V(\BB_+)$, since $\BB \to \BB_0$ is a homogeneous map. Therefore $(\Spec \BB \smallsetminus V(\BB_+)) \times_S \G_{m,S} \to \Spec (\BB)$ lands in $\Spec \BB \smallsetminus V(\BB_+)$.

\begin{Def}
	The \textit{relative projective spectrum} $\Proj \BB$ of $\BB$ over $S$ is the  stack $[(\Spec \BB \smallsetminus V(\BB_+)) / \G_{m,S}]$ defined over $S$.
\end{Def}

We are going to show that the relative projective spectrum is a scheme. We want to argue locally, for which we need that the projective spectrum commutes with pullbacks. Note that, for given $g:T\to S$, the pullback $g^*:\Alg^\Z(S) \to \Alg^\Z(T)$ maps $\N$-graded objects to $\N$-graded objects, and is such that $(g^*\BB)_0 \simeq g^*(\BB_0)$. 

\begin{Lem}
	\label{Lem:ProjPB}
	The $\Proj$-construction is stable under pullbacks. That is, if $g:T \to S$ is given, then $\Proj g^*\BB \simeq \Proj \BB \times_S T$.
\end{Lem}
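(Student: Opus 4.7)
The plan is to unpack the definition of $\Proj$ and chase base change through each ingredient, relying on the fact that pullbacks along $g$ commute with finite limits, with colimits (in $\St$), with forming the underlying graded module, and with the quotient-stack construction.

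First, since $g^* : \Alg^\Z(S) \to \Alg^\Z(T)$ is (the opposite of) a left adjoint along $\G_m$-equivariant base change, it preserves the grading and all colimits on underlying modules. In particular, the sequence $\BB_+ \to \BB \to \BB_0$ from \S\ref{Par:IrrId} pulls back to $g^*(\BB_+) \to g^*\BB \to g^*(\BB_0) \simeq (g^*\BB)_0$, which is identified with $(g^*\BB)_+ \to g^*\BB \to (g^*\BB)_0$. Thus $V((g^*\BB)_+) \simeq V(\BB_+)\times_S T$ as closed substacks of $\Spec g^*\BB \simeq \Spec \BB \times_S T$. Since the formation of open complements is stable under arbitrary base change, we obtain
\begin{equation*}
\Spec g^*\BB \smallsetminus V((g^*\BB)_+) \;\simeq\; (\Spec\BB \smallsetminus V(\BB_+))\times_S T.
\end{equation*}

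Next, the $\G_{m,T}$-action on the left-hand side agrees with the base change of the $\G_{m,S}$-action on $\Spec\BB \smallsetminus V(\BB_+)$ along $T \to S$: this is automatic from naturality of the equivalence in Theorem~\ref{Thm:Equiv} in the base, which was recorded in \S\ref{Par:Glob}, applied to the graded map $\BB \to \BB_0$. Equivalently, the simplicial diagram $B(\Spec\BB \smallsetminus V(\BB_+),\G_{m,S})$ pulls back levelwise to $B(\Spec g^*\BB \smallsetminus V((g^*\BB)_+),\G_{m,T})$, using that $\G_{m,T} \simeq \G_{m,S} \times_S T$.

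Finally, colimits in $\St$ are universal (they commute past pullbacks, as in any $\infty$-topos). Applying this to the simplicial diagrams above, and using that the colimit is the quotient stack by definition, gives
\begin{equation*}
\Proj g^*\BB \;=\; [(\Spec g^*\BB \smallsetminus V((g^*\BB)_+))/\G_{m,T}] \;\simeq\; [(\Spec\BB \smallsetminus V(\BB_+))/\G_{m,S}]\times_S T \;=\; \Proj\BB \times_S T,
\end{equation*}
which proves the claim. No step here is a real obstacle; the only mildly delicate point is ensuring that the $\G_{m,T}$-action on the pullback is indeed the base change of the $\G_{m,S}$-action, which is handled by invoking the naturality of the equivalence $\Phi_{(-)} \dashv \Psi_{(-)}$ established in \S\ref{Par:Glob}.
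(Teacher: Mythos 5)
Your proof is correct and follows essentially the same route as the paper: identify $V((g^*\BB)_+)$ with $V(\BB_+)\times_S T$, pass to open complements, and use universality of colimits in $\St$ to commute the quotient-stack construction with base change. The only difference is that you spell out the $\G_m$-equivariance of the pullback more explicitly (via naturality of $\Phi_{(-)}\dashv\Psi_{(-)}$), whereas the paper simply writes $g^*\G_{m,S}$ and lets the functoriality recorded earlier carry that point.
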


\begin{proof}
	We have that $
	V((g^* \BB)_+) \simeq \Spec  g^*\BB_0  \simeq V(\BB_+) \times_S T
	$, since the relative spectrum commutes with pullbacks.	From the pasting lemma for pullbacks, it follows that 
	$
	V(g^*\BB_+) \simeq \Spec  g^* \BB  \times_{\Spec \BB} V(\BB_+)
	$.	We then get
	\begin{align*}
	\Proj g^* \BB &= [ (\Spec g^*\BB \smallsetminus V(g^* \BB_+)) / \G_{m,T} ] \\ &\simeq [ g^* (\Spec \BB \smallsetminus V(\BB_+) ) / g^*\G_{m,S} ] \simeq \Proj \BB \times_S T
	\end{align*}
	For the last equivalence we have used that $g^*:\St_{/S} \to \St_{/T}$ commutes with quotient stacks. Indeed, $g^*$ commutes with all colimits since colimits in topoi are universal.
\end{proof}

\begin{Thm}
	\label{Thm:ProjRep}
	$\Proj \BB $ is representable by a scheme over $S$.
\end{Thm}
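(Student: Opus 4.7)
The plan is to cover $\Proj \BB$ by affine opens of the form $\Spec \BB_{(f)}$, for $f$ a local section of $\BB_1$, and to invoke that representability is local on the target. By Lemma \ref{Lem:ProjPB} the $\Proj$-construction commutes with pullback, so representability may be checked Zariski-locally on $S$. Replacing $S$ by an affine open $\Spec R$ over which $\pi_0\BB$ is generated as a $\pi_0 R$-algebra by finitely many classes $[f_1],\dots,[f_n] \in \pi_0 \BB_1$, we may lift these to homogeneous maps $f_i\colon R[x(1)] \to B$ in $\Alg^\Z_R$, where $B$ corresponds to $\BB$ under Definition \ref{Def:AlgMSt}.

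For each $i$, form the homogeneous localization $B_{f_i} \in \Alg^\Z_R$. By Lemma \ref{Lem:Deg0loc} the natural map $B_{(f_i)}[t^{\pm 1}] \to B_{f_i}$ is an equivalence of graded $R$-algebras. Transporting this across the contravariant equivalence $\Phi_R \dashv \Psi_R$ of Theorem \ref{Thm:Equiv} and combining with the description of $\rho_*$ in Example \ref{Exm:AlgBGM}, the $\G_{m,R}$-scheme $\Spec B_{f_i}$ is identified with the trivial torsor $\Spec B_{(f_i)} \times_R \G_{m,R}$; in particular the $\G_{m,R}$-action is free in the sense that taking the quotient stack yields
\begin{align*}
[\Spec B_{f_i} / \G_{m,R}] \simeq \Spec B_{(f_i)},
\end{align*}
which is affine over $S$.

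Next one verifies that the $\G_{m,R}$-equivariant open immersions $\Spec B_{f_i} \hookrightarrow \Spec B$ land in $\Spec B \smallsetminus V(B_+)$ and cover it. Equivariance holds since $B \to B_{f_i}$ is a map in $\Alg^\Z_R$, hence by Theorem \ref{Thm:Equiv} corresponds to a $\G_{m,R}$-equivariant morphism; that $B \to B_{f_i}$ is a flat open localization follows because derivedly it is a pushout of $R[s] \to R[s^{\pm 1}]$ in $\Alg^\Z_R$. The covering property is a statement on the underlying topological spaces, so it reduces to the classical fact that $D(f_1),\dots,D(f_n)$ cover $\Spec(\pi_0 B) \smallsetminus V(\pi_0 B_+)$, which follows from the generation hypothesis. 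Since pullback preserves colimits, the induced maps
\begin{align*}
\Spec B_{(f_i)} \simeq [\Spec B_{f_i}/\G_{m,R}] \;\longrightarrow\; [(\Spec B \smallsetminus V(B_+))/\G_{m,R}] = \Proj \BB
\end{align*}
constitute a Zariski open cover of $\Proj \BB$ by affines over $S$, proving representability.

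The main obstacle I expect is the identification $[\Spec B_{f_i} / \G_{m,R}] \simeq \Spec B_{(f_i)}$. It is not immediate from the definitions because the quotient stack is a colimit in $\St_{/S}$, whereas the identification $B_{f_i} \simeq B_{(f_i)}[t^{\pm 1}]$ lives on the opposite side of the equivalence of Theorem \ref{Thm:Equiv}. One must carefully match the $\G_{m,R}$-action on $\Spec B_{f_i}$ coming from the grading with the translation action on $\Spec B_{(f_i)} \times_R \G_{m,R}$, and then use that the quotient of a trivial torsor is its base. Once this is done, the remaining steps---descent of open immersions through the quotient and Zariski gluing---are routine.
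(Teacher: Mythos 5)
Your proposal takes the same route as the paper: reduce to affine $S$ via Lemma \ref{Lem:ProjPB}, cover $\Spec B \smallsetminus V(B_+)$ by the equivariant opens $\Spec B_f$ with $f$ homogeneous of degree $1$, and use Lemma \ref{Lem:Deg0loc} to identify $\Spec B_f$ with the trivial torsor $\Spec B_{(f)}\times_S\G_{m,S}$, so that $[\Spec B_f/\G_{m,S}]\simeq \Spec B_{(f)}$. The only inessential deviation is your restriction to finitely many lifts $f_1,\dots,f_n$; the paper's hypothesis of generation in degree $1$ does not assume finite generation, and the argument goes through unchanged using the (possibly infinite) family of all $f\in B_1$.
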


\begin{proof}
	Since being schematic is local, by Lemma \ref{Lem:ProjPB}  we may  assume that $\Spec \BB \to S$ corresponds to a map $R \to B$ in $\Alg$ with $B$ a graded $R$-algebra.  
	
	Note that $\{\Spec B_f \mid f \in B_1\}$  is an open cover of $\Spec  B  \smallsetminus V(B_+)$ by equivariant maps, and thus that $\{[\Spec B_{f}/\G_{m,S}] \mid f \in B_1 \}$ is an open cover of $\Proj B$, as holds in general for such a situation. It thus suffices to show that $[\Spec B_f / \G_{m,S}] \simeq \Spec B_{(f)}$. And indeed this holds, by Lemma \ref{Lem:Deg0loc} and the fact that the diagonal $\G_{m,S}$-action on $\Spec B_f \cong \Spec B_{(f)} \times_S \G_{m,S}$ is given by only acting on $\G_{m,S}$.
\end{proof}

\begin{Not}
	From here on, write $\Proj \BB$ and its structure map as 
	\begin{align*}
	Y \coloneqq \Proj \BB \xrightarrow{\tau} S
	\end{align*}
	Affine-locally on $S$, we write $D_+(f)$ for the affine open subsets of the form $\Spec B_{(f)} \subset Y$, for $f$ a homogeneous point in $\BB$ of degree 1.
\end{Not}

\subsection{Functoriality}
\label{Par:ProjFunct}
We will show that the derived $\Proj$ construction has familiar functorial behaviour. First we record:

\begin{Lem}\label{Lem:ProjClas}
	The underlying classical scheme of $\Proj \BB$ is the classical projective spectrum of $\pi_0 \BB$ over $S_\cl$. When $S, \BB$ are classical, then so is $\Proj \BB$.
\end{Lem}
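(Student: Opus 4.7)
The plan is to reduce both assertions to affine-local statements via the open cover
\[
\{D_+(f) = \Spec \BB_{(f)} : f \in \BB_1 \text{ homogeneous of degree } 1\}
\]
of $Y = \Proj \BB$ produced in the proof of Theorem \ref{Thm:ProjRep}. Both ``being classical'' and ``agreeing with a specified classical scheme'' are local properties of a derived scheme, so it suffices to check them on each $D_+(f)$. Accordingly, I may work affine-locally on $S$, assuming $S = \Spec R$ and that $\BB$ corresponds to an $\N$-graded $R$-algebra $B$ generated in degree $1$.

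The key input is the computation inside the proof of Lemma \ref{Lem:Deg0loc}: for $f \in B_1$ one has $\pi_*(B_f) \cong \pi_*(B)_f$ as graded $\pi_0(B)$-modules, where the right-hand side is the classical graded localization of Remark \ref{Rem:DisGrad}. Taking degree zero gives
\[
\pi_0(B_{(f)}) = \pi_0(B_f)_0 \cong \bigl((\pi_0 B)_f\bigr)_0 = (\pi_0 B)_{(f)}.
\]
For the first claim, this identifies the classical truncation $(\Spec B_{(f)})_\cl = \Spec \pi_0(B_{(f)})$ with the standard affine chart $\Spec (\pi_0 B)_{(f)}$ of the classical projective spectrum of $\pi_0 B$ over $S_\cl = \Spec \pi_0 R$. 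Since $\BB_1 \to \pi_0(\BB_1)$ is surjective on components (and conversely every $f$ projects to a class), the cover $\{D_+(f)_\cl\}$ of $Y_\cl$ matches the standard cover $\{D_+(g)\}_{g \in \pi_0(\BB_1)}$ of $\Proj(\pi_0 \BB)$, with matching transition data (since the classical truncation of the localization $B_{(fg)}$ agrees with the classical localization of $\pi_0 B$ at $fg$). A standard gluing argument then identifies $Y_\cl$ with $\Proj(\pi_0 \BB)$ over $S_\cl$.

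For the second claim, suppose $R$ and $B$ are discrete, so $\pi_n(B) = 0$ for $n > 0$. The isomorphism $\pi_*(B_f) \cong \pi_*(B)_f$ immediately gives $\pi_n(B_f) = 0$ for $n > 0$, so $B_f$ is discrete; the summand $B_{(f)}$ is therefore also discrete, hence each $D_+(f)$ is a classical affine scheme, and $Y = \Proj \BB$ is classical.

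I do not anticipate a substantive obstacle: once Lemma \ref{Lem:Deg0loc} is in hand, both assertions are bookkeeping on the affine cover. The only subtlety is verifying that the gluing data for the two descriptions match, but this is automatic because both sides use the same indexing set of degree-one elements (up to the surjection $\BB_1 \twoheadrightarrow \pi_0(\BB_1)$) and the same localization formula on overlaps.
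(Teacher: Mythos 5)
Your proof is correct and follows essentially the same approach as the paper's: reduce to the affine chart $D_+(f) = \Spec B_{(f)}$, observe that $\pi_0(B_{(f)}) \cong (\pi_0 B)_{(f)}$, and conclude both claims from the local picture. You spell out the $\pi_*$-computation from Lemma \ref{Lem:Deg0loc} and the gluing compatibility more explicitly than the paper does, but the argument is the same.
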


\begin{proof}
	Using the gluing description of $\Proj$, it suffices to observe that for a graded $A$-algebra $B$, $A \in \Alg$, and $f \in B_1$, we have $\pi_0(B_{(f)}) \simeq \pi_0(B)_{(f)}$, where we have written the connected component containing $f$ just as $f \in \pi_0 B$. The second claim also follows from the local picture.
\end{proof}

%

Now let $\varphi: \BB \to \BB'$ be a morphism of graded $\OO_S$-algebras. Let $U(\varphi)$ be the open complement of the fibre product 
\begin{align*}
	F \coloneqq \Spec \BB' \times_{\Spec \BB} V(\BB_+)
\end{align*}
Since $V(\BB'_+) \to \Spec \BB'$ factors through $F$, it holds that $U(\varphi)$ is contained in $\Spec \BB' \setminus V(\BB'_+)$. The induced map $U(\varphi) \to \Spec \BB \setminus V(\BB_+)$ is moreover equivariant, so we get a morphism
\begin{align*}
	\Proj(\varphi): [U(\varphi)/\G_m] \to [ (\Spec \BB \setminus V(\BB_+)) / \G_m ]
\end{align*}
If $\varphi$ is a surjective map, then $U(\varphi)$ is all of $\Spec \BB' \setminus V(\BB'_+)$. If $\BB, \BB'$ are generated in degree 1, we thus in this case get a morphism
\begin{align*}
	\Proj(\varphi): \Proj \BB' \to \Proj \BB
\end{align*}

\subsection{Graded $\BB$-modules}
\label{Par:GradBMod}
By Remark \ref{Rem:Einfty}, we can consider $\BB$ as an algebra object in the symmetric monoidal category $\cn\QCoh^\N   (S)$, and therefore as an algebra object in $\QCoh^\Z(S)$. 
\begin{Def}
	The category $\GrMod_{\BB}$ of \textit{graded, quasi-coherent $\BB$-modules} is the category of modules in $\QCoh^\Z(S)$ over $\BB$. Such a module is \textit{connective} if the underlying $\Z$-graded $\OO_S$-module is connective. We write $\cn\GrMod_{\BB}$ for the category of connective objects in $\GrMod_{\BB}$.
\end{Def}

\begin{Rem}
	Informally, a graded, quasi-coherent $\BB$-module is thus a quasi-coherent module $\MM$ over the underlying algebra of $\BB$, together with a decomposition $\MM \simeq \bigoplus_{d \in \Z} \MM_d$ in $\QCoh(S)$, such that the maps $\BB_d \otimes_{\OO_S} \MM_e  \to \MM$ factor through $\MM_{d+e}$ for all $d,e$, in a homotopically coherent way. 
\end{Rem}

The category $\GrMod_{\BB}$ is stable by \cite[Prop.\ 7.1.1.4]{LurieHA}. Furthermore, it is symmetric monoidal by \cite[Thm.\ 4.5.2.1]{LurieHA}, where the monoidal structure is given by the relative tensor product $(-) \otimes_\BB (-)$, which preserves colimits in each variable separately by \cite[Cor.\ 4.4.2.15]{LurieHA}.

We will sketch the construction of a functor
\begin{align*}
	\cn\GrMod_{\BB} \to \cn\QCoh(Y): \MM \mapsto \MM^\sim
\end{align*}
We first give a local description of $\MM^\sim$ for given $\MM \in \GrMod_{\BB}$. Take an open immersion $\Spec R \to X$, and let $B \in \Alg^\Z_R$ correspond to the pullback of $\BB$ to $\Spec R$. Then $\Proj B \simeq \Proj \BB \times_X \Spec R$ has an affine open cover by schemes of the form $\Spec B_{(f)}$, where $f \in B_1$. On such $\Spec B_{(f)}$, the module $\MM^\sim$ is given by $M_{(f)}$, where $M$ is the pullback of $\MM$ to $\Spec R$, and $M_{(f)}$ is the degree-zero part of $M_f \coloneqq M \otimes_B B_{f}$. 

In the classical case, it is not hard to see that this local description glues into a quasi-coherent $\OO_Y$-module $\MM^\sim$, functorial in $\MM$. One can use the Grothendieck construction in the derived setting. Recall that this entails an equivalence, for any category $\CCC$, between $\Fun(\CCC,\Cat)$ and the category of coCartesian fibrations over $\CCC$. See e.g.\ \cite{BarwickFibrations} for an exposition. For a given functor $F:\CCC \to \Cat$, we write the associated coCartesian fibration as $\int F \to \CCC$.

Let $\AAA$ be the category $\int \Alg^\N_{(-)}$ associated to the functor $\Alg \to \Cat$ that sends $R$ to $\Alg^\N_R$. Then objects of $\AAA$ are pairs $(R,B)$ with $R \in \Alg$ and $B \in \Alg^\N_R$. A morphism $(R,B) \to (R',B')$ consists of a ring map $R \to R'$ and a map of $\N$-graded $R'$-algebras $B \otimes_R R' \to B'$. 

Likewise, let $\XXX$ be the category $\int \cn\GrMod_{(-)}$. Observe that $\XXX$ has as objects triples $(R,B,M)$, where $(R,B) \in \AAA$ and $M \in \cn\GrMod_B$. A morphism $(R,B,M) \to (R',B',M')$ is a map $(R,B) \to (R',B')$ in $\AAA$ together with a morphism $M \otimes_B B' \to M'$ in $\GrMod_{B'}$. 

Now let $\AAA_0$ be the category $\int(\Alg^\N_{(-)})^\simeq$ associated to the functor $\Alg \to \Space$ that sends $R \in \Alg$ to the core $(\Alg^\N_{R})^\simeq$ of $\Alg^\N_{R}$ (obtained by discarding non-invertible morphisms). This is the subcategory of $\AAA$ spanned by all objects, and by those morphisms $(R,B) \to (R',B')$ for which the induced map $B \otimes_R R' \to B'$ is an equivalence. Consider the functor $\AAA_0 \to \Cat$ that sends $(R,B)$ to $\cn\QCoh(\Proj B)$, where we temporarily write $\Proj B$ for $[(\Spec B \setminus V(B_+)) / \G_m]$, even though this might not be a scheme. Observe that this is well-defined by \S \ref{Par:ProjFunct}, since for $(R,B) \to (R',B')$ in $\AAA_0$ we have a morphism $f:\Proj B' \to \Proj B$, hence $f^*: \cn\QCoh(\Proj B) \to \cn\QCoh(\Proj B')$. Write then $\YYY_0$ for $\int \cn\QCoh(\Proj (-))$. 

There is a functor $F:\XXX_0 \to \YYY_0$ that sends $(R,B,M)$ to $(R,B,M^\sim)$, where the local description of $M^\sim$ is as above. This can be formalized by reducing to the case where $(R,B,M)$ is of the form $R = \Z[t_1,\dots,t_k]$, $B= R[x(d_1),\dots,x(d_n)]$, and $M$  a finite direct sum of shifted copies of $B$, i.e.\ graded $B$-modules $B(e)$ for which $B(e)_d = B_{d+e}$. This  strategy is similar to the one used in \cite[\S 25.2.1]{LurieSpectral}. We omit details.

The functor $F$ sends coCartesian edges to coCartesian edges. We thus get functors $F_{(R,B)}:\GrMod_{B} \to \QCoh(\Proj B)$, natural with respect to pairs $(R,B) \in \AAA$. To globalize this to schemes, let $\AAA'_0$ be the category of pairs $(S,\BBB)$, where $S$ is a scheme and $\BBB \in \Alg^\N(S)$. Morphisms are defined as in $\AAA_0$. Now the functor $\cn\GrMod_{(-)}$  on $\AAA'_0$ is the right Kan extensions of its restriction to $\AAA_0$; the same holds for $\cn\QCoh(\Proj(-))$. The functors $F_{(R,B)}$ thus induce a functor $\cn\GrMod_\BBB \to \cn\QCoh(\Proj(\BBB))$, written $\MM \mapsto \MM^\sim$, natural in $(S,\BBB) \in \AAA'_0$, which is locally given by $F_{(R,B)}$.

\begin{Rem}
	Evaluating at affine opens of the form $\Spec B_{(f)}$ shows $\OO_Y \simeq {\BB}^\sim$.
\end{Rem}

\begin{Prop}
	The functor $\cn\GrMod_{\BB} \to \cn\QCoh(Y): \MM \mapsto \MM^\sim$ is exact.
\end{Prop}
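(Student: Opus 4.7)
The plan is to exploit locality: fiber sequences in $\cn\QCoh(Y)$ can be detected on any affine open cover, and by construction $\MM^\sim$ is compatible with restriction to such opens. Working locally on $S$, I assume $S = \Spec R$ so that $\BB$ corresponds to some $B \in \Alg^\N_R$; then the opens $D_+(f) = \Spec B_{(f)}$ with $f \in B_1$ cover $Y$, and it suffices to show that for each such $f$ the functor
\begin{align*}
\Theta_f : \cn\GrMod_B \to \cn\Mod_{B_{(f)}}, \quad M \mapsto M_{(f)}
\end{align*}
preserves fiber sequences. I would factor $\Theta_f$ as the base change $M \mapsto M_f \coloneqq M \otimes_B B_f$ followed by the degree-zero functor $(-)_0 : \cn\GrMod_{B_f} \to \cn\Mod_{B_{(f)}}$, and check exactness of each factor separately.

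For $(-)_0$, I would invoke Lemma \ref{Lem:Deg0loc} to identify $B_f \simeq B_{(f)}[t^{\pm 1}]$ as graded algebras, with $\deg t = 1$. Multiplication by $t$ then induces natural equivalences $N_d \simeq N_{d+1}$ for every graded $B_f$-module $N$, so $(-)_0$ is an equivalence of categories with inverse $N_0 \mapsto N_0 \otimes_{B_{(f)}} B_{(f)}[t^{\pm 1}]$, and is in particular exact.

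For $M \mapsto M_f$: this functor is left adjoint to restriction of scalars along $B \to B_f$ and hence preserves all colimits. What remains is left exactness on connective modules, which amounts to derived flatness of $B \to B_f$. Combining Lemma \ref{Lem:Deg0loc} with Example \ref{Exm:Sym} gives $\pi_*(B_f) \cong \pi_*(B)_f$, the classical homogeneous localization of the graded ring $\pi_*(B)$ at $f$; since classical localization is flat, the Tor spectral sequence for the derived tensor product collapses, yielding $\pi_*(M \otimes_B B_f) \cong \pi_*(M)_f$ for every $M \in \cn\GrMod_B$. This simultaneously preserves connectivity and fiber sequences. I expect this derived-flatness verification to be the main technical step, since it is where the connective/stable distinction must be carefully handled; once established, $\Theta_f$ is exact, and hence so is $\MM \mapsto \MM^\sim$.
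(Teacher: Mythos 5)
Your proof is correct, and its overall shape---localize to $S = \Spec R$, reduce to the functor $M \mapsto M_{(f)}$ on $\cn\GrMod_B$, and factor that as $M \mapsto M_f$ followed by $(-)_0$---is the same as the paper's. The two inner steps are handled a bit differently. For $(-)_0 \colon \GrMod_{B_f} \to \Mod_{B_{(f)}}$, the paper exhibits the left adjoint $N \mapsto \mathrm{triv}(N) \otimes_{B_{(f)}} B_f$, concludes exactness from the adjunction between stable categories, and handles t-exactness separately via $\pi_n(M_d) \cong \pi_n(M)_d$. You instead upgrade this to the stronger (and correct) claim that $(-)_0$ is an equivalence, via Lemma \ref{Lem:Deg0loc} and the standard shearing identification of graded $A[t^{\pm 1}]$-modules (with $\deg t = 1$) with $A$-modules; this is slightly cleaner and absorbs both exactness and t-exactness at once. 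For $M \mapsto M_f$, the paper simply cites \cite[Cor.\ 4.4.2.15]{LurieHA} for colimit-preservation and leaves t-exactness implicit, whereas you make the derived flatness of $B \to B_f$ explicit by noting $\pi_*(B_f) \cong \pi_*(B)_f$ (via Lemma \ref{Lem:Deg0loc} and Example \ref{Exm:Sym}) and invoking the collapsing Tor spectral sequence. That is more careful and correctly identifies the actual technical content; your instinct that this flatness verification is the crux is accurate. One small terminological quibble: you phrase the flatness step as ``left exactness on connective modules,'' but the property you verify and in fact need is t-exactness (preservation of connectivity and of $\pi_*$); these are not the same thing in the prestable setting, though the conclusion you draw from the spectral sequence is the right one.
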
	

\begin{proof}
	Since the question is local, it suffices to show that
	\begin{align*}
		\cn\GrMod_{B} \to \cn\Mod_{B_{(f)}} : M \mapsto M_{(f)}
	\end{align*}
	is exact, for any graded $R$-algebra $B$ and $f \in B_1$. Since the functor $\cn\GrMod_{B} \to \cn\GrMod_{B_f}: M \mapsto M_f$ is exact by \cite[Cor.\ 4.4.2.15]{LurieHA}, we only need to show that $\cn\GrMod_{B_f} \to \cn\Mod_{B_{(f)}}: M \mapsto M_0 $ is exact. For this, in turn, it suffices to show that the functor
	\begin{align*}
		H:\GrMod_{B_f} \to \Mod_{B_{(f)}}: M \mapsto M_0 
	\end{align*}
	is exact and t-exact.
	
	We have an adjunction $\mathrm{triv} \dashv (-)_0:\Mod_{B_{(f)}}  \rightleftarrows \GrMod_{B_{(f)}}$, where $\mathrm{triv}$ endows a $B_{(f)}$-module with the trivial grading. The functor 
	\begin{align*}
		\Mod_{B_{(f)}} \to \GrMod_{B_f}: N \mapsto \mathrm{triv}(N) \otimes_{B_{(f)}} B_f
	\end{align*}
	is then a left adjoint to $H$. Since $\GrMod_{B_f}$ and $\Mod_{B_{(f)}}$ are both stable, it follows that $H$ is exact. Now t-exactness follows from the fact that $\pi_n(M_d) \cong \pi_n(M)_d$. 
\end{proof}

\subsection{Serre's twisting sheaf}
\label{Sec:SerreTwist}
For $n\in \Z$, let $\BB(n)$ be the graded $\BB$-module such that $\BB(n)_d = \BB_{n+d}$.  Then put $\OO_{Y}(n) \coloneqq {\BB(n)}^\sim$, which is the derived version of Serre's twisting sheaf. 

\begin{Rem}
	Let $B$ be a graded ring. With similar machinery as in \S \ref{Subsec:AffGMtoGR}, we could endow $B(n)$ with a $\G_m$-coaction, which gives us a cosimplicial module that has $C(B(n))$  as underlying cosimplicial space. This justifies the terminology from Definition \ref{Def:dtwist}, but is not needed in what follows.
\end{Rem}

Recall, from \S \ref{Par:GradModSt}, that $\Z^\otimes$ is the symmetric monoidal category associated to the monoid $(\Z,+)$. 

\begin{Lem}
	\label{Lem:Twist}
	The modules $\OO_{Y}(n) \in \QCoh (Y)$ are invertible, and induce a lax symmetric monoidal functor $\OO_Y(-):\Z^\otimes \to \QCoh(Y)$ for which the maps $\OO_Y(n) \otimes \OO_Y(m) \to \OO_Y(n+m)$ are invertible.
\end{Lem}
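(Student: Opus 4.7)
The plan is to construct $\OO_Y(-)$ as the composition of two symmetric monoidal functors, and then verify invertibility and the equivalence of structure maps by a local computation based on Lemma \ref{Lem:Deg0loc}.

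First I would check that the shift operation $M \mapsto M(n)$ on $\QCoh^\Z(S)$ is compatible with Day-convolution: both $(M(n) \otimes N)_d$ and $((M \otimes N)(n))_d$ are computed as $\bigoplus_{a+b=d} M_{n+a} \otimes N_b$. This compatibility passes to modules over the algebra object $\BB$, giving $\BB(n) \otimes_{\BB} \BB(m) \simeq (\BB \otimes_{\BB} \BB)(n+m) \simeq \BB(n+m)$. These equivalences are coherent in $n,m$ and assemble into a symmetric monoidal functor $\Z^\otimes \to \GrMod_{\BB}$, $n \mapsto \BB(n)$.

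Next I would promote the functor $(-)^\sim : \cn\GrMod_{\BB} \to \cn\QCoh(Y)$ from \S \ref{Par:GradBMod} to a lax symmetric monoidal functor. This can be built directly into the Grothendieck-fibration construction used in that paragraph, where locally on an affine chart $D_+(f) = \Spec B_{(f)}$ the functor takes $M$ to $M_{(f)}$; the required monoidality reduces to the local claim that degree-zero graded localization $(-)_{(f)} : \GrMod_B \to \Mod_{B_{(f)}}$ is symmetric monoidal. The latter follows from Lemma \ref{Lem:Deg0loc}, which gives $M_f \simeq M_{(f)} \otimes_{B_{(f)}} B_f$, and hence
\[
(M \otimes_B N)_{(f)} \simeq (M_f \otimes_{B_f} N_f)_0 \simeq (M_{(f)} \otimes_{B_{(f)}} N_{(f)} \otimes_{B_{(f)}} B_f)_0 \simeq M_{(f)} \otimes_{B_{(f)}} N_{(f)}.
\]
Composing with the first step yields a lax symmetric monoidal functor $\OO_Y(-) : \Z^\otimes \to \QCoh(Y)$ with $\OO_Y(n) \coloneqq \BB(n)^\sim$.

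Finally I would check invertibility and the equivalence of structure maps locally on $Y$. Since $\BB$ is generated in degree one, $Y$ is covered by charts $D_+(f)$ with $f \in B_1$. On such a chart, $\OO_Y(n)|_{D_+(f)} = B(n)_{(f)}$ is the degree-$n$ piece of $B_f$. By Lemma \ref{Lem:Deg0loc} we have $B_f \simeq B_{(f)}[t^{\pm 1}]$ with $t \mapsto f$, so multiplication by $f^n$ gives an equivalence $B_{(f)} \xrightarrow{\sim} (B_f)_n = B(n)_{(f)}$. Thus $\OO_Y(n)$ is a line bundle, and under these trivializations the structure map $\OO_Y(n) \otimes \OO_Y(m) \to \OO_Y(n+m)$ corresponds to $f^n \cdot f^m = f^{n+m}$, which is visibly an isomorphism. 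The main obstacle is ensuring that the lax monoidal structure on $(-)^\sim$ is defined coherently in the derived setting; I would resolve this by incorporating the monoidal structure directly into the Grothendieck-fibration construction of $(-)^\sim$, where global coherence reduces to the local identification displayed above.
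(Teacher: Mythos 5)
Your proof is correct but takes a genuinely different route from the paper in two places. For the existence of the lax symmetric monoidal functor $\OO_Y(-)$, the paper does not make $(-)^\sim$ itself lax monoidal; instead, using the observation from \S\ref{Par:GradConEinft} that a lax monoidal functor $\Z^\otimes \to \QCoh(Y)$ is the same thing as a $\Z$-graded $\OO_Y$-algebra, it directly constructs the single object $\bigoplus_{n\in\Z}\OO_Y(n)$ via the same Grothendieck-fibration/Kan-extension scheme used for $(-)^\sim$, reducing to polynomial $(R,B)$. Your route is more ambitious: you promote all of $(-)^\sim$ to a lax monoidal functor and precompose with $n\mapsto \BB(n)$. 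That is a strictly stronger structure to build (and would be reusable elsewhere), but it also requires checking the lax-monoidal coherence of $(-)^\sim$ for arbitrary pairs of graded modules, not just for the shifts $\BB(n)$; your local reduction to the monoidality of $(-)_{(f)}$ via Lemma \ref{Lem:Deg0loc} is the right idea, though you tacitly need the module analogue of that lemma ($M_f \simeq M_{(f)}\otimes_{B_{(f)}}B_f$), which the paper never states explicitly. For the invertibility of the structure maps, the paper uses a one-line duality trick --- tensoring $h\colon \OO_Y(n)\otimes\OO_Y(m)\to\OO_Y(n+m)$ with $\OO_Y(-n)$ yields the identity on $\OO_Y(m)$ --- whereas you verify it directly on the charts $D_+(f)$; both ultimately rest on the same local trivialization by $f^n$, so this is mostly a matter of presentation. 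The invertibility of the $\OO_Y(n)$ themselves you prove exactly as the paper does.
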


\begin{proof}
	The first claim is local, so we reduce to a graded $R$-algebra $B$. We claim that the $D_+(f)$ for points $f \in B_1$ form a trivializing open cover for $\OO_{Y}(n)$. To see this, consider the $B$-module homomorphism $\times f^n: B \to B(n)$, which gives an equivalence $B_f \simeq B(n)_f$. Moreover, this equivalence preserves the grading, and thus gives an equivalence $B_{(f)} \simeq B(n)_{(f)}$. It follows that $\OO_Y(n)$ is a line bundle.
	
	For the existence of the functor $\OO_Y(-)$, as in \S \ref{Par:GradConEinft}, it suffices to give a $\Z$-graded $\OO_Y$-algebra of the form $\bigoplus_{n \in \Z} \OO_Y(n)$. We sketch the argument for why such an algebra exists.
	
	Recall the category $\AAA_0$ of pairs $(R,B)$ from \ref{Par:GradBMod}. Let $\ZZZ$ be the category of triples $(R,B,\CC)$, where $(R,B) \in \AAA_0$ and $\CC \in \Alg^\Z(\Proj \BB)$. As in the construction of $(-)^\sim$, we can define a functor
	\begin{align*}
		\AAA_0 \to \ZZZ: (R,B) \mapsto \left( R,B, \bigoplus\nolimits_{n\in \Z} \OO_{\Proj B}(n) \right)
	\end{align*}
	by defining this functor in the case where the object $(R,B) \in \AAA_0$ is of the form $(\Z[t_1,\dots,t_k],R[x(d_1),\dots,x(d_n)])$, which is straightforward. By right Kan extending this functor to schemes, we indeed get a $\Z$-graded $\OO_Y$-algebra $\bigoplus_{n \in \Z} \OO_Y(n)$.
	
	For the third claim we observe that the dual of $\OO_Y(n)$ is $\OO_Y(n)^\vee = \OO_Y(-n)$. Upon tensoring the map $h:\OO_Y(n) \otimes \OO_Y(m) \to \OO_Y(n+m)$ with $\OO_Y(-n)$ we obtain the identity on $\OO_Y(m)$. It follows that $h$ is invertible.
\end{proof}

\subsection{Functor of points}
\label{Par:FOP}
In the classical case, we know that $\Proj \BB$ represents the functor that assign to a (classical) scheme $f:T \to S$ over $S$ the set of equivalence classes of pairs $(\LL, \psi)$, where $\LL$ is a line bundle on $T$ and $\psi$ is a map $f^* \BB \to \bigoplus_{n \geq 0} \LL^{\otimes n}$ of graded $\OO_T$-algebras such that $f^*\BB_1 \to \LL$ is surjective. Here, two such pairs $(\LL,\psi)$ and $(\LL',\psi')$ are equivalent if there is an isomorphism of $\Z$-graded $f^*\BB$-algebras $\bigoplus_{n \geq 0} \LL^{\otimes n} \cong \bigoplus_{n \geq 0} (\LL')^{\otimes n}$. See e.g.\ \cite[\href{https://stacks.math.columbia.edu/tag/01O4}{Tag 01O4}]{stacks-project}. 

The goal of this section is to give a similar description in the derived setting. We thus want to make sense of graded algebras which are of the form $\bigoplus\nolimits_{n\in \N} \LL^{\otimes n}$, for some line bundle $\LL$. We do this as follows.
\begin{Def}
	Let $\XXX$ be a stack and $\AA$ an $\N$-graded, quasi-coherent $\OO_{\XXX}$-algebra. We say that $\AA$ is \textit{freely generated by a line bundle} if the canonical map $\OO_{\XXX} \to \AA_0$ is invertible, $\AA_1$ is a line bundle on $\XXX$, and the canonical maps $\AA_1^{\otimes n} \to \AA_n$ are equivalences for all $n \in \N$.
\end{Def}

\begin{Rem}
	\label{Rem:SymLine}
	Let $\AA$ be an $\N$-graded, quasi-coherent $\OO_T$-algebra on a scheme $T$. We argue that $\AA$ is freely generated by a line bundle $\LL$ if and only if $\AA$ is equivalent to $\Sym \LL$.
	
	Let first $\AA$ be freely generated by a line bundle $\LL \simeq \AA_1$. Then the inclusion $\LL \to \AA$ induces an $\N$-graded map $\Sym \LL \to \AA$ by Lemma \ref{Lem:SymZ}. We claim that this is an equivalence. Since we can check this locally, we may assume that $\LL$ is free and that $X$ is affine, say $X = \Spec A, \LL = A$. Then the map $\Sym(\LL)_n \to \AA_n$ is equivalent to the map $(At)^{\otimes n} \to A^{\otimes n}$, which is an equivalence.
	
	The converse, that $\Sym(\LL)$ is freely generated by $\LL$, is again straightforward to check locally.
\end{Rem}

\begin{Lem}
	\label{Lem:GBLplb}
	Let $\psi: \XXX' \to \XXX$ be a morphism of stacks and $\AA$ an $\N$-graded $\OO_{\XXX}$-algebra. If $\AA$ is freely generated by a line bundle, then so is $\psi^*\AA$. 
\end{Lem}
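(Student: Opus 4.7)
The plan is to reduce each of the three defining properties of being freely generated by a line bundle to the fact that $\psi^*: \Alg^\N(\XXX) \to \Alg^\N(\XXX')$ is a symmetric monoidal left adjoint that is compatible with the forgetful functor to quasi-coherent modules and sends line bundles to line bundles.

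First I would record the key compatibility: if $j: \N \to 0$ denotes the monoid map, then by the naturality in Proposition \ref{Prop:NRadj} (extended to stacks by Definition \ref{Def:AlgMSt}), the forgetful functor $j_!$ commutes with $\psi^*$. Combined with the decomposition $j_!\AA \simeq \bigoplus_{d\in\N} \AA_d$ in $\QCoh(\XXX)$ from the discussion following Definition \ref{Def:AlgMSt}, this yields equivalences $(\psi^*\AA)_d \simeq \psi^*(\AA_d)$ in $\QCoh(\XXX')$ for every $d \in \N$. This is the one bookkeeping step that requires a little care; the rest of the proof is formal.

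Second, I would verify the three conditions. Applying $\psi^*$ to the equivalence $\OO_\XXX \to \AA_0$ and using $\psi^*\OO_\XXX \simeq \OO_{\XXX'}$ gives that $\OO_{\XXX'} \to (\psi^*\AA)_0$ is an equivalence. Since $\psi^*$ sends invertible objects in $\QCoh(\XXX)$ to invertible objects in $\QCoh(\XXX')$ (as it is a symmetric monoidal functor), $(\psi^*\AA)_1 \simeq \psi^*\AA_1$ is a line bundle. Finally, using that $\psi^*$ preserves tensor products, for each $n$ we have a commutative diagram
\begin{equation*}
(\psi^*\AA)_1^{\otimes n} \simeq (\psi^*\AA_1)^{\otimes n} \simeq \psi^*(\AA_1^{\otimes n}) \xrightarrow{\psi^*(\sim)} \psi^*(\AA_n) \simeq (\psi^*\AA)_n,
\end{equation*}
where the middle arrow is the image under $\psi^*$ of the equivalence $\AA_1^{\otimes n} \xrightarrow{\sim} \AA_n$ coming from the hypothesis on $\AA$. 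The composite is therefore an equivalence, which is what had to be shown.

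The main (and essentially only) obstacle is the first step, i.e.\ checking that pullback commutes with the formation of graded pieces; once this is in place, each of the three conditions is preserved by $\psi^*$ by purely formal reasons (preservation of equivalences, of the unit, of tensor products, and of invertible objects).
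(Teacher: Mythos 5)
Your proof is correct and follows the same route as the paper: the paper's argument is exactly to invoke the commutativity of the square relating $\psi^*$ on $\Alg^\N$ with the graded-piece functors $(-)_n$, together with the fact that structure sheaves, line bundles, and tensor products pull back. You have simply unpacked the paper's one-line appeal to that diagram into the explicit verification of the three defining conditions, so the content is the same.
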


\begin{proof}
	Use that structure sheaves, line bundles and tensor products pull back, together with commutativity of the diagrams
	\begin{center}
		\begin{tikzcd}
			\Alg^\N(\XXX) \arrow[d, "(-)_n"] \arrow[r, "\psi^*"] & \Alg^\N(\XXX') \arrow[d, "(-)_n"] \\
			\cn\QCoh  (\XXX) \arrow[r, "\psi^*"] & \cn\QCoh   (\XXX')
		\end{tikzcd}
	\end{center}
	with vertical arrows induced by the forgetful functors $\Alg^\N(-) \to \cn\QCoh^\N(-)$ and the functors $(-)_n:\cn\QCoh^\N(-) \to \cn\QCoh(-)$.
\end{proof}

\begin{Exm}
	\label{Exm:UnivGmTors}
	Consider the projection map $\rho: S \to B\G_{m,S}$. Since $\rho$ is affine, it holds $S = \Spec \rho_* \OO_S$. In fact, $\rho_* \OO_S$ is a $\Z$-graded $\OO_{B\G_{m,S}}$-algebra such that $(\rho_*\OO_S)_{\geq 0}$ is freely generated by a line bundle. Indeed, since $S \to B\G_{m,S}$ is the pullback of $\nu:\Spec \Z \to B\G_{m}$, by Lemma \ref{Lem:GBLplb} it suffices to check the statement on $\nu$. In this case it follows from the fact that $\nu$ is the universal $\G_m$-torsor in the classical sense, since a $\G_m$-torsor on a classical scheme $U$ is locally of the form $U \times \G_m$.
\end{Exm}

Let $\YYY: \Sch_{/S}^\op \to \Space$ be the following functor. For $\psi: T\to S$ we let $\YYY(\psi)$ be the space of maps $\psi^* \BB \to \MM$ of $\N$-graded $\OO_T$-algebras, such that $\MM$ is freely generated by a line bundle, and such that $ \psi^*\BB_1 \to \MM_1$ is surjective. Thus, $\YYY$ is a sub-presheaf of 
\begin{align*}
	\Sch_{/S}^\op &\to \Space \\ (\psi:T \to S) &\mapsto (\Alg^\N(T){_{\psi^*\BB /}})^{\simeq}
\end{align*}
where $(-)^\simeq$ means that we discard non-invertible morphisms.

\begin{Prop}
	The functor $\YYY$ is the functor of points of $Y=\Proj(\BB)$.
\end{Prop}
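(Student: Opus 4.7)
The plan is to identify $\Sch_{/S}(T,Y)$ with $\YYY(\psi)$ by unwinding the quotient-stack presentation $Y = [(\Spec\BB \setminus V(\BB_+))/\G_{m,S}]$ and transporting equivariant data into graded algebras via Theorem \ref{Thm:Equiv}. By the discussion of quotient stacks in \S\ref{Par:PrinBun}, a morphism $T \to Y$ over $S$ is a principal $\G_{m,S}$-bundle $q \colon P \to T$ together with a $\G_{m,S}$-equivariant morphism $f \colon P \to \Spec\BB$ over $S$ whose image is disjoint from $V(\BB_+)$. By Example \ref{Exm:UnivGmTors}, principal $\G_{m,S}$-bundles on $T$ over $S$ correspond naturally to pairs $(\psi,\LL)$ with $\psi\colon T \to S$ and $\LL$ a line bundle on $T$, and under Theorem \ref{Thm:Equiv} the bundle associated to $(\psi,\LL)$ is $P = \Spec_T\bar\MM$, where $\bar\MM := \bigoplus_{n \in \Z}\LL^{\otimes n}$ is a $\Z$-graded $\OO_T$-algebra whose non-negative part $\MM := j^!\bar\MM$ is $\Sym \LL$, hence freely generated by $\LL$ in degree one (Remark \ref{Rem:SymLine}).

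The second step is to translate $f$: by Theorem \ref{Thm:Equiv} applied over $S$, combined with its naturality in the base and base change along $\psi$, giving a $\G_{m,S}$-equivariant affine morphism $P \to \Spec\BB$ over $S$ amounts to a morphism $\psi^* j_!\BB \to \bar\MM$ in $\Alg^\Z(T)$; the adjunction $j_! \dashv j^!$ from Example \ref{Exm:fjadjs} then turns this into a morphism $\alpha\colon \psi^*\BB \to \MM$ in $\Alg^\N(T)$.

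The third step is to match the open-immersion condition to degree-one surjectivity. The map $f$ avoids $V(\BB_+)$ precisely when the derived pullback $P \times_{\Spec\BB} V(\BB_+) \simeq \Spec_T(\bar\MM \otimes_{\psi^*\BB}\psi^*\BB_0)$ is empty, equivalently $\pi_0$ of its structure algebra vanishes. Using the fibre sequence $\BB_+ \to \BB \to \BB_0$ of \S\ref{Par:IrrId}, this reduces to the image of $\psi^*\BB_+ \to \bar\MM$ generating the unit ideal on $\pi_0$; since $\BB$ is generated in degree one and $\bar\MM_n = \LL^{\otimes n}$, trivialising $\LL$ on an affine cover of $T$ further reduces this to surjectivity of $\alpha_1 \colon \psi^*\BB_1 \to \LL$.

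The main obstacle will be this third step: one must carefully interpret ``image misses $V(\BB_+)$'' in the derived setting as vanishing of $\bar\MM\otimes_{\psi^*\BB}\psi^*\BB_0$, then exploit degree-one generation of $\BB$ to cut the relevant ideal down to its degree-one part. The other steps are essentially bookkeeping of the adjoint equivalences already established in the paper.
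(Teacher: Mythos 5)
Your proposal is correct and follows essentially the same route as the paper: both unwind a $T$-point of the quotient stack into a principal $\G_{m,S}$-bundle $P$, identify $P$ with $\Spec$ of a $\Z$-graded $\OO_T$-algebra whose non-negative part is freely generated by a line bundle, translate the equivariant map $P \to \Spec\BB$ via Theorem \ref{Thm:Equiv} into a graded map $\psi^*\BB \to \MM$, and show that avoiding $V(\BB_+)$ amounts to surjectivity of $\psi^*\BB_1 \to \MM_1$. For this last step the paper reduces to the underlying classical schemes and invokes the classical case, whereas you unpack that $\pi_0$-level computation (unit-ideal condition, degree-one generation, local trivialisation of $\LL$) explicitly; both arguments are valid.
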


\begin{proof}
	Let $\psi: T \to S$ be given. We know that $Y(T)$ is the space of commutative squares
	\begin{center}
		\begin{tikzcd}
			P \arrow[d] \arrow[r] & \Spec \BB \smallsetminus V(\BB_+) \arrow[d] \\
			T \arrow[r, "\psi"] & S
		\end{tikzcd}	
	\end{center}
	where $P \to T$ is a principal $\G_{m,S}$-bundle and $P \to \Spec \BB \smallsetminus V(\BB_+)$ is $\G_{m,S}$-equivariant. 
	
	Recall that $P \to T$ is a pullback of $S \to B\G_{m,S}$. Therefore, by Lemma \ref{Lem:GBLplb} and Example \ref{Exm:UnivGmTors}, the bundle $P \to T$ is classified by a $\Z$-graded $\OO_T$-algebra $\AA$ such that $P \simeq \Spec \AA$, and such that $\AA_{\geq 0}$ is freely generated by a line bundle.
	
	The  composition $\varphi:P \to \Spec \BB \smallsetminus V(\BB_+) \to \Spec \BB$ is equivariant, and thus corresponds to a graded map $\psi^*\BB \to \AA$ which, by Example \ref{Exm:Ageq0}, factors through $\AA_{\geq 0}$ since $\BB$ is $\N$-graded. Now the claim is that $\varphi$ landing outside $V(\BB_+)$ is exactly the condition that $\psi^*\BB \to \AA_{\geq 0}$ induces a surjection $\psi^*\BB_1 \to \AA_1$.  
	
	To see this, consider the following diagram of pullback squares in $\Sch$
	\begin{center}
		\begin{tikzcd}
			Q \arrow[dr, phantom, "\scalebox{1}{$\lrcorner$}" , very near start, color=black] \arrow[r] \arrow[d] & \Spec \psi^* \BB_0 \arrow[dr, phantom, "\scalebox{1}{$\lrcorner$}" , very near start, color=black] \arrow[d] \arrow[r] & \Spec \BB_0 \arrow[d]\\
			\Spec \AA \arrow[r] & \Spec \psi^* \BB  \arrow[r] & \Spec \BB 
		\end{tikzcd}	
	\end{center}
	Observe that $Q \simeq \Spec (\AA \otimes_{\psi^*\BB} \psi^* \BB_0)$ over $T$. Since $V(\BB_+) \simeq \Spec \BB_0$, we thus have to show that $Q = \varnothing$ if and only if $\psi^*\BB_1 \to \AA_1$ is surjective. But $Q = \varnothing$ if and only if $Q_\cl = \varnothing$, so the statement follows from the classical case.	
\end{proof}

\begin{Rem}
	The identity on $Y$ is classified by the principal $\G_{m,S}$-bundle $P = \Spec \BB \setminus V(\BB_+)$ over $Y$, which gives us that $P = \Spec \AA$ for a $\Z$-graded $\OO_Y$-algebra $\AA$ by Example \ref{Exm:UnivGmTors}. The functor of point description of $Y$ gives us a map $\theta:\tau^*\BB \to \AA_{\geq 0}$, which is universal in the sense that  $\varphi:T \to Y$ corresponds to the $T$-point of $\YYY$ given by pulling back $\theta$ along $\varphi$.
	
	We can now also make sense of $\OO_Y(1)$ as the universal line bundle. In fact, the underlying $\Z$-graded $\OO_Y$-module of $\AA$ is $\bigoplus\nolimits_{d \in \Z} \OO_Y(d)$. To see this, observe that on an affine patch of the form $\Spec B_{(f)} \subset Y$ we have
	\begin{align*}
		\OO_Y(d)(D_+(f)) = (B_f(d))_0 = (B_f)_d = \AA_d(D_+(f))
	\end{align*}
	where the last equality follows from $P = \Spec \BB \setminus V(\BB_+)$ and the proof of Theorem \ref{Thm:ProjRep}.	
\end{Rem}

\begin{Exm}
	Let us compare our functor of points description of $\Proj \BB$ with the classical one. So suppose that $S, \BB$ are classical, and let $f: T\to S$ be a scheme over $S$, also classical. Write $\YYY'$ for the functor described in \cite[\href{https://stacks.math.columbia.edu/tag/01O4}{Lem.\ 01O4}]{stacks-project}. We will see that $\YYY$ and $\YYY'$ are equivalent.
	
	As we shall see below in Lemma \ref{Lem:ProjClas}, it holds that $\Proj \BB$ is classical. Thus $\YYY(T)$ is 0-truncated. It thus suffices to show $\pi_0\YYY(T) \cong \YYY'(T)$.
	
	Any line bundle on $T$ is classical, since $T$ is. Hence $\Sym \LL$ is classical, for any line bundle $\LL$ on $T$, by \cite[Cor.\ 25.2.3.2]{LurieSpectral}. Thus a morphism $f^*\BB \to \MM$ in $\Alg^\N(T)$ where $\MM $ is freely generated in degree 1 by a line bundle $\LL$ is equivalent to a morphism $\pi_0f^*\BB \to \bigoplus_{d \geq 0} \LL^{\otimes d}$ of discrete $\N$-graded $\OO_T$-algebras. Now two such $f^*\BB \to \MM$ and $f^*\BB \to \MM'$ are equal in $\pi_0\YYY(T)$ if and only if there is a graded $\OO_T$-algebra isomorphism $\MM \to \MM'$ under $f^*\BB$. This gives $\pi_0\YYY(T) \cong \YYY'(T)$, as we wanted. 
\end{Exm}

\begin{Exm}
	We return to the derived scheme $S$. Let $\EE \in \QCoh(S)$ be locally free of finite rank. Recall that the projective bundle of $\EE$ over $S$ is defined as the scheme $\P_S(\EEE)$ with the following functor of points. For $f: T\to S$, it holds that $\P_S(\EEE)(T)$ is the space of pairs $(\LL,\psi)$, with $\LL \in \Pic(T)$ and $\psi: f^*\EE \to \LL$ a surjective map of $\OO_T$-modules. Such a map is equivalent to a graded map $f^* \Sym \EE \to \Sym \LL$ which is surjective in degree 1, and hence $\P_S(\EEE) \simeq \Proj (\Sym \EEE)$.
\end{Exm}

\section{The Rees algebra of a closed immersion}
\label{Sec:Rees}
The goal of this section is to describe the (extended) Rees algebra of a closed immersion, which will be our input for defining blow-ups in the next section. 	

The strategy will be as follows. For any given map $Z \to X$, we will define a stack $D_{Z/X}$ over $\A^1_X$ with a $\G_{m,X}$ action which satisfies a universal property closely related to the one  described for the classical case in \S \ref{Par:ClasRees}. Then we will show that for $Z \to X$ a closed immersion, $D_{Z/X}$ is representable by a scheme affine over $\A^1_X$. 

The stack $D_{Z/X}$ will be defined using Weil restrictions, which we will first review. The advantage of this approach is that $D_{(-)/X}$ is functorial by construction, which will allow us to work locally when checking representability in the proof of Theorem \ref{Thm:ReesRep}.

\subsection{Weil restrictions}
  We recall the relevant theory on Weil restrictions.

Let $Y \in \St$ and $X,Z \in \St_{/Y}$. If it exists, we write $\Map_{/Y}(X,Z) $ for the \textit{internal mapping object}, i.e.,\ an object in $\St_{/Y}$ together with an evaluation map $e: \Map_{/Y}(X,Z)\times_Y X \to Z$ such that the induced map
\begin{align*}
\St_{/Y}(T,\Map_{/Y}(X,Z)) \to \St_{/Y}(T \times_Y X, Z)
\end{align*}
is an equivalence, for all $T \in \St_{/Y}$.

For us, a morphism of stacks $f:X \to Y$ is \textit{affine} if for all $\Spec A \to Y$ the fibre product $X_A$ is representable by an affine scheme. If $X,Y$ are algebraic, this coincides with $f$ being affine in the sense of Remark \ref{Rem:RelSpec}. In general, $f$ being affine implies that it is representable in the sense of \cite[Con.\ \S 19.1.2]{LurieSpectral}.\footnote{In fact, the theory of Weil restriction is developed in \cite[Con.\ \S 19.1.2]{LurieSpectral} for representable morphisms, but we do not need this generality.}

\begin{Exm}
	The map $f:B\G_m \to [\A^1/\G_m]$ induced by the zero section is affine. Indeed, let $g:\Spec A \to [\A^1/\G_m]$ be given. Pulling back $g$ along $\A^1 \to [\A^1/\G_m]$ gives an affine scheme $\Spec B$, where $B$ is a graded algebra endowed with a graded map $\Z[t] \to B$. Let $b$ be the image of $t$ in $B$. Then the pullback of $g$ along $f$ is $\Spec ((B/(b))_0)$. 
\end{Exm}	

\begin{Prop}[{\cite[Prop.\ 19.1.2.2]{LurieSpectral}}]
	Let morphisms $X\to Y \leftarrow Z$ in $\St$ be given, with $X \to Y$ affine. Then $\Map_{/Y}(X,Z)$ exists as a stack.
\end{Prop}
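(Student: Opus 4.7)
The plan is to construct $\Map_{/Y}(X,Z)$ explicitly as a sheaf on $\Alg$ and verify the expected universal property. For $A \in \Alg$, I would let $W(A)$ be the space of pairs $(f, g)$ where $f:\Spec A \to Y$ is a morphism and $g:\Spec A \times_{Y,f} X \to Z$ is a morphism over $Y$; functoriality in $A$ is by pullback of both components. The first projection defines a map of presheaves $W \to Y$. Because $X \to Y$ is affine, for each $f$ the pullback $\Spec A \times_{Y,f} X$ is representable by an affine scheme $\Spec B_f$, so $g$ is concretely a morphism $\Spec B_f \to Z$ over $Y$.

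Next, I would verify that $W$ is a sheaf for the étale topology. Given an étale cover $\{A \to A_\alpha\}$ with Čech nerve $A_\bullet$, a compatible family in $\lim_n W(A_n)$ supplies compatible $f_\alpha \in Y(A_\alpha)$ which, by the sheaf property of $Y$, glue uniquely to some $f \in Y(A)$. Once $f$ is fixed, set $\Spec B \coloneqq \Spec A \times_{Y,f} X$; since affine maps are stable under base change, the morphisms $\Spec(B \otimes_A A_\alpha) \to \Spec B$ form an étale cover of $\Spec B$, and the compatible family $g_\alpha$ descends to a unique $g:\Spec B \to Z$ over $Y$ by the sheaf property of $Z$. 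The resulting pair $(f,g)$ is the required unique point of $W(A)$.

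The universal property should then be almost tautological. Both $\St_{/Y}(-, W)$ and $\St_{/Y}((-)\times_Y X, Z)$ carry colimits in the first variable to limits in $\Space$ (using universality of colimits in $\St$ to commute $(-) \times_Y X$ with colimits for the second). Writing a general $T \in \St_{/Y}$ as a colimit of affine schemes thus reduces the desired equivalence to the affine case $T = \Spec A$, where the identification $\St_{/Y}(\Spec A, W) \simeq W(A) \simeq \St_{/Y}(\Spec A \times_Y X, Z)$ is built into the definition of $W$.

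The main obstacle will be the sheaf verification, where one must handle the dependence of $\Spec B_f$ on $f$ coherently: the descent data is glued in two stages, first producing $f$ and then $g$ over the $f$-dependent base $\Spec B_f$, and one needs this two-stage procedure to agree with simultaneous gluing of pairs. The affineness hypothesis on $X \to Y$ is exactly what makes this go through, ensuring that every base change of $X \to Y$ along an affine is again affine and that the Čech descent for $g$ happens within a genuine étale cover of an affine scheme, where the sheaf property of $Z$ applies directly.
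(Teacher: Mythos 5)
Your construction of $W$ and the plan of attack (smallness of the values, sheaf condition, then reduction to affines via colimits) matches the argument in the cited \cite[Prop.\ 19.1.2.2]{LurieSpectral}, which the paper itself only summarizes in two sentences. Your remark that the main obstacle is the two-stage gluing is correct, and since you leave it unresolved the proposal is not quite complete; but the gap is closable by a short, standard argument. Note that the decomposition of $W(A)$ into a dependent sum indexed by $f \in Y(A)$ is exactly what makes the descent look like a two-stage problem, so it is cleanest to avoid it. Fix an étale cover $A \to A^0$ with Čech nerve $A^{\bullet}$. The structure map $p \colon W \to Y$ has the property that $W \times_Y \Spec A$, for any $\Spec A \to Y$, is the presheaf $B \mapsto \St_{/Y}(\Spec B \times_Y X, Z)$, and universality of colimits in $\St$ gives $\Spec A \times_Y X \simeq |\Spec A^{\bullet} \times_Y X|$, whence $\St_{/Y}(\Spec A \times_Y X, Z) \simeq \lim_n \St_{/Y}(\Spec A^n \times_Y X, Z)$; so these pullbacks are sheaves. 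Since sheafification is left exact and hence commutes with the pullbacks $(-) \times_Y \Spec A$, and since $Y$ is a colimit of affines over itself (universality again), $W \to W^{\sharp}$ is then an equivalence. This sidesteps the coherence issue entirely. Two further small comments: the smallness that the paper's gloss flags as a separate step is in your formulation automatic, precisely because $\Spec A \times_{Y,f} X$ is affine, so $\St_{/Y}(\Spec A \times_{Y,f} X, Z)$ is a fibre of $Z(B_f) \to Y(B_f)$ between small spaces; and in your final paragraph $\St_{/Y}(\Spec A, W)$ is the fibre of $W(A) \to Y(A)$ over the chosen $f$, not all of $W(A)$, though this is only a slip of notation and does not affect the argument.
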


The proof of this proposition involves two things. First, it is shown that the assignment $T \mapsto \St_{/Y}(T \times_Y X,Z)$ is in fact valued in (small) spaces. Second, it is shown that  this is a sheaf.

\begin{Prop}[{\cite[Con.\ 19.1.2.3]{LurieSpectral}}]
	Let $f:X \to Y$ be an affine morphism of stacks. Then the pullback functor $f^*:\St_{/Y} \to \St_{/X}$ has a right adjoint, given by
	\begin{align*}
	\Res_{X/Y}:\St_{/X} &\to \St_{/Y} \\
	Z & \mapsto \sMap_{/Y}(X,Z) \times_{\sMap_{/Y}(X,X)} Y
	\end{align*}
\end{Prop}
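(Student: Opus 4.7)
The plan is to verify the defining universal property of the right adjoint directly, by constructing for every $T \in \St_{/Y}$ a natural equivalence
\[
\St_{/Y}(T, \Res_{X/Y}(Z)) \simeq \St_{/X}(T \times_Y X, Z).
\]
Since the internal mapping object $\sMap_{/Y}(X, Z)$ already exists (by the previous proposition) and both sides are known to be small spaces, what remains is a purely formal manipulation of pullbacks and universal properties.

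First I would apply the representable functor $\St_{/Y}(T, -)$ to the pullback square defining $\Res_{X/Y}(Z)$. Since representables preserve limits, this yields
\[
\St_{/Y}(T, \Res_{X/Y}(Z)) \simeq \St_{/Y}(T, \sMap_{/Y}(X,Z)) \times_{\St_{/Y}(T, \sMap_{/Y}(X,X))} \St_{/Y}(T, Y),
\]
and the last factor is contractible because $Y$ is terminal in $\St_{/Y}$. It remains to identify the remaining factors using the universal property of the internal mapping objects, which rewrites the numerator as $\St_{/Y}(T \times_Y X, Z)$ and the denominator as $\St_{/Y}(T \times_Y X, X)$. By naturality of the evaluation map of $\sMap_{/Y}$ in its second argument, the transition map is postcomposition with the structure map $Z \to X$, and the contractible factor picks out the canonical projection $\pi_X : T \times_Y X \to X$.

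Combining these, I obtain
\[
\St_{/Y}(T, \Res_{X/Y}(Z)) \simeq \St_{/Y}(T \times_Y X, Z) \times_{\St_{/Y}(T \times_Y X, X)} \{\pi_X\},
\]
which is, by definition of $\St_{/X}$ as a pullback of $\St_{/Y}$ along the forgetful functor, precisely $\St_{/X}(T \times_Y X, Z) = \St_{/X}(f^* T, Z)$. Naturality of every step in $T$ then packages this into the asserted adjunction $f^* \dashv \Res_{X/Y}$.

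I do not expect a genuine obstacle: the affineness of $f$ enters only through the existence of the internal mapping object $\sMap_{/Y}(X, -)$, established in the preceding proposition, and the rest is a formal diagram chase. The only thing to be careful about is the identification of the map $\sMap_{/Y}(X,Z) \to \sMap_{/Y}(X,X)$ with the one induced by postcomposition with $Z \to X$, which however follows immediately from the defining evaluation map $\sMap_{/Y}(X,-) \times_Y X \to (-)$ being natural in the second argument.
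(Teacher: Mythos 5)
Your argument is correct and is essentially the same computation the paper records informally in the remark immediately following the proposition (Remark 5.1.3), where $\Res_{X/Y}(Z)(T)$ is unwound into $\St_{/X}(T\times_Y X,Z)$ using that $Y$ is terminal in $\St_{/Y}$ and the universal property of $\sMap_{/Y}$; the paper itself does not reprove the result but cites Lurie. One small point worth being explicit about: the pointwise equivalence $\St_{/Y}(T,\Res_{X/Y}(Z))\simeq\St_{/X}(f^*T,Z)$ natural in $T$ suffices, since a functor with object-wise right adjoints automatically assembles into a right adjoint functor, so naturality in $Z$ need not be checked by hand.
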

We call $\Res_{X/Y}$ the \textit{Weil restriction} along $f$. When convenient, we write $\Res_f$ for $\Res_{X/Y}$.
\begin{Rem}
	\label{Rem:ResFP}
	Let $Z\to X \to Y$ be morphisms of schemes. Observe that for any scheme $f:T \to Y$, we indeed have that
	\begin{align*}
	\Res_{X/Y}(Z)(T) &= \sMap_{/Y}(X,Z)(T) \times_{\sMap_{/Y}(X,X)(T)} Y(T)\\
	&= \St_{/Y}(T \times_Y X, Z) \times_{ \St_{/Y}(T \times_Y X, X) } \{f\}  \\
	&\simeq \St_{/X}(T \times_YX, Z)
	\end{align*}
	In particular, we always have a map $\Res_{X/Y}(Z) \times_Y X \to Z$ over $X$ induced by the identity on $\Res_{X/Y}(Z)$.
\end{Rem}

The following shows that Weil restrictions commute with pulling back.

\begin{Lem}
	\label{Lem:WeilPull}
	Let $f:X \to Y$ and $f':X' \to Y'$ be  affine morphisms of stacks. Suppose that we are given a commutative square in $\St$
	\begin{equation}
		\label{Eq:WeilPullCart}
		\begin{tikzcd}
			Z' \arrow[d] \arrow[r] & X' \arrow[d] \arrow[r, "f'"] & Y' \arrow[d, "g"] \\
			Z \arrow[r] & X \arrow[r, "f"] & Y
		\end{tikzcd}	
	\end{equation}	
	 with the square on the right Cartesian. Then the universal property of $\Res_{X/Y}(Z)$ induces a map $\Res_{X'/Y'}(Z') \to \Res_{X/Y}(Z)$ over $Y$. If both squares are Cartesian, then this map exhibits $\Res_{X'/Y'}(Z')$ as the pullback of $\Res_{X/Y}(Z)$ along $g$.
\end{Lem}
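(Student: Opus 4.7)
The plan is to construct the desired map using the Beck--Chevalley equivalence for Weil restrictions along Cartesian squares, then observe that when the left square is also Cartesian, the constructed map becomes a pullback square.

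First I would establish the Beck--Chevalley equivalence. Writing $g': X' \to X$ for the top arrow in the right Cartesian square, the claim is that for any $W \in \St_{/X}$ there is a natural equivalence
\begin{align*}
\Res_{X'/Y'}\bigl((g')^*W\bigr) \simeq g^* \Res_{X/Y}(W)
\end{align*}
of stacks over $Y'$. This is checked on $T$-points using Remark \ref{Rem:ResFP}: for $T \to Y'$, the right-hand side evaluates to $\St_{/X}(T \times_Y X, W)$ (with structure map $T \to Y' \xrightarrow{g} Y$), the left-hand side evaluates to $\St_{/X'}(T \times_{Y'} X', (g')^*W)$, and the identification $T \times_Y X \simeq T \times_{Y'} X'$ combined with the universal property of the pullback $(g')^*W = W \times_X X'$ over $X'$ produces the equivalence.

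Next I would construct the map. From the diagram (\ref{Eq:WeilPullCart}), the universal property of the pullback $(g')^*Z = Z \times_X X'$, applied to the two maps $Z' \to X'$ and $Z' \to Z$ (which agree on $X$ by commutativity of the left square), yields a canonical morphism $Z' \to (g')^*Z$ over $X'$. Applying the right adjoint $\Res_{f'}$ gives a map $\Res_{X'/Y'}(Z') \to \Res_{X'/Y'}((g')^*Z)$ over $Y'$, which under the Beck--Chevalley equivalence becomes a map $\Res_{X'/Y'}(Z') \to g^*\Res_{X/Y}(Z)$ over $Y'$. Composing with the canonical map $g^*\Res_{X/Y}(Z) \to \Res_{X/Y}(Z)$ yields the desired morphism over $Y$.

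Finally, for the second claim, if the left square is also Cartesian, then $Z' \simeq Z \times_X X' = (g')^*Z$ and the map $Z' \to (g')^*Z$ constructed above is an equivalence. Hence $\Res_{X'/Y'}(Z') \to g^*\Res_{X/Y}(Z)$ is an equivalence, and the factorization through the Cartesian square defining $g^*\Res_{X/Y}(Z)$ exhibits $\Res_{X'/Y'}(Z')$ as the pullback of $\Res_{X/Y}(Z)$ along $g$. No step here looks genuinely difficult; the one point requiring care is to carry out the Beck--Chevalley identification honestly at the level of functors of points (including compatibility of the structure maps to $X$, $X'$, $Y$, $Y'$), since this is what makes the final diagram commute and identifies the constructed morphism with the Beck--Chevalley equivalence in the doubly Cartesian case.
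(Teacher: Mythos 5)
Your proof is correct and rests on the same functor-of-points computation (via Remark \ref{Rem:ResFP}) that the paper uses. The organizational difference is that you isolate the Beck--Chevalley equivalence $\Res_{X'/Y'}\circ(g')^*\simeq g^*\circ\Res_{X/Y}$ as a standalone fact, then obtain the map by factoring $Z'\to(g')^*Z\to Z$ and applying $\Res_{f'}$; the paper instead builds $h$ directly from the evaluation map $\Res_{X'/Y'}(Z')\times_{Y'}X'\to Z'\to Z$ and verifies on $T'$-points, in a single step, that $h$ is an equivalence when both squares are Cartesian. Unwinding your Beck--Chevalley identification on $T'$-points gives exactly the composite $\Res_{X'/Y'}(Z')\times_{Y'}X'\to Z'\to(g')^*Z\to Z$, so your map coincides with the paper's $h$. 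Your formulation is a bit more modular and makes explicit that only the right square needs to be Cartesian to build the map, but the mathematical content is the same.
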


\begin{proof}
	Observe that
	\begin{align*}
	\Res_{X/Y}(Z) ( \Res_{X'/Y'}(Z') ) \simeq \St_{/X}(\Res_{X'/Y'}(Z')\times_{Y'} X',Z)
	\end{align*}
	Hence, composing the map $\Res_{X'/Y'}(Z')\times_{Y'} X' \to Z'$ with $Z' \to Z$ induces a map $\Res_{X'/Y'}(Z') \to  \Res_{X/Y}(Z)$ over $Y$, and thus a map
	\begin{align*}
	h: \Res_{X'/Y'}(Z') \to \Res_{X/Y}(Z) \times_Y Y'
	\end{align*}
	Now if both squares in (\ref{Eq:WeilPullCart}) are Cartesian, then $h$ is an equivalence. Indeed, for given $T' \in \St_{/Y'}$, it holds that $h(T')$ is the map
	\begin{align*}
	\St_{/X'}(T'\times_{Y'}X',Z') \to \St(T',\Res_{X/Y}(Z)) \times_{Y(T')} \{T' \to Y\} \simeq \St_{/X}(T'\times_{Y} X,Z)
	\end{align*}
	which is an equivalence.
\end{proof}

\subsection{The functor $G_A$}
We first describe a functor $G_A$, which is used in the construction of the Rees algebra. It will turn out that this functor $G_A$ is right adjoint to taking the Rees algebra when restricted to the appropriate categories.
\begin{Not}
	For $X$ a scheme and $\BB$ a $\Z$-graded $\OO_X[t^{-1}]$-algebra, where the degree of $t^{-1}$ is $-1$, we write $\overline{\BB}$ for the tensor product $\BB \otimes_{\OO_X[t^{-1}]} \OO_X$ of $\Z$-graded $\OO_X$-algebras, given by the zero section $\OO_X[t^{-1}] \to \OO_X:t^{-1} \mapsto 0$.
	
	Taking the degree-zero part of the map $\OO_X[t^{-1}] \to \overline{\BB}$ gives us a canonical map $\OO_X \to (\overline{\BB})_0$. This gives us a functor $G_X:\Alg^\Z(\A^1_X) \to \Alg(X)$ which sends $\BB$ to $G_X(\BB) \coloneqq (\overline{\BB})_0$. In the case that $X = \Spec A$ we write $G_A \coloneqq G_X$.
\end{Not}

An object $x$ in a category $\CCC$ is \textit{compact} if $\CCC(x,-)$ preserves filtered colimits.

\begin{Lem}
	\label{Lem:GAfiltcoco}
	For any $A \in \Alg$, the functor $G_A$ preserves filtered colimits.
\end{Lem}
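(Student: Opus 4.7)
The plan is to factor $G_A$ as a composition of two functors, each of which preserves filtered colimits for a different reason. Writing $\varphi: A[t^{-1}] \to A$ for the zero section (a graded ring homomorphism if we endow $A$ with the trivial grading, since $t^{-1}$ of degree $-1$ is sent to $0 \in A$), the functor $G_A$ is by definition the composition
\begin{align*}
\Alg^\Z_{A[t^{-1}]} \xrightarrow{\varphi_!} \Alg^\Z_A \xrightarrow{(-)_0} \Alg_A
\end{align*}
where $\varphi_! = (-) \otimes_{A[t^{-1}]} A$ is extension of scalars in the graded setting.

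First I would handle $\varphi_!$: by Proposition \ref{Prop:NRadj} (applied to the map $\varphi$ in $\Alg$ and the identity on $\Z$), $\varphi_!$ is a left adjoint, so preserves all small colimits, in particular filtered ones.

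Next I would handle $(-)_0: \Alg^\Z_A \to \Alg_A$. By Remark \ref{Rem:AlgRMcoslice}, $\Alg^\Z_A$ is the fibre product $\Alg^\Z \times_{\Alg} \Alg_A$ along $(-)_0: \Alg^\Z \to \Alg$ and the forgetful $U:\Alg_A \to \Alg$, and under this identification the functor $(-)_0$ is simply projection onto the $\Alg_A$-factor. As observed in the proof of Lemma \ref{Lem:PolMRcompproj}, sifted colimits in such a fibre product are computed componentwise: this uses \cite[Prop.\ 6.2.19]{CisinskiHigh} together with Lemma \ref{Lem:AlgRForgetSifted}, which tells us that both $(-)_0: \Alg^\Z \to \Alg$ and $U: \Alg_A \to \Alg$ preserve sifted colimits. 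Consequently the projection $(-)_0: \Alg^\Z_A \to \Alg_A$ preserves sifted colimits.

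Since filtered colimits are sifted, both functors in the composition preserve filtered colimits, hence so does $G_A$. I do not anticipate any real obstacle: the only point needing care is verifying that $\varphi:A[t^{-1}]\to A$ genuinely fits into the framework of Proposition \ref{Prop:NRadj} as a morphism of graded rings, but this is immediate once we endow $A$ with the trivial $\Z$-grading.
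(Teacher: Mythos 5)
Your proof is correct and starts from the same factorization of $G_A$ as the paper, but you take a different route for the second step. The paper observes that the forgetful functor $U:\Alg_A \to \Space$ is conservative and preserves filtered colimits, so it suffices that $U \circ (-)_0 \simeq \Alg^\Z_A(A[x(0)],-)$ preserves filtered colimits, which holds because $A[x(0)]$ is compact (indeed compact projective, by Lemma~\ref{Lem:PolMRcompproj}). You instead invoke the componentwise computation of \emph{sifted} colimits in the fibre product $\Alg^\Z_A \simeq \Alg^\Z \times_{\Alg} \Alg_A$ (via Lemma~\ref{Lem:AlgRForgetSifted} and \cite[Prop.~6.2.19]{CisinskiHigh}, as in the proof of Lemma~\ref{Lem:PolMRcompproj}), concluding that the projection to $\Alg_A$ preserves sifted colimits. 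Both arguments ultimately rest on Lemma~\ref{Lem:AlgRForgetSifted}, so the difference is more in emphasis than in depth; but your version does prove the strictly stronger statement that $G_A$ preserves \emph{sifted} colimits, which the paper's corepresentability argument alone does not immediately give.
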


\begin{proof}
	The functor $G_A$ factorizes as
	\begin{align*}
	\Alg^\Z_{A[t^{-1}]} \xrightarrow{(-)\otimes_{A[t^{-1}]} A} \Alg^\Z_A \xrightarrow{(-)_0} 
	\Alg_A
	\end{align*}
	Since the first arrow is cocontinuous, it suffices to show that $(-)_0$ preserves filtered colimits.
	
	Since the forgetful functor $U:\Alg_A \to \Space$  is conservative, it suffices to show that the composition $U \circ (-)_0$ preserves filtered colimits. But this is the functor $\Alg^\Z_A(A[x(0)],-)$, which preserves filtered colimits since $A[x(0)]$ is compact by Lemma \ref{Lem:PolMRcompproj}.
\end{proof}

\begin{Rem}
	\label{Rem:GANOTcoco}
	The functor $G_A$ does not preserve all colimits. Indeed, consider the tensor product
	\begin{align*}
	A[t^{-1},u] \otimes_{A[t^{-1}]} A[t^{-1},s] \simeq A[t^{-1},s,u]
	\end{align*}
	in $\Alg^\Z_{A[t^{-1}]}$, where $u$ is in degree $-1$ and $s$ in degree $1$. Then it holds
	\begin{align*}
	G_A(A[t^{-1},s,u]) \simeq A[su] \not\simeq A \otimes_A A \simeq G_A(A[t^{-1},u]) \otimes_{G_A(A[t^{-1}])} G_A(A[t^{-1},s])
	\end{align*}
\end{Rem}

\subsection{Construction}
\label{Par:const}
Let $Z\to X$ be a closed immersion. Consider $X$ as scheme over $\A^1_X$ via the zero section. Identify $\A^1_X$ with $\Spec \OO_X[t^{-1}]$, and endow $\OO_X[t^{-1}]$ with the grading where $t^{-1}$ is of degree $-1$.

Write $\zeta_X$ for the map $B\G_{m,X} \to [\A^1_X/\G_{m,X}]$ induced by the zero section. 
\begin{Def}
	Define $D_{Z/X}$ via the following pullback square.
	\begin{equation}
	\label{Eq:DefDZX}
		\begin{tikzcd}
		 {D_{Z/X}} \arrow[d] \arrow[r]& {\Res_{\zeta_X}([Z/\G_{m,X}])} \arrow[d] \\
		 \A^1_X \arrow[r] & {[\A^1_X/\G_{m,X}]}
		\end{tikzcd}
	\end{equation}
We endow $D_{Z/X}$ with the $\G_{m,X}$-action such that $D_{Z/X} \to {\Res_{\zeta_X}([Z/\G_{m,X}])}$ induces an equivalence $[D_{Z/X}/\G_{m,X}] \simeq {\Res_{\zeta_X}([Z/\G_{m,X}])}$. If $Z \to X$ is of the form $\Spec B \to \Spec A$, we write $D_{B/A} \coloneqq D_{Z/X}$. 
\end{Def}

\begin{Def}
		If $D_{Z/X}$ is affine over $\A^1_X$, then by Theorem \ref{Thm:Equiv} it holds that $D_{Z/X} = \Spec \RR$, where $\RR$ is a $\Z$-graded $\OO_X[t^{-1}]$-algebra. In this case, we call $\RR$ the \textit{extended Rees algebra} of $Z \to X$, and write $\RR_{Z/X}^\ext \coloneqq \RR$ and $\RR_{Z/X} \coloneqq \RR_{\geq 0}$. If $Z \to X$ is of the form $\Spec B \to \Spec A$, then we let $R_{B/A}^\ext$ and $R_{B/A}$ be the $\Z$-graded $A$-algebras corresponding to $\RR_{Z/X}^\ext$ and $\RR_{Z/X}$.
\end{Def}

Observe that $D_{(-)/X}$ is functorial on the category of closed immersions $Z \to X$ for which $D_{Z/X} \to \A^1_X$ is affine. In extension, $R^\ext_{(-)/A}$ is functorial on the category of $A$-algebras $B$ for which $\Spec B \to \Spec A$ is a closed immersion and for which $D_{B/A}$ is affine over $\A^1_A$.

The main goal of this chapter is to show the following:
\begin{Thm}\label{Thm:ReesRep}	
	The stack $D_{Z/X}$ is affine over $\A^1_X$.
\end{Thm}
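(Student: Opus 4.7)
First, I would apply Lemma \ref{Lem:WeilPull} to the compound diagram
\begin{center}
\begin{tikzcd}
Z \arrow[r] \arrow[d] & X \arrow[r] \arrow[d] & \A^1_X \arrow[d] \\
{[Z/\G_{m,X}]} \arrow[r] & B\G_{m,X} \arrow[r, "\zeta_X"] & {[\A^1_X/\G_{m,X}]}
\end{tikzcd}
\end{center}
in which both squares are Cartesian: pulling back along the atlas $\A^1_X \to [\A^1_X/\G_{m,X}]$ trivializes the universal $\G_m$-torsors over $B\G_{m,X}$ and $[Z/\G_{m,X}]$ to the zero section $X \hookrightarrow \A^1_X$ and to $Z$ respectively. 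This yields a natural equivalence $D_{Z/X} \simeq \Res_{X/\A^1_X}(Z)$ of stacks over $\A^1_X$, with the $\G_{m,X}$-action induced by the scaling action on $\A^1_X$ that fixes the zero section.

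Since affineness over $\A^1_X$ is Zariski-local on $X$, and Weil restriction is compatible with base change (again Lemma \ref{Lem:WeilPull}), I would then reduce to the affine case $X = \Spec A$, $Z = \Spec B$, with $A \to B$ a surjection. Writing $\A^1_A = \Spec A[\tau]$, Remark \ref{Rem:ResFP} yields the functor of points
\[
D_{B/A}(\Spec R) \simeq \Alg_A\bigl( B,\, R \otimes_{A[\tau]} A \bigr)
\]
for any $A[\tau]$-algebra $R$. Showing affineness amounts to corepresenting this functor by an $A[\tau]$-algebra $R^{\ext}_{B/A}$.

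The basic case is when $B = A \otimes_{A[x_1,\ldots,x_n]} A$, with the two structure maps $x_i \mapsto a_i$ and $x_i \mapsto 0$. Then $D_{B/A}(\Spec R)$ is the space of tuples $(y_1,\ldots,y_n) \in R^n$ equipped with homotopies $\tau y_i \simeq a_i$, which is corepresented by the pushout
\[
R^{\ext}_{B/A} \coloneqq A[\tau,y_1,\ldots,y_n] \otimes_{A[\tau,x_1,\ldots,x_n]} A[\tau]
\]
with left map $x_i \mapsto a_i$ and right map $x_i \mapsto \tau y_i$. For a general surjection $A \to B$, I would present $B$ as a colimit in $\Alg_A$ of such finite-cell surjections, using the standard cell-structure theory for simplicial commutative rings that builds any surjection iteratively by finite cell attachments killing elements of homotopy. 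Then $Z = \lim_i Z_i$ in $\Sch_{/X}$ and, since $\Res_{X/\A^1_X}$ is a right adjoint,
\[
\Res_{X/\A^1_X}(Z) \simeq \lim_i \Spec R^{\ext}_{B_i/A} \simeq \Spec\!\bigl( \colim_i R^{\ext}_{B_i/A} \bigr),
\]
which is affine over $\A^1_X$.

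The principal obstacle is this last step. It requires a presentation of $B$ as a colimit of finite-cell surjections that is genuinely compatible with $\Spec$, and the identification of the resulting colimit of the $R^{\ext}_{B_i/A}$ with the desired extended Rees algebra. Remark \ref{Rem:GANOTcoco} cautions that $G_A$, and with it the naive algebra-side universal property, does not preserve general colimits; the colimit in $\Alg_{A[\tau]}$ assembling the finite-cell Rees algebras must therefore be the one forced by the limit of affine schemes on the geometric side, not a naive algebra-side formula. Setting up this correspondence cleanly, while keeping track of the $\G_{m,X}$-equivariance that will ultimately yield the grading on $R^{\ext}_{Z/X}$ via Theorem \ref{Thm:Equiv}, is the main technical subtlety.
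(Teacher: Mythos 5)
Your opening identification $D_{Z/X} \simeq \Res_{X/\A^1_X}(Z)$ via Lemma \ref{Lem:WeilPull}, and the consequent non-equivariant functor-of-points formula $D_{B/A}(\Spec R) \simeq \Alg_A(B, R \otimes_{A[\tau]} A)$, are both correct and in fact make explicit something the paper only uses implicitly in the chain of equivalences (\ref{Eq:FundComp}). Your colimit assembly step — since $\Res_{X/\A^1_X}$ is a right adjoint and $\Spec: \Alg^\op \to \St$ is a right adjoint, a colimit presentation $B = \colim_i B_i$ with each $D_{B_i/A}$ affine forces $D_{B/A} \simeq \Spec(\colim_i R^\ext_{B_i/A})$ — is also correct, and is arguably a cleaner formulation of the paper's Lemma \ref{Lem:ReesColim}. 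The explicit pushout formula for $R^\ext_{B/A}$ when $B = A/(\underline{a})$ matches the paper's Proposition \ref{Prop:ReesForm}.

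The genuine gap is in your ``basic case.'' You only establish affineness of $D_{B/A}$ when $B$ is obtained from $A$ by killing a finite sequence of elements of $\pi_0(A)$ (i.e.\ by attaching $1$-cells). But for a general surjection $A \to B$, the cell presentation $A = B_0 \to B_1 \to B_2 \to \cdots$ with $\colim B_n = B$ necessarily involves cells of all dimensions: after the first stage, the intermediate $B_n$ have spurious higher homotopy that must be killed by $k$-cells with $k \geq 2$. None of those intermediate $B_n$ (nor the $B_\alpha$ subquotients needed to write each $B_n \to B_{n+1}$ as a filtered colimit of finite cell attachments) is of the form $A/(\underline{a})$, so your explicit formula for $R^\ext$ does not apply to them, and your colimit argument cannot close. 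The paper handles exactly this point: its Proposition \ref{Prop:ReesFinQuot} treats finite quotients $A/(\underline{\sigma})$ with $\sigma_i : S^{n_i} \to A$ of arbitrary level $n_i \geq 0$, using the universal property of Lemma \ref{Lem:UPfinquot} and the fibre sequence $Q_1 \to Q_0 \to G_A(Q)$, and the explicit formula in Proposition \ref{Prop:ReesForm} requires the graded higher-level free variables $A[v]$ from \S\ref{Par:Gradn}. You would need to generalize your pushout formula to
\[
R^\ext_{A/(\underline{\sigma})/A} \simeq A[\tau, \underline{y}] \otimes_{A[\tau,\underline{x}]} A[\tau],
\]
with $x_i, y_i$ free in level $n_i$ and the maps given by $x_i \mapsto \sigma_i$ and $x_i \mapsto \tau y_i$, and then also verify that each step $B_n \to B_{n+1}$ (attaching infinitely many $n$-cells) is a filtered colimit of finite cell attachments over $B_n$ so that the colimit assembly applies at every stage. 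This is precisely the content of the paper's Lemma \ref{Lem:CWapprox}, Lemma \ref{Lem:ReesComp} and the bulk of the proof of Theorem \ref{Thm:ReesRep}. Your closing remarks gesture at these difficulties but do not resolve them, and the ``compatibility with $\Spec$'' concern you raise is actually a non-issue given your own right-adjoint argument — the real missing piece is the higher-cell computation.
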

Observe that the question is local on $X$. We will therefore for now assume that $Z \to X$ is of the form $\Spec B \to \Spec A$, where $A \to B$ is a surjective ring map.

Throughout, we endow $Z$ with trivial $\G_{m,X}$-action. As such, it holds that $[Z/\G_{m,X}] \simeq Z \times B\G_{m}$. We will stick to the notation $[Z/\G_{m,X}]$ to emphasize that this stack lives over $[X/\G_{m,X}] = B\G_{m,X}$.

Let $Q$ be a $\Z$-graded algebra over $A[t^{-1}]$, and write $T \coloneqq [\Spec Q/\G_{m,X}]$. We make the following key observation:
\begin{equation}
\label{Eq:FundComp}
\begin{split}
\St^{\G_{m,X}}_{/\A^1_X}(\Spec Q, D_{Z/X}) &\simeq \St_{/[\A_X^1/\G_{m,X}]}(T,\Res_{\zeta_X}([Z/\G_{m,X}])) \\
&\simeq \St_{/B\G_{m,X}}(T \times_{[\A^1_X/\G_{m,X}]} B\G_{m,X},[Z/\G_{m,X}]  ) \\
&\simeq \Alg^\Z_{A}(B,\overline{Q}) \\
&\simeq \Alg_A(B, G_A(Q))
\end{split}
\end{equation}
We used the  universal property of the Weil restriction mentioned in Remark \ref{Rem:ResFP}, and the fact that taking the zeroth degree of a graded algebra is right adjoint to the functor that endows an algebra with trivial grading.

\begin{Rem}
	The question of representability of mapping stacks and of Weil restrictions has a history of different incarnations. We mention one that seems closest to our situation. 
	
	In \cite[Prop.\ 5.1.14]{HalpernleistnerMapping}  the following is shown. For a flat morphism $\pi: \XXX \to \YYY$ of algebraic stacks, satisfying a formal requirement generalizing properness, with $\YYY$ Noetherian, the Weil restriction $\Res_{\pi}(\ZZZ)$ is affine over $\YYY$ for $\ZZZ$ affine over $\XXX$. 
	
	In our case, we are interested in a Weil restriction $\Res_{\pi}(\ZZZ)$ where $\pi$ is not flat, namely where $\pi$ is the map $B\G_{m,X} \to [\A^1_X/\G_{m,X}]$. Without the flatness assumption, \cite{HalpernleistnerMapping} only gives us results on the question of algebraicity of $\Res_{\pi}(\ZZZ)$, meanwhile we are interested in affineness. Our approach is thus independent of theirs.
\end{Rem}

\begin{Def}
	Write $H_B$ for the functor $\Alg^\Z_{A[t^{-1}]}\to \Space$ that sends $Q$ to $\Alg_A(B, G_A(Q))$.
\end{Def}
\begin{Rem}\label{Rem:HArep}
	Observe that $H_B$ is corepresentable if and only if $H_B$ preserves limits and $\kappa$-filtered colimits for some regular cardinal $\kappa$ by \cite[Prop.\ 5.5.2.7]{LurieHTT}. 
\end{Rem}
\begin{Lem}
	\label{Lem:RedtoHA}
	The functor $H_B$ is corepresentable by a $\Z$-graded $A[t^{-1}]$-algebra $R$ if and only if $D_{B/A}$ is affine over $\Spec A[t^{-1}]$, in which case $D_{B/A} \simeq \Spec R$.
\end{Lem}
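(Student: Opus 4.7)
The strategy hinges on the natural equivalence (\ref{Eq:FundComp})
$$
\St^{\G_m}_{/\A^1_X}(\Spec Q, D_{B/A}) \simeq H_B(Q),
$$
which expresses $H_B(-)$ as the $\G_m$-equivariant functor of $\A^1_X$-points of $D_{B/A}$ restricted to the full subcategory $\Aff^{\G_m}_{A[t^{-1}]} \simeq (\Alg^\Z_{A[t^{-1}]})^{\op}$ from Theorem \ref{Thm:Equiv}. Both directions will be deduced from this, together with the standard fact that affine schemes over $[\A^1_X/\G_m]$ generate the slice $\St_{/[\A^1_X/\G_m]}$ under colimits.

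The direction $(\Leftarrow)$ is immediate from Theorem \ref{Thm:Equiv}: if $D_{B/A}$ is affine over $\A^1_X$, say $D_{B/A} \simeq \Spec R$, then $\St^{\G_m}_{/\A^1_X}(\Spec Q, \Spec R) \simeq \Alg^\Z_{A[t^{-1}]}(R,Q)$, and combining this with (\ref{Eq:FundComp}) shows $H_B \simeq \Alg^\Z_{A[t^{-1}]}(R,-)$, so $H_B$ is corepresented by $R$.

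For the direction $(\Rightarrow)$, suppose $H_B$ is corepresented by $R \in \Alg^\Z_{A[t^{-1}]}$. Applying (\ref{Eq:FundComp}) and Theorem \ref{Thm:Equiv} to the corepresentability isomorphism produces a natural equivalence $\St^{\G_m}_{/\A^1_X}(\Spec Q, \Spec R) \simeq \St^{\G_m}_{/\A^1_X}(\Spec Q, D_{B/A})$ in $Q \in \Alg^\Z_{A[t^{-1}]}$. Evaluating at $Q = R$ and tracking the identity yields a canonical morphism $f \colon \Spec R \to D_{B/A}$ in $\St^{\G_m}_{/\A^1_X}$ through which this equivalence factors by postcomposition. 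It remains to check that $f$ is an equivalence: passing through $\St^{\G_m}_{/\A^1_X} \simeq \St_{/[\A^1_X/\G_m]}$ from Proposition \ref{Prop:AlgBGm}, one tests the induced $[f]$ against generating affine schemes $T \to [\A^1_X/\G_m]$. Since $\A^1_X \to [\A^1_X/\G_m]$ is an affine $\G_m$-torsor, the pullback of any such $T$ to $\A^1_X$ is an affine $\G_m$-scheme over $\A^1_X$, hence $\Spec Q'$ for some $Q' \in \Alg^\Z_{A[t^{-1}]}$ by Theorem \ref{Thm:Equiv}. Thus every generator is a legitimate test object for our equivalence, so $[f]$ induces an equivalence on all $T$-points, and $f$ is an equivalence; in particular $D_{B/A} \simeq \Spec R$ is affine over $\A^1_X$.

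The main obstacle I expect is articulating the Yoneda step precisely, and in particular recognising the pullbacks of affines over $[\A^1_X/\G_m]$ as objects of $\Aff^{\G_m}_{A[t^{-1}]}$. Without the full version of Theorem \ref{Thm:Equiv}, these pullbacks could not be probed by $H_B$ (which a priori only sees $\Alg^\Z_{A[t^{-1}]}$), so the global equivalence between graded algebras and affine $\G_m$-schemes is doing real work here.
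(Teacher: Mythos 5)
Your argument is correct and agrees with the paper's, which simply cites the equivalence (\ref{Eq:FundComp}); you have spelled out the Yoneda-type details the paper treats as immediate. In particular, your observation that any affine $T \to [\A^1_X/\G_{m,X}]$ pulls back to an object of $\Aff^{\G_m}_{A[t^{-1}]}$ (because the $\G_m$-torsor $T\times_{[\A^1_X/\G_{m,X}]}\A^1_X$ over the affine $T$ is itself affine) is precisely what licenses testing the map $[f]$ only against $\Alg^\Z_{A[t^{-1}]}$, and correctly isolates where Theorem~\ref{Thm:Equiv} is doing real work.
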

\begin{proof}
	 This follows from the observation made in (\ref{Eq:FundComp}).
\end{proof}

\begin{Cor}
	\label{Cor:BcompDBA}
	Suppose that $B$ is compact in $\Alg_A$. Then $D_{B/A}$  is affine over $\Spec A[t^{-1}]$ if and only if $H_B$ preserves limits.
\end{Cor}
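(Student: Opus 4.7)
The plan is to combine Lemma \ref{Lem:RedtoHA}, Remark \ref{Rem:HArep}, and Lemma \ref{Lem:GAfiltcoco} in a fairly direct way. By Lemma \ref{Lem:RedtoHA}, $D_{B/A}$ being affine over $\Spec A[t^{-1}]$ is equivalent to $H_B$ being corepresentable by a $\Z$-graded $A[t^{-1}]$-algebra. By Remark \ref{Rem:HArep}, this in turn is equivalent to $H_B$ preserving both limits and $\kappa$-filtered colimits for some regular cardinal $\kappa$. So the one direction is automatic: if $D_{B/A}$ is affine over $\Spec A[t^{-1}]$, then $H_B$ preserves limits.

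For the converse, suppose $H_B$ preserves limits; I need to produce filtered-colimit preservation for free from the compactness hypothesis on $B$. The key observation is that $H_B$ factorizes as
\begin{align*}
H_B = \Alg_A(B,-) \circ G_A : \Alg^\Z_{A[t^{-1}]} \to \Alg_A \to \Space.
\end{align*}
By Lemma \ref{Lem:GAfiltcoco}, $G_A$ preserves filtered colimits. By the assumption that $B$ is compact in $\Alg_A$, the functor $\Alg_A(B,-)$ also preserves filtered colimits. Composing, $H_B$ preserves filtered colimits, so together with the hypothesis that it preserves limits, Remark \ref{Rem:HArep} applies with $\kappa = \omega$, and $H_B$ is corepresentable. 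Another invocation of Lemma \ref{Lem:RedtoHA} then gives that $D_{B/A}$ is affine over $\Spec A[t^{-1}]$.

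There is no real obstacle here; the content has already been isolated in the lemmas, and the corollary is essentially a packaging of them. The only thing worth double-checking is that the compactness hypothesis kicks in at the right level, namely that $B$ compact in $\Alg_A$ (not, say, in $\Alg^\Z_{A[t^{-1}]}$) is enough, which it is precisely because $G_A$ absorbs the passage from the graded world to the ungraded one while still preserving filtered colimits by Lemma \ref{Lem:GAfiltcoco}. Note also that Remark \ref{Rem:GANOTcoco} warns us that $G_A$ does not preserve arbitrary colimits, which is why the compactness hypothesis (filtered colimits only) is the natural sufficient condition to impose.
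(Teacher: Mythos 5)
Your proof is correct and follows exactly the same route as the paper's, which simply cites Remark \ref{Rem:HArep}, Lemma \ref{Lem:RedtoHA}, and Lemma \ref{Lem:GAfiltcoco} in the same combination; you have just spelled out the factorization $H_B = \Alg_A(B,-)\circ G_A$ explicitly. Nothing to change.
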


\begin{proof}
	Combine Remark \ref{Rem:HArep} with Lemma \ref{Lem:RedtoHA} and the fact that $G_A(-)$ preserves filtered colimits by Lemma \ref{Lem:GAfiltcoco}.
\end{proof}

We will first do two separate cases of the proof of Theorem \ref{Thm:ReesRep}. We will then do the general case in \S \ref{Par:ReesClos}.

\subsection{Rees algebra of a finite quotient}
A map $A \to B$ is a \textit{finite quotient} if $B$ is obtained from $A$ by attaching cells induced by a finite number of pointed maps $\sigma_i:S^{n_i} \to A$, with $n_i \geq 0$ for all $i$. Using multi-index notation, we write the induced map $\bigvee\nolimits_i S^{n_i} \to A$ as $\underline{\sigma}$, and $B$ as $A/(\underline{\sigma})$. See \S \ref{Par:FinQuot} for details.

\begin{Prop}
	\label{Prop:ReesFinQuot}
	Suppose that $B$ is a finite quotient over $A$. Then $D_{B/A}$  is affine over $\Spec A[t^{-1}]$.
\end{Prop}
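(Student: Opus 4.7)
The plan is to invoke Corollary~\ref{Cor:BcompDBA}, which reduces the statement to showing (a) that $B$ is compact in $\Alg_A$, and (b) that the functor $H_B$ preserves all small limits. Both ingredients will follow from the description of $B$ as a finite pushout, together with stability of the $\infty$-category of $A[t^{-1}]$-modules.

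For (a), the finite quotient $B$ can be written as the pushout
\begin{align*}
B \simeq A \otimes_{\Free_A(\bigvee\nolimits_i S^{n_i})} A,
\end{align*}
with one leg the zero map and the other induced by $\underline{\sigma}$. The forgetful functor $U\colon \Alg_A \to \Space_*$ preserves sifted colimits, so its left adjoint $\Free_A$ takes compact pointed spaces to compact $A$-algebras. Since each $S^{n_i}$ is compact and finite colimits of compact objects are compact, $B$ is compact in $\Alg_A$.

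For (b), applying $\Alg_A(-, G_A(Q))$ to the pushout description of $B$ yields a natural pullback
\begin{align*}
H_B(Q) \simeq \{*\} \times_{\prod_i \Omega^{n_i} U(G_A(Q))} \{*\},
\end{align*}
where one leg picks out the image of $\underline{\sigma}$ and the other the origin. Each $\Omega^{n_i}$ and the functor $U$ are right adjoints, hence preserve limits, so preservation of limits by $H_B$ reduces to preservation of limits by $G_A\colon \Alg^\Z_{A[t^{-1}]} \to \Alg_A$.

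The hard part will be this last point, which is somewhat counterintuitive since $G_A$ factors as $(-)_0 \circ ((-)\otimes_{A[t^{-1}]} A)$, with the inner base change being a left adjoint. The trick is that $t^{-1}$ is a regular element on $A[t^{-1}]$, so $A \simeq \coFib(t^{-1}\colon A[t^{-1}] \to A[t^{-1}])$ as $A[t^{-1}]$-modules, and base change is computed on underlying graded modules as $Q \mapsto \coFib(t^{-1}\colon Q \to Q)$. Since $\Mod^{\Z}_{A[t^{-1}]}$ is stable, cofibres agree with shifted fibres and therefore commute with arbitrary limits; since limits in $\Alg^\Z_{A[t^{-1}]}$ and $\Alg^\Z_A$ are created from those on underlying modules, base change preserves limits. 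The degree-zero functor $(-)_0$ is right adjoint to the trivial grading (Example~\ref{Exm:fjadjs}) and hence preserves limits too. Combined, $G_A$ preserves limits, and Corollary~\ref{Cor:BcompDBA} finishes the argument.
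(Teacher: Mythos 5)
Your reduction to Corollary \ref{Cor:BcompDBA} is sound, and so is the verification that a finite quotient is compact in $\Alg_A$. The gap is in step (b): it is \emph{false} that $G_A$ preserves limits, and this is exactly where the paper's proof takes a different turn.

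The stability argument you give is valid inside $\Mod^{\Z}_{A[t^{-1}]}$: there, $\coFib \simeq \Sigma\circ\fib$ and so cofibres commute with limits. But limits in $\Alg^\Z_{A[t^{-1}]}$ are created by the forgetful functor to degree-wise \emph{connective} modules (equivalently, degree-wise spaces), not by the forgetful functor to the stable category $\Mod^\Z_{A[t^{-1}]}$. The inclusion $\cn\Mod \hookrightarrow \Mod$ is a \emph{left} adjoint: passing a limit from $\Alg^\Z_{A[t^{-1}]}$ into the stable module category silently introduces a $\tau_{\geq 0}$, and cofibre does not commute with truncation. One would need
\begin{align*}
\coFib\bigl(\tau_{\geq 0} M \to \tau_{\geq 0} N\bigr) \simeq \tau_{\geq 0}\coFib(M \to N),
\end{align*}
which fails whenever $\pi_{-1}(M)$ contributes to $\pi_0\coFib(M\to N)$. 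Concretely: take $A = \Z$, $R = \Z[y]$ with $y$ in degree $1$ viewed as a $\Z[t^{-1}]$-algebra via $t^{-1}\mapsto 0$, and let $R' = R'' = \Z[t^{-1}]$ map to $R$ via the structure map. Then $(R'\times_R R'')_1 = 0$, so $G_\Z(R'\times_R R'') = \coFib(0\to\Z) = \Z$; but $G_\Z(R') = G_\Z(R'') = \Z$, $G_\Z(R) = \coFib(\Z\xrightarrow{0}\Z) = \Z\oplus\Z[1]$, and $\Z\times_{\Z\oplus\Z[1]}\Z$ has $\pi_0\cong\Z^2$. So $G_\Z$ fails to preserve even this finite pullback.

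The paper starts from the very pullback you wrote, $H_B(Q) \simeq \{\underline{\sigma}(Q)\}\times_{\Space_*(S^{\underline{n}}, G_A(Q))}\{0\}$, but then rotates the diagram. Applying the limit-preserving $\Space_*(S^{\underline{n}},-)$ to the fibre sequence $Q_1 \to Q_0 \to G_A(Q)$ yields a Cartesian square with $\Space_*(S^{\underline{n}},Q_1)$ as the fibre over $0$, and pasting it with the square expressing the universal property of $B$ gives
\begin{align*}
H_B(Q) \simeq \{\underline{\sigma}(Q)\}\times_{\Space_*(S^{\underline{n}}, Q_0)} \Space_*(S^{\underline{n}}, Q_1),
\end{align*}
in which $G_A(Q)$ no longer appears. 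This expression only involves the slice functors $Q\mapsto Q_0$ and $Q\mapsto Q_1$, which visibly preserve limits since limits in $\Alg^\Z_{A[t^{-1}]}$ are computed degree-wise. That substitution — trading $G_A(Q)$ for $Q_0$ and $Q_1$ via the fibre sequence — is the step your argument is missing.
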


\begin{proof}
	Write $B = A/(\underline{\sigma})$ for $\underline{\sigma}:S^{\underline{n}} \to A$ the pointed map associated to $B$. Since $B$ is compact over $A$, by Corollary \ref{Cor:BcompDBA} it suffices to show that $H_B$ preserves limits.
	
	Let $Q \in \Alg^\Z_{A[t^{-1}]}$, and write $q$ for the image of $t^{-1}$ under the structure map $A[t^{-1}] \to Q$. Also, write $\underline{\sigma}(Q)$ for the composition of $\underline{\sigma}$ with $A \to Q$. 
	
	We have a fibre sequence $Q \xrightarrow{\times q} Q \to \overline{Q}$, and thus a fibre sequence $Q_1 \to Q_0 \to G_A(Q)$. Therefore, in the following commutative diagram
	\begin{center}
		\begin{tikzcd}
		\Alg_A(B,G_A(Q)) \arrow[rr, bend left] \arrow[r, dashed] \arrow[d] & \Space_*(S^{\underline{n}},Q_1 ) \arrow[r] \arrow[d] & \{ 0 \} \arrow[d] \\
		\{ \underline{\sigma}(Q) \} \arrow[r] & \Space_*(S^{\underline{n}},Q_0 ) \arrow[r] & \Space_*(S^{\underline{n}},G_A(Q) ) 
		\end{tikzcd}
	\end{center}
	the square on the right is Cartesian. By the universal property of $B$ (Lemma \ref{Lem:UPfinquot}), the outer square is Cartesian as well. It follows that there is a dashed arrow as indicated that makes the square on the left  Cartesian. 
	
	We just saw that 
	\begin{align*}
	\Alg_A(B,G_A(-)) \simeq \{\underline{\sigma}(-)\} \times_{\Space_*(S^{\underline{n}},(-)_0)} \Space_*(S^{\underline{n}},(-)_1)
	\end{align*}
	as functor $\Alg^\Z_{A[t^{-1}]} \to \Space$. Since limits in $\Alg^\Z_{A[t^{-1}]}$ are computed degree-wise, this functor thus preserves limits, which was to be shown.
\end{proof}

\subsection{Quasi-smooth $A$-algebras and their Rees algebras}
Recall that a closed immersion $Z \to X$ is \textit{quasi-smooth} if locally on $X$ it is a pullback of the map $\{0\} \to \A^n$, for some $n$, along some map $X \to \A^n$. A surjection $A \to B$ is \textit{quasi-smooth} if the corresponding map on spectra is.
	
\begin{Exm}
	\label{Ex:AX/FisQS}
	Any surjection of the form $A \to B=A[\underline{X}]/(\underline{f})$, where $\underline{f} = f_1,\dots,f_n$ is a sequence of points in $A[\underline{X}]=A[X_1,\dots,X_k]$, is quasi-smooth. Indeed, by \cite[Prop.\ 2.3.8]{KhanVirtual}, it suffices to show that the shifted cotangent complex $\LL_{B/A}[-1]$ is a locally free $B$-module of finite rank. But this follows from \cite[Prop.\ 2.3.14]{KhanVirtual} and its proof.
\end{Exm}

%

\begin{Prop}
	\label{Prop:ReesQS}
	If $A \to B$ is a quasi-smooth surjection, then $D_{B/A}$ is affine over $\Spec A[t^{-1}]$.
\end{Prop}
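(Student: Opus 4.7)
The plan is to reduce to the finite quotient case handled by Proposition~\ref{Prop:ReesFinQuot}. First I would observe that the conclusion is local on $X = \Spec A$. By Lemma~\ref{Lem:WeilPull}, the formation of $\Res_{\zeta_X}([Z/\G_{m,X}])$ commutes with base change along any morphism $X' \to X$, and passing to the pullback square \eqref{Eq:DefDZX} (together with the obvious identification $[\A^1_X/\G_{m,X}] \times_X X' \simeq [\A^1_{X'}/\G_{m,X'}]$) gives a natural equivalence $D_{Z/X} \times_{\A^1_X} \A^1_{X'} \simeq D_{Z \times_X X' / X'}$. Since affineness is Zariski-local on the target, it suffices to exhibit a Zariski cover $\{\Spec A_\alpha \to \Spec A\}$ for which each $D_{B \otimes_A A_\alpha / A_\alpha}$ is affine over $\A^1_{A_\alpha}$.

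Next, I would use the definition of quasi-smoothness to reduce to an explicit local model. After passing to a suitable cover, the closed immersion $\Spec B \to \Spec A$ is obtained by pulling back $\{0\} \to \A^n$ along a morphism $\Spec A \to \A^n$ classifying a sequence $\underline{f} = (f_1, \dots, f_n)$ of points of $A$. This identifies $B$ with the derived pushout $A \otimes_{\Z[x_1,\dots,x_n]} \Z \simeq A/(f_1,\dots,f_n)$. Such a presentation exhibits $A \to B$ as a finite quotient in the sense of \S\ref{Par:FinQuot}, since it is obtained from $A$ by attaching $n$ one-cells along the pointed map $\bigvee_{i=1}^n S^0 \to A$ determined by $\underline{f}$.

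Finally, Proposition~\ref{Prop:ReesFinQuot} applies to give that $D_{B/A}$ is affine over $\A^1_A$ on each patch, and the locality observation then patches these into affineness of $D_{B/A}$ over $\A^1_A$ in the original situation.

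The main obstacle is not conceptual but bookkeeping: checking that the equivalence $D_{Z/X} \times_{\A^1_X} \A^1_{X'} \simeq D_{Z \times_X X' / X'}$ coming out of Lemma~\ref{Lem:WeilPull} really does preserve the $\G_{m}$-equivariant structure cleanly enough that affineness descends, and verifying that the particular cell-attachment presentation $A \to A/(\underline{f})$ coming from the quasi-smooth hypothesis agrees with the formalism of finite quotients from \S\ref{Par:FinQuot}. Beyond this, no genuinely new input is needed on top of the finite quotient case.
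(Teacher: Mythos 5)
Your proof is correct and takes essentially the same route as the paper: localize on $\Spec A$, use the definition of quasi-smoothness to write $B$ as a finite quotient $A/(f_1,\dots,f_n)$ locally, and apply Proposition~\ref{Prop:ReesFinQuot}. The only difference is that you explicitly justify the localization step via Lemma~\ref{Lem:WeilPull} (which is a sound choice, and in fact slightly more careful than the paper, whose proof asserts locality without reference and only records the base-change statement afterwards as Proposition~\ref{Prop:ReesPull}).
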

 
\begin{proof}
	Since the question is local on $\Spec A$, we may assume that $A \to B$ is of the form $A \to A/(a_1,\dots,a_n)$ for certain points $a_i \in A$. Then the claim follows from Proposition \ref{Prop:ReesFinQuot}.
\end{proof}


%

\subsection{Rees algebra of a closed immersion}
\label{Par:ReesClos}
We now show the general case of any closed immersion $Z\to X$. Recall that we may assume $Z \to X$ to be of the form $\Spec B \to\Spec A$. The strategy is roughly as follows. We can obtain $B$ from $A$ by attaching an infinite number of cells, in order of dimension. We show that in each intermediate step $A \to B_\alpha \to B$, it holds that $D_{B_\alpha/A}$ will be affine over $\Spec A[t^{-1}]$. The corresponding $R^\ext_{B_\alpha/A}$ will then assemble into $R^\ext_{B/A}$, as the $B_\alpha$ assemble into $B$.
\begin{Lem}
	\label{Lem:ReesComp}
	Suppose that $D_{B/A}$ is affine over $\Spec A[t^{-1}]$. Let $\sigma: S^n \to B$ be a given pointed map, for $n>0$, and put $B' \coloneqq B/(\sigma)$. Then $D_{B'/A}$ is affine over $\Spec A[t^{-1}]$.
\end{Lem}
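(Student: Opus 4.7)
The plan is to derive an explicit fibre-product description of the functor $H_{B'}$, and then argue its corepresentability via Remark~\ref{Rem:HArep}, in the spirit of the proof of Proposition~\ref{Prop:ReesFinQuot}.

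First I would exploit the universal property of the cell attachment $B' = B/(\sigma)$: for any $C \in \Alg_A$, a map $B' \to C$ is the same as a map $\varphi: B \to C$ together with a nullhomotopy of the composite $\varphi \circ \sigma: S^n \to C$. This gives a natural pullback square
$$\begin{tikzcd} \Alg_A(B', C) \ar[r] \ar[d] & \{0\} \ar[d] \\ \Alg_A(B, C) \ar[r, "{\varphi\, \mapsto\, \varphi \circ \sigma}"] & \Space_*(S^n, C). \end{tikzcd}$$
Applying this with $C = G_A(Q)$ for $Q \in \Alg^\Z_{A[t^{-1}]}$, and using functoriality in $Q$, yields a natural equivalence
$$H_{B'}(Q) \simeq H_B(Q) \times_{\Space_*(S^n, G_A(Q))} \{0\}.$$

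Next I would verify that each of the three functors appearing in this pullback preserves all limits and all $\kappa$-filtered colimits for a suitable regular cardinal $\kappa$. The hypothesis that $D_{B/A}$ is affine over $\Spec A[t^{-1}]$ gives (via Lemma~\ref{Lem:RedtoHA}) a corepresenting object $R^\ext_{B/A}$ for $H_B$, so $H_B$ preserves all limits and $\kappa$-filtered colimits for some $\kappa$. The functor $G_A$ preserves filtered colimits by Lemma~\ref{Lem:GAfiltcoco}, and preserves limits because it is corepresented by $A[x(0)]$, viewed as an $A[t^{-1}]$-algebra via the zero section $t^{-1} \mapsto 0$. The functor $\Space_*(S^n, -)$ preserves limits and filtered colimits, since $S^n$ is a compact object of $\Space_*$. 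Finally, the constant functor at the basepoint preserves all limits (its value is terminal in $\Space$) and all filtered colimits (filtered $\infty$-categories have weakly contractible classifying spaces). Since finite limits commute with limits and with $\kappa$-filtered colimits in $\Space$, the pullback $H_{B'}$ inherits both preservation properties.

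By Remark~\ref{Rem:HArep} it then follows that $H_{B'}$ is corepresentable by some $R' \in \Alg^\Z_{A[t^{-1}]}$, and Lemma~\ref{Lem:RedtoHA} concludes that $D_{B'/A} \simeq \Spec R'$ is affine over $\Spec A[t^{-1}]$. The main technical point is the identification in the first paragraph: one must interpret $B/(\sigma)$ correctly as a pushout in the $\infty$-category $\Alg_A$ and extract the resulting universal property for mapping out. Once this is in hand, the rest is a routine assembly of preservation properties via Remark~\ref{Rem:HArep} and Lemma~\ref{Lem:RedtoHA}; importantly, we never need $B'$ to be compact (and indeed $B$ itself is not assumed compact), because the corepresenting object $R'$ is produced by the adjoint-functor-theorem form of Remark~\ref{Rem:HArep} rather than by direct construction.
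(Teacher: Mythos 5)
Your overall strategy --- express $H_{B'}$ as a pullback
\[
H_{B'}(Q)\;\simeq\;H_B(Q)\times_{\Space_*(S^n,\,G_A(Q))}\{0\}
\]
and then check preservation of limits and $\kappa$-filtered colimits term by term, concluding by Remark~\ref{Rem:HArep} and Lemma~\ref{Lem:RedtoHA} --- is exactly the paper's. The gap is in your treatment of the middle term.

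You assert that $G_A$ preserves limits ``because it is corepresented by $A[x(0)]$, viewed as an $A[t^{-1}]$-algebra via the zero section $t^{-1}\mapsto 0$.'' This is false. Since $A[x(0)]$ is concentrated in degree $0$, the unique graded map $A[t^{-1}]\to A[x(0)]$ sends $t^{-1}\mapsto 0$; a point of $\Alg^\Z_{A[t^{-1}]}(A[x(0)],Q)$ is therefore a map $A[x(0)]\to Q$ together with a path in $Q_{-1}$ from $0$ to $q$. Taking $Q=A[t^{-1}]$ with $A$ nonzero and discrete gives $\Alg^\Z_{A[t^{-1}]}(A[x(0)],A[t^{-1}])=\varnothing$, while $G_A(A[t^{-1}])=A$. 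More fundamentally, $G_A(Q)\simeq\operatorname{cofib}(Q_1\xrightarrow{\,\times q\,}Q_0)$ is a cofibre taken in $\cn\Mod_A$, and cofibres in $\cn\Mod_A$ do not commute with limits (this is what distinguishes $\cn\Mod_A$ from the stable $\Mod_A$: the truncation $\tau_{\geq 0}$ interferes). So $G_A$ is not limit-preserving, hence not corepresentable, and the proof as written does not close.

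What does go through is the weaker statement that $\Space_*(S^n,G_A(-))$ preserves limits and filtered colimits for $n\geq 1$, which is what you actually need. The correct argument --- and the paper's --- observes that $\Space_*(S^n,G_A(-))\simeq\Alg_A(A[S^n],G_A(-))=H_{A[S^n]}(-)$, and that $A[S^n]\simeq A/(0\colon S^{n-1}\to A)$ is a \emph{finite quotient} precisely because $n>0$ (so $S^{n-1}$ exists). By Proposition~\ref{Prop:ReesFinQuot}, $D_{A[S^n]/A}$ is affine over $\Spec A[t^{-1}]$, hence $H_{A[S^n]}$ is corepresentable and thus preserves limits and filtered colimits. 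Equivalently, one rewrites $\Omega^n G_A(Q)\simeq\{0\}\times_{\Omega^{n-1}Q_0}\Omega^{n-1}Q_1$ using the fibre sequence $Q_1\to Q_0\to G_A(Q)$; the right-hand side manifestly preserves limits because $(-)_0$ and $(-)_1$ do, and this rewriting costs one loop, which is exactly why $n\geq 1$ is needed. Your proof would be complete with this substitution; as it stands, the corepresentability claim for $G_A$ is a genuine error, not merely an imprecise justification.
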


\begin{proof}
	By Remark \ref{Rem:HArep} and Lemma \ref{Lem:RedtoHA}, we need to show that $H_{B'}$ preserves limits and $\kappa$-filtered colimits for some regular $\kappa$. This functor can be written as
	\begin{align*}
	H_{B'}: \Alg^\Z_{A[t^{-1}]} &\to \Space\\
	Q & \mapsto \Alg_A(B,G_A(Q)) \times_{\Alg_A(A[S^n],G_A(Q))} \{0\}
	\end{align*}
	Since $D_{B/A}$ is affine over $\Spec A[t^{-1}]$, it suffices to show that $D_{A[S^n]/A}$ is so as well. Since $A[S^n] \simeq A/({S^{n-1} \xrightarrow{0} A})$, this is indeed so by Proposition \ref{Prop:ReesFinQuot}.
\end{proof}	

Clearly, the proof of Lemma \ref{Lem:ReesComp} uses that $n>0$. Luckily, it will turn out that this is exactly the case that we need, essentially since in assembling $B$ from $A$ by attaching $k$-cells, we can work in order of the dimension $k$. 

\begin{Lem}
	\label{Lem:ReesColim}
	Suppose that $B$ is a colimit of $A$-algebras $B_\alpha$ such that $D_{B_\alpha/A}$ is affine over $\Spec A[t^{-1}]$, for all $\alpha$. Then $D_{B/A}$ is affine over $\Spec A[t^{-1}]$.
\end{Lem}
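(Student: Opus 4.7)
The plan is a direct Yoneda argument using the universal property established in \eqref{Eq:FundComp}. By Lemma \ref{Lem:RedtoHA}, the hypothesis that each $D_{B_\alpha/A}$ is affine over $\Spec A[t^{-1}]$ amounts to saying that $H_{B_\alpha}$ is corepresented by the extended Rees algebra $R^\ext_{B_\alpha/A}$. The functoriality of $R^\ext_{(-)/A}$ recalled in \S\ref{Par:const} assembles these into a diagram $\{R^\ext_{B_\alpha/A}\}_\alpha$ in $\Alg^\Z_{A[t^{-1}]}$, with the transition map $R^\ext_{B_\alpha/A} \to R^\ext_{B_\beta/A}$ corresponding under Yoneda to the natural transformation $H_{B_\beta} \to H_{B_\alpha}$ induced by $B_\alpha \to B_\beta$.

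Next I set $R \coloneqq \colim_\alpha R^\ext_{B_\alpha/A}$, computed in $\Alg^\Z_{A[t^{-1}]}$, and check that $R$ corepresents $H_B$. For any $Q \in \Alg^\Z_{A[t^{-1}]}$, the chain of canonical equivalences
\begin{align*}
\Alg^\Z_{A[t^{-1}]}(R,Q) &\simeq \lim\nolimits_\alpha \Alg^\Z_{A[t^{-1}]}(R^\ext_{B_\alpha/A},Q) \\
&\simeq \lim\nolimits_\alpha \Alg_A(B_\alpha, G_A(Q)) \\
&\simeq \Alg_A(\colim\nolimits_\alpha B_\alpha, G_A(Q)) \simeq H_B(Q)
\end{align*}
delivers the claim: the first equivalence is the defining property of the colimit $R$, the second is the corepresentability of each $H_{B_\alpha}$, the third is the dual property that $\Alg_A(-,G_A(Q))$ sends colimits to limits, and the last uses $B \simeq \colim_\alpha B_\alpha$ together with the definition of $H_B$. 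A second application of Lemma \ref{Lem:RedtoHA} then identifies $D_{B/A}$ with $\Spec R$, which is affine over $\Spec A[t^{-1}]$.

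I do not anticipate a serious obstacle. The one subtlety to be careful about is that the argument must never commute a colimit past $G_A$, which would be illegitimate by Remark \ref{Rem:GANOTcoco}; in the display above this is avoided because $G_A(Q)$ sits in the second slot of $\Alg_A(-,-)$, so the colimit in the $B_\alpha$ exits to a limit of mapping spaces without ever being fed into $G_A$. Everything else is formal Yoneda and the fact that $\Alg^\Z_{A[t^{-1}]}$ admits small colimits.
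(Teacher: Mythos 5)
Your argument is correct and is essentially the same as the paper's: both proofs invoke Lemma \ref{Lem:RedtoHA} to translate affineness into corepresentability, then identify the corepresenting object as $\colim_\alpha R^\ext_{B_\alpha/A}$ via the chain $\Alg^\Z_{A[t^{-1}]}(\colim_\alpha R^\ext_{B_\alpha/A},Q) \simeq \lim_\alpha \Alg_A(B_\alpha,G_A(Q)) \simeq \Alg_A(B,G_A(Q))$, with the only cosmetic difference being that you run the chain in the opposite direction and explicitly flag the Yoneda assembly of the diagram $\{R^\ext_{B_\alpha/A}\}_\alpha$ and the danger of pushing a colimit through $G_A$.
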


\begin{proof}
	By Lemma \ref{Lem:RedtoHA}, it suffices to show that $H_B$ is corepresentable. Let $Q \in \Alg^\Z_{A[t^{-1}]}$ be given. Then we have natural equivalences
	\begin{align*}
	H_B(Q)=\Alg_A(B,G_A(Q)) &\simeq \lim \Alg_A(B_\alpha,G_A(Q)) \\
	& \simeq \lim \Alg^\Z_{A[t^{-1}]}(R_{B_\alpha/A}^\ext,G_A(Q)) \\
	& \simeq \Alg^\Z_{A[t^{-1}]}(\colim R_{B_\alpha/A}^\ext,G_A(Q))
	\end{align*}
	Hence $D_{B/A}$ is affine over $\Spec A[t^{-1}]$, and in fact $D_{B/A} = \Spec (\colim R_{B_\alpha/A}^\ext)$.
\end{proof}

\begin{Lem}
	\label{Lem:CWapprox}
	For any map $A \to B$, there is a sequence $A = B_{0} \to B_1 \to \dots \to B_n \to \dots$ in $\Alg_{A/B}$, indexed by $n\in \N$, such that the map $\colim B_n \to B$ is an equivalence, and with all $B_k \to B_{k+1}$ obtained by attaching infinitely many $k$-cells to $B_k$.
\end{Lem}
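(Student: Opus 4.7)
The strategy is to construct the tower as a cellular (Postnikov-type) approximation of $B$ over $A$ in the $\infty$-category $\Alg_A$. Matching the conventions already in play in Lemma~\ref{Lem:ReesComp} and its proof, I take a $k$-cell attachment (for $k \geq 1$) to $C \in \Alg_A$ to be a pushout $C \sqcup_{A[S^{k-1}]} A$ along a map $\sigma : A[S^{k-1}] \to C$ and the augmentation $A[S^{k-1}] \to A$, and a $0$-cell attachment to be adjoining a free variable $C \to C[x]$. Both kinds are pushouts of generating cofibrations in the standard model structure on simplicial commutative $A$-algebras, so the existence of some cell presentation is the small-object argument; the content here is organizing it inductively by cell dimension.

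\textbf{Inductive construction.} Set $B_0 = A$ with the given map to $B$. Proceed by induction on $k$, maintaining the invariant that $\pi_i(B_k) \to \pi_i(B)$ is an isomorphism for $i < k-1$ and a surjection for $i = k-1$ (vacuous for small $k$). For the step $B_0 \to B_1$, choose classes $\{b_\alpha \in \pi_0(B)\}_\alpha$ generating $\pi_0(B)$ as a $\pi_0(A)$-algebra and set $B_1 = A[x_\alpha : \alpha]$ with $x_\alpha \mapsto b_\alpha$; this attaches one $0$-cell per $b_\alpha$ and makes $\pi_0(B_1) \to \pi_0(B)$ surjective. For the step $B_k \to B_{k+1}$ with $k \geq 1$, attach $k$-cells of two kinds simultaneously: (a) one along each representative $\sigma : A[S^{k-1}] \to B_k$ of a generator of $\ker(\pi_{k-1}(B_k) \to \pi_{k-1}(B))$, killing that class, where the composite to $B$ is null so that a chosen null-homotopy extends the map to $B_{k+1} \to B$; and (b) one along the zero map $A[S^{k-1}] \to B_k$ per generator of $\pi_k(B)$ not yet in the image of $\pi_k(B_k)$, with null-homotopy chosen so that the resulting pushout (containing a copy of $A[S^k] \simeq A \sqcup_{A[S^{k-1}]} A$) produces the desired $\pi_k$-class in $B_{k+1}$. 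The invariant then advances to level $k+1$.

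\textbf{Convergence and main obstacle.} Since $\pi_*$ commutes with filtered colimits in $\Alg$ and every $\pi_i$ stabilizes to $\pi_i(B)$ after finitely many steps of the tower, the induced map $\colim_n B_n \to B$ is an isomorphism on all homotopy groups, hence an equivalence in $\Alg_A$. The main obstacle is that attachments of type~(a) can introduce new elements of $\pi_k(B_{k+1})$ via the derived pushout, which must be compatible with the type~(b) attachments aimed at $\pi_k(B)$. This is handled by packaging (a) and (b) into a single simultaneous pushout and analysing the associated long exact sequence in homotopy (refining a Koszul/Tor-type spectral sequence); a Hurewicz argument then guarantees that a sufficient common set of (a) and (b) generators can always be chosen so that both the kernel on $\pi_{k-1}$ and the cokernel on $\pi_k$ are controlled.
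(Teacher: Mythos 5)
Your construction is the same Postnikov-tower/small-object argument that the paper invokes (the paper's proof is a one-line citation to the approximation theorem of Elmendorf--Kriz--Mandell--May, which builds a cell approximation by dimension exactly as you do). Your setup of the tower, the invariant, and the convergence argument via stabilization of $\pi_m$ past stage $m+2$ are all correct.

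However, your ``main obstacle'' paragraph misidentifies where the work is and then waves at a resolution. The phenomenon you flag --- that type-(a) attachments may create new classes in $\pi_k(B_{k+1})$ --- is not a problem at all and requires no ``compatibility'' with type-(b) attachments, no Koszul/Tor spectral sequence, and no Hurewicz argument: any spurious $\pi_k$-classes are simply killed at the next stage $B_{k+1} \to B_{k+2}$ by type-(a) cells, and surjectivity of $\pi_k(B_{k+1}) \to \pi_k(B)$ is already secured by the type-(b) attachments, which type-(a) cannot destroy. The genuine subtlety, which you do not raise, is the step needed to advance the invariant on $\pi_{k-1}$: after attaching $k$-cells along generators of $\ker(\pi_{k-1}(B_k) \to \pi_{k-1}(B))$, one needs $\pi_{k-1}(B_k) \to \pi_{k-1}(B_{k+1})$ to be \emph{surjective}. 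In the stable setting of $R$-modules (the setting of the EKMM result the paper cites) this is immediate from the cofibre long exact sequence; for simplicial commutative $A$-algebras the pushout $B_k \otimes_{A[S^{k-1}]} A$ is not a cofibre in a stable category, and this surjectivity (together with the identification of the kernel with the ideal generated by the killed classes) is what needs a short argument --- e.g.\ via the truncation trick used in the paper's remark following Definition~B.6.1, or via a Tor spectral sequence for the pushout. As written, your proposal asserts rather than proves the one step that actually has content in this non-stable setting.
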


\begin{proof}
	This follows by a small objects type of argument, which  is essentially the same as in \cite[Thm.\ 2.10]{ElmendorfRings}.
\end{proof}

\begin{proof}[Proof of Theorem \ref{Thm:ReesRep}]
	Using Lemma \ref{Lem:CWapprox}, take a sequence $A = B_0 \to B_1 \to \dots \to B_n \to \dots$, indexed by $n \in \N$, such that $B_k \to B_{k+1}$ is obtained by attaching $k$-cells to $B_k$, and with $B = \colim B_n$. 
	
	We will first show that $D_{B_2/A}$ is affine over $\A^1_X$. As before, we observe that $A \to B_2$ is surjective. Moreover, by assumption this map is of the form $A \to A[\underline{X}]/(\underline{f})$ for variables $\underline{X} = X_1,X_2,\dots$ and relations $\underline{f} = f_1,f_2,\dots$ (both possibly infinite).
	
	Let $\CCC$ be the category of sets of the form $\{X_{i_1},\dots,X_{i_n},f_{j_1},\dots,f_{j_m}\}$, where all the $f_{j_k}$ are in $A[X_{i_1},\dots,X_{i_n}]$, and for which $A \to A[X_{i_1},\dots,X_{i_n}]/(f_{j_1},\dots,f_{j_m})$ is surjective. Morphisms in $\CCC$ are the inclusions. For each such object, we have a factorization of $A \to B_2$ as $
	A \to A[X_{i_1},\dots,X_{i_n}]/(f_{j_1},\dots,f_{j_m}) \to B_2$. Now the induced map
	\begin{align*}
	\colim\nolimits_{\{X_{i_1},\dots,X_{i_n},f_{j_1},\dots,f_{j_m}\} \in \CCC} A[X_{i_1},\dots,X_{i_n}]/(f_{j_1},\dots,f_{j_m}) \to B_2
	\end{align*}
	is an equivalence of $A$-algebras. Indeed, we can write
	\begin{align*}
	A[X_{i_1},\dots,X_{i_n}]/(f_{j_1},\dots,f_{j_m}) \simeq A[X_{i_1},\dots,X_{i_n}] \otimes_{A[F_{j_1},\dots,F_{j_m}]} A
	\end{align*}
	from which the claim follows by the fact that colimits commute with tensor products. 
	
	Since all the maps $A\to A[X_{i_1},\dots,X_{i_n}]/(f_{j_1},\dots,f_{j_m})$ are quasi-smooth by Example \ref{Ex:AX/FisQS}, we have that $D_{(A[X_{i_1},\dots,X_{i_n}]/(f_{j_1},\dots,f_{j_m}))/A}$ is affine over $\A^1_X$ by Proposition \ref{Prop:ReesQS}, and thus that $D_{B_2/A}$ is affine over $\A^1_X$ by Lemma \ref{Lem:ReesColim}.
	
	Now let $k\geq 2$, and assume that $D_{B_k/A}$ is affine over $\A^1_X$. We will show that $D_{B_{k+1}/A}$ is as well. With a similar argument as before, we write $B_{k+1}$ as a colimit of $B_k$-algebras $B_\alpha$, which are each obtained from $B_k$ by attaching a finite number of $k$-cell. By Lemma \ref{Lem:ReesComp}, each $D_{B_\alpha/A}$ is affine over $\A^1_X$, hence by Lemma \ref{Lem:ReesColim} so is $D_{B_{k+1}/A}$.
	
	By induction, we conclude that $D_{B_n/A}$ is affine over $\A^1_X$, for all $n \geq 0$. Since $B= \colim B_n$, the statement follows from another application of Lemma \ref{Lem:ReesColim}.	
\end{proof}

\subsection{Structural induction}
We will isolate the proof strategy used in Theorem \ref{Thm:ReesRep}, since we will have chance to use it again on several occasions.

\begin{Def}
	For $A \in \Alg$, write $\Sur_A$ for the full subcategory of $\Alg_A$ spanned by the surjections.
\end{Def}

Let $\mathbf{P}$ be a property of closed immersions $Z \to X$. For $A \in \Alg$, write $\mathbf{P}_A$ for the corresponding property of objects in the category $\Sur_A$.
\begin{Lem}
	\label{Lem:StrucInduc}
	Suppose that $\mathbf{P}$ satisfies the following properties:
	\begin{enumerate}
		\item It is local on $X$.
		\item It holds for any $\Spec B \to \Spec A$ for which $A \to B$ is a finite quotient.
		\item The property $\mathbf{P}_A$ is stable under filtered colimits in $\Sur_A$, for any $A \in \Alg$.
		\item If $\mathbf{P}_A$ holds for a surjection $A \to B$, then it also holds for $A \to B/(\sigma)$, for any pointed map $\sigma:S^n \to B$ with $n \geq 1$.
	\end{enumerate}
	Then $\mathbf{P}$ holds for any closed immersion $Z \to X$.
\end{Lem}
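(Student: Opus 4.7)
The plan is to mimic the proof of Theorem \ref{Thm:ReesRep} verbatim, replacing the specific conclusion ``$D_{B/A}$ is affine over $\Spec A[t^{-1}]$'' by the abstract statement ``$\mathbf{P}_A$ holds for $A \to B$''. First I would use (1) to reduce to the case where $Z \to X$ is of the form $\Spec B \to \Spec A$ with $A \to B$ a surjection, and invoke Lemma \ref{Lem:CWapprox} to produce a sequence $A = B_0 \to B_1 \to \cdots$ in $\Alg_{A/B}$ with $B \simeq \colim_n B_n$ and each $B_k \to B_{k+1}$ obtained by attaching $k$-cells.

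The crux is to verify $\mathbf{P}_A$ for $A \to B_2$, which by construction is of the form $A \to A[\underline{X}]/(\underline{f})$. Mirroring Theorem \ref{Thm:ReesRep}, I would write this as a filtered colimit in $\Sur_A$ of maps $A \to A[X_{i_1},\dots,X_{i_n}]/(f_{j_1},\dots,f_{j_m})$ over those finite sub-collections for which the target is surjective over $A$. Each such intermediate map is quasi-smooth by Example \ref{Ex:AX/FisQS}, hence Zariski-locally on $\Spec A$ of the form $A' \to A'/(a'_1,\dots,a'_k)$, i.e.\ a finite quotient. Applying (2) Zariski-locally, then (1) to globalize, and finally (3) to assemble the filtered colimit, one obtains $\mathbf{P}_A$ for $A \to B_2$.

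For the inductive step, assuming $\mathbf{P}_A$ holds for $A \to B_k$ with $k \geq 2$, I would express $B_{k+1}$ as a filtered colimit in $\Sur_A$ of $B_k$-algebras $B_\alpha$, each obtained from $B_k$ by only finitely many $k$-cell attachments. As $k \geq 2$, every such individual attachment is of the form $B \mapsto B/(\sigma)$ for a pointed map $\sigma: S^n \to B$ with $n \geq 1$, so finitely many iterations of (4) yield $\mathbf{P}_A$ for each $A \to B_\alpha$, and (3) then yields $\mathbf{P}_A$ for $A \to B_{k+1}$. A concluding application of (3) to the filtered sequence $B_2 \to B_3 \to \cdots$ in $\Sur_A$ gives $\mathbf{P}_A$ for $A \to B$.

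The main obstacle is the base case at $B_2$: hypotheses (2) and (4) do not apply directly, since $A \to B_1$ is typically a polynomial extension and hence not in $\Sur_A$, while $A \to B_2$ is not itself a single cell attachment over $A$. It is precisely the combination of the locality hypothesis (1) with the quasi-smooth description of $A \to B_2$ that enables the reduction to finite quotients covered by (2). Once this base case is settled the induction is essentially mechanical, since (4) handles each individual cell attachment in dimension $n \geq 1$ and (3) handles the filtered colimits appearing at each stage.
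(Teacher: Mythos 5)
Your proposal is correct and follows essentially the same route as the paper: reduce to the affine surjective case via (1), invoke Lemma \ref{Lem:CWapprox}, handle $A \to B_2$ by writing it as a filtered colimit of quasi-smooth surjections (which are finite quotients Zariski-locally, so (1) and (2) apply) and invoking (3), run the induction on $k \geq 2$ via (4) applied to each individual $k$-cell attachment followed by (3), and conclude with a final application of (3) to the sequence $B_2 \to B_3 \to \cdots$. The only cosmetic difference is that the paper extracts "quasi-smooth closed immersions satisfy $\mathbf{P}$" as a preliminary consequence of (1) and (2), whereas you fold the same argument into the base case at $B_2$.
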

\begin{proof}
	From (1) and (2) it follows that $\mathbf{P}$ holds for any quasi-smooth map $Z \to X$. By (1), we can and will assume that $Z \to X$ is of the form $\Spec B \to \Spec A$.  
	
	Now write $A \to B$ as $A = B_0 \to \dots \to B_n \to \dots \to B$, with $B_k \to B_{k+1}$ obtained by attaching $k$-cells, and $B = \colim B_n$. By (3), it suffices to show that $\mathbf{P}_A$ holds for all $A \to B_n, n \geq 2$.
	
	From the proof of Theorem \ref{Thm:ReesRep} it follows that $B_2$ is in fact a filtered colimit of $A$-algebras $B_\alpha$, such that each $A \to B_\alpha$ is surjective and quasi-smooth. It follows that $\mathbf{P}_A$ holds for all $A \to B_\alpha$, and hence for $A \to B_2$ by (3).
	
	Now suppose that $\mathbf{P}_A$ holds for $A \to B_k$ with $k \geq 2$. Again we can write $B_{k+1}$ as a filtered colimit of $B_k$-algebras $B_\alpha$, each obtained from $B_k$ by attaching a finite number of $k$-cells. Since $\alpha \mapsto B_\alpha$ is a filtered diagram, it also holds that $B_{k+1} \simeq \colim B_\alpha$ as $A$-algebras. Hence, by (3), and by induction on the number of $k$-cells, we may assume that $B_{k+1}$ is of the form $B_k/(\sigma)$ for certain $\sigma:S^{k-1} \to B_k$. By (4), $\mathbf{P}_A$ now also holds for $A \to B_{k+1}$. 
	
	By induction, $\mathbf{P}_A$ holds for all $A \to B_n, n \geq 2$, which was to be shown. 
\end{proof}

\subsection{Base change of Rees algebras}
Let still $Z \to X$ be a closed immersion, let $f:X' \to X$ be a map of schemes,  put $Z' \coloneqq Z \times_X X'$. Recall the map $\zeta_X: B\G_{m,X} \to [\A^1_X/\G_{m,X}]$ induced by the zero section from \S \ref{Par:const}. Letting $X$ vary in this notation, it holds that $\zeta_{X'}$ is the pullback of $\zeta_X$. Observe that we then have a map 
\begin{align*}
k:D_{Z'/X'} \to D_{Z/X}
\end{align*}
induced by the maps $D_{Z'/X'} \to \Res_{\zeta_{X'}}([Z'/\G_{m,{X'}}]) \to \Res_{\zeta_X}([Z/\G_{m,X}])$ and $D_{Z'/X'} \to \A^1_{X'} \to \A^1_{X}$.
\begin{Prop}
	\label{Prop:ReesPull}
	The map $k$ exhibits $D_{Z'/X'}$ as pullback of $D_{Z/X}$ along $\A^1_{X'} \to \A^1_X$. Consequently, we have an equivalence $\RR^\ext_{Z'/X'} \simeq f^*\RR_{Z/X}^\ext$ of $\Z$-graded $\OO_X$-algebras.
\end{Prop}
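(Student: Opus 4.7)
The plan is to reduce the statement to two applications of Lemma \ref{Lem:WeilPull}, combined with pasting of pullback squares. The consequence about $\RR^\ext_{Z/X}$ will then be immediate from the functoriality of $\Alg^\Z(-) \simeq \Alg^{\G_m}(-)$ established in Theorem \ref{Thm:Equiv}.

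First, I would verify that the following diagram consists of two Cartesian squares:
\begin{center}
\begin{tikzcd}
{[Z'/\G_{m,X'}]} \arrow[d] \arrow[r] & B\G_{m,X'} \arrow[d] \arrow[r, "\zeta_{X'}"] & {[\A^1_{X'}/\G_{m,X'}]} \arrow[d] \\
{[Z/\G_{m,X}]} \arrow[r] & B\G_{m,X} \arrow[r, "\zeta_X"] & {[\A^1_X/\G_{m,X}]}
\end{tikzcd}
\end{center}
The right square is Cartesian because $\zeta_X$ is the quotient of the zero section $X \to \A^1_X$ by the (compatible) $\G_{m,X}$-actions, and both $\G_{m,X'} \simeq \G_{m,X} \times_X X'$ and $\A^1_{X'} \simeq \A^1_X \times_X X'$; pullbacks commute with quotient stacks since colimits in $\St$ are universal (this is the same argument invoked just before Definition~\ref{Def:AlgMSt}). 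The left square is Cartesian because $Z' = Z \times_X X'$, the $\G_{m,X}$- and $\G_{m,X'}$-actions are trivial on $Z$ and $Z'$, and quotient stacks commute with pullbacks.

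Next, I apply Lemma \ref{Lem:WeilPull} to this double square, using that $\zeta_X$ (and hence $\zeta_{X'}$) is affine. This yields that the induced map
\[
\Res_{\zeta_{X'}}([Z'/\G_{m,X'}]) \longrightarrow \Res_{\zeta_X}([Z/\G_{m,X}])
\]
exhibits the source as the pullback of the target along $[\A^1_{X'}/\G_{m,X'}] \to [\A^1_X/\G_{m,X}]$. Now I paste this Cartesian square onto the defining pullback square (\ref{Eq:DefDZX}) for $D_{Z'/X'}$ along $\A^1_{X'} \to [\A^1_{X'}/\G_{m,X'}]$, and then onto the defining pullback square for $D_{Z/X}$ via the Cartesian square
\begin{center}
\begin{tikzcd}
\A^1_{X'} \arrow[d] \arrow[r] & {[\A^1_{X'}/\G_{m,X'}]} \arrow[d] \\
\A^1_X \arrow[r] & {[\A^1_X/\G_{m,X}]}
\end{tikzcd}
\end{center}
Two applications of the pasting lemma for pullbacks give that $D_{Z'/X'} \simeq D_{Z/X} \times_{\A^1_X} \A^1_{X'}$, and one checks directly that the comparison map induced on the limits is precisely $k$.

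For the consequence, note that by Theorem \ref{Thm:ReesRep} both $D_{Z/X}$ and $D_{Z'/X'}$ are $\G_m$-affine over $\A^1_X$ and $\A^1_{X'}$ respectively, so correspond to $\Z$-graded algebras $\RR^\ext_{Z/X}$ and $\RR^\ext_{Z'/X'}$ under the (natural in the base) equivalence of Theorem \ref{Thm:Equiv}. Since the Cartesian square from the previous step is $\G_m$-equivariant (the maps in (\ref{Eq:DefDZX}) are principal $\G_m$-bundles), translating across the equivalence yields $\RR^\ext_{Z'/X'} \simeq f^*\RR^\ext_{Z/X}$ in $\Alg^\Z(X')$. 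I do not expect any serious obstacle here: the only thing to be careful about is that Lemma \ref{Lem:WeilPull} genuinely applies to \emph{both} squares being Cartesian (the first part of that lemma only needs the right square to be Cartesian), and that the pullback of $D_{Z/X}$ is taken in $\St_{/\A^1_X}$ so the identification is compatible with the structure maps to $\A^1_X$ and with the $\G_{m,X}$-actions.
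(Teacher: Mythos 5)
Your proof is correct and takes essentially the same approach as the paper: the paper's proof is the one-liner ``This follows from Lemma \ref{Lem:WeilPull}.'' You have simply spelled out the double Cartesian square of stacks, the application of Lemma \ref{Lem:WeilPull}, and the pasting with the defining squares (\ref{Eq:DefDZX}) that the paper leaves implicit.
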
	
\begin{proof}
	This follows from Lemma \ref{Lem:WeilPull}.
\end{proof}

\begin{Cor}
	\label{Cor:ConePull}
	The map $k$ induces an equivalence $\overline{ \RR^\ext_{{Z'}/{X'}}} \simeq f^* \overline{ \RR^\ext_{Z/X} }$ of $\Z$-graded $\OO_X$-algebras.
\end{Cor}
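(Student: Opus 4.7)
The plan is to deduce the corollary directly from Proposition \ref{Prop:ReesPull} by applying the zero-section base change and using that pullback along $f$ commutes with tensor products and respects zero sections.

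More precisely, I would first note that by construction, $\overline{\RR^\ext_{Z/X}}$ is obtained from $\RR^\ext_{Z/X}$ by pulling back along the zero section $z_X \colon X \to \A^1_X$, i.e., $\overline{\RR^\ext_{Z/X}} \simeq z_X^* \RR^\ext_{Z/X}$ in $\Alg^\Z(X)$, where $\RR^\ext_{Z/X}$ is regarded as living on $\A^1_X$ via the equivalence between graded $\OO_X[t^{-1}]$-algebras and quasi-coherent graded algebras on $\A^1_X$. The analogous statement holds on $X'$.

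Next, I would check that the zero section is stable under base change: the diagram
\begin{center}
\begin{tikzcd}
X' \arrow[r, "z_{X'}"] \arrow[d, "f"] & \A^1_{X'} \arrow[d, "f_{\A^1}"] \\
X \arrow[r, "z_X"] & \A^1_X
\end{tikzcd}
\end{center}
is Cartesian. Consequently, on quasi-coherent graded algebras we have a natural equivalence $z_{X'}^* f_{\A^1}^* \simeq f^* z_X^*$.

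Combining these ingredients, the calculation is
\begin{align*}
\overline{\RR^\ext_{Z'/X'}} \simeq z_{X'}^*\RR^\ext_{Z'/X'} \simeq z_{X'}^* f_{\A^1}^* \RR^\ext_{Z/X} \simeq f^* z_X^* \RR^\ext_{Z/X} \simeq f^* \overline{\RR^\ext_{Z/X}},
\end{align*}
where the second equivalence is Proposition \ref{Prop:ReesPull}. To see that this equivalence is induced by $k$, one traces through the definitions: the map $k$ is, by Lemma \ref{Lem:WeilPull}, precisely the comparison map to the pullback, so the equivalence exhibited here is the one obtained by restricting $k$ along the zero sections. There is no substantial obstacle; the only thing to verify carefully is the compatibility of the base-change isomorphisms for Weil restriction (already handled in Lemma \ref{Lem:WeilPull}) with the identification of $D_{Z/X}$ with $\Spec \RR^\ext_{Z/X}$, which is immediate from the construction in \S\ref{Par:const}.
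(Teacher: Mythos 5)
Your proof is correct and takes essentially the same approach as the paper, which invokes Proposition \ref{Prop:ReesPull} together with the pasting law for pullbacks. Your write-up is simply the algebra-side unwinding of the geometric pasting argument: Cartesianness of the zero-section square and compatibility of pullbacks with composition are exactly what the pasting law provides on the $\Spec$-side.
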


\begin{proof}
	Use Proposition \ref{Prop:ReesPull} and the pasting law for pullbacks.
\end{proof}

In fact, in \S \ref{Par:ReesAdj} we will see that $\OO_X[t^{-1}] \to \RR^\ext_{Z/X}$ is an equivalence in degree $\leq 0$. It follows that $\overline{\RR_{Z/X}^\ext}$ is $\N$-graded.

\subsection{An explicit formula for the Rees algebra of a finite quotient}
\label{Par:ReesForm}
The proof of Theorem \ref{Thm:ReesRep} shows that for any closed immersion $Z \to X$, the Rees algebra $\RR_{Z/X}^\ext$ is locally on $X$ a colimit of Rees algebras of finite quotients. Here we will give an explicit formula for the Rees algebra of finite quotients.

\begin{Lem}
	\label{Lem:AdjEqv}
	For a surjection $A \to B$, we have equivalences
	\begin{align*}
	\Alg^\Z_{A[t^{-1}]}(R_{B/A}^\ext,Q) \simeq \St_{/[\A_X^1/\G_{m,X}]}(\Spec Q,D_{Z/X}) \simeq \Alg_A(B,G_A(Q))
	\end{align*}
	natural in $Q \in \Alg^\Z_{A[t^{-1}]}$ and $B \in \Alg_A$.
\end{Lem}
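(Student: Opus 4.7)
The plan is to assemble the result from ingredients already in place: the defining pullback square \eqref{Eq:DefDZX} for $D_{Z/X}$, the universal property of the Weil restriction, Theorem \ref{Thm:ReesRep}, and Theorem \ref{Thm:Equiv}. The core computation was already carried out in \eqref{Eq:FundComp}; the lemma merely promotes it to a statement about $R^\ext_{B/A}$ once the representability of $D_{Z/X}$ is known.

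For the first equivalence, Theorem \ref{Thm:ReesRep} identifies $D_{Z/X}$ with $\Spec R_{B/A}^\ext$ as a $\G_{m,X}$-scheme affine over $\A^1_X$. The principal-bundle equivalence $\St^{\G_{m,X}}_{/\A^1_X} \simeq \St_{/[\A^1_X/\G_{m,X}]}$ from \S\ref{Par:PrinBun} converts $\G_{m,X}$-equivariant maps $\Spec Q \to D_{Z/X}$ over $\A^1_X$ into maps over $[\A^1_X/\G_{m,X}]$ (after taking quotients of the source, which is how the middle term of the lemma is to be read). The contravariant equivalence of Theorem \ref{Thm:Equiv} then rewrites these as graded $A[t^{-1}]$-algebra maps $R_{B/A}^\ext \to Q$.

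For the second equivalence I apply the universal property of the Weil restriction to the pullback square \eqref{Eq:DefDZX}. Writing $T \coloneqq [\Spec Q/\G_{m,X}]$, a map $T \to D_{Z/X}$ over $[\A^1_X/\G_{m,X}]$ is the same as a lift of the structure map $T \to [\A^1_X/\G_{m,X}]$ through $\Res_{\zeta_X}[Z/\G_{m,X}]$, which by Remark \ref{Rem:ResFP} corresponds to a map $T \times_{[\A^1_X/\G_{m,X}]} B\G_{m,X} \to [Z/\G_{m,X}]$ over $B\G_{m,X}$. The fibre product computes to $[\Spec \overline{Q}/\G_{m,X}]$; undoing the quotient via the principal-bundle equivalence, then applying Theorem \ref{Thm:Equiv}, converts such data into graded $A$-algebra maps $B \to \overline{Q}$ with $B$ in trivial grading (since $Z$ carries the trivial $\G_{m,X}$-action). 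Finally, the adjunction between the trivial-grading functor and $(-)_0$ from Example \ref{Exm:fjadjs}(\ref{Itm:trivgrad}) rewrites this as $\Alg_A(B, (\overline{Q})_0) = \Alg_A(B, G_A(Q))$.

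Naturality in $Q \in \Alg^\Z_{A[t^{-1}]}$ and in $B$ is automatic, since every functor in the chain is manifestly functorial in its respective argument. I expect no real obstacle: all the heavy lifting has been done in Theorems \ref{Thm:Equiv} and \ref{Thm:ReesRep}, and the only care needed is to keep track of the principal-bundle equivalence of \S\ref{Par:PrinBun} when toggling between the equivariant and quotient-stack formulations of the middle term.
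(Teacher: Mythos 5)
Your proposal is correct and follows essentially the same route as the paper: the paper's proof is a one-line citation of Theorem \ref{Thm:ReesRep} together with the chain of equivalences already established in (\ref{Eq:FundComp}), and what you have done is expand that chain in full (Weil restriction universal property, fibre product $[\Spec\overline{Q}/\G_{m,X}]$, the principal-bundle equivalence, Theorem \ref{Thm:Equiv}, and the trivial-grading $\dashv (-)_0$ adjunction). You also correctly read the middle term $\St_{/[\A^1_X/\G_{m,X}]}(\Spec Q, D_{Z/X})$ as the equivariant mapping space, i.e.\ as maps between the corresponding quotient stacks over $[\A^1_X/\G_{m,X}]$, which is the intended (slightly abused) meaning in the lemma and matches the left-hand side of (\ref{Eq:FundComp}).
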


\begin{proof}
	This follows from Theorem \ref{Thm:ReesRep} and the equivalences in (\ref{Eq:FundComp}).
\end{proof}
 Recall that we can adjoin a simplicial set $K$ to an algebra $A$ to get a new algebra $A[K]$. There is also a graded version of this available. Using this, we define an $A$-algebra $A[v]$ with $v$ free of degree $d$ and level $n$, see \S \ref{Par:Gradn} for details. This has the universal property that $\Alg^\Z_A(A[v],B)$ is equivalent to the space of pointed maps $S^n \to B_d$, for any graded $A$-algebra $B$.
 \begin{Not}
 	For $B \in \Alg^\Z$ and $\tau:S^m \to B_d$ a pointed map, we write $B/(\tau)$ for the pushout in $\Alg^\Z$ of $A[v] \to B$ along the map $A[v] \to A$ induced by $0:S^m \to A$, where $v$ is free of level $m$, degree $d$. Similarly for multi-index notation.
 \end{Not}
\begin{Prop}
	\label{Prop:ReesForm}
	Let $A \to B$ be a finite quotient, say $B = A/(\underline{\sigma})$ for $\underline{\sigma} = \sigma_1,\dots,\sigma_k$, with $\sigma_i: S^{n_i} \to A$ pointed maps. We then have an equivalence
	\begin{align*}
	R_{B/A}^\ext \simeq {A[\underline{v},t^{-1}]}/{(\underline{v}t^{-1} - \underline{\sigma})}
	\end{align*}
	of $\Z$-graded $A[t^{-1}]$-algebras, where $\underline{v} = v_1,\dots,v_k$ with the $v_i$  free of degree 1 in level $n_i$.
\end{Prop}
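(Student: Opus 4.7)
The plan is to apply the Yoneda lemma in $\Alg^\Z_{A[t^{-1}]}$: by Lemma \ref{Lem:AdjEqv}, the Rees algebra $R_{B/A}^\ext$ is characterized as the $\Z$-graded $A[t^{-1}]$-algebra that corepresents the functor $H_B : Q \mapsto \Alg_A(B, G_A(Q))$. It therefore suffices to verify that the candidate algebra $C \coloneqq A[\underline{v}, t^{-1}]/(\underline{v}t^{-1} - \underline{\sigma})$ corepresents the same functor, naturally in $Q$.

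To compute $\Alg^\Z_{A[t^{-1}]}(C, Q)$, I would combine the universal property of the free graded algebra $A[\underline{v}, t^{-1}]$ recalled in \S\ref{Par:Gradn} with the defining pushouts of the quotient. A map $A[\underline{v}, t^{-1}] \to Q$ of $\Z$-graded $A[t^{-1}]$-algebras amounts to a tuple $(\tilde{v}_i : S^{n_i} \to Q_1)_i$ of pointed maps into the degree-$1$ piece. The quotient by $\underline{v}t^{-1} - \underline{\sigma}$ is by construction a further pushout against the null maps on free generators of degree $0$ and level $n_i$, so mapping $C$ into $Q$ is equivalent to specifying such a tuple $(\tilde{v}_i)_i$ together with a pointed null-homotopy between $t^{-1}\tilde{v}_i$ and $\sigma_i(Q)$ in $Q_0$. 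This yields a natural identification
\begin{align*}
\Alg^\Z_{A[t^{-1}]}(C, Q) \simeq \{\underline{\sigma}(Q)\} \times_{\Space_*(S^{\underline{n}}, Q_0)} \Space_*(S^{\underline{n}}, Q_1),
\end{align*}
with the right-hand vertical arrow induced by multiplication by $t^{-1}$.

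The final step is to recognize this pullback as the description of $H_B(Q)$ already obtained in the proof of Proposition \ref{Prop:ReesFinQuot}: the universal property of $B = A/(\underline{\sigma})$ presents $\Alg_A(B, G_A(Q))$ as the fibre over $\underline{\sigma}(Q)$ in $\Space_*(S^{\underline{n}}, G_A(Q))$, and the fibre sequence $Q_1 \xrightarrow{\times t^{-1}} Q_0 \to G_A(Q)$ rewrites this fibre as exactly the same pullback. Both identifications are manifestly natural in $Q$, so Yoneda delivers $C \simeq R_{B/A}^\ext$ in $\Alg^\Z_{A[t^{-1}]}$. The only real care needed is in keeping track of the universal properties of the free graded algebra $A[v]$ and of the quotient construction $(-)/(\underline{\tau})$ from the appendix; there is no substantive computational obstacle, since the spatial content of the argument already appears in the proof of Proposition \ref{Prop:ReesFinQuot} — the present proposition merely repackages that pullback description as an explicit presentation.
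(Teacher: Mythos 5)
Your proposal is correct and follows essentially the same route as the paper's proof: both sides of the asserted equivalence are shown to corepresent the pullback functor $Q \mapsto \{\underline{\sigma}(Q)\} \times_{\Space_*(S^{\underline{n}}, Q_0)} \Space_*(S^{\underline{n}}, Q_1)$, using Lemma~\ref{Lem:AdjEqv} together with the proof of Proposition~\ref{Prop:ReesFinQuot} on one side, and the universal property of the free graded generator and the quotient construction on the other, after which Yoneda finishes the argument. The only point worth being careful about is the passage between a null-homotopy of $\tilde{v}_i t^{-1} - \sigma_i(Q)$ and a path $\tilde{v}_i t^{-1} \simeq \sigma_i(Q)$, which is valid here because $Q_0$ is group-like; the paper encodes this by writing down the explicit pullback square.
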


\begin{proof}
	By Lemma \ref{Lem:AdjEqv} and the proof of Proposition \ref{Prop:ReesFinQuot}, we have an equivalence
	\begin{align*}
	\Alg^\Z_{A[t^{-1}]}(R_{B/A}^\ext,-) \simeq \{\underline{\sigma}(-) \}\times_{\Space_*(S^{\underline{n}},(-)_0 )} \Space_*(S^{\underline{n}},(-)_1)
	\end{align*}
	as functors $\Alg^\Z_{A[t^{-1}]} \to \Space$, where as before $\underline{\sigma}(Q)$ means the composition of $\underline{\sigma}$ with the structure map $A \to Q$.
	
	On the other hand, by construction we have a pullback square
	\begin{center}
	\begin{tikzcd}
	\Alg^\Z_{A[t^{-1}]}({A[\underline{v},t^{-1}]}/{(\underline{v}t^{-1} - \underline{\sigma})}, Q) \arrow[rr] \arrow[d] && \Space_*(S^{\underline{n}}, Q_1) \arrow[d, "{(\id,0)}"] \\
	\Space_*(S^{\underline{n}},Q_1) \arrow[rr, "{(\id, \times t^{-1} - \underline{\sigma})}"]  && \Space_*(S^{\underline{n}},Q_1) \times \Space_*(S^{\underline{n}},Q_0)
	\end{tikzcd}
	\end{center}
	of spaces, for each $Q \in \Alg^\Z_{A[t^{-1}]}$. We therefore also have an equivalence
	\begin{align*}
	\Alg^\Z_{A[t^{-1}]}({A[\underline{v},t^{-1}]}/{(\underline{v}t^{-1} - \underline{\sigma})}, -) \simeq \{\underline{\sigma}(-) \}\times_{\Space_*(S^{\underline{n}},(-)_0 )} \Space_*(S^{\underline{n}},(-)_1)
	\end{align*}
	as functors $\Alg^\Z_{A[t^{-1}]} \to \Space$, from which the claim follows.
\end{proof}

\begin{Cor}
	\label{Cor:RZXdeg1}
	The Rees algebra $\RR_{Z/X}$ is generated in degree 1.
\end{Cor}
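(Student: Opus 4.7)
The plan is to apply the structural induction of Lemma \ref{Lem:StrucInduc} to the property $\mathbf{P}$ asserting that $\RR_{Z/X}$ is generated in degree $1$. Locality in $X$ is built into the definition, so only the three remaining axioms need to be verified: the finite quotient case, stability under filtered colimits in $\Sur_A$, and stability under cell attachment $\sigma \colon S^n \to B$ with $n \geq 1$.

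For a finite quotient $A \to B = A/(\underline{\sigma})$, Proposition \ref{Prop:ReesForm} yields $R^\ext_{B/A} \simeq A[\underline{v}, t^{-1}]/(\underline{v}t^{-1} - \underline{\sigma})$ with $v_i$ free of degree $1$ and level $n_i$. Those variables with $n_i = 0$ produce polynomial generators in $\pi_0 (R_{B/A})_1$; those with $n_i \geq 1$ live in strictly positive homotopy and contribute nothing to $\pi_0$. Using the relations $v_i t^{-1} = \sigma_i$ (for $n_i = 0$), any monomial in $\pi_0 R^\ext_{B/A}$ of positive degree $d$ reduces to a degree-$d$ polynomial in the relevant $v_i$. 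Stability under filtered colimits is immediate from Lemma \ref{Lem:ReesColim} together with the fact that $\pi_0$ from graded simplicial rings to discrete graded rings preserves all colimits, being left adjoint to the inclusion of discrete objects.

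The critical step is cell attachment. Given $\sigma \colon S^n \to B$ with $n \geq 1$, the pushout $B/(\sigma) \simeq B \otimes_{A[S^n]} A$ lies in $\Sur_A$ (since $A \to A[S^n]$ is surjective on $\pi_0$ for $n \geq 1$), and the adjunction in Lemma \ref{Lem:AdjEqv}, applied contravariantly in the first argument, transports this to a pushout
\[
R^\ext_{B/(\sigma)/A} \simeq R^\ext_{B/A} \otimes_{R^\ext_{A[S^n]/A}} A[t^{-1}]
\]
in $\Alg^\Z_{A[t^{-1}]}$. Identifying $A[S^n] \simeq A/(0 \colon S^{n-1} \to A)$ as a finite quotient and applying Proposition \ref{Prop:ReesForm} gives $R^\ext_{A[S^n]/A} \simeq A[v, t^{-1}]/(v t^{-1})$ with $v$ of degree $1$ and level $n - 1$. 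For $n \geq 2$ the variable $v$ sits in level $\geq 1$, so $\pi_0 R^\ext_{A[S^n]/A} \cong \pi_0 A[t^{-1}]$ and passing to $\pi_0$ of the pushout leaves $\pi_0 R^\ext_{B/A}$ unchanged. For $n = 1$ the pushout quotients $\pi_0 R^\ext_{B/A}$ by the ideal generated by the image of $v$ under $\sigma$, a single element of $\pi_0 (R_{B/A})_1$; quotienting by an ideal generated by a degree-$1$ element manifestly preserves generation in degree $1$.

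The main obstacle is carrying out the $n = 1$ subcase: one needs to recognise $A[S^n]$ as a finite quotient so Proposition \ref{Prop:ReesForm} applies, derive the pushout formula from the universal property (using that pushouts in $\Sur_A$ agree with those in $\Alg_A$ when the output remains a surjection), and verify that the induced pushout on $\pi_0$ does not spoil generation in degree $1$.
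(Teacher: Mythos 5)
Your proposal is correct and follows the same strategy as the paper: structural induction via Lemma \ref{Lem:StrucInduc}, with Proposition \ref{Prop:ReesForm} handling the finite quotient case and the pushout formula $R^\ext_{B/(\sigma)/A} \simeq R^\ext_{B/A} \otimes_{R^\ext_{A[S^n]/A}} A[t^{-1}]$ (from Lemma \ref{Lem:ReesComp}) handling the cell-attachment step. You provide more detail than the paper, which disposes of the cell-attachment case in one sentence; in particular, your case split on $n=1$ versus $n\geq 2$, showing that for $n\geq 2$ the pushout leaves $\pi_0 R^\ext_{B/A}$ unchanged while for $n=1$ it quotients by the degree-$1$ image of $v$, makes explicit what the paper compresses into ``by assumption on $B$ and by the case of finite quotients.''
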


\begin{proof}
	Say that a closed immersion $Z \to X$ has property $\mathbf{P}$ if $\RR_{Z/X}$ is generated in degree 1. We will check that $\mathbf{P}$ satisfies the conditions of Lemma \ref{Lem:StrucInduc}.
	
	Clearly, $\mathbf{P}$ is local on $X$. Observe that the case where $Z \to X$ corresponds to a finite quotient $A \to B$ immediately follows from the explicit formula in Proposition \ref{Prop:ReesForm}. Also, when $X = \Spec A$, then $\mathbf{P}$ is stable under filtered colimits in the category $\Sur_A$ of surjections $A \to B$.
	
	Now suppose that $\mathbf{P}$ holds for $\Spec B \to \Spec A$, and let $\sigma: S^n \to B$ with $n\geq 1$ be a pointed map. Put $B' \coloneqq B/(\sigma)$. From the proof of Lemma \ref{Lem:ReesComp} it  follows that $R^\ext_{B'/A} \simeq R^\ext_{B/A} \otimes_{R^\ext_{A'/A}} A[t^{-1}]$, where $A' = A/(S^{n-1} \xrightarrow{0} A)$. Hence by assumption on $B$ and by the case of finite quotients, $R^\ext_{B'/A}$ is generated in degree 1.
\end{proof}

\subsection{The Rees algebra adjunction}
\label{Par:ReesAdj}
\begin{Def}
	Let $\Rees_A$ be the full subcategory of $\Alg^\Z_{A[t^{-1}]} = (\Alg_A^\Z)_{A[t^{-1}]/}$ spanned by those $\Z$-graded $A$-algebra maps $A[t^{-1}] \to R$ for which the induced $\Z_{\leq 0}$-graded map $A[t^{-1}] \to R_{\leq 0}$ is an equivalence. Objects of $\Rees_A$ are called \textit{extended Rees algebras over $A$.}
\end{Def}

\begin{Rem}
	Neither the category $\Sur_A$ nor $\Rees_A$ is presentable, since these categories do not have all products.
\end{Rem}

\begin{Rem}
	\label{Lem:GAcoco}
	Notwithstanding Remark \ref{Rem:GANOTcoco}, the restriction $G_A:\Rees_A \to \Alg_A$ does preserves all colimits. Indeed, by \cite[Prop.\ 4.4.2.7]{LurieHTT}, it suffices to show that $G_A$ preserves pushouts and coproducts of extended Rees algebra.
	
	For the claim on pushouts, let $R',R,R''$ be extended Rees algebras. Then since $(-)\otimes_{A[t^{-1}]} A$ preserves colimits, it holds that
	\begin{align*}
	G_A(R'\otimes_R R'') \simeq (\overline{R'} \otimes_{\overline{R}} \overline{R''})_0 \simeq G_A(R') \otimes_{G_A(R)} G_A(R'')
	\end{align*}
	where in the last equivalence we have used that, by Proposition \ref{Prop:SpclAdj}, $(B'\otimes_B B'')_0 \simeq B'_0 \otimes_{B_0} B''_0$ for $\Z$-graded algebra $B',B,B''$ concentrated in degrees $\geq 0$.
	
	The claim on coproducts is similar, using that also $(-)_0$ commutes with coproducts of $\Z$-graded algebras concentrated in degree $\geq 0$, again by Proposition \ref{Prop:SpclAdj}.
	
	The reason that this argument does not work in the general case $G_A:\Alg_{A[t^{-1}]}^\Z \to \Alg_A$ is because $Q \otimes_{A[t^{-1}]}A$ may not be concentrated in degree $\geq 0$ for $Q \in \Alg^\Z_{A[t^{-1}]}$, as seen in Remark \ref{Rem:GANOTcoco}.
\end{Rem}

\begin{Prop}
	We have an adjunction $R^\ext_{(-)/A} \dashv G_A(-):  \Sur_A\rightleftarrows\Rees_A $.
\end{Prop}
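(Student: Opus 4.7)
The strategy is to observe that Lemma \ref{Lem:AdjEqv} already supplies a natural equivalence
\[
\Alg^\Z_{A[t^{-1}]}(R^\ext_{B/A}, Q) \simeq \Alg_A(B, G_A(Q))
\]
for every $B \in \Sur_A$ and $Q \in \Alg^\Z_{A[t^{-1}]}$. Since $\Sur_A \subset \Alg_A$ and $\Rees_A \subset \Alg^\Z_{A[t^{-1}]}$ are full subcategories, the claimed adjunction is obtained by restricting this equivalence, provided the two functors take values in the correct subcategories. Concretely, the two things to verify are:
\begin{enumerate}[label=(\alph*)]
    \item $R^\ext_{B/A} \in \Rees_A$ for every $B \in \Sur_A$;
    \item $G_A(Q) \in \Sur_A$ for every $Q \in \Rees_A$.
\end{enumerate}

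For (b), I would first consider the cofibre sequence of $\Z$-graded $A[t^{-1}]$-modules
\[
A[t^{-1}](1) \xrightarrow{\,\cdot\, t^{-1}\,} A[t^{-1}] \to A,
\]
in which $A[t^{-1}](1)$ denotes the degree shift. Tensoring over $A[t^{-1}]$ with $Q$ yields a fibre sequence $Q(1) \to Q \to \overline{Q}$ of $\Z$-graded $A$-modules, and taking degree-zero parts gives a fibre sequence $Q_1 \to Q_0 \to G_A(Q)$ in $\Mod_A$. Since $Q \in \Rees_A$ forces $Q_0 \simeq A$, the associated long exact sequence in homotopy yields surjectivity of $\pi_0 A \to \pi_0 G_A(Q)$, which is (b).

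For (a), the plan is to apply the structural induction of Lemma \ref{Lem:StrucInduc} to the property $\mathbf{P}$: ``the structure map $A[t^{-1}] \to (R^\ext_{B/A})_{\leq 0}$ is an equivalence of $\Z_{\leq 0}$-graded $A[t^{-1}]$-algebras.'' Locality on $X$ follows from the base-change identity $\RR^\ext_{Z'/X'} \simeq f^*\RR^\ext_{Z/X}$ of Proposition \ref{Prop:ReesPull}, since pullback commutes with the degreewise truncation. For a finite quotient $B = A/(\underline{\sigma})$, Proposition \ref{Prop:ReesForm} presents $R^\ext_{B/A}$ as $A[\underline{v},t^{-1}]/(\underline{v}t^{-1}-\underline{\sigma})$ with all $v_i$ in degree $1$, so a direct computation with monomials shows that its $\Z_{\leq 0}$-truncation is $A[t^{-1}]$. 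Closure under filtered colimits in $\Sur_A$ follows from Lemma \ref{Lem:ReesColim} together with the fact that filtered colimits in $\Alg^\Z_{A[t^{-1}]}$ are computed degreewise. Finally, for the cell-attachment step $B \leadsto B/(\sigma)$ with $\sigma: S^n \to B$ and $n \geq 1$, the proof of Lemma \ref{Lem:ReesComp} together with Lemma \ref{Lem:AdjEqv} gives the identification
\[
R^\ext_{B/(\sigma)/A} \simeq R^\ext_{B/A} \otimes_{R^\ext_{A[S^n]/A}} A[t^{-1}],
\]
and Proposition \ref{Prop:ReesForm} gives $R^\ext_{A[S^n]/A} \simeq A[v,t^{-1}]/(vt^{-1})$ with $v$ of degree $1$ and level $n-1$. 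The pushout thus amounts to attaching a cell killing the image $w \in (R^\ext_{B/A})_1$ of $v$; but the relation $vt^{-1}=0$ in $R^\ext_{A[S^n]/A}$ is preserved by the map into $R^\ext_{B/A}$, so $wt^{-1}$ is already nullhomotopic in $(R^\ext_{B/A})_0 \simeq A$, and the nullhomotopy propagates to show that every product $w\cdot t^{-k}$ with $k \geq 1$ is already null. Consequently the new cells introduced by the pushout live entirely in positive degrees, so the $\Z_{\leq 0}$-truncation is unchanged.

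The principal obstacle is the cell-attachment step in (a): one must be careful that the pushout in $\Alg^\Z_{A[t^{-1}]}$ does not secretly introduce new data in non-positive degrees via multiplication by powers of $t^{-1}$. The key observation that unlocks this step is the vanishing of $w\cdot t^{-k}$ in $R^\ext_{B/A}$, which is forced by the inductive hypothesis $(R^\ext_{B/A})_{\leq 0} \simeq A[t^{-1}]$ together with functoriality of the Rees construction. Once this is settled, all four hypotheses of Lemma \ref{Lem:StrucInduc} are in place, yielding (a), and the adjunction follows immediately.
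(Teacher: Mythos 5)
Your proof takes essentially the same route as the paper's: both reduce to checking that the two functors land in the stated subcategories (via Lemma \ref{Lem:AdjEqv}), both verify $G_A(\Rees_A)\subset\Sur_A$ by a $\pi_0$ computation coming from the fibre sequence $Q_1\to Q_0\to G_A(Q)$, and both prove $R^\ext_{(-)/A}(\Sur_A)\subset\Rees_A$ by the structural induction of Lemma \ref{Lem:StrucInduc} with the finite-quotient base case read off from Proposition \ref{Prop:ReesForm} and the cell-attachment step reduced to the pushout identification from Lemma \ref{Lem:ReesComp}.

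One remark on the cell-attachment step: the paper disposes of it by pointing to a ``similar argument as in Corollary \ref{Cor:RZXdeg1},'' whereas you attempt to flesh out the mechanism (``the nullhomotopy propagates\dots new cells live entirely in positive degrees''). This is the right intuition, but be careful that the pushout $R^\ext_{B/A}\otimes_{R^\ext_{A[S^n]/A}}A[t^{-1}]$ is not literally ``quotient by $w$''; the map $R^\ext_{A[S^n]/A}\to A[t^{-1}]$ is itself a quotient of $A[v,t^{-1}]/(vt^{-1})$ by $v$, and interaction between the pre-existing nullhomotopy of $vt^{-1}$ and the new one coming from $v\simeq 0$ can in principle create new homotopy in degree $0$. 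The cleaner way to close this, in the spirit of Lemma \ref{Lem:GAcoco}, is to note that all three vertices of the pushout are in $\Rees_A$, hence concentrated in degree $\geq 0$ after applying $(-)\otimes_{A[t^{-1}]}A$, and then use that $(-)_{\leq 0}$ and pushouts interact well for such algebras (Proposition \ref{Prop:SpclAdj}). But as a proof sketch your argument is faithful to the paper's intent.
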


\begin{proof}
	 By Lemma \ref{Lem:AdjEqv}, it suffices to show that $R^\ext_{(-)/A}(\Sur_A) \subset \Rees_A$ and that $G_A(\Rees_A) \subset \Sur_A$.
	
	To show that $R^\ext_{(-)/A}(\Sur_A) \subset \Rees_A$, we use the obvious affine version of Lemma \ref{Lem:StrucInduc}. Clearly, the question is local on $\Spec A$ and stable under filtered colimits in $\Sur_A$. The case where $A \to B$ is a finite quotient follows from Proposition \ref{Prop:ReesForm}. The case where $A \to B$ is of the form $A \to B_0/(\sigma)$, for certain $B_0 \in \Alg_A$ for which $R^\ext_{B_0/A} \in \Rees_A$ and with $\sigma:S^n \to B_0, n \geq 1$ a pointed map, follows from a similar argument as in Corollary \ref{Cor:RZXdeg1}.
	
	Conversely, let $R \in \Rees_A$. Then $\pi_0 G_A(R)$ is the underived quotient $\pi_0A/\pi_0R_1$, since $\pi_0\overline{R}$ is the underived quotient $\pi_0R/(t^{-1})$, and since $(-)_0$ commutes with $\pi_0$. It follows that $A \to G_A(R)$ is surjective.
\end{proof}

Next we want to show that $R^\ext_{(-)/A}$ is fully faithful. We first record the following variation on the well-know fact that a left adjoint $F$ is fully faithful if and only if the corresponding unit $\eta$ is an equivalence. The variation  seems folklore in the setting of $\infty$-categories, but see e.g.\ \cite[\href{https://stacks.math.columbia.edu/tag/07RB}{Tag 07RB}]{stacks-project} for the corresponding statement on 1-categories.

\begin{Lem}
	\label{Lem:ffadj}
Let $F \dashv G: \CCC \rightleftarrows \DDD$ be an adjunction with unit $\eta$. Then $F$ is fully faithful if there is some equivalence $\id_{\CCC} \simeq GF$.
\end{Lem}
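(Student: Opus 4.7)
The plan is to reduce to the standard $\infty$-categorical criterion that $F$ is fully faithful iff the unit $\eta : \id_\CCC \to GF$ is a natural equivalence; this criterion is a direct $\infty$-analog of Stacks Tag 07RB, with the non-trivial direction coming from the observation that, under the adjunction, the comparison map $F : \Map_\CCC(c,c') \to \Map_\DDD(Fc,Fc')$ corresponds to postcomposition with $\eta_{c'}$, so fully-faithfulness together with Yoneda gives $\eta$ an equivalence. Granting this, we must show that the existence of \emph{any} equivalence $\alpha : \id_\CCC \simeq GF$ forces $\eta$ itself to be an equivalence.

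First I would exhibit $\eta$ as a split monomorphism of functors. Naturality of $\alpha$ at the morphism $\eta_c : c \to GFc$ gives the square
\[
\alpha_{GFc} \circ \eta_c \simeq GF(\eta_c) \circ \alpha_c,
\]
and whiskering the triangle identity $\epsilon_F \circ F\eta \simeq \id_F$ by $G$ yields $G(\epsilon_{Fc}) \circ GF(\eta_c) \simeq \id_{GFc}$. Combining these, the natural transformation
\[
r \;:=\; \alpha^{-1} \circ G\epsilon_F \circ \alpha_{GF} \;:\; GF \to \id_\CCC
\]
satisfies $r \circ \eta \simeq \id_{\id_\CCC}$ componentwise.

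The main step is to upgrade $r$ to an equivalence, from which $\eta \simeq r^{-1}$. Since the outer factors $\alpha^{-1}$ and $\alpha_{GF}$ are equivalences, it suffices to show that the monad multiplication $G\epsilon_F : GFGF \to GF$ is an equivalence. Using $\alpha$, I would transport the monad structure on $T := GF$ to a monad structure on $\id_\CCC$, whose unit is $\alpha^{-1} \circ \eta : \id_\CCC \to \id_\CCC$ and whose multiplication is the transport of $G\epsilon_F$. One then invokes the rigidity of the monoidal unit in the monoidal $\infty$-category $(\Fun(\CCC,\CCC),\circ)$: any associative algebra (monad) structure on the unit object is equivalent to the trivial one, forcing the transported unit and multiplication to be equivalences, and hence $\eta$ and $G\epsilon_F$ to be equivalences as well.

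The main obstacle is this last rigidity claim for $\E_1$-algebra structures on the monoidal unit. An alternative, more elementary route, which bypasses this, is to verify directly that $\eta \circ r \simeq \id_{GF}$ by combining naturality of $\eta$ at $\alpha_c^{-1}$, the second triangle identity $G\epsilon \circ \eta G \simeq \id_G$, and the identity $\alpha_{GFc} \simeq GF(\alpha_c)$ obtained from naturality of $\alpha$ at $\alpha_c$ (using that $\alpha_c$ is an equivalence to cancel). This reduces the question to the identity of two idempotents on $GFGF$, both of which are forced to equal $\id_{GFGF}$ by pointwise naturality arguments.
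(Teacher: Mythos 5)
Your proposal takes a genuinely different route from the paper's, and it is worth comparing the two. The paper does not pass through the standard criterion ``$F$ fully faithful iff the unit is an equivalence'' at all. Instead it writes out the three-term chain
\[
\CCC(x,x') \xrightarrow{\;F\;} \DDD(Fx,Fx') \xrightarrow{\;G\;} \CCC(GFx,GFx') \xrightarrow{\;\eta_x^*\;} \CCC(x,GFx'),
\]
observes that the first two arrows compose to the action of $GF\simeq\id_\CCC$ (an equivalence) while the last two compose to the adjunction equivalence, and concludes by two-out-of-six. This is short, self-contained, and uses nothing beyond the definition of the adjunction. Your approach instead reduces to showing that $\eta$ itself is an equivalence, exhibits $\eta$ as a split monomorphism with retraction $r=\alpha^{-1}\circ G\epsilon_F\circ\alpha_{GF}$ (this computation is correct), and then tries to upgrade the split mono to an equivalence. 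The ``rigidity'' route does close, but it leans on a substantive $\infty$-categorical input: that $\pi_0\End(\id_\CCC)$ is a commutative monoid (Eckmann--Hilton, since $\id_\CCC$ is the monoidal unit of $\Fun(\CCC,\CCC)$ and its endomorphism space inherits an $\E_2$-structure), so that $r\circ\eta\simeq\id$ implies $\eta\circ r\simeq\id$ after conjugating by $\alpha$ — or, equivalently, that the monoidal unit has an essentially unique $\E_1$-algebra structure. This is a much heavier hammer than the lemma it proves.

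The ``more elementary'' alternative you sketch does not close. Tracing through the three ingredients you list, one lands on $GF(\alpha_c^{-1})\circ \eta_{GFc}\circ G\epsilon_{Fc}\circ GF(\alpha_c)$ and would need $\eta_{GFc}\circ G\epsilon_{Fc}\simeq\id_{GFGFc}$; but the triangle identity only gives the composition in the other order, $G\epsilon_{Fc}\circ\eta_{GFc}\simeq\id_{GFc}$. What remains is exactly the statement that a certain idempotent on $GFGF$ is the identity, and this is not forced by ``pointwise naturality'': idempotent natural endomorphisms of a functor equivalent to $\id_\CCC$ need not be the identity without the commutativity input above. So this branch is circular — it quietly re-invokes the same rigidity you were trying to avoid. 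If you want to keep your reduction to the unit, the honest argument is the Eckmann--Hilton one (set $a=r\alpha$ and $b=\alpha^{-1}\eta$ in $\pi_0\End(\id_\CCC)$, note $ab=\id$, conclude $ba=\id$ by commutativity); but the paper's two-out-of-six argument is both simpler and more in the spirit of a lemma one wants to use as a black box.
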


\begin{proof}
	Suppose that $\id_{\CCC} \simeq GF$, so that $GF$ is fully faithful. Then for $x,x' \in \CCC$ it holds that in the following sequence
	\begin{align}
	\label{Eq:ffadj}
	\CCC(x,x') \xrightarrow{F} \DDD(Fx,Fx') \xrightarrow{G} \CCC(GFx,GFx') \xrightarrow{\eta_x^*} \CCC(x,GFx')
	\end{align}
	the composition of the first two arrows is an equivalence. By adjunction, the composition of the second two arrows is also an equivalence. By two-out-of-six, we conclude that all arrows in the diagram are equivalences. In particular, the first arrow is an equivalence, i.e.,\ $F$ is fully faithful.
\end{proof}

\begin{Prop}
	The functor $R^\ext_{(-)/A}: \Sur_A \to \Rees_A$ is fully faithful.
\end{Prop}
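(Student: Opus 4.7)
My plan is to apply Lemma \ref{Lem:ffadj} to the adjunction $R^\ext_{(-)/A} \dashv G_A$, which reduces the claim to showing that the unit $\eta_B : B \to G_A(R^\ext_{B/A})$ is an equivalence for every $B \in \Sur_A$. I verify this by structural induction on the surjection $A \to B$, following the pattern of Lemma \ref{Lem:StrucInduc} applied to $\Sur_A \subset \Alg_A$.

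For the base case, suppose $B = A/(\underline{\sigma})$ is a finite quotient with $\sigma_i : S^{n_i} \to A$. Proposition \ref{Prop:ReesForm} gives $R^\ext_{B/A} \simeq A[\underline{v}, t^{-1}]/(\underline{v}t^{-1} - \underline{\sigma})$ with each $v_i$ free of degree $1$ at level $n_i$. The derived tensor $(-) \otimes_{A[t^{-1}]} A$ sends $t^{-1}$ to $0$, so the relation $v_i t^{-1} - \sigma_i$ collapses to the pointed map $\sigma_i : S^{n_i} \to A \subset A[\underline{v}]$, giving $\overline{R^\ext_{B/A}} \simeq A[\underline{v}]/(\underline{\sigma})$ as $\Z$-graded $A$-algebras. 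Since the functor $(-)_0$ is cocontinuous by Proposition \ref{Prop:SpclAdj} and $(A[\underline{v}])_0 \simeq A$ (as $v_i$ sits in positive degree), one obtains $G_A(R^\ext_{B/A}) \simeq A/(\underline{\sigma}) = B$. Tracing through the adjunction correspondence shows this equivalence agrees with $\eta_B$.

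For the inductive step, the functor $R^\ext_{(-)/A}$ is cocontinuous as a left adjoint, and the restriction $G_A|_{\Rees_A}$ is cocontinuous by Remark \ref{Lem:GAcoco}; by naturality of $\eta$, the property that $\eta_B$ is an equivalence is therefore closed under filtered colimits in $\Sur_A$. Given a pointed map $\sigma : S^n \to B$ with $n \geq 1$ and $B$ satisfying the inductive hypothesis, set $B' := B/(\sigma)$, which is the pushout $B \otimes_{A[S^n]} A$ in $\Alg_A$; because $n \geq 1$ forces $\pi_0 A[S^n] \to \pi_0 A$ to be an isomorphism, this pushout lives in $\Sur_A$. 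Using the identification $A[S^n] \simeq A/(0 : S^{n-1} \to A)$ (a finite quotient, hence handled by the base case) together with $R^\ext_{A/A} \simeq A[t^{-1}]$, cocontinuity of $R^\ext_{(-)/A}$ and $G_A|_{\Rees_A}$ yields
\[
G_A(R^\ext_{B'/A}) \simeq G_A(R^\ext_{B/A}) \otimes_{G_A(R^\ext_{A[S^n]/A})} G_A(R^\ext_{A/A}) \simeq B \otimes_{A[S^n]} A \simeq B',
\]
so $\eta_{B'}$ is an equivalence.

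The main delicate point is checking that the pushouts produced at the cell-attachment step are genuinely computed inside $\Rees_A$, so that Remark \ref{Lem:GAcoco} — which asserts cocontinuity of $G_A$ only on $\Rees_A$, and not on all of $\Alg^\Z_{A[t^{-1}]}$ as observed in Remark \ref{Rem:GANOTcoco} — actually applies; this amounts to verifying that $R^\ext_{(-)/A}$ lands in $\Rees_A$, which was already established in the construction of the adjunction.
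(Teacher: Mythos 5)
Your proposal takes a genuinely different route from the paper. The paper invokes Lemma \ref{Lem:ffadj} with a natural equivalence $\varphi_B : B \to G_A(R^\ext_{B/A})$ constructed directly and globally, via a chain of identifications involving Weil restrictions, the global sections functor $\Gamma$, and the adjunction $\pi_* \dashv \pi^* : \Alg^\N(S) \rightleftarrows \Alg(S)$ from Proposition \ref{Prop:SpclAdj}. You instead aim for the stronger statement that the unit $\eta_B$ itself is an equivalence, and verify it by the structural-induction machinery (finite quotients, then filtered colimits and cell attachments of dimension $\geq 1$). Your base-case computation $\overline{R^\ext_{B/A}} \simeq A[\underline{v}]/(\underline{\sigma})$ and $(\overline{R^\ext_{B/A}})_0 \simeq B$ is correct, the filtered-colimit step is fine, and your treatment of the cell-attachment pushouts is essentially the argument the paper uses for Corollary \ref{Cor:RZXdeg1}. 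Yours is the more concrete and self-contained route, at the cost of re-running the induction pattern; the paper's is slicker but leans on more global machinery.

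There is, however, a gap you acknowledge but do not close, and it is load-bearing. In the base case you establish an equivalence of $A$-algebras $G_A(R^\ext_{B/A}) \simeq B$, but you only assert, without unwinding, that this is the unit $\eta_B$. That identification is not automatic: in the derived setting $\Alg_A(A/(\underline{\sigma}), A/(\underline{\sigma}))$ is the space of nullhomotopies of $\underline{\sigma}$ in $A/(\underline{\sigma})$ and need not be contractible, so two abstractly equivalent $A$-algebras can admit non-invertible (or distinct invertible) self-maps. Your inductive steps appeal to naturality of $\eta$ (to pass $\eta$ through filtered colimits and pushouts), so you really do need $\eta_B$ — not just some equivalence — to be invertible in the base case. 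Either trace $\eta_B$ through the chain (\ref{Eq:FundComp}) and the Yoneda argument in Proposition \ref{Prop:ReesForm}, or instead follow the paper and produce a natural transformation $\varphi : \id \to G_A R^\ext_{(-)/A}$ by hand which is manifestly an equivalence, and then apply Lemma \ref{Lem:ffadj}, which does not require $\varphi$ to be the unit.

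A secondary point: your appeal to Remark \ref{Lem:GAcoco} for $G_A|_{\Rees_A}$ preserving the relevant pushout silently presumes that the pushout of extended Rees algebras in $\Alg^\Z_{A[t^{-1}]}$ lands in $\Rees_A$ (so that it agrees with the pushout in $\Rees_A$); you flag this but dismiss it as already settled by $R^\ext_{(-)/A}$ landing in $\Rees_A$, which is a different statement. In the case at hand it does hold (as the paper's Corollary \ref{Cor:RZXdeg1} computation shows), but it deserves a sentence rather than a gesture.
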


\begin{proof}
	Let $B \in \Sur_A$ be given. By Lemma \ref{Lem:ffadj}, it suffices to give some equivalence $\varphi_B:B \to G_A(R_{B/A}^\ext)$, natural in $B$. Write $\Spec B \to \Spec A$ as $Z \to X$.
	

	Let $\pi$ be the structure map $B\G_{m,X} \to X$. Recall that $\pi_*: \Alg(B\G_{m,X}) \simeq \Alg^{\Z}(X) \to \Alg(X)$ corresponds to $(-)_0$, and $\pi^*$ to endowing an algebra with the trivial action. Also recall, from Remark \ref{Rem:RelSpec}, that the relative spectrum functor furnishes an equivalence from $\Alg(B\G_{m,X})^\op$  to the category $\Aff_{/B\G_{m,X}}$ of algebraic stacks affine over $B\G_{m,X}$. The inverse of $\Spec$ is given by the global sections functors, denoted $\Gamma$.
	
	Write $\nu:[X \times_{\A^1_X} X / \G_{m,X}] \to B\G_{m,X}$ for the pullback of $\zeta_X$ along itself. By Lemma \ref{Lem:WeilPull}, and since $[D_{Z/X} / \G_{m,X}] \simeq \Res_{\zeta_X}([Z/\G_{m,X}])$, it holds
	\begin{equation}
	\label{Eq:first}
	\begin{split}
		 G_A(R_{B/A}^\ext) &\simeq \pi_* \Gamma [D_{Z/X} \times_{\A^1_X} X/\G_{m,X}] \\ &\simeq \pi_* \Gamma \Res_{\nu} [Z \times_{\A^1_X} X / \G_{m,X}] 
	\end{split}
	\end{equation}
	Recall from Proposition \ref{Prop:SpclAdj} that the functors $\pi_*, \pi^*$ restricted to $\N$-graded algebras also give an adjunction  $\pi_* \dashv \pi^*: \Alg^\N(S) \rightleftarrows \Alg(S)$. Observe that $\Res_{\nu} [Z \times_{\A^1_X} X / \G_{m,X}]$ is affine over $B\G_{m,X}$, since $D_{Z/X} \times_{\A^1_X}  X$ is affine over $\A^1_X$. Therefore, for any $A$-algebra $Q$, it holds
	\begin{align*}
		\Alg_A(\pi_* \Gamma \Res_{\nu} [Z \times_{\A^1_X} X / \G_{m,X}], Q) 
		 &\simeq \Alg^\N_A(\Gamma \Res_{\nu} [Z \times_{\A^1_X} X / \G_{m,X}], \pi^* Q ) \\
		&\simeq \Aff_{/B\G_{m,X}} (\Spec \pi^*Q,\Res_\nu [Z \times_{\A^1_X} X / \G_{m,X}] ) 
	\end{align*}
	Now combining this with (\ref{Eq:first}) and using the universal property of the Weil restriction, we get that
	\begin{align*}
		\Alg_A(G_A(R_{B/A}^\ext),Q ) \simeq \Alg^\Z_{A \otimes_{A[t]} A} (B \otimes_{A[t]} A,Q \otimes_{A[t]} A) \simeq \Alg^\N_A(B,Q \otimes_{A[t]} A)
	\end{align*}
	Now since $(-)_0: \Alg^\N_A \to \Alg_A$ is a left adjoint, it commutes with pushouts. It follows that
	\begin{align*}
		\Alg_A(G_A(R_{B/A}^\ext),Q ) \simeq \Alg_A(B,(Q \otimes_{A[t]} A)_0) \simeq \Alg_A(B,Q)
	\end{align*} 	
	By naturality in $Q$, we thus have an equivalence $ G_A(R_{B/A}^\ext) \simeq B$. Since all of the above equivalences are also natural in $Z$, the claim follows.
\end{proof}

\begin{Rem}
	\label{Rem:EssFibSeq}
	By what we have just seen,  the essential image of the functor $R^\ext_{(-)/A}: \Sur_A \to \Rees_A$ captures the theory of closed immersions into $\Spec A$. In the classical case, the latter is equivalent to the theory of ideals of $A$, as is well-known. In the derived setting, for a closed immersion $A \to B$,  we do have an exact sequence 
	\begin{align*}
		R_{B/A}^\ext \xrightarrow{\times t^{-1}} R_{B/A}^\ext \to \overline{ R^\ext_{B/A}}
	\end{align*}
	from which it follows that $(R_{B/A})_1$ is the fibre of $A \to B$. But observe that in the derived setting there is no obvious way of defining ideals of $A$, let alone recover $B$ from this.
\end{Rem}

\subsection{Example: Classical Rees algebra of a regular embedding}
\label{Par:ClasRees}
Let $A$ be classical, $\underline{a} = a_1,\dots,a_n \in A$ a regular sequence, and $B \coloneqq A/(\underline{a})$. Recall that $\underline{a}$ being regular is equivalent to $B$ being classical. We will show that $R_{B/A}^\ext$ is discrete, and that it coincides with the classical extended Rees algebra of $A \to \pi_0 B \cong B$. 

By Proposition \ref{Prop:ReesForm}, we know that $R^\ext_{B/A}$ is the quotient $A[\underline{v},t^{-1}]/(\underline{v}t^{-1}-\underline{a})$. Now observe that $\{v_it^{-1}- a_i\}_i$ is a regular sequence in $A[\underline{v},t^{-1}]$ by \cite[\href{https://stacks.math.columbia.edu/tag/0625}{Tag 0625}]{stacks-project}, since $\underline{a},\underline{v}$ is regular. It follows that the quotient is isomorphic to the underived, hence discrete, quotient. 

Let $I$ be the (underived) kernel of $A \to B$. Then the map $A[It,t^{-1}] \to R_{B/A}^\ext$ of $A[t^{-1}]$-algebras, defined by sending $a_it$ to $v_i$, is an isomorphism. It follows that $R_{B/A}^\ext$ coincides with the classical extended Rees algebra.

We also make the following observation:
\begin{align*}
\overline{ R^\ext_{B/A}} \simeq \bigoplus\nolimits_{n \geq 0} I^n / I^{n+1} \simeq \Sym^\cl N_{B/A} 
\end{align*}
where the quotient $I^n / I^{n+1}$, the symmetric algebra, and the conormal sheaf are classical. The first equivalence follows from the fact that $t^{-1}$ is regular, and the second from the fact that $\Spec B \to \Spec A$ is a regular embedding in the classical sense.

\subsection{Comparison to the classical Rees algebra}
We will show how to recover the classical extended Rees algebra of $\pi_0A \to \pi_0B$ from $R^\ext_{B/A}$. It will turn out that at most two corrections are needed: first, for $A \to B$ classical, we currently do not know whether $R^\ext_{B/A}$ is always discrete; and second, there are $A \to B$ classical for which $t^{-1}$ is not regular on $\pi_0R^\ext_{B/A}$. 

Let $k$ be a discrete ring. A discrete, $\Z$-graded $k[t^{-1}]$-algebra $Q$ is called \textit{$t^{-1}$-regular} if $t^{-1}$ is a non-zero divisor in $Q$. We write $\CCC$ for the 1-category of such $Q$. We then have an adjunction
\begin{align*}
	(-)^\reg:\cl\Alg^\Z_{k[t^{-1}]} \rightleftarrows \CCC : \varphi
\end{align*}
where the right adjoint $\varphi$ is the obvious forgetful functor, and we write $\cl\Alg$ for the 1-category of discrete algebras. For $Q \in \cl\Alg^\Z_{k[t^{-1}]}$, we can compute $Q^\reg$ either as the image of the natural map $Q \to Q_{t^{-1}}$, or as $Q/K$, where $K \coloneqq \bigcup_{n \geq 1} K_n$ with $K_n$ the kernel of multiplication by $t^{-n}$.

Let now $A \to B$ be a surjection in $\Alg$, and write $R=\cl R^\ext_{\pi_0B/\pi_0A}$ for the classical extended Rees algebra of $\pi_0A \to \pi_0B$. Then $R$ is $t^{-1}$-regular, and $R/(t^{-1})$ is $\pi_0B$ in degree zero. We thus have a natural map $B \to R/(t^{-1})$, which induces a map $\alpha:(\pi_0R^\ext_{B/A})^{\reg} \to R$ by the universal properties of $R^\ext_{(-)/A}$, of $\pi_0(-)$, and of $(-)^\reg$.

\begin{Prop}
	\label{Prop:pi0regisclas}
	The map $\alpha:(\pi_0R^\ext_{B/A})^{\reg} \to \cl R^\ext_{\pi_0B/\pi_0A}$ is an equivalence.
\end{Prop}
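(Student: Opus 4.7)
The plan is to identify both sides as corepresenting the same functor on $\CCC$ by purely formal manipulation of the adjunctions developed in \S \ref{Par:ReesAdj} together with the classical universal property reviewed in \S \ref{Par:ReesClas}. Since the comparison map $\alpha$ is built from these same universal properties, Yoneda will force $\alpha$ to be the resulting isomorphism.

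For the left side, fix $Q \in \CCC$. Since $(-)^\reg$ is left adjoint to $\varphi$,
\[
	\Hom_{\CCC}((\pi_0 R^\ext_{B/A})^\reg, Q) = \Hom_{\cl\Alg^\Z_{\pi_0 A[t^{-1}]}}(\pi_0 R^\ext_{B/A}, Q).
\]
Because $Q$ is discrete as a graded object, a map in the $\infty$-category $\Alg^\Z_{A[t^{-1}]}$ from $R^\ext_{B/A}$ into $Q$ is determined on $\pi_0$, identifying the previous expression with $\pi_0 \Alg^\Z_{A[t^{-1}]}(R^\ext_{B/A}, Q)$. Lemma \ref{Lem:AdjEqv} then turns this into $\pi_0 \Alg_A(B, G_A(Q))$, and applying the same discreteness argument once more, now to $B$ and the discrete target $G_A(Q)$, yields $\Hom_{\cl\Alg_{\pi_0 A}}(\pi_0 B, G_A(Q))$.

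For the right side, the classical universal property recalled in \S \ref{Par:ReesClas}, applied over $\pi_0 A$, gives directly
\[
	\Hom_{\CCC}(\cl R^\ext_{\pi_0 B/\pi_0 A}, Q) \cong \Hom_{\cl\Alg_{\pi_0 A}}(\pi_0 B, (Q/(t^{-1}))_0).
\]
To match the two descriptions, I would unpack $G_A(Q) = (Q \otimes_{A[t^{-1}]} A)_0$: since $Q$ is discrete and $t^{-1}$ is a non-zero divisor on it, the derived tensor product agrees with the underived quotient $Q/(t^{-1})$, so $G_A(Q) = (Q/(t^{-1}))_0$. The two corepresented functors on $\CCC$ thus coincide, Yoneda produces a canonical isomorphism, and tracing through the construction of $\alpha$ shows that $\alpha$ is this isomorphism.

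The main obstacle I anticipate is justifying the truncation step: that for a discrete $\Z$-graded target $Q$, the mapping space $\Alg^\Z_{A[t^{-1}]}(P, Q)$ is discrete and coincides with the classical hom set $\Hom_{\cl\Alg^\Z_{\pi_0 A[t^{-1}]}}(\pi_0 P, Q)$. Unwinding $\Alg^\Z = \PPP_\Sigma(\Poly^\Z)$, this reduces to the statement that a natural transformation into a presheaf valued levelwise in sets factors through levelwise $\pi_0$ of the source, which goes through essentially as in the ungraded case. Once this technical point is secured, everything else is formal.
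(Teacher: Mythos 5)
Your proof is correct and follows essentially the same route as the paper: both identify each side as corepresenting the functor $Q \mapsto \Hom_{\cl\Alg_{\pi_0 A}}(\pi_0 B, (Q/(t^{-1}))_0)$ on $\CCC$, via the $(-)^\reg \dashv \varphi$ and $\pi_0$ adjunctions, the Rees-algebra adjunction of Lemma \ref{Lem:AdjEqv}, and the classical universal property from \S \ref{Par:ReesClas}, and then conclude by Yoneda. Your treatment is slightly more explicit than the paper's about the identification $G_A(Q) \simeq (Q/(t^{-1}))_0$ for $Q$ discrete and $t^{-1}$-regular, and about the truncation step, but the substance is identical.
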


\begin{proof}
Let $Q$ be a discrete, $\Z$-graded, $t^{-1}$-regular $\pi_0A[t^{-1}]$-algebra, and write $\CCC$ for the 1-category of such $Q$. Then it holds
\begin{align*}
	\CCC(  (\pi_0R_{B/A}^\ext)^\reg, Q ) &\simeq \Alg^\Z_{A[t^{-1}]}(R^{\ext}_{B/A},Q) \simeq \Alg^\Z_A(B,Q/(t^{-1})) \\ &\simeq \cl\Alg^\Z_{\pi_0A}(\pi_0B,Q/(t^{-1}))
\end{align*}
where $\cl\Alg$ again means discrete hom-set. Here, the first equivalence follows from the universal properties of $(-)^\reg$ and  $\pi_0(-)$, the second from the universal property of $R^\ext_{(-)/A}$ from \S \ref{Par:ReesAdj}, and the third from the fact that $Q$ is $t^{-1}$-regular. It follows that $\alpha$ exhibits $(\pi_0R^\ext_{B/A})^{\reg}$ as having the same universal property as $\cl R^\ext_{\pi_0B/\pi_0A}$ as explained in \S \ref{Par:ReesClas}, and thus that $\alpha$ is an equivalence.
\end{proof}

\begin{Cor}
	Not for all discrete surjections $A \to B$ does it holds that $\pi_0R^\ext_{B/A}$ is $t^{-1}$-regular.
\end{Cor}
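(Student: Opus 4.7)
My plan is to exhibit an explicit counterexample built from the simplest non-reduced discrete ring. Fix a field $k$ and consider $A = k[x]/(x^2)$ with $B = A/(x) \simeq k$; both are discrete and $A \to B$ is surjective, corresponding to the single pointed map $x\colon S^0 \to A$ of level $0$.

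First I would apply Proposition \ref{Prop:ReesForm} to get
\[
R^\ext_{B/A} \simeq A[v, t^{-1}]/(vt^{-1} - x),
\]
with $v$ free of degree $1$ and level $0$. The next step will be to show this derived quotient is actually discrete. I would verify that $f \coloneqq vt^{-1} - x$ is a non-zero-divisor in $A[v,t^{-1}]$: using the $k[v,t^{-1}]$-module decomposition $A[v,t^{-1}] \cong k[v,t^{-1}] \oplus k[v,t^{-1}] \cdot x$, multiplication by $f$ has a triangular block form whose diagonal entry $vt^{-1}$ is a non-zero-divisor in the domain $k[v,t^{-1}]$. Hence the Koszul complex for $f$ is acyclic and $R^\ext_{B/A}$ agrees with its underived quotient.

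Eliminating $x = vt^{-1}$ in the relation $x^2 = 0$ will then give
\[
\pi_0 R^\ext_{B/A} \simeq k[v, t^{-1}]/(v^2 t^{-2}).
\]
In this ring the element $v^2 t^{-1}$ is nonzero (since $v^2 t^{-1} \notin (v^2 t^{-2})$ already in $k[v, t^{-1}]$), while $t^{-1} \cdot v^2 t^{-1} = v^2 t^{-2} = 0$. So $t^{-1}$ is a zero-divisor on $\pi_0 R^\ext_{B/A}$, which completes the counterexample.

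The only real obstacle is the discreteness check: without it one cannot simplify $\pi_0 R^\ext_{B/A}$ explicitly. As a sanity check, Proposition \ref{Prop:pi0regisclas} predicts $(\pi_0 R^\ext_{B/A})^{\reg} \simeq \cl R^\ext_{B/A} = A[xt, t^{-1}]$, in which the classical relation $(xt)^2 = x^2 t^2 = 0$ corresponds precisely to the $t^{-1}$-power-torsion classes $v^2, v^2 t^{-1}, \dots$ that are killed on passage to $(-)^{\reg}$; this also matches the informal picture that the derived Rees algebra records strictly more information than the classical one in the presence of nilpotents.
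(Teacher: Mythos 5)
There is a genuine gap at the very first step. You write ``$B = A/(x) \simeq k$; both are discrete,'' but for $A = k[x]/(x^2)$ the \emph{derived} quotient $A/(x)$ produced by the finite-quotient construction (which is what Proposition~\ref{Prop:ReesForm} applies to) is \emph{not} discrete: since $x$ is a zero-divisor in $A$, the fibre sequence $A \xrightarrow{\times x} A \to A/(x)$ gives $\pi_1(A/(x)) \cong \ker(\times x) \cong k \neq 0$. So either you take $B$ to be the derived $A/(x)$, in which case $B$ is not discrete and the hypothesis of the Corollary fails, or you take $B = k$, in which case $A \to B$ is not a finite quotient and the formula $R^\ext_{B/A} \simeq A[v,t^{-1}]/(vt^{-1}-x)$ does not apply. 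This is not a cosmetic issue: the correct computation of $R^\ext_{k/A}$ (carried out in the paper's subsequent Example, with the same $A$ written as $k[\epsilon_0]$) requires an infinite sequence of cell attachments and yields $\pi_0 R^\ext_{k/A} \cong k[v_0,t^{-1}]/(v_0^2 t^{-2}, v_0^2 t^{-1})$, which has the extra relation $v_0^2 t^{-1}=0$ that your ring $k[v,t^{-1}]/(v^2 t^{-2})$ lacks. Your sanity check against $(-)^\reg$ does not detect this discrepancy, because both rings happen to have the same $t^{-1}$-regular quotient $k[v,t^{-1}]/(v^2)$.

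The subtlety you ran into is in fact structural: if $A \to B$ is a finite quotient with $B$ discrete, the attaching maps must form a regular sequence, and then (as in \S\ref{Par:ClasRees}) the Rees algebra is automatically discrete and $t^{-1}$-regular. So a discrete counterexample can never be a finite quotient, and one cannot avoid the infinite-colimit analysis. The paper sidesteps this by proving the Corollary indirectly: it assumes $t^{-1}$-regularity always holds, uses base change of Rees algebras together with Proposition~\ref{Prop:pi0regisclas} to deduce that $\pi_1(A/I^n \otimes_A A') = 0$ for all base changes, and then contradicts this with the classical example $A = k[x,y,z]/(xy-z^2) \to k[y]$, $A' = A/(y)$, $n=2$. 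The explicit dual-numbers computation you were aiming for is the content of the Example following the Corollary, not of the proof itself.
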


\begin{proof}
	Suppose to the contrary that $\pi_0R^\ext_{B/A}$ is $t^{-1}$-regular, for all $B$ over $A$ discrete. Consider maps $A' \leftarrow A \to B$ between discrete rings such that $B' \coloneqq B \otimes_A A'$ is discrete. Let $I$ be the ideal of $A \to B$, and $I'$ the ideal of $A' \to B'$. 
	
	We have  $R^\ext_{B/A} \otimes_A A' \simeq R_{B'/A'}$, by base change of Rees algebras (Proposition \ref{Prop:ReesPull}). Upon applying $\pi_0$ to this, we get that $\pi_0(I^n\otimes_A A') \cong (I')^n$, for any $n\geq 1$, by the assumption that both Rees algebras are $t^{-1}$-regular and by Proposition \ref{Prop:pi0regisclas}. Now apply $(-) \otimes_A A'$ to $I^n \to A \to A/I^n$, and consider the long exact sequence associated to the resulting fibre sequence. Since $\pi_0( A/I^n \otimes_A A' )$ is the underived quotient $A'/(I')^n$, this gives us an exact sequence
	\begin{align*}
		\cdots  \to 0\to \pi_1(A/I^n \otimes_A A') \to \pi_0(I^n \otimes_A A') = (I')^n \hookrightarrow A' \to A'/(I')^n
	\end{align*}
	 which implies that $\pi_1(A/I^n \otimes_A A') = 0$. We give an explicit counter-example to this result.
	
	Consider the case where $A \to B$ is the map $k[x,y,z]/(xy-z^2) \to k[y]$ of $k[y]$-algebras sending $x,z$ to $0$, and $A'$ is $A/(y)$. Then $y$ is a zero-divisor in $A/I^2 = \pi_0k[x,y,z]/(x^2,xy,xz,z^2)$, hence $(A/I^2) \otimes_A A'$ is non-zero in $\pi_1$.
\end{proof}

Here is a concrete example which shows that $R^\ext_{B/A}$ might not be $t^{-1}$-regular, even if it is discrete.

\begin{Exm}
	Let $k$ be a discrete ring, and let $A \to B$ be the map $k[\epsilon_0] \to k$, where $k[\epsilon_0]$ is the dual numbers, i.e., with $\epsilon_0^2 = 0$. We can obtain $B$ from $A$ by an infinite number of cell attachments as follows. First, put $B_1 \coloneqq A/(\epsilon_0)$. Then, from the fibre sequence $A \xrightarrow{\times \epsilon} A \to B_1$, it follows that $\pi_1B_1 = k$, and that $\pi_kB_1 = 0$ for $k > 1$. Let $\epsilon_1$ be the path $\epsilon_0 \simeq 0$, so that $\epsilon_1 \epsilon_0$ generates $\pi_1B_1$. By abuse of notation, we write $B_1 = k[\epsilon_0,\epsilon_1]$, with the understanding that $\epsilon_1$ is a path $\epsilon_0 \simeq 0$.
		
	Inductively, we consider an $A$-algebra $B_n = k[\epsilon_0,\epsilon_1,\dots,\epsilon_n]$, with for $k \geq 1$ each $\epsilon_{k+1}$ in level $k+1$ and exhibiting a homotopy $\epsilon_k \epsilon_0 \simeq 0$. Then we define $B_{n+1}$ as $B_n / (\epsilon_0\epsilon_n)$, which we write as $k[\epsilon_0,\dots,\epsilon_{n+1}]$, with the same convention as for $B_n$. The natural map $\colim B_n \to B$ is now an equivalence. 
	
	To compute $R_{B/A}^\ext$, we first compute all $R^\ext_{B_n/A}$, inductively on $n$. We know that $R^\ext_{B_1/A}$ is $A[t^{-1},v_0]/(v_0t^{-1}-\epsilon_0)$, with $v_0$ in degree 1 and level 0. It will turn out that 
	\begin{align*}
		R^\ext_{B_n/A} = A[t^{-1},v_0,v_1,\dots,v_{n-1},\tau_1,\tau_2,\dots,\tau_n]
	\end{align*}
	where each $v_i$ is in degree 1 and level $i$; each $\tau_j$ is in degree 0 and level $j$; for all $i \geq 0$ we have that $v_{i+1}$ is a homotopy $\epsilon_0 v_i \simeq 0$; $\tau_1$ is a homotopy $v_0 t^{-1} \simeq \epsilon_0$; and for all $j \geq 1$ we have that $\tau_{j+1}$ is a homotopy $v_jt^{-1} \simeq \tau_j \epsilon_0$.
	
	Indeed, assume that it is true for $n$. Then observe that, since $R^\ext_{(-)/A}$ commutes with pushouts, we have
	\begin{align*}
		R^\ext_{B_{n+1}/A} & \simeq R^\ext_{B_{n}/A} \otimes_{R^\ext_{A[S^{n}]/A}} A[t^{-1}] \\ &\simeq A[t^{-1},v_0,v_1,\dots,v_{n-1},\tau_1,\tau_2,\dots,\tau_n] \otimes_{A[t^{-1},w_{n-1}]/(w_{n-1}t^{-1})} A[t^{-1}]
	\end{align*}
	where $w_{n-1}$ is free in degree 1, level $n-1$, and the map $R^\ext_{A[S^{n}]/A} \to R^\ext_{B_n/A}$ sends $w_{n-1}$ to $\epsilon_0 v_{n-1}$. We will show that, by pasting pushout diagrams, we get
	\begin{align*}
		R_{B_{n+1}/A}^\ext \simeq  \left( R^\ext_{B_n/A} / (\epsilon_0 v_{n-1}) \right) \otimes_{A[t^{-1},S^n]} A[t^{-1}]
	\end{align*}
	Indeed, write $R^\ext_{B_n/A} / (\epsilon_0 v_{n-1})$ as $R^\ext_{B_n/A}[v_n]$, with the same conventions as before. Then consider the following diagram.
	\begin{center}
		\begin{tikzcd}
			A[w_{n-1}t^{-1}] \arrow[d] \arrow[r] \ar[rd, phantom,  "(\alpha)" ] & A \arrow[d] \\
			A[t^{-1},w_{n-1}] \arrow[r] \arrow[d] \ar[rd, phantom,  "(\beta)" ] & A[t^{-1},w_{n-1}]/(w_{n-1}t^{-1}) \arrow[d, "f"] \arrow[r] \ar[rd, phantom,  "(\gamma)" ] & R^\ext_{B_n/A} \arrow[d] \\
			A[t^{-1}] \arrow[r] & A[t^{-1},S^n] \ar[rd, phantom,  "(\delta)" ]\arrow[d] \arrow[r] & R^\ext_{B_n/A}[v_n] \arrow[d] \\
			& A[t^{-1}] \arrow[r] & R^\ext_{B_{n+1}/A}
		\end{tikzcd}
	\end{center}
	Since $(\alpha)$ and $(\alpha) + (\beta)$ are both pushouts, it follows that $(\beta)$ is a pushout, where the map $f$ is the quotient map 
	\begin{align*}
		A[t^{-1},w_{n-1}]/(w_{n-1}t^{-1}) \to A[t^{-1},w_{n-1}]/(w_{n-1}t^{-1}, w_{n-1}) \simeq A[t^{-1},S^n]
	\end{align*}
	Since $A[t^{-1},w_{n-1}]/(w_{n-1}t^{-1}) \to R^\ext_{B_n/A}$ sends $w_{n-1}$ to $\epsilon_0v_{n-1}$, it follows that $(\beta) + (\gamma)$, hence $(\gamma)$, is a pushout. Now, since $(\gamma) + (\delta)$ is a pushout, also $(\delta)$ is a pushout, which is what we wanted.

	 Now note that $A[t^{-1},S^n] \to  R^\ext_{B_n/A}[v_n]$ is determined by the two different homotopies $v_nt^{-1}$ and $\tau_n \epsilon_0$, both establishing $\epsilon_0 v_{n-1}t^{-1} \simeq 0$; glued together they give a map $\sigma:S^n \to R^\ext_{B_n/A}[v_n]$. Thus
	\begin{align*}
		R^\ext_{B_{n+1}/A} \simeq R^\ext_{B_n/A} / (\epsilon_0 v_{n-1}, \sigma) \simeq R_{B_n/A}^\ext[v_n,\tau_{n+1}]
	\end{align*}
	where $\tau_{n+1}$ is a homotopy $v_nt^{-1}\simeq \tau_n \epsilon_0$.

	Passing to the colimit gives us	$R^\ext_{B/A} \simeq A[t^{-1},v_0,v_1,\dots,\tau_1,\tau_2,\dots]$, with the same conventions on the $v_i$ and $\tau_j$. 	We see that $R^\ext_{B/A}$ is discrete, and thus that
	\begin{align*}
		R^\ext_{B/A} \cong \pi_0 A[t^{-1},v_0] / (v_0 t^{-1} - \epsilon_0, \epsilon_0 v_0)
	\end{align*}
	 It is however not $t^{-1}$-regular, since $v_0^2t^{-1} = v_0 \epsilon_0 = 0$, but $v_0^2 \not= 0$. Since the kernel of multiplication by $t^{-k}$ is generated by $v_0^2$ for any $k \geq 1$, we get that 
	\begin{align*}
		(R^\ext_{B/A})^\reg \cong \pi_0 A[t^{-1},v_0]/(v_0t^{-1}-\epsilon_0,v_0\epsilon_0,v_0^2) \cong A[\epsilon_0t, t^{-1}]
	\end{align*}
	where the last isomorphism is given by $v_0 \mapsto \epsilon_0t$. We thus indeed recover the classical extended Rees algebra.
\end{Exm}

\begin{Rem}
	Let $A \to B$ be discrete. Then from the long exact sequence associated to the fibre sequence $R^\ext_{B/A} \xrightarrow{\times t^{-1}} R^\ext_{B/A} \to \overline{R^\ext_{B/A}}$ we deduce two things. First, that $(R^\ext_{B/A})_1$ is always discrete. Second, that $\pi_0R^\ext_{B/A}$ is $t^{-1}$-regular, i.e., equal to the classical extended Rees algebra, if and only if $\pi_1 \overline{R^\ext_{B/A}} = 0$.
\end{Rem}

\section{Blow-ups}
\label{Sec:Bl}
We will define the blow-up of an arbitrary closed immersion $Z \to X$ using the corresponding Rees algebra, and recover the construction from \cite{KhanVirtual} in the quasi-smooth case. We close with the deformation to the normal cone, which generalizes \cite[Thm.\ 4.1.13]{KhanVirtual} to arbitrary closed immersions.
\subsection{Blow-ups via Rees algebras}
Recall that we defined the projective spectrum for quasi-coherent, $\N$-graded algebras generated in degree 1. Since  $\RR_{Z/X}$ is generated in degree 1 by Corollary \ref{Cor:RZXdeg1}, the following thus makes sense.
\begin{Def}
	Let $Z \to X$ be a closed immersion. Define the \textit{blow-up} of $Z$ in $X$ as the scheme $\Bl_ZX \coloneqq \Proj \RR_{Z/X}$ over $X$.
\end{Def}

For $Z \to X$ classical, we write $\Bl^\cl_ZX$ for the classical blow-up of $Z$ in $X$. The following example show that in general we do not have $\Bl^\cl_{Z_\cl}X_\cl \simeq (\Bl_ZX)_\cl$. 

\begin{Exm}
	Let $A$ be a discrete ring and $B \coloneqq A/(0)$ the derived quotient. Then $R \coloneqq R_{B/A}^\ext$ is the discrete graded ring $A[st,t^{-1}]/(s)$ with $\deg s =0$ and $\deg t =1$, since $s$ is regular on the discrete ring $A[st,t^{-1}]$. However, $t^{-1}$ is not a regular element of $R$, and therefore $\overline{R}$ is not discrete. Indeed, in degree $d>0$, we have a fibre sequence $R_d \xrightarrow{\times t^{-1}} R_{d-1} \to \overline{R}_{d-1}$ of $A$-modules, which is just the fibre sequence $A \xrightarrow{0} A \to B$. Hence the underlying $A$-module of $\overline{R}$ is $\bigoplus_{d \geq 0} B$.
	
	To compute the blow-up of $\Spec B$ in $\Spec A$, write $R$ as $A[v,t^{-1}]/(vt^{-1})$, where $\deg (v) = 1$. Then $R_{\geq 0} \cong A[v]$, and hence
	\begin{align*}
	\Bl_{\Spec B} \Spec A = \Proj R_{\geq 0} \cong \Proj A[v] \cong \Spec A,
	\end{align*}
	since the derived projective spectrum of a discrete ring is given by the classical Proj-construction. Observe that this result is what we expect according to \cite[Thm.\ 4.1.5.viii]{KhanVirtual}.
\end{Exm}

\begin{Exm}
	\label{Exm:Blcl}
	Let $Z \to X$ be a regular embedding of classical schemes. Then $\Bl_ZX$ is classical, and coincides with $\Bl^\cl_ZX$. This follows from the example in \S \ref{Par:ClasRees}, together with the fact that the projective spectrum of a discrete algebra $\RR$ coincides with the classical projective spectrum of $\pi_0\RR$.
\end{Exm}

\begin{Prop}
	\label{Prop:BlBase}
	Blow-ups commute with pullbacks. That is, for $Z \to X$ a  closed immersion and $X' \to X$ any map, we have $\Bl_{Z \times_{X} X'}X' \simeq (\Bl_{Z}X )\times_{X} X'$. 
\end{Prop}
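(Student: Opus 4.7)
The proof is essentially a concatenation of two results established earlier, namely Proposition \ref{Prop:ReesPull} (base change for the extended Rees algebra) and Lemma \ref{Lem:ProjPB} (base change for $\Proj$). Writing $f\colon X' \to X$ for the given map and $Z' \coloneqq Z \times_X X'$, the plan is to chase an equivalence
\begin{align*}
\Bl_{Z'}X' \;=\; \Proj \RR_{Z'/X'} \;\simeq\; \Proj f^*\RR_{Z/X} \;\simeq\; \Proj \RR_{Z/X} \times_X X' \;=\; \Bl_Z X \times_X X'.
\end{align*}

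The first equivalence is the content to verify. Proposition \ref{Prop:ReesPull} gives $\RR^\ext_{Z'/X'} \simeq f^*\RR^\ext_{Z/X}$ as $\Z$-graded quasi-coherent $\OO_{X'}[t^{-1}]$-algebras, so what remains is to check that taking the non-negatively graded part commutes with pullback along $f$. Concretely, writing $j\colon \N \hookrightarrow \Z$, I need to show that the functor $j^!\colon \Alg^\Z(-) \to \Alg^\N(-)$ commutes with $f^*$. Affine-locally, $j^!$ is by Proposition \ref{Prop:NRadj} computed as precomposition along the inclusion $\Poly^\N_R \to \Poly^\Z_R$, and $\varphi^!$ for a ring map $\varphi\colon R \to R'$ is given by precomposition as well; the naturality statement in Proposition \ref{Prop:NRadj} (i.e.\ the functoriality of $\Alg^{(-)}_{(-)}$ in both variables) then yields $j^!\varphi^! \simeq \varphi^!j^!$, and globalizing via the right Kan extension defining $\Alg^{(\bullet)}(-)$ (Definition \ref{Def:AlgMSt}, Remark \ref{Rem:KanExtProd}) gives $j^!f^* \simeq f^*j^!$ on stacks. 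Therefore $\RR_{Z'/X'} \simeq j^!\RR^\ext_{Z'/X'} \simeq j^!f^*\RR^\ext_{Z/X} \simeq f^*\RR_{Z/X}$.

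The second equivalence is then Lemma \ref{Lem:ProjPB} applied to $\RR_{Z/X}$ along $f\colon X' \to X$; this requires that $f^*\RR_{Z/X}$ is still generated in degree $1$, which holds because being generated in degree $1$ is tested on $\pi_0$ locally and $f^*$ preserves the property of a surjection in degree $1$ after passing to $\pi_0$ (alternatively, one can invoke Corollary \ref{Cor:RZXdeg1} applied directly to the closed immersion $Z' \to X'$, which is valid since Rees algebras of closed immersions are always generated in degree $1$). Nothing here is an obstacle; the only potential subtlety is ensuring that the identification $j^! f^* \simeq f^* j^!$ globalizes cleanly from the affine case to arbitrary schemes, which is handled by the compatibility of the adjunction $j_! \dashv j^!$ with the right Kan extension procedure defining $\Alg^M(-)$ on stacks.
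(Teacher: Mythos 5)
Your proof takes the same route as the paper's one-line argument (Proposition \ref{Prop:ReesPull} followed by Lemma \ref{Lem:ProjPB}), and you are right to flag the intermediate step that the paper glosses over: to conclude $\RR_{Z'/X'} \simeq f^*\RR_{Z/X}$ from $\RR^\ext_{Z'/X'} \simeq f^*\RR^\ext_{Z/X}$ one does need the non-negative truncation $j^!$ to commute with $f^*$.

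However, your justification of that commutativity is not quite right as stated. The functoriality of $\Alg^{(-)}_{(-)}$ on $\Alg^\op \times \Mon^\op$ gives commutativity of \emph{like} adjoints: it yields $j^!\varphi^! \simeq \varphi^! j^!$ (both right adjoints), and after taking adjoints $j_!\varphi_! \simeq \varphi_! j_!$ (both left adjoints). Globalizing the former gives $j^! f_* \simeq f_* j^!$, and globalizing the latter gives $j_! f^* \simeq f^* j_!$. Neither is the statement you want, which is the \emph{mixed} commutation $j^! f^* \simeq f^* j^!$ of a right adjoint in the monoid variable with a left adjoint in the scheme variable. This is exactly the kind of statement Lemma \ref{Lem:magic} proves (for $j_!$ against $\varphi_*$), and the argument there is what you actually need: build the Beck--Chevalley transformation $j^!f^* \to f^*j^!$ from the adjunction units and counits, observe that the question of whether it is an equivalence is local, and check it affine-locally, where it reduces to the evident identification $(B\otimes_R R')_{\geq 0} \simeq B_{\geq 0}\otimes_R R'$ (use the decomposition $B \simeq \bigoplus_d B_d$ and that tensoring preserves direct sums). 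With that fix the rest of your argument, including the appeal to Corollary \ref{Cor:RZXdeg1} for degree-$1$ generation of $\RR_{Z'/X'}$, is fine.
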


\begin{proof}
	This follows from Proposition \ref{Prop:ReesPull}, together with the fact the $\Proj(-)$ commutes with pullbacks. 
\end{proof}

\begin{Cor}
	For any closed immersion $Z \to X$, the map $\Bl_Z X \to X$ is an equivalence on $X \setminus Z$.
\end{Cor}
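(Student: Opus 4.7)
The plan is to reduce to the case $Z = \emptyset$ via base change, and then compute $\Bl_\emptyset Y$ explicitly via the Rees algebra formula of Proposition \ref{Prop:ReesForm}.

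First, let $j: X \setminus Z \hookrightarrow X$ be the open immersion. Then $Z \times_X (X \setminus Z) \simeq \emptyset$, so by Proposition \ref{Prop:BlBase} we have
\begin{align*}
\Bl_Z X \times_X (X \setminus Z) \simeq \Bl_\emptyset (X \setminus Z).
\end{align*}
Hence it suffices to prove the absolute claim that $\Bl_\emptyset Y \to Y$ is an equivalence for every derived scheme $Y$. Since the formation of both sides is local on $Y$, we may assume $Y = \Spec A$ for some ring $A$.

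Next I would compute $R^\ext_{0/A}$. The empty closed immersion $\Spec 0 \to \Spec A$ corresponds to the map $A \to 0$, which is a finite quotient realized by the pointed map $\sigma: S^0 \to A$ sending the non-basepoint to $1 \in A$. By Proposition \ref{Prop:ReesForm},
\begin{align*}
R^\ext_{0/A} \simeq A[v,t^{-1}]/(vt^{-1} - 1)
\end{align*}
with $v$ free of degree $1$ and level $0$. The relation $vt^{-1} = 1$ makes $v$ inverse to $t^{-1}$, so the natural map $A[t^{\pm 1}] \to R^\ext_{0/A}$ sending $t \mapsto v$ is an equivalence of $\Z$-graded $A[t^{-1}]$-algebras. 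Taking nonnegative degrees gives $R_{0/A} \simeq A[t]$ as an $\N$-graded $A$-algebra generated in degree $1$.

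Finally, $\Bl_\emptyset \Spec A = \Proj A[t]$. The element $t \in A[t]_1$ generates the irrelevant ideal $A[t]_+$, so by the proof of Theorem \ref{Thm:ProjRep} the open cover of $\Proj A[t]$ reduces to the single chart $D_+(t) = \Spec A[t]_{(t)}$. Lemma \ref{Lem:Deg0loc} supplies an equivalence $A[t]_{(t)}[s^{\pm 1}] \simeq A[t]_t \simeq A[t^{\pm 1}]$, from which $A[t]_{(t)} \simeq A$, and one checks the structure map $\Proj A[t] \to \Spec A$ is the identity. There is no real obstacle here: the base change is immediate from Proposition \ref{Prop:BlBase}, and the main step is just the clean computation that the extended Rees algebra of $A \to 0$ is $A[t^{\pm 1}]$.
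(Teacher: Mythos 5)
Your proof is correct and follows essentially the same approach as the paper: reduce to $\Bl_\emptyset Y \to Y$ via Proposition \ref{Prop:BlBase}, then identify $R^\ext_{0/A}$ via Proposition \ref{Prop:ReesForm} applied to the finite quotient $A \to A/(1)$, obtaining $R_{0/A} \simeq A[t]$ and $\Proj A[t] \simeq \Spec A$. You simply spell out the intermediate computation $A[v,t^{-1}]/(vt^{-1}-1) \simeq A[t^{\pm 1}]$ and the single-chart cover $D_+(t)$ in more detail than the paper does.
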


\begin{proof}
	Note that $(\Bl_ZX) \times_X (X \setminus Z) \simeq \Bl_\varnothing (X \setminus Z)$ by Proposition \ref{Prop:BlBase}. It thus suffices to show that for any scheme $X'$, it holds that $\Bl_\emptyset X' \to X'$ is an equivalence. The latter follows from the explicit formula in Proposition \ref{Prop:ReesForm}. Namely, the map $A \to 0$ is equivalent to the quotient $A \to A/(1)$. Hence $R_{0/A} \simeq A[t]$, and thus $\Bl_\emptyset \Spec A \simeq \Proj A[t] \simeq \Spec A$.
\end{proof}

\begin{Exm}
	For $X \in \Sch$ it holds that $\Bl_X X = \varnothing$. To see this, by Proposition \ref{Prop:BlBase}, we may assume that $X = \Spec A$. Now $R_{A/A}^\ext = A[t^{-1}]$, and $\Proj A = \varnothing$, from which the claim follows.
\end{Exm}

\subsection{Naturality in the target}
Suppose we have a commutative diagram of schemes of the form 
\begin{center}
	\begin{tikzcd}
	Z \arrow[d] \arrow[r, "h"] & W \arrow[d] \\
	X \arrow[r, "f"] & Y
	\end{tikzcd}
\end{center}
where the vertical arrows are closed immersions. Recall that $\zeta_X:B\G_{m,X} \to [\A^1_X/\G_{m,X}]$ is the map induced by the zero section, likewise for $\zeta_Y$. Then, by Lemma \ref{Lem:WeilPull}, we have a map 
\begin{align*}
\alpha:\Res_{\zeta_X}([Z/\G_{m,X}]) \to \Res_{\zeta_Y}([Z/\G_{m,Y}])
\end{align*}
\begin{Lem}
	\label{Lem:DZXnat}
	The map $\alpha$ induces an equivariant map $D_{Z/X} \to D_{W/Y}$ over $\A^1_Y$.
\end{Lem}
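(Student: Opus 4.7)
My plan is to construct $D_{Z/X} \to D_{W/Y}$ in two stages. First I will use Lemma \ref{Lem:WeilPull} to produce $\alpha$ itself; then I will descend to the $D$'s by their universal properties as pullbacks; finally, equivariance will be extracted from the induced map on quotient stacks.

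For the first stage, the diagram to feed into Lemma \ref{Lem:WeilPull} is
\begin{center}
\begin{tikzcd}
{[Z/\G_{m,X}]} \arrow[d] \arrow[r] & B\G_{m,X} \arrow[d] \arrow[r, "\zeta_X"] & {[\A^1_X/\G_{m,X}]} \arrow[d] \\
{[W/\G_{m,Y}]} \arrow[r] & B\G_{m,Y} \arrow[r, "\zeta_Y"] & {[\A^1_Y/\G_{m,Y}]}
\end{tikzcd}
\end{center}
with the verticals induced by $f$ and $h$. The one non-formal point is that the right square is Cartesian. This holds because, by the construction in \S\ref{Sec:EquivGeom}, $B\G_{m,X}$ is by definition $X \times_Y B\G_{m,Y}$, while $\A^1_X = X \times_Y \A^1_Y$ compatibly with the $\G_{m}$-actions gives $[\A^1_X/\G_{m,X}] \simeq X \times_Y [\A^1_Y/\G_{m,Y}]$, so
\[
B\G_{m,Y} \times_{[\A^1_Y/\G_{m,Y}]} [\A^1_X/\G_{m,X}] \simeq X \times_Y B\G_{m,Y} \simeq B\G_{m,X}.
\]
Since $\zeta_X$ and $\zeta_Y$ are affine, as already noted in the excerpt, Lemma \ref{Lem:WeilPull} then produces $\alpha$ as a map over $[\A^1_Y/\G_{m,Y}]$.

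For the second stage, I invoke the pullback defining $D_{W/Y}$: it suffices to specify maps $D_{Z/X} \to \A^1_Y$ and $D_{Z/X} \to \Res_{\zeta_Y}([W/\G_{m,Y}])$ together with a homotopy between their images in $[\A^1_Y/\G_{m,Y}]$. I will take $D_{Z/X} \to \A^1_X \to \A^1_Y$ and $D_{Z/X} \to \Res_{\zeta_X}([Z/\G_{m,X}]) \xrightarrow{\alpha} \Res_{\zeta_Y}([W/\G_{m,Y}])$ respectively; the required homotopy comes from the pullback square defining $D_{Z/X}$ combined with the fact that $\alpha$ is a morphism over $[\A^1_Y/\G_{m,Y}]$. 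The resulting map $D_{Z/X} \to D_{W/Y}$ is over $\A^1_Y$ by construction.

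Finally, since the $\G_{m,X}$-action on $D_{Z/X}$ is characterized by the identification $[D_{Z/X}/\G_{m,X}] \simeq \Res_{\zeta_X}([Z/\G_{m,X}])$, and similarly for $Y$, the induced map on quotient stacks is $\alpha$. This descent property is precisely the datum of $\G_m$-equivariance, viewing $D_{W/Y}$ as a $\G_{m,X}$-scheme through the restriction along $\G_{m,X} \to \G_{m,Y}$. The only nontrivial step in the whole argument is the Cartesianness check above; everything else is a formal consequence of the universal properties of Weil restrictions, pullbacks, and quotient stacks.
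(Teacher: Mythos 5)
Your proposal is correct and follows essentially the same route as the paper's proof: use Lemma \ref{Lem:WeilPull} (on the morphism of spans over $[\A^1_Y/\G_{m,Y}]$) to build $\alpha$, then combine $\alpha$ with the map $\A^1_X \to \A^1_Y$ and the pullback square defining $D_{W/Y}$ to produce $D_{Z/X} \to D_{W/Y}$, deducing equivariance from the fact that everything is induced by maps of quotient stacks. You spell out the Cartesianness of the right square and the mechanism of equivariance in more detail than the paper, which simply asserts these; both are correct.
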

\begin{proof}
	We have maps $D_{Z/X} \to \Res_{\zeta_X}([Z/\G_{m,X}]) \to \Res_{\zeta_Y}([Z/\G_{m,Y}])$ and $D_{Z/X} \to \A^1_X \to \A^1_Y$ that make the obvious square commute, which gives us the map $D_{Z/X} \to D_{W/Y}$ over $\A^1_Y$. It is equivariant because it is induced by equivariant maps.
\end{proof}
\begin{Prop}
	Suppose $f:X \to Y$ is a closed immersion and that $h$ is the identity. Then $D_{Z/X} \to D_{Z/Y}$ induces a closed immersion $\Bl_ZX \to \Bl_ZY$ over $Y$.
\end{Prop}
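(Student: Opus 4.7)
The plan is to reduce the proposition to the statement that the morphism
\[
\varphi\colon \RR_{Z/Y} \longrightarrow f_*\RR_{Z/X}
\]
of $\N$-graded, quasi-coherent $\OO_Y$-algebras, obtained by taking the non-negative part of the map of extended Rees algebras dual to the equivariant morphism $D_{Z/X}\to D_{Z/Y}$ of Lemma~\ref{Lem:DZXnat}, is surjective. Granting surjectivity, I would conclude from \S\ref{Par:ProjFunct} that the induced map $\Bl_ZX = \Proj f_*\RR_{Z/X} \to \Proj \RR_{Z/Y} = \Bl_ZY$ is a closed immersion. The fact that a surjection of $\N$-graded algebras yields a closed immersion on $\Proj$ would be verified on the standard affine cover $\{\Spec B_{(f)}\}$ of the proof of Theorem~\ref{Thm:ProjRep}: Lemma~\ref{Lem:Deg0loc} together with the compatibility of $\pi_0$ with graded localization reduces it to the surjectivity of $B_{(f)} \to B'_{(\varphi(f))}$, which follows from surjectivity of $\varphi$ by taking the degree-zero part.

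For surjectivity of $\varphi$, since the question is local on $Y$, I would reduce to the affine situation $Y=\Spec A$, $X=\Spec B$, $Z=\Spec C$ with $A \to B \to C$ both surjective, in which $\varphi$ becomes a map $\RR_{C/A} \to \RR_{C/B}$ of $\N$-graded $A$-algebras. The key input is Corollary~\ref{Cor:RZXdeg1}: since $\RR_{C/B}$ is generated in degree $1$, it suffices to check surjectivity of $\varphi_0$ and $\varphi_1$. The map $\varphi_0 \colon A \to B$ is surjective by hypothesis, so the only genuine computation is in degree $1$.

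For that, I would invoke Remark~\ref{Rem:EssFibSeq} to identify $(\RR_{C/A})_1 \simeq \fib(A \to C)$ and $(\RR_{C/B})_1 \simeq \fib(B \to C)$; naturality of this identification in the base ring realizes $\varphi_1$ as the map of fibres induced by the commutative square
\[
\begin{tikzcd}
A \arrow[r] \arrow[d, "\alpha"'] & C \arrow[d, "\id"] \\
B \arrow[r] & C
\end{tikzcd}
\]
with $\alpha\colon A \to B$ the given surjection. Applying the total-fibre construction to this square in $\Mod_A$ then produces a fibre sequence
\[
\fib(\alpha) \longrightarrow \fib(A \to C) \longrightarrow \fib(B \to C).
\]
Because $\alpha$ is a surjection of connective rings, $\fib(\alpha)$ is connective, whence $\pi_{-1}\fib(\alpha) = 0$ and the long exact sequence in homotopy supplies the desired surjectivity of $\pi_0\varphi_1$.

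No individual step is a serious obstacle; the difficulty is conceptual rather than technical. The main content is the recognition that Corollary~\ref{Cor:RZXdeg1} and Remark~\ref{Rem:EssFibSeq} together collapse the problem to a single degree and turn that degree into an elementary fibre-sequence argument, with the relevant fibre sequence being precisely the one furnished by the surjectivity of $\alpha$.
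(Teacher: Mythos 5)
Your proof is correct and follows essentially the same route as the paper's: reduce to the affine case, reduce surjectivity of the graded map of Rees algebras to degree $1$ via Corollary~\ref{Cor:RZXdeg1}, identify the degree-one piece as the fibre of the closed immersion via Remark~\ref{Rem:EssFibSeq}, and finish with a connectivity argument. The only cosmetic difference is that you package the degree-one surjectivity using the three-term fibre sequence $\fib(\alpha)\to\fib(\beta\alpha)\to\fib(\beta)$ directly, whereas the paper runs a diagram chase on the ladder of long exact sequences associated to $F_{B/C}\to C\to B$ and $F_{B/A}\to A\to B$; these are the same computation.
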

\begin{proof}
	The question is local on $Y$, so we may assume that $Z \to X \to Y$ is of the form $\Spec B \to \Spec A \to \Spec C$. Write $\psi:R_{B/C} \to R_{B/A}$ for the graded map of $C$-algebras induced by Lemma \ref{Lem:DZXnat}. We claim that $\psi$ is surjective, and that the induced map on projective spectra gives us the closed immersion $\Bl_ZX \to \Bl_ZY$. 
	
	For surjectivity, since $R_{B/A}$ is generated in degree 1, and since $\psi$ is the surjection $C \to A$ in degree 0, it suffices to show that $\psi$ is surjective in degree 1. Recall from Remark \ref{Rem:EssFibSeq} that multiplication with $t^{-1}$ induces a fibre sequence $(R_{B/A})_1 \to A \to B$. Write $(R_{B/A})_1 \coloneqq F_{B/A}$, and likewise $F_{B/C}$ for the fibre of $C \to B$. 
	
	The long exact sequence associated to $F_{B/C} \to C \to B$ lives over the sequence associated to $F_{B/A} \to A \to B$ via obvious maps. From a diagram chase on the resulting diagram, it follows that $\pi_0 F_{B/C} \to \pi_0F_{B/A}$ is surjective.

	It is clear from the construction of the relative spectrum that $\Proj_Y(R_{B/A}) \simeq \Proj_X(R_{B/A})$, since $X,Y$ are affine. Hence, since $\psi$ is surjective, it induces a closed immersion $g: \Bl_ZX \to \Bl_ZY$ over $Y$ by \S \ref{Par:ProjFunct}.	
\end{proof}

\subsection{Virtual Cartier divisors}
We review the notion of virtual Cartier divisors, which is used in \cite{KhanVirtual} to define blow-ups in quasi-smooth centres.

\begin{Def}
	Let $S$ be a scheme.
	\begin{itemize}
		\item A \textit{virtual Cartier divisor} on $S$ is a closed immersion $D \to S$ of virtual codimension one, that is, it is Zariski locally on $S$ given as $\Spec A/f \to \Spec A$ for some point $f \in A$.
		\item A \textit{generalized Cartier divisor} on $S$ is a line bundle $\LL \in \Pic(S)$, together with a map $\LL \to \OO_S$ in $\QCoh(S)$.
	\end{itemize}	
\end{Def}
If $i:D \to S$ is a virtual Cartier divisor on $S$, then the  fibre $\LL \to \OO_S \to i_* \OO_D$ gives a generalized Cartier divisor on $S$. And in fact, this construction gives an equivalence of the space of virtual Cartier divisors to the space of generalized Cartier divisors.

\begin{Def}
\label{Def:VCD}	
	Let $i:Z \to X$ be a closed immersion. The \textit{conormal sheaf} of $i$ is the shifted cotangent bundle $\NN_{Z/X} \coloneqq \LL_{Z/X}[-1]$. 
	
	Now let $S$ be a scheme over $X$. Then a \textit{virtual Cartier divisor on $S$ over $(X,Z)$}, written $(S,D)$, is a commutative square
	\begin{equation}\label{Eq:VCD}
	\begin{tikzcd}
	D \arrow[r] \arrow[d, "g"] & S \arrow[d] \\
	Z \arrow[r] & X
	\end{tikzcd}	
	\end{equation}
	such that: 
	\begin{enumerate}
		\item $D \to S$ is a virtual Cartier divisor;
		\item the square is Cartesian in the classical sense on the underlying classical schemes; 
		\item $g^* \NN_{Z/X} \to \NN_{D/S}$ is surjective.
	\end{enumerate}
	Write $\vDiv_ZX(S\to X)$ for the space of virtual Cartier divisors on $S$ over $(X,Z)$. 
\end{Def}

Observe that virtual Cartier divisors on $S$ over $(X,Z)$ pull back along maps $S' \to S$ over $X$, since $\NN_{D/S}$ is stable under  pull-backs. Thus $\vDiv_ZX$ gives a functor $\dSch_{/X}^\op \to \Space$. One of the main results in \cite[Thm.\ 4.1.5]{KhanVirtual} is the following: if $Z \to X$ is quasi-smooth, then $\vDiv_ZX$ is representable by a scheme.

\begin{Lem}
	\label{Lem:vDivPB}
	Virtual Cartier divisors over $(X,Z)$ pull  back along maps $X' \to X$. That is, for $Z' \coloneqq X\times_{X'} Z$ and $(S,D)$ a virtual Cartier divisor over $(X,Z)$, the pullback $D' \to S'$ of $D \to S$ is a virtual Cartier divisor over $(X',Z')$.
\end{Lem}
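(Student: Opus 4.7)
The plan is to form the base change of the square \eqref{Eq:VCD} along $X' \to X$, producing
\begin{center}
\begin{tikzcd}
D' \arrow[r] \arrow[d, "g'"] & S' \arrow[d] \\
Z' \arrow[r] & X'
\end{tikzcd}
\end{center}
with $S' \coloneqq S \times_X X'$, $D' \coloneqq D \times_X X'$ and $g'$ the induced map, and then verify the three conditions of Definition \ref{Def:VCD}.

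First I would check that $D' \to S'$ is a virtual Cartier divisor. This is immediate from the equivalence between virtual Cartier divisors on $S$ and generalized Cartier divisors $\LL \to \OO_S$: both line bundles and morphisms of quasi-coherent modules pull back along $S' \to S$, and the virtual Cartier divisor associated to the pulled-back generalized Cartier divisor is precisely $D \times_S S' \simeq D'$.

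Next I would verify condition (2), that the new square is Cartesian on classical schemes. The key point is that the underlying-classical-scheme functor $(-)_\cl$, locally given by $\Spec \pi_0$, preserves fibre products of schemes along a closed immersion, since $\pi_0 \colon \Alg \to \cl\Alg$ preserves colimits of connective rings as a left adjoint. Applying this twice gives
\begin{align*}
D'_\cl \simeq D_\cl \times^\cl_{X_\cl} X'_\cl \simeq (S_\cl \times^\cl_{X_\cl} Z_\cl) \times^\cl_{X_\cl} X'_\cl \simeq S'_\cl \times^\cl_{X'_\cl} Z'_\cl,
\end{align*}
where the middle equivalence uses the hypothesis on $(S,D)$ and the last equivalence uses the corresponding identifications $S'_\cl \simeq S_\cl \times^\cl_{X_\cl} X'_\cl$ and $Z'_\cl \simeq Z_\cl \times^\cl_{X_\cl} X'_\cl$.

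For condition (3), stability of the cotangent complex under base change yields $\NN_{Z'/X'} \simeq \pi^* \NN_{Z/X}$ and $\NN_{D'/S'} \simeq \sigma^* \NN_{D/S}$ along the obvious maps $\pi \colon Z' \to Z$ and $\sigma \colon D' \to D$. The map $g'^*\NN_{Z'/X'} \to \NN_{D'/S'}$ is therefore the pullback of $g^* \NN_{Z/X} \to \NN_{D/S}$ along $\sigma$, and pullback functors are right $t$-exact, so they preserve surjectivity in the sense of \S \ref{Par:DAGEls} (a map of connective modules is surjective iff its cofibre is in degree $\geq 1$, a property preserved by inverse image). There is no serious obstacle; the only mild subtlety is the interaction of $(-)_\cl$ with fibre products in step (2), which reduces to the fact that $\pi_0$ commutes with tensor products of connective simplicial commutative rings.
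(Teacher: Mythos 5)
Your proof is correct and follows the same route as the paper's: the paper declares conditions (1) and (2) of Definition \ref{Def:VCD} to be clear and only spells out condition (3), which you verify by the same argument (base change of the conormal sheaf together with stability of surjectivity under pullback of quasi-coherent sheaves). One small remark: in step (2) the qualification ``along a closed immersion'' is unnecessary, since $(-)_{\cl}$ commutes with \emph{all} fibre products of derived schemes---affine-locally these are pushouts of connective rings and $\pi_0$ is a left adjoint.
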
	
\begin{proof}
	The first two conditions from Definition \ref{Def:VCD} are clear. The third condition follows from the fact that the conormal sheaf commutes with base change and the fact that pulling back quasi-coherent sheaves is exact.
\end{proof}

\subsection{A virtual Cartier divisor over $(X,Z)$}
\label{Par:univVCD}
Let $\BB$ be the $\OO_X$-algebra corresponding to the quasi-smooth closed immersion $i:Z \to X$. Consider $\overline{ \RR^\ext_{Z/X} }$ as a graded $\OO_Z$-algebra via the map $\BB   \to \overline{ \RR^\ext_{Z/X} }$. Throughout, put $E_{Z/X} \coloneqq  \Proj_Z (\overline{ \RR^\ext_{Z/X} })$. We will first give a  virtual Cartier divisor over $(X,Z)$ of the form $(\Bl_ZX, E_{Z/X})$,
and in the next section show that the map $\Bl_ZX \to \vDiv_ZX$ thus induced by the universal property of $\vDiv_ZX$ is an equivalence in the case that $Z \to X$ is quasi-smooth.

To get a commutative square as in (\ref{Eq:VCD}), with $(S,D) = (\Bl_ZX,E_{Z/X})$, by Proposition \ref{Prop:BlBase} it suffices to give a map $k:E_{Z/X} \to \Bl_{Z \times_X Z/Z}$ of schemes over $Z$. By \S \ref{Par:ProjFunct}, it thus suffices to give a surjective map 
\begin{align*}
\kappa:i^* \RR_{Z/X} \simeq \RR_{Z\times_XZ/Z}  \to \overline{ \RR^\ext_{Z/X} }
\end{align*}
of graded $\OO_Z$-algebras. We define $\kappa$ to be the adjoint of the map 
\begin{align*}
\RR_{Z/X} \to \RR_{Z/X}^\ext \to \overline{\RR_{Z/X}^\ext}\simeq  i_*\overline{\RR_{Z/X}^\ext} 
\end{align*}

\begin{Lem}
	\label{Lem:kappa}
	The map $\kappa$ is a surjective map of graded $\OO_Z$-algebras.
\end{Lem}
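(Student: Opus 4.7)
The plan is to verify the claim locally on $X$, so I would reduce to the case where $X = \Spec A$ and $Z = \Spec B$ with $A \to B$ a surjection, in which setting $\kappa$ becomes a map of $\N$-graded $B$-algebras
\[ \kappa \colon R_{B/A} \otimes_A B \longrightarrow \overline{R_{B/A}^\ext} \]
obtained by adjunction from the composite $R_{B/A} \to R_{B/A}^\ext \to \overline{R_{B/A}^\ext}$. To justify that this is well-defined as a map of graded $B$-algebras, I would invoke the Rees algebra adjunction $R^\ext_{(-)/A} \dashv G_A$ of \S\ref{Par:ReesAdj}, which gives $(\overline{R_{B/A}^\ext})_0 \simeq G_A(R_{B/A}^\ext) \simeq B$. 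This identifies the underlying $\OO_X$-algebra of $\overline{R_{B/A}^\ext}$ with $i_*$ of a genuine $\N$-graded $\OO_Z$-algebra, and $\kappa$ is its adjoint under $i^* \dashv i_*$.

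The key input for surjectivity is the defining cofibre sequence
\[ \RR_{Z/X}^\ext \xrightarrow{\times t^{-1}} \RR_{Z/X}^\ext \longrightarrow \overline{\RR_{Z/X}^\ext} \]
of $\Z$-graded $\OO_X[t^{-1}]$-modules. Since $t^{-1}$ has degree $-1$, its degree-$d$ strand is a fibre sequence $(\RR_{Z/X}^\ext)_{d+1} \xrightarrow{\times t^{-1}} (\RR_{Z/X}^\ext)_d \to (\overline{\RR_{Z/X}^\ext})_d$ of connective $\OO_X$-modules, and the associated long exact sequence in homotopy groups yields a surjection of $\pi_0 A$-modules $\pi_0(R_{B/A})_d \twoheadrightarrow \pi_0(\overline{R_{B/A}^\ext})_d$ for every $d \geq 0$ (using that $R_{B/A}$ and $R_{B/A}^\ext$ agree in non-negative degrees, and that connectivity kills the boundary term on the right).

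To conclude, I would observe that the target of this surjection is canonically a $\pi_0 B$-module because $\overline{R_{B/A}^\ext}$ is a $B$-algebra; hence the $\pi_0 A$-linear surjection factors uniquely through the base change
\[ \pi_0(R_{B/A})_d \otimes_{\pi_0 A} \pi_0 B \;\cong\; \pi_0\bigl( (R_{B/A} \otimes_A B)_d \bigr), \]
and the factored map remains surjective (since the original composite is). This factored map is precisely $\pi_0\kappa_d$, proving that $\kappa$ is surjective in every degree and hence as a map of graded algebras. I do not foresee any genuine obstacle: once $\overline{R_{B/A}^\ext}$ has been identified as a $B$-algebra through the Rees adjunction, the statement is a direct consequence of the degreewise cofibre sequence together with the universal property of base change.
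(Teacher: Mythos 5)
Your argument is correct and follows essentially the same route as the paper: both rest on the surjectivity of $R^\ext_{B/A}\to\overline{R^\ext_{B/A}}$ coming from the cofibre sequence $\RR^\ext_{Z/X}\xrightarrow{\times t^{-1}}\RR^\ext_{Z/X}\to\overline{\RR^\ext_{Z/X}}$, and both then propagate this surjectivity to $\kappa$ by noting that $\kappa$ factors the surjection (you phrase this via the unit of the $i^*\dashv i_*$ adjunction degree by degree, while the paper writes $\kappa=\kappa'\circ j$ with $\kappa'$ the base change of the surjection and $j$ the degree-covering inclusion $\RR_{Z\times_XZ/Z}\to\RR^\ext_{Z\times_XZ/Z}$). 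The two presentations are diagram-chases in opposite directions around the same square and carry the same content.
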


\begin{proof}
	The question is local on $X$, so we assume that $Z \to X$ corresponds to a surjection $A \to B$. Then $\kappa$ is the composition of the map $R_{B \otimes_A B/B} \to R_{B \otimes_A B/B}^\ext$ with the map
	\begin{align*}
	\kappa':R_{B \otimes_A B/B}^\ext \simeq R_{B/A}^\ext \otimes_A B \to \overline{ R^\ext_{B/A}}
	\end{align*}
	induced by the inclusion of $B$ as the degree-zero part of $\overline{ R^\ext_{B/A}}$ and by the quotient map $R_{B/A}^\ext \to \overline{R_{B/A}^\ext}$. 
	
	To see that $\kappa$ is surjective, notice that $R_{B/A}^\ext \to \overline{R_{B/A}^\ext}$ is surjective, hence that $\kappa'$ is surjective. Since $\overline{R_{B/A}^\ext}$ is concentrated in positive degrees and the map $R_{B \otimes_A B/B} \to R_{B \otimes_A B/B}^\ext$ is the identity in positive degrees, the claim follows.
\end{proof}

Consider now the square
\begin{equation}
		\label{Eq:VCDTS}
		\begin{tikzcd}
			E_{Z/X}  \arrow[d, "g"] \arrow[r, "k"] & \Bl_ZX \arrow[d] \\
			Z \arrow[r] & X
		\end{tikzcd}
	\end{equation}
induced by $\kappa$. 

\begin{Lem}
	The map $k$ is a virtual Cartier divisor.
\end{Lem}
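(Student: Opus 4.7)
The question is local on $X$, so I would assume $Z = \Spec B$ and $X = \Spec A$, and write $R = R_{B/A}^\ext$ and $\overline R = \overline{R_{B/A}^\ext}$. By Corollary \ref{Cor:RZXdeg1}, the blow-up $\Bl_Z X = \Proj R_{\geq 0}$ is covered by affine opens $D_+(f) = \Spec R_{(f)}$ for $f \in R_1$. It thus suffices to show that for each such $f$, the restriction of $k$ to the preimage of $D_+(f)$ is of the form $\Spec R_{(f)}/(g) \to \Spec R_{(f)}$ for some $g \in R_{(f)}$.

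I would first identify $k^{-1}(D_+(f))$. Since $k$ factors as the closed immersion $E_{Z/X} \to \Bl_{Z \times_X Z} Z$ induced by the surjection $\kappa$ of Lemma \ref{Lem:kappa}, followed by the pullback $\Bl_{Z \times_X Z} Z \simeq \Bl_Z X \times_X Z \to \Bl_Z X$, the functorial description of $\Proj$ from \S \ref{Par:ProjFunct} yields $k^{-1}(D_+(f)) \simeq \Spec \overline R_{(\bar f)}$, where $\bar f$ is the image of $f$ in $\overline R_1$. Moreover, under this identification $k|_{D_+(f)}$ is the map of affine schemes induced by the natural $A$-algebra map $R_{(f)} \to \overline R_{(\bar f)}$.

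The main computation is then $\overline R_{(\bar f)} \simeq R_{(f)}/(ft^{-1})$. I would obtain this by tensoring the defining fibre sequence $A[t^{-1}] \xrightarrow{\times t^{-1}} A[t^{-1}] \to A$ with $R$ over $A[t^{-1}]$, producing a fibre sequence $R \xrightarrow{\times t^{-1}} R \to \overline R$. Graded localization at $f$ is exact and preserves this fibre sequence; since $\times t^{-1}$ lowers degree by one, extracting the degree-$0$ piece of the target yields a fibre sequence
\begin{align*}
	(R_f)_1 \xrightarrow{\times t^{-1}} (R_f)_0 \to \overline R_{(\bar f)}
\end{align*}
of $R_{(f)}$-modules. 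By Lemma \ref{Lem:Deg0loc}, the equivalence $R_f \simeq R_{(f)}[u^{\pm 1}]$ with $u \mapsto f$ identifies both $(R_f)_1$ and $(R_f)_0$ with $R_{(f)}$, and it converts $\times t^{-1}$ into multiplication by $ft^{-1} \in R_{(f)}$, since $t^{-1} = (ft^{-1})u^{-1}$ in $R_{(f)}[u^{\pm 1}]$. This would give $\overline R_{(\bar f)} \simeq R_{(f)}/(ft^{-1})$, so that $k|_{D_+(f)}$ takes the form $\Spec R_{(f)}/(ft^{-1}) \to \Spec R_{(f)}$, which is a virtual Cartier divisor on $D_+(f)$ by definition.

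The principal obstacle I expect is the careful degree bookkeeping in the last step: because $\times t^{-1}$ is not grade-preserving, extracting "the degree-$0$ piece" of the fibre sequence requires pairing source in degree $1$ against target in degree $0$, and verifying under the identification of Lemma \ref{Lem:Deg0loc} that this really becomes multiplication by $ft^{-1}$ rather than some other element. Once this is cleanly performed, the cover by the $D_+(f)$ assembles into the desired global statement.
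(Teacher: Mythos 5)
Your proof is correct and takes essentially the same approach as the paper's: reduce to the affine chart $D_+(f)$ and show that $\overline R_{(\bar f)}$ is the quotient of $R_{(f)}$ by a single element, which the paper compresses into the phrase ``by exactness of localization'' while you carry out the graded fibre-sequence bookkeeping explicitly via Lemma \ref{Lem:Deg0loc}. One small point you leave implicit, and which the paper's proof does flag, is the identification $(R_{B/A})_{(f)} \simeq (R^\ext_{B/A})_{(f)}$ needed in order to write $D_+(f) = \Spec R_{(f)}$ with $R = R^\ext_{B/A}$ rather than with $R_{B/A} = (R^\ext_{B/A})_{\geq 0}$.
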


\begin{proof}
	The question is local on $X$, so we assume that $Z \to X$ corresponds to a surjection $A \to B$. Let $f$ be a point in $(R_{B/A})_1$, and write $f'$ for the image of $f$ in $\overline{ R^\ext_{B/A} }$. The map $(R_{B/A})_f \to (R^\ext_{B/A})_f$ is an equivalence in degree 0. By exactness of localization, it follows that locally $k$ corresponds to the ring map
	\begin{align*}
		(R_{B/A})_{(f)} \to (R^\ext_{B/A}/(t^{-1}))_{(f')} \simeq (R^\ext_{B/A})_{(f')} / (t^{-1})
	\end{align*}
	from which it follows that $k$ is a virtual Cartier divisor.
\end{proof}

\begin{Lem}
	The underlying square of (\ref{Eq:VCDTS}) of classical schemes is Cartesian.
\end{Lem}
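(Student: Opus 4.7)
The plan is to reduce to the affine setting and compare the two sides on a common affine cover. Locally, $Z \to X$ is $\Spec B \to \Spec A$ for a surjection with fibre $I$; write $R = R_{B/A}$ and $R^\ext = R_{B/A}^\ext$. By Lemma \ref{Lem:ProjPB} I have $\Bl_Z X \times_X Z \simeq \Proj(R \otimes_A B)$. Since $R$ is generated in degree $1$ (Corollary \ref{Cor:RZXdeg1}), the opens $D_+(f) = \Spec R_{(f)}$ for $f \in R_1$ cover $\Bl_Z X$, and the surjectivity of $\kappa$ (Lemma \ref{Lem:kappa}) gives a compatible cover of $E_{Z/X}$ by the opens $D_+(\kappa(f)) = \Spec \overline{R^\ext}_{(\kappa(f))}$. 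It therefore suffices to compare the classical schemes on each such patch.

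Following the strategy of the preceding lemma, I will localize the fibre sequence $R^\ext \xrightarrow{\cdot t^{-1}} R^\ext \to \overline{R^\ext}$ at $f$ and take degree-zero parts. A degree-zero element of $R^\ext_f$ has the form $x/f^k$ with $k \geq 0$ and $x \in R^\ext_k$, which forces $x$ to lie in $R$, so $R^\ext_{(f)} = R_{(f)}$. Under this identification, multiplication by $t^{-1}$ between degree-zero parts becomes multiplication by $ft^{-1} \in A \subseteq R_{(f)}$, yielding a fibre sequence
\[ R_{(f)} \xrightarrow{\cdot ft^{-1}} R_{(f)} \to \overline{R^\ext}_{(\kappa(f))}, \]
so $\pi_0 \overline{R^\ext}_{(\kappa(f))} = \pi_0 R_{(f)}/(ft^{-1})$. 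On the other hand, tensoring the fibre sequence $I \to A \to B$ with $R_{(f)}$ gives $\pi_0(R_{(f)} \otimes_A B) = \pi_0 R_{(f)}/(\pi_0 I \cdot \pi_0 R_{(f)})$.

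The heart of the argument is then to show that $(ft^{-1}) = \pi_0 I \cdot \pi_0 R_{(f)}$ as ideals in $\pi_0 R_{(f)}$. By Remark \ref{Rem:EssFibSeq}, the sequence $R_1 \xrightarrow{\cdot t^{-1}} A \to B$ is a fibre sequence, so $\pi_0 I$ is exactly the image of multiplication by $t^{-1}$ on $\pi_0 R_1$. One inclusion is immediate since $ft^{-1} \in \pi_0 I$; for the other, any $a = t^{-1} g$ with $g \in \pi_0 R_1$ can be rewritten in $\pi_0 R_{(f)}$ as $a = (g/f)(ft^{-1})$, placing it in $(ft^{-1})$. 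The main subtlety I anticipate is the identification $R^\ext_{(f)} = R_{(f)}$ together with tracking the compatibility of the local covers under $k$; both should reduce to routine bookkeeping once the degree-zero fibre-sequence manipulation is in place.
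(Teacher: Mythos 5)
Your proof is correct, and the overall strategy is sound, but it takes a genuinely different route from the paper. The paper argues \emph{globally}, before applying $\Proj$: it reduces the Cartesianness of the classical square to the statement that the natural graded map
\[
\pi_0 R_{B/A} \otimes_{\pi_0 A} \pi_0 B \longrightarrow \pi_0 \overline{R^\ext_{B/A}}
\]
is an isomorphism, and verifies this degree-by-degree in a single line using the identity $t^{-1}(\pi_0 R^\ext)_{d+1} = t^{-1}(\pi_0 R^\ext)_1 \cdot (\pi_0 R^\ext)_d = \im(\tau)\cdot (\pi_0 R)_d$, which holds because $\pi_0 R$ is generated in degree $1$. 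You instead argue \emph{locally}, after applying $\Proj$: you cover $\Bl_ZX$ by the $D_+(f)$, pull this back to the compatible cover of $E_{Z/X}$ by $D_+(\kappa(f))$, identify $\pi_0 R^\ext_{(f)}$ with $\pi_0 R_{(f)}$, and then on each patch identify both sides with $\pi_0 R_{(f)}/(ft^{-1})$. Your key step --- that $(ft^{-1}) = \im(\tau)\cdot\pi_0 R_{(f)}$, using $t^{-1}g = (g/f)(ft^{-1})$ --- is precisely the local shadow of the paper's degree-$d$ calculation. Both arguments are valid; the paper's avoids localization entirely and is a bit slicker, while yours is more explicit at the cost of the bookkeeping you already anticipate, namely the identification $R^\ext_{(f)} = R_{(f)}$ (which is fine on $\pi_0$, the only level you need here) and the compatibility of covers under $k$ (which is automatic since $k^{-1}(D_+(f)) = D_+(\kappa(f))$). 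One small presentational caveat: when you write ``tensoring the fibre sequence $I \to A \to B$ with $R_{(f)}$,'' you should note that $\pi_0 I$ maps onto the classical kernel $J = \ker(\pi_0 A \to \pi_0 B)$ but need not inject into $\pi_0 A$; what you actually use is the ideal generated by the image of $\pi_0 I$ in $\pi_0 R_{(f)}$, which is $J\cdot\pi_0 R_{(f)}$.
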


\begin{proof}
	Again we assume that $Z \to X$ is of the form $\Spec B \to \Spec A$. Write $\Proj^\cl(-)$ for the classical projective spectrum. To see that the square (\ref{Eq:VCDTS}) is Cartesian on the underlying classical schemes, it suffices to show that the natural map
	\begin{align*}
		\varphi:	\pi_0 R_{B/A} \otimes_{\pi_0A} \pi_0 B \to \pi_0 \overline{  R^\ext_{B/A} }
	\end{align*}
	is an isomorphism, where the tensor product on the left is underived.
	
	Let $I$ be the kernel of $\pi_0 A \to \pi_0 B$. Let $\tau$ be the map $\pi_0(R_{B/A}^\ext)_1 \to \pi_0 A$ given by multiplication with $t^{-1}$. Then $I = \im(\tau)$. It follows that $\varphi$ is the graded map of $\N$-graded rings
	\begin{align*}
		\pi_0(R_{B/A}) / (\im (\tau) \pi_0(R_{B/A}) ) \to \pi_0(R_{B/A}^\ext)/(t^{-1})
	\end{align*}
	induced by the inclusion $\pi_0(R_{B/A}) \to \pi_0(R_{B/A}^\ext)$. To show that this is an isomorphism, it suffices to check this on each degree $d \geq 0$. And indeed $\varphi$ is an isomorphism in degree $d$, since
	\begin{align*}
		t^{-1}(\pi_0 R^\ext_{B/A})_{d+1} \cong t^{-1}(\pi_0 R^\ext_{B/A})_{1}(\pi_0 R^\ext_{B/A})_{d} \cong \im(\tau) (\pi_0R_{B/A})_d,
	\end{align*} 
	since $\pi_0(R_{B/A})$ is generated in degree 1. 
\end{proof}

\begin{Rem}
	\label{Rem:ConnFib}
	Let $K \xrightarrow{\alpha} L \xrightarrow{\beta} M$ be a sequence of spaces. We then have a fibre sequence $\fib(\alpha) \to \fib (\beta \alpha) \to \fib(\beta)$, where $\fib(-)$ means the fibre. It follows that if $S \to S'$ and $T \to T'$ are maps of $R$-algebras with connected fibres, then so is the map $S \otimes_R T \to S' \otimes_R T'$, since it factorizes as 
	\begin{align*}
		S \otimes_R T \to S \otimes_R T' \to S' \otimes_R T',
	\end{align*}
	both of which have connected fibres.
\end{Rem}

\begin{Prop}
	\label{Prop:VCD}
	The square (\ref{Eq:VCDTS}) is a virtual Cartier divisor over $(X,Z)$.
\end{Prop}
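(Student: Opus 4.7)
The first two conditions of Definition \ref{Def:VCD} for the square (\ref{Eq:VCDTS}) are settled by the two preceding lemmas, so the only remaining task is to verify that the conormal map $g^*\NN_{Z/X} \to \NN_{E_{Z/X}/\Bl_ZX}$ is surjective on $\pi_0$. Since surjectivity is local, I may assume $Z \to X$ corresponds to a surjection $A \to B$, and restrict to an affine chart $D_+(f) = \Spec C$ with $C = (R_{B/A})_{(f)}$ and $f \in (R_{B/A})_1$; on this chart, the preceding lemma gives $E_{Z/X} \cap D_+(f) \simeq \Spec C/(t^{-1}f)$, and $\NN_{E_{Z/X}/\Bl_ZX}$ restricts to a rank-one free $C/(t^{-1}f)$-module with distinguished generator $t^{-1}f$.

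My plan is to produce a factorisation $g^*i^*F_{Z/X} \xrightarrow{g^*\alpha} g^*\NN_{Z/X} \to \NN_{E_{Z/X}/\Bl_ZX}$ whose composite is already known to be surjective, thereby forcing the second arrow to be surjective. Here $F_{Z/X} \coloneqq (\RR_{Z/X})_1 \simeq \fib(\OO_X \to i_*\OO_Z)$, the equivalence coming from Remark \ref{Rem:EssFibSeq}. First, I identify $\NN_{E_{Z/X}/\Bl_ZX} \simeq k^*\OO_{\Bl_ZX}(1)$ by applying the exact sheafification functor $(-)^\sim$ from \S \ref{Par:GradBMod} to the twist by $(1)$ of the multiplication-by-$t^{-1}$ fibre sequence of graded $\RR_{Z/X}$-modules, which yields the fibre sequence $\OO_{\Bl_ZX}(1) \to \OO_{\Bl_ZX} \to k_*\OO_{E_{Z/X}}$. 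Next, the functor-of-points description in \S \ref{Par:FOP} supplies a tautological surjection $s:\tau^*F_{Z/X} \twoheadrightarrow \OO_{\Bl_ZX}(1)$; pulling $s$ back along $k$ and using $k^*\tau^* \simeq g^*i^*$ produces a surjection $\bar s:g^*i^*F_{Z/X} \twoheadrightarrow \NN_{E_{Z/X}/\Bl_ZX}$, which is the desired composite. The map $\alpha:i^*F_{Z/X} \to \NN_{Z/X}$ is the canonical ``derivation'' of the augmentation ideal $F_{Z/X}$, obtained from the cotangent-complex formalism applied to $\OO_X \to i_*\OO_Z$, while $g^*\NN_{Z/X} \to \NN_{E_{Z/X}/\Bl_ZX}$ is the map induced by the commutative square via the relative cotangent complex.

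The main obstacle is the compatibility check: showing that $\bar s$ is homotopic to the composite of $g^*\alpha$ followed by the square-induced conormal map. I plan to verify this locally on the chart $\Spec C$, where both arrows send a homogeneous local section $\sigma \in F_{B/A}$ to the class of $t^{-1}\sigma$ in $C/(t^{-1}f)$: for $\bar s$ this is immediate from the explicit Rees-algebra presentation of Proposition \ref{Prop:ReesForm}, while for the other composite it follows from the standard local description of the conormal of a closed subscheme locally cut out by a single equation. Once this local compatibility is in place, surjectivity of $\bar s$ forces surjectivity of $g^*\NN_{Z/X} \to \NN_{E_{Z/X}/\Bl_ZX}$, completing the verification of condition (3).
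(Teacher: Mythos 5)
Your approach is genuinely different from the paper's. The paper first invokes \cite[Rem.\ 4.1.3.iii]{KhanVirtual} to translate condition (3) into the equivalent requirement that $\OO_{\Bl_{Z\times_XZ/Z}} \to h_*\OO_E$ be surjective on $\pi_1$, and then deduces this by structural induction via Lemma \ref{Lem:StrucInduc}: the base case (finite quotients) admits an explicit section, the filtered-colimit case uses that fibres commute with filtered colimits, and the cell-attachment case is handled by the tensor argument of Remark \ref{Rem:ConnFib}. You instead work directly with the conormal map, using the identification $\NN_{E_{Z/X}/\Bl_ZX}\simeq k^*\OO_{\Bl_ZX}(1)$ and the tautological surjection $s:\tau^*F_{Z/X}\twoheadrightarrow\OO_{\Bl_ZX}(1)$ supplied by the functor-of-points description of $\Proj$. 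This is an attractive alternative: it exploits the machinery already built in \S\ref{Sec:Proj} rather than the cell-by-cell bookkeeping, and the surjectivity of $\bar s$ comes for free from the universal property.

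That said, the burden falls on the factorisation claim, and there you leave genuine work. Two points deserve more care. First, the map $\alpha:i^*F_{Z/X}\to\NN_{Z/X}$ is not automatic in the derived setting; ``the cotangent-complex formalism'' needs to be unwound. A clean route is: tensoring the fibre sequence $F_{Z/X}\to\OO_X\to i_*\OO_Z$ with $\OO_Z$ gives $i^*F_{Z/X}\simeq\fib(\OO_Z\to i^*i_*\OO_Z)$, and since $\OO_Z\to i^*i_*\OO_Z$ is split by the multiplication, this fibre is the desuspension of the augmentation ideal $\II_\Delta=\fib(i^*i_*\OO_Z\to\OO_Z)$; the universal linearisation $\II_\Delta\to\LL_{Z/X}$ then yields $\alpha$ after shifting. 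Second, and more seriously, you assert that $\bar s$ is homotopic to the composite but only verify a $\pi_0$-equality on a chart. For the surjectivity conclusion you actually only need the $\pi_0$-level factorisation, so demanding a homotopy overstates what is required --- but even the $\pi_0$ claim ``both arrows send $\sigma$ to the class of $t^{-1}\sigma$'' conflates several identifications (the trivialisation of $\OO_{\Bl_ZX}(1)$ on $D_+(f)$ by $f$, the generator $t^{-1}f$ of the conormal module, and the degree-shifting in $(\RR^\ext_{Z/X})_{(f)}$) and is not immediate. The cleanest fix is to establish the factorisation by naturality: both $\alpha$ and the conormal map $g^*\NN_{Z/X}\to\NN_{E/\Bl}$ are functorial, and one has the commutative square whose top row $g^*i^*F_{Z/X}\to k^*F_{E/\Bl}$ is induced by the commutative square (\ref{Eq:VCDTS}); since $k^*F_{E/\Bl}\simeq\NN_{E/\Bl}$ is an equivalence (because virtual Cartier divisors are quasi-smooth), one only needs to check that this induced map coincides with $\bar s$, which amounts to tracing the tautological $\G_m$-bundle through the functor-of-points description. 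If you carry out that identification carefully, the argument closes; as written, it is a plausible but incomplete sketch of the key step.
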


\begin{proof}
	It remains to show that $g^*\NN_{Z/X} \to \NN_{E_{Z/X} / \Bl_ZX}$ is surjective on $\pi_1$. With the same argument as in \cite[Rem.\ 4.1.3.iii]{KhanVirtual} --- which also works in the non-quasi-smooth case --- under the facts that we have already shown, it holds that (\ref{Eq:VCDTS}) is a virtual Cartier divisor over $(X,Z)$ if and only if $\OO_{\Bl_{Z\times_X Z/Z}} \to h_* \OO_{E}$ is surjective on $\pi_1$, for the obvious map $h: E \to \Bl_{Z \times_XZ /Z}$.
	
	Say that a closed immersion $Z \to X$ has property $\mathbf{P}$ if the corresponding map $\OO_{\Bl_{Z\times_X Z/Z}} \to h_* \OO_{E}$ is surjective on $\pi_1$. We will verify that $\mathbf{P}$ satisfies the conditions of Lemma \ref{Lem:StrucInduc}, and thus conclude that all closed immersions satisfy $\mathbf{P}$. Clearly, $\mathbf{P}$ is local on $X$ and stable under pullbacks. 
	
	Now let $Z \to X$ be a closed immersion corresponding to a finite quotient $A \to B$, say where $B = A/(\underline{\sigma})$ for certain $\underline{\sigma} = \sigma_1,\dots,\sigma_k$, with $\sigma_i:S^{n_i} \to A$ pointed maps. Let $K$ be the pointed space $S^{n_1} \vee  \dots \vee S^{n_k}$. Put $\A^K \coloneqq \Spec (\Z[K])$. Observe that $Z \to X$ is a pullback of $\{0\} \to \A^K$ along the map $X \to \A^K$ induced by $\underline{\sigma}$. To show that $\mathbf{P}$ holds for $Z \to X$, it thus suffices to show that $\mathbf{P}$ holds for $\{0\} \to \A^K$, by Lemma \ref{Lem:vDivPB}.

	Put $W \coloneqq \Spec (\Z \otimes_{\Z[K]} \Z)$. It suffices to show that $\kappa:R_{W/\{0\}} \to \overline{R^\ext_{\{0\}/\A^K}}$ is surjective on $\pi_1$. By Proposition \ref{Prop:ReesForm}, $\kappa$ is the obvious map
	\begin{align*}
		\big( \Z[\underline{v},t^{-1}]/(\underline{v}t^{-1}) \big)_{\geq 0} \to \Z[\underline{s},\underline{v},t^{-1}]/(\underline{v}t^{-1} - \underline{s},t^{-1})
	\end{align*}
	where $\underline{v} = v_1,\dots,v_k$ and $\underline{s} = s_1,\dots,s_k$ are variables with $v_i, s_i$ free in degree 1 and level $n_i$. Now the map
	\begin{align*}
		  \Z[\underline{s},\underline{v},t^{-1}]/(\underline{v}t^{-1} - \underline{s},t^{-1}) \to \big( \Z[\underline{v},t^{-1}]/(\underline{v}t^{-1}) \big)_{\geq 0}
	\end{align*}
	of $\Z[\underline{v}]$-algebras, defined by $s_i \mapsto v_it^{-1}$ and $t^{-1} \mapsto 0$, is a section of $\kappa$. The claim follows.
	
	Property $\mathbf{P}$ on any closed immersion $\Spec B \to \Spec A$ is equivalent to the property that
	\begin{align*}
		\kappa_{(f)}: (R_{B \otimes_A B/B})_{(f)} \to \left( \overline{R_{B/A}^\ext} \right)_{(f')}
	\end{align*}
	is surjective on $\pi_1$, for any $f \in (R_{B \otimes_A B/B})_1$ and $f' \coloneqq \kappa(f)$. This, in turn, is  equivalent to the fibre of $\kappa_{(f)}$ being connected, for all $f$, because $\kappa_{(f)}$ is an isomorphism on $\pi_0$. From the fact that taking fibres commutes with filtered colimits, it now follows that $\mathbf{P}$ is stable under filterd colimits in $\Sur_A$.
	
	Finally, suppose that $\mathbf{P}$ holds for a closed immersion $Z \to X$ of the form $\Spec B \to \Spec A$, and let $\sigma:S^n \to B$ be given with $n \geq 1$. We need to show that $\mathbf{P}$ holds for $\Spec B' \to \Spec A$, where $B' \coloneqq A/(\sigma)$. 
	
	By twice applying Proposition \ref{Prop:ReesPull}, we see that 
	\begin{align*}
		R_{B' \otimes_A B'/B'} \simeq R_{B \otimes_A B/B} \otimes_B B' \otimes_B B'
	\end{align*}
	Also, since $R^\ext_{(-)/A}$ is a left-adjoint, we see that
	\begin{align*}
		\overline{R_{B'/A}^\ext} \simeq R_{B/A}^\ext \otimes_{R^\ext_{A[S^n]/A}} A
	\end{align*}
	Now let $f \in  (R_{B' \otimes_A B'/B'})_1$, put $f' \coloneqq \kappa(f)$. We need to show that the fibre of $\kappa_{(f)}$ is connected.

	For simplicity, assume that $f$ is of the form $g \otimes \lambda$, with $g \in R_{B \otimes_A B/B}$. By assumption, the map
	$
		(R_{B \otimes_A B/B})_{(g)} \to \left(\overline{R_{B/A}^\ext}\right)_{(g')}
	$
	has a connected fibre, where $g'$ is the image of $g$. Also, the map $B' \otimes_B B' \to B'$ is an isomorphism on $\pi_0$ and is surjective on $\pi_1$, since it has a section, and thus has a connected fibre. It follows that,  upon tensoring these two maps, we obtain the map
	\begin{align*}
		\alpha_{(f)}:(R_{B \otimes_A B / B} \otimes_B B' \otimes_B B' )_{(f)} \to \left( \overline{R^\ext_{B/A}} \otimes_B B' \right)_{(f'')}
	\end{align*}
	which has connected fibre by Remark \ref{Rem:ConnFib}, where $f''$ is the image of $f$.
	
	We can write
	\begin{align*}
		\overline{R_{B/A}^\ext} \otimes_B B' \simeq R^\ext_{B/A} \otimes_{A[S^n, t^{-1}]} A \simeq R^\ext_{A[S^n]/A} \otimes_{A[S^n,t^{-1}]} R^\ext_{B/A} \otimes_{R^\ext_{A[S^n]/A}} A
	\end{align*}
	This description gives us a map 
	\begin{align*}
		\beta: \overline{R_{B/A}^\ext} \otimes_B B' \to  R_{B/A}^\ext \otimes_{R^\ext_{A[S^n]/A}} A\simeq \overline{R^\ext_{B'/A}}
	\end{align*}
	such that $\beta_{(f'')} \circ \alpha_{(f)} \simeq \kappa_{(f)}$. Hence, since the fibre of $\alpha_{(f)}$ is connected, by Remark \ref{Rem:ConnFib}, it suffices to show that so is the fibre of $\beta_{(f'')}$. The latter follows from the fact that $\pi_0(\beta_{(f'')})$ is injective and that $\beta_{(f'')}$ has a section.
\end{proof}

\subsection{The functor of points of $\Bl_ZX$ in the quasi-smooth case}
\begin{Def}
	Write $h_{Z/X}:\Bl_ZX \to \vDiv_ZX$ for the map induced by the virtual Cartier divisor $(\Bl_ZX, E_{Z/X})$ from Proposition \ref{Prop:VCD}.
\end{Def}

\begin{Lem}
	\label{Lem:hZXBC}
	The map $h_{Z/X}$ commutes with base change. That is, for given $X' \to X$ and $Z' \coloneqq Z\times_X X'$, the following diagram 
	\begin{center}
		\begin{tikzcd}
			\Bl_{Z'}X' \arrow[d, "f"] \arrow[r, "h_{Z'/X'}"] & \vDiv_{Z'}X' \arrow[d, "g"] \\
			\Bl_ZX \arrow[r, "h_{Z/X}"] & \vDiv_ZX
		\end{tikzcd}
	\end{center}
	where the vertical arrows are induced by Lemma \ref{Prop:BlBase} and \cite[Thm.\ 4.1.5.2]{KhanVirtual}, commutes.
\end{Lem}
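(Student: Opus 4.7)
The plan is to use the functor-of-points description of $\vDiv_ZX$: both compositions $g \circ h_{Z'/X'}$ and $h_{Z/X} \circ f$ are maps $\Bl_{Z'}X' \to \vDiv_ZX$, and therefore correspond to virtual Cartier divisors on $\Bl_{Z'}X'$ over $(X,Z)$. It suffices to exhibit a natural equivalence between the two virtual Cartier divisors they classify.

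First I would unwind which virtual Cartier divisors are classified. By construction, $h_{Z/X}$ classifies $(\Bl_ZX, E_{Z/X})$, so $h_{Z/X} \circ f$ classifies its pullback along $f:\Bl_{Z'}X' \to \Bl_ZX$, namely $(\Bl_{Z'}X', E_{Z/X} \times_{\Bl_ZX} \Bl_{Z'}X')$, viewed as a virtual Cartier divisor over $(X,Z)$ via the structure map $\Bl_{Z'}X' \to X' \to X$ and the vertical leg $E_{Z/X} \times_{\Bl_ZX} \Bl_{Z'}X' \to E_{Z/X} \to Z$. On the other hand, $h_{Z'/X'}$ classifies $(\Bl_{Z'}X',E_{Z'/X'})$ over $(X',Z')$, and composing with $g$ simply regards it as a virtual Cartier divisor over $(X,Z)$ via the maps $X' \to X$ and $Z' \to Z$.

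The main step is then to produce a natural equivalence
\[
E_{Z/X} \times_{\Bl_ZX} \Bl_{Z'}X' \simeq E_{Z'/X'}
\]
compatible with the structure morphisms down to $\Bl_{Z'}X'$ and to $Z'$. Using Proposition \ref{Prop:BlBase}, we have $\Bl_{Z'}X' \simeq \Bl_ZX \times_X X'$, so by the pasting law for pullbacks
\[
E_{Z/X} \times_{\Bl_ZX} \Bl_{Z'}X' \simeq E_{Z/X} \times_X X' \simeq E_{Z/X} \times_Z Z',
\]
where the second equivalence uses that $E_{Z/X} \to X$ factors through $Z$. Now $E_{Z/X} = \Proj_Z(\overline{\RR^\ext_{Z/X}})$, so by Lemma \ref{Lem:ProjPB} this pullback is $\Proj_{Z'}(\overline{\RR^\ext_{Z/X}}|_{Z'})$, and Corollary \ref{Cor:ConePull} identifies this last graded $\OO_{Z'}$-algebra with $\overline{\RR^\ext_{Z'/X'}}$, yielding the desired $E_{Z'/X'}$.

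Finally, I would check that this equivalence is compatible with the remaining data of the virtual Cartier divisor --- the maps to $\Bl_{Z'}X'$ and $Z'$, and the surjection $\kappa$ from Lemma \ref{Lem:kappa} used to build the square (\ref{Eq:VCDTS}). This is essentially bookkeeping: the map $\kappa$ is defined from the natural quotient $\RR^\ext_{Z/X} \to \overline{\RR^\ext_{Z/X}}$ by adjunction along the closed immersion, and both the Rees algebra and its quotient commute with pullbacks along $X' \to X$ (Proposition \ref{Prop:ReesPull} and Corollary \ref{Cor:ConePull}), so the formation of $\kappa$ itself is natural in base change. The main (minor) obstacle is keeping track of all these naturalities simultaneously so that the two virtual Cartier divisors on $\Bl_{Z'}X'$ are identified as objects of the space $\vDiv_ZX(\Bl_{Z'}X')$, but no new input beyond the base-change results already established is required.
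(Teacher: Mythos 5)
Your proof is correct and takes essentially the same approach as the paper: both compositions classify virtual Cartier divisors over $(X,Z)$, and the key identification $E_{Z/X}\times_{\Bl_ZX}\Bl_{Z'}X'\simeq E_{Z'/X'}$ comes from Proposition \ref{Prop:BlBase}, Lemma \ref{Lem:ProjPB}, and Corollary \ref{Cor:ConePull}. The paper's proof is just a more terse version of the same argument, phrasing both sides as being induced by $(\Bl_{Z'}X',E_{Z'/X'})$ over $(X,Z)$ and citing Corollary \ref{Cor:ConePull} for the $h_{Z/X}\circ f$ case.
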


\begin{proof}
	Both ways around the square are induced by the virtual Cartier divisor $(\Bl_{Z'}X', E_{Z'/X'})$ over $(X,Z)$. The case $g\circ h_{Z'/X'}$ follows from the fact that $g$ is defined by composing a virtual Cartier divisor $(S,D)$ over $(X,Z)$ with the pullback square of $(X',Z')$ over $(X,Z)$. The case $h_{Z/X} \circ f$ follows since $f$ is defined via pulling back, together with Corollary \ref{Cor:ConePull}.
\end{proof}

\begin{Thm}\label{Thm:BlDiv}
	If $Z \to X$ is quasi-smooth, then the map $h_{Z/X}:\Bl_ZX \to \vDiv_ZX$ is an equivalence.
\end{Thm}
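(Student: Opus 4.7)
The plan is to reduce to a single universal case via base change, and then verify the claim there by identifying both sides with the classical blow-up.

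First I would use that both $\Bl_{Z}X$ and $\vDiv_{Z}X$ are compatible with base change in $X$ (Proposition~\ref{Prop:BlBase} and \cite[Thm.\ 4.1.5]{KhanVirtual}), and that $h_{Z/X}$ commutes with such base change (Lemma~\ref{Lem:hZXBC}). Being an equivalence of schemes over $X$ is Zariski-local on $X$, so, since $Z \to X$ is quasi-smooth, we may work locally and assume there is a morphism $\varphi: X \to \A^n$ such that $Z \simeq X \times_{\A^n} \{0\}$. Applying Lemma~\ref{Lem:hZXBC} to this $\varphi$ gives a Cartesian square exhibiting $h_{Z/X}$ as a pullback of $h_i$, where $i: \{0\} \to \A^n$ is the universal zero section. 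It thus suffices to prove that $h_i$ is an equivalence.

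For the universal case, both sides are classical and both coincide with the classical blow-up $\Bl^{\cl}_{\{0\}}\A^n$. On the source, since $(x_1,\ldots,x_n)$ is a regular sequence in $\Z[x_1,\ldots,x_n]$, Example~\ref{Exm:Blcl} together with \S~\ref{Par:ClasRees} gives $\Bl_{\{0\}}\A^n \simeq \Bl^{\cl}_{\{0\}}\A^n$; on the target, a corresponding result from \cite[\S 4.1]{KhanVirtual} identifies $\vDiv_{\{0\}}\A^n$ with $\Bl^{\cl}_{\{0\}}\A^n$ for this regular embedding. To see that $h_i$ is the identity under these identifications, I would check on each chart $D_+(v_k) \subseteq \Bl^{\cl}_{\{0\}}\A^n$ of the Proj cover that the virtual Cartier divisor $(\Bl_{\{0\}}\A^n, E_{\{0\}/\A^n})$ constructed in \S~\ref{Par:univVCD} restricts to the tautological exceptional hyperplane. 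Here we use the identification $E_{\{0\}/\A^n} = \Proj \overline{R^{\ext}_{\{0\}/\A^n}} \simeq \Proj(\Sym^{\cl} \NN^{\cl}_{\{0\}/\A^n})$ from \S~\ref{Par:ClasRees}. Since Khan's identification $\vDiv_{\{0\}}\A^n \simeq \Bl^{\cl}_{\{0\}}\A^n$ is induced by the same tautological virtual Cartier divisor on the classical blow-up, it follows that $h_i$ is the identity.

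The main obstacle will be this last explicit matching in the universal case: verifying that our $E_{\{0\}/\A^n}$ and the virtual Cartier divisor underlying Khan's identification correspond to the same generalized Cartier divisor on $\Bl^{\cl}_{\{0\}}\A^n$. This is a chart-by-chart classical computation, but one must keep track of the conventions (on degrees of $t^{-1}$ and $v_k$, and on the sign of the line bundle twist in the equivalence $\OO_{\Bl_Z X}(1) \simeq \NN^\vee$) to ensure that no discrepancy in the identification creeps in.
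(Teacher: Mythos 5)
Your proposal follows essentially the same route as the paper: reduce by Zariski-locality and base change (via Proposition~\ref{Prop:BlBase}, Lemma~\ref{Lem:hZXBC}, and Khan's compatibility of $\vDiv$ with base change) to the universal case $\{0\} \to \A^n$, then identify both $\Bl_{\{0\}}\A^n$ and $\vDiv_{\{0\}}\A^n$ with the classical blow-up $\Bl^{\cl}_{\{0\}}\A^n$. The paper's own proof is if anything terser --- it simply asserts the composite equivalence $\Bl_{\{0\}}\A^n \simeq \Bl^\cl_{\{0\}}\A^n \simeq \vDiv_{\{0\}}\A^n$ without the chart-by-chart verification you flag at the end, so your extra caution about matching the tautological virtual Cartier divisors is reasonable diligence rather than a divergence in strategy.
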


\begin{proof}
	The question is local on $X$, so we may assume that $Z \to X$ is a base change of the map $\{0\} \to \A^n$ along a map $X \to \A^n$. By Lemma \ref{Lem:hZXBC}, Proposition \ref{Prop:BlBase} and \cite[Thm.\ 4.1.5.ii]{KhanVirtual}, we may assume that $Z \to X$ is of the form $\{0\} \to \A^n$. In this case $h_{Z/X}$ is the equivalence $\Bl_{\{0\}}\A^n \simeq \Bl^\cl_{\{0\}}\A^n \simeq \vDiv_{\{0\}}\A^n$.
\end{proof}

\subsection{Deformation to the normal cone}
\label{Par:DefCZX}
The following is an application of the construction of the blow-up, although it does not use the actual blow-up. The goal is to generalize \cite[Thm.\ 4.1.13]{KhanVirtual}. We return to an arbitrary closed immersion $Z \to X$, and let $\BB$ the $\OO_X$-algebra such that $Z=\Spec \BB$.
\begin{Def}
	The \textit{normal cone} $C_{Z/X}$ of $Z$ in $X$ is the pullback of $D_{Z/X}$ along the zero section $X \to \A^1_X$. 
\end{Def}
Observe that $C_{Z/X} \to D_{Z/X}$ is a virtual Cartier divisor. Since $G_X(\RR^\ext_{Z/X}) \simeq \BB$, the map $\BB \to \overline{ \RR^\ext_{Z/X}}$, that includes $\BB$ as the degree-zero part of $\overline{ \RR^\ext_{Z/X}}$, has a canonical retraction by \S \ref{Par:IrrId}. We call the corresponding map $j_0: Z \to C_{Z/X}$ the \textit{zero-section} of $C_{Z/X}$.

By the proof of Theorem \ref{Thm:ReesRep}, it follows that we have an equivalence
\begin{align}
\label{Eq:MapEquivGlob}
\Alg^\Z(\A^1_X)(\RR^\ext_{Z/X}, \QQ ) \simeq \Alg(X)(\BB,G_X(\QQ))
\end{align}
natural in $\QQ \in \Alg^\Z(\A^1_X)$. Let $j:\A^1_Z \to D_{Z/X}$ be the canonical map corresponding to the identity on $\BB$ via the equivalence (\ref{Eq:MapEquivGlob}). Write $p:D_{Z/X} \to \A^1_X$ for the canonical projection. We call the sequence
\begin{align}
\label{Eq:DefNC}
\A^1_Z \xrightarrow{j} D_{Z/X} \xrightarrow{p} \A^1_X
\end{align}
the \textit{deformation to the normal cone}.

\begin{Prop}
	\label{Prop:DefNC}
	The deformation to the normal cone satisfies the following properties:
	\begin{enumerate}
		\item It is stable under base change.
		\item The map $j$ is a closed immersion.
		\item Restricting to $\G_{m}$ gives us the canonical sequence $\G_{m,Z} \to \G_{m,X} \to \G_{m,X}$.
		\item Restricting to $0$ gives us $Z \xrightarrow{j_0} C_{Z/X} \xrightarrow{p_0} X$, where $j_0$ is the zero-section and $p_0$ the projection.
	\end{enumerate}
\end{Prop}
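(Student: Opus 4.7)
The plan is to deduce the four claims in sequence, with part (1) as the foundation for (3) and (4), while (2) will require a separate local analysis of the graded algebra map associated to $j$.

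For (1), Proposition \ref{Prop:ReesPull} gives base change compatibility for $D_{Z/X}$, and in particular for the projection $p$. The map $j$ is defined via the universal property (\ref{Eq:MapEquivGlob}) applied to $\id_\BB$; since that equivalence is built from Weil restrictions (Lemma \ref{Lem:WeilPull}) and the functor $G_X$, both of which behave well under pullback, the assignment sending $\id_\BB$ to $j$ also commutes with any base change $X' \to X$.

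For (3), pull back the defining square (\ref{Eq:DefDZX}) along $\G_{m,X} \to \A^1_X$. This map factors through the open inclusion $X \hookrightarrow [\A^1_X/\G_{m,X}]$. By Lemma \ref{Lem:WeilPull}, the pullback of $\Res_{\zeta_X}([Z/\G_{m,X}])$ along this open inclusion is the Weil restriction along the pullback of $\zeta_X$ there, which is $\emptyset \to X$ since the zero section $B\G_{m,X}$ and the open stratum $X$ are disjoint inside $[\A^1_X/\G_{m,X}]$. The Weil restriction along $\emptyset \to X$ is $X$ itself, so $D_{Z/X} \times_{\A^1_X} \G_{m,X} \simeq \G_{m,X}$ with $p$ restricting to the identity. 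The restricted $j$ is then a map $\G_{m,Z} \to \G_{m,X}$ over $\G_{m,X}$ (in the sense that the target is $\G_{m,X}$ via the identity), and the only such map is the projection $\G_{m,Z} = Z \times_X \G_{m,X} \to \G_{m,X}$, which is the base change of $Z \to X$ as claimed. For (4), the pullback of $D_{Z/X}$ along the zero section is $C_{Z/X}$ by definition, and $p$ restricts to the projection $p_0$. The restriction of $j$ corresponds to a graded algebra map $\overline{\varphi}: \overline{\RR^\ext_{Z/X}} \to \BB$. Because $\varphi$ corresponds to $\id_\BB$ under (\ref{Eq:MapEquivGlob}), we have $G_X(\varphi) = \id_\BB$; since $G_X$ is the degree-zero part after modding out $t^{-1}$, this forces $(\overline{\varphi})_0 = \id_\BB$, i.e.,\ $\overline{\varphi}$ is a retraction of the degree-zero inclusion $\BB \hookrightarrow \overline{\RR^\ext_{Z/X}}$. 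The retraction defining $j_0$ in \S \ref{Par:IrrId} is characterized by exactly this condition, so $\overline{\varphi}$ agrees with that retraction and the restricted $j$ equals $j_0$.

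The main obstacle is part (2). The question is local on $X$, so we reduce to a closed immersion $\Spec B \to \Spec A$ and consider the graded $A[t^{-1}]$-algebra map $\varphi: R^\ext_{B/A} \to B[t^{-1}]$ corresponding to $\id_B$. We must show $\varphi$ is surjective on $\pi_0$, and we verify this degree by degree. Since $B[t^{-1}]$ has $t^{-1}$ in degree $-1$, we have $(B[t^{-1}])_d = 0$ for $d > 0$, making surjectivity automatic there. For $d \leq 0$, the result in \S \ref{Par:ReesAdj} gives that $A[t^{-1}] \to R^\ext_{B/A}$ is an equivalence in degrees $\leq 0$, and $A[t^{-1}]$-linearity of $\varphi$ forces $\varphi_d$ to be the structure map $A \to B$ tensored with $t^d$. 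The latter is surjective on $\pi_0$ because $A \to B$ is, so $\varphi$ is $\pi_0$-surjective in every degree, and hence $j$ is a closed immersion.
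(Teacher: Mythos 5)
Your proof is correct and follows essentially the same route as the paper's: base change from Proposition \ref{Prop:ReesPull}, surjectivity of $\RR^\ext_{Z/X}\to\BB[t^{-1}]$ checked by splitting into degrees $>0$ (target zero) and $\leq 0$ (structure map), the $\G_m$-restriction computed via Lemma \ref{Lem:WeilPull} and $\Res_{\emptyset/X}(\emptyset)\simeq X$, and the $0$-restriction by reducing mod $t^{-1}$. Your treatment of (4) is slightly more explicit than the paper's — identifying $\overline{\varphi}$ with the retraction $\pi$ from \S\ref{Par:IrrId} via the adjunction $i^!\dashv i_!$ and the fact that a graded map into a trivially graded algebra is determined by its degree-$0$ component — where the paper simply says "quotient out $t^{-1}$".
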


\begin{proof}
	The first point is Corollary \ref{Cor:ConePull}.
	
	Observe that the sequence (\ref{Eq:DefNC}) corresponds to a sequence
	\begin{align}
	\label{Eq:AffDefNC}
	\OO_X[t^{-1}] \to \RR^\ext_{Z/X} \to \BB[t^{-1}]
	\end{align} 
	in $\Alg^\Z(X)$. In degrees $\leq 0$, this is $\OO_X[t^{-1}] \to \OO_X[t^{-1}] \to \BB[t^{-1}]$. The second map is clearly surjective, as this can be checked locally on $X$. Thus $j$ is a closed immersion.
	
	For the restriction to $\G_{m}$, let $\G_{m,Z} \to U \to \G_{m,X}$ be the pullback of (\ref{Eq:DefNC}) to $\G_{m,X}$. Define
	\begin{align*}
	W \coloneqq \Res_{\zeta_X}([Z/\G_{m,X}]) \times_{[\A^1_X/\G_{m,X}]}  [\G_{m,X} / \G_{m,X}] 
	\end{align*}
	 By construction of $D_{Z/X}$ and the pullback lemma, we have a Cartesian square
	 \begin{center}
	 	\begin{tikzcd}
	 		U \arrow[r] \arrow[d, "k"] & W \arrow[d, "h"] \\
	 		\G_{m,X} \arrow[r] & {[\G_{m,X}/\G_{m,X}]}
	 	\end{tikzcd}
	 \end{center}
 	which is the pullback of the Cartesian square (\ref{Eq:DefDZX}) that defines $D_{Z/X}$ along $\G_{m,X} \to \A^1_X$. We will show that $h \simeq \id_X$, so that $k\simeq \id_{\G_{m,X}}$.
 	
 	We know that $[\G_{m,X}/\G_{m,X}] \simeq X$. Observe that we have pullback squares
 	\begin{center}
 		\begin{tikzcd}
	 		\varnothing \arrow[r] \arrow[d] & \varnothing \arrow[d] \arrow[r] & {[\G_{m,X}/\G_{m,X}]} \arrow[d] \\
	 		{[Z/\G_{m,X}]} \arrow[r] & B\G_{m,X} \arrow[r] & {[\A^1_X/\G_{m,X}]}
 		\end{tikzcd}
 	\end{center}
 	By Lemma \ref{Lem:WeilPull}, it thus holds that $W \simeq \Res_{\varnothing/X}(\varnothing)$. As a right adjoint, $\Res_{\varnothing/X}$ preserves terminal objects, hence sends $\varnothing$ to $X$. The claim follows.
 	
 	For the last point, we may simply quotient out $t^{-1}$ in the sequence (\ref{Eq:AffDefNC}) to obtain $\OO_X \to \overline{ \RR^\ext_{Z/X}} \to \BB$.
\end{proof}

\begin{Lem}
	\label{Lem:DZCXasVCD}
	The commutative square
	\begin{equation}
	\label{Eq:DZCXasVCD}
	\begin{tikzcd}
	C_{Z/X} \arrow[r] \arrow[d] & D_{Z/X} \arrow[d] \\
	Z \times \{0\} \arrow[r] & \A^1_X
	\end{tikzcd}
	\end{equation}
	 is a virtual Cartier divisor over $Z \times \{0\} \to \A^1_X$.
\end{Lem}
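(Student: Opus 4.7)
The plan is to verify in turn the three conditions of Definition \ref{Def:VCD} for the given square. All claims are local on $X$, so I reduce throughout to $Z = \Spec B$, $X = \Spec A$.

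For condition (1), that $C_{Z/X} \to D_{Z/X}$ is a virtual Cartier divisor: by construction $C_{Z/X}$ is the derived pullback of $D_{Z/X}$ along the zero section $X \to \A^1_X$. Zariski-locally the zero section is $\Spec A \hookrightarrow \Spec A[t^{-1}]$, cut out by $t^{-1}$, so it is a virtual Cartier divisor, and since this property is stable under base change the same holds for $C_{Z/X} \to D_{Z/X}$, cut out locally by the image of $t^{-1}$ in $R^\ext_{B/A}$.

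For condition (2), classical Cartesianness: write $I = \ker(\pi_0 A \to \pi_0 B)$. The underived classical pullback is locally $\Spec(\pi_0 R^\ext_{B/A}/(t^{-1},I))$, while the fibre sequence $R^\ext_{B/A} \xrightarrow{\times t^{-1}} R^\ext_{B/A} \to \overline{R^\ext_{B/A}}$ identifies $\pi_0 C_{Z/X}$ locally with $\pi_0 R^\ext_{B/A}/(t^{-1})$ (classical quotient). By Remark \ref{Rem:EssFibSeq} the fibre of $A \to B$ is $(R_{B/A})_1$, so on $\pi_0$ we have $I = t^{-1}\cdot \pi_0(R_{B/A})_1 \subseteq t^{-1}\cdot \pi_0 R^\ext_{B/A}$, giving the identification of the two quotients.

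For condition (3), conormal surjectivity: imitating the argument from the proof of Proposition \ref{Prop:VCD} (based on \cite[Rem.\ 4.1.3.iii]{KhanVirtual}), once (1) and (2) hold, the condition reduces to showing that $\OO_{D_{Z/X}\times_{\A^1_X} Z} \to h_*\OO_{C_{Z/X}}$ is surjective on $\pi_1$, for $h \colon C_{Z/X} \to D_{Z/X}\times_{\A^1_X} Z$ the natural comparison map. Affine-locally this map is the multiplication map $\overline{R^\ext_{B/A}} \otimes_A B \to \overline{R^\ext_{B/A}}$, where $B$ acts via the inclusion as the degree-zero part; it has an obvious section $r \mapsto r\otimes 1$, so it is surjective on all homotopy groups.

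The main obstacle is step (2), specifically the identification $I = t^{-1}\cdot \pi_0(R_{B/A})_1$ that reconciles the classical pullback with $\pi_0 C_{Z/X}$; this rests on the fibre sequence highlighted in Remark \ref{Rem:EssFibSeq}. The remaining two steps are essentially formal consequences of the construction of $D_{Z/X}$ by Weil restriction together with the equivalence $G_A(R^\ext_{B/A}) \simeq B$.
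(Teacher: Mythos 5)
Your argument follows the same route as the paper's proof: reduce to the affine case $\Spec B \to \Spec A$, observe that $C_{Z/X} \to D_{Z/X}$ is a virtual Cartier divisor and the square is classically Cartesian, and then use \cite[Rem.\ 4.1.3.iii]{KhanVirtual} to reduce condition (3) to $\pi_1$-surjectivity of $\overline{R^\ext_{B/A}} \otimes_A B \to \overline{R^\ext_{B/A}}$, which has the section $r \mapsto r \otimes 1$. The only difference is that you spell out the verification of classical Cartesianness via $I = t^{-1}\cdot \pi_0(R_{B/A})_1$, whereas the paper asserts that step as clear; this is a correct filling-in of a gap rather than a different approach.
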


\begin{proof}
	The statement is local on $X$, so we may assume that $Z \to X$ is of the form $\Spec B \to \Spec A$. It is clear that (\ref{Eq:DZCXasVCD}) is classically Cartesian on the underlying classical schemes, and we already know that $C_{Z/X} \to D_{Z/X}$ is a virtual Cartier divisor. Write $F \coloneqq \overline{ R^\ext_{B/A}} \otimes_A B$. By \cite[Rem.\ 4.1.3.iii]{KhanVirtual}, it remains to show that the map $F \to \overline{ R^\ext_{B/A}}$ induced by (\ref{Eq:DZCXasVCD}) is surjective on $\pi_1$. This follows from the fact that the composition $
	\overline{ R^\ext_{B/A}} \to F \to \overline{ R^\ext_{B/A}}
	$
	is the identity.
\end{proof}

\begin{Prop}
	In the case that $Z \to X$ is quasi-smooth, we recover the deformation to the normal cone from \cite[Thm.\ 4.1.13]{KhanVirtual}. In particular, in this case we have that $C_{Z/X}$ is equivalent to the normal bundle $\Spec \Sym \NN_{Z/X}$.
\end{Prop}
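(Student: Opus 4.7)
The main substance of the proposition is the identification $C_{Z/X} \simeq \Spec \Sym \NN_{Z/X}$ in the quasi-smooth case; once this is established, the comparison with Khan's deformation sequence will be essentially forced by the characterizing properties listed in Proposition~\ref{Prop:DefNC}. The plan is to first prove the normal-bundle identification by reducing to a universal local model, and then to use it to match up the two deformation sequences.

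For the normal-bundle identification, both $C_{Z/X}$ and $\Spec \Sym \NN_{Z/X}$ commute with base change in $X$: the former by Corollary~\ref{Cor:ConePull}, the latter by the base-change property of the cotangent complex. Since a quasi-smooth closed immersion is, locally on $X$, a pullback of $\{0\} \to \A^n_\Z$, it suffices to treat this universal case. Here $A = \Z[x_1,\dots,x_n]$ and $B = \Z$, so $B = A/(\underline{x})$ is the quotient by a regular sequence, and Proposition~\ref{Prop:ReesForm} gives
\begin{align*}
R_{B/A}^\ext \simeq A[v_1,\dots,v_n,t^{-1}]/(v_it^{-1}-x_i).
\end{align*}
By \S\ref{Par:ClasRees} this is discrete and agrees with the classical extended Rees algebra of $(\underline{x})$; quotienting further by $t^{-1}$ forces each $x_i$ to vanish and yields $\overline{R^\ext_{B/A}} \simeq \Z[v_1,\dots,v_n]$. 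Since $\NN_{B/A}$ is the free $B$-module of rank $n$ dual to $\underline{x}$, this is precisely $\Sym \NN_{B/A}$, and the global quasi-smooth case follows by the base-change reduction.

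For the comparison with Khan's deformation, recall from Proposition~\ref{Prop:DefNC} that our sequence $\A^1_Z \xrightarrow{j} D_{Z/X} \xrightarrow{p} \A^1_X$ is stable under base change, has $j$ a closed immersion, restricts over $\G_m$ to the canonical sequence $\G_{m,Z}\to\G_{m,X}\to\G_{m,X}$, and restricts over $\{0\}$ to $Z \to C_{Z/X} \to X$. By the first step, this last restriction is exactly Khan's zero-fibre $Z \to \Spec\Sym\NN_{Z/X} \to X$ in the quasi-smooth case. To produce an actual equivalence with Khan's deformation space, the natural strategy is to exploit the universal property~(\ref{Eq:FundComp}): Khan's deformation is a $\G_{m,X}$-equivariant scheme over $\A^1_X$ whose pullback along the zero section factors through $Z$, which transposes across the Weil-restriction adjunction defining $D_{Z/X}$ into a canonical equivariant morphism from Khan's deformation to $D_{Z/X}$ over $\A^1_X$.

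I expect the main obstacle to be the verification that this canonical morphism is an equivalence. The compatibilities over $\G_m$ and $\{0\}$ (where both sides are explicitly identified) only control the two geometric fibres, and lifting this to an equivalence over all of $\A^1_X$ will require reducing once more by base change to the universal case $\{0\} \to \A^n_\Z$. There both deformation spaces become totally explicit --- ours as computed above, and Khan's recognizable as the same classical affine scheme $\Spec\Z[\underline{x},\underline{v},t^{-1}]/(v_it^{-1}-x_i)$ through the construction in \cite[\S 4.1]{KhanVirtual} --- so that the comparison becomes manifest and the general case follows by pullback.
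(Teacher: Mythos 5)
Your proposal is correct but takes a genuinely different route from the paper. Where the paper constructs a single comparison map $h\colon D_{Z/X}\to\Bl_{Z\times\{0\}}(X\times\A^1)$ by feeding the virtual Cartier divisor from Lemma~\ref{Lem:DZCXasVCD} into the $\vDiv$-universal property of the blow-up (i.e.\ into Theorem~\ref{Thm:BlDiv}), you instead transpose Khan's deformation space across the Weil-restriction adjunction~(\ref{Eq:FundComp}) defining $D_{Z/X}$, producing a map in the opposite direction. Both arguments then reduce to the universal local model $\{0\}\to\A^n$ by Zariski-locality and base change (Proposition~\ref{Prop:ReesPull}, Proposition~\ref{Prop:BlBase}) and conclude by matching up with the classical picture, the paper citing Fulton~\S 5.1 together with~\S\ref{Par:ReesClas}. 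A further difference in organization: you establish $C_{Z/X}\simeq\Spec\Sym\NN_{Z/X}$ up front by a direct computation $\overline{R^\ext_{B/A}}\simeq B[\underline{v}]\simeq\Sym_B\NN_{B/A}$ in the local case, whereas the paper obtains it as a byproduct of the full identification $D_{Z/X}\simeq B_{Z/X}$ via Khan's Theorem~4.1.13. The paper's route is a little more economical because it reuses machinery already in place (the VCD on $D_{Z/X}$, the functor-of-points description of the blow-up), while your route is more self-contained relative to the Weil-restriction definition of $D_{Z/X}$ but does require --- as you flag --- tracing the canonical transposed map through the local model to see that it is indeed the evident identification; this is the counterpart of the paper's appeal to Fulton for the classical deformation space, and should go through without difficulty.
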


\begin{proof}
	Let $B_{Z/X}$ be $\Bl_{Z \times \{0\}}(X \times \A^1) \setminus \Bl_{Z \times \{0\}} (X \times \{0\})$. We will give an equivalence $D_{Z/X} \simeq B_{Z/X}$ over $\A^1_X$, which suffices by \cite[Thm.\ 4.1.13]{KhanVirtual} and Proposition \ref{Prop:DefNC}.
	
	By the universal property of the blow-up, we have a map 
	\begin{align*}
	h:D_{Z/X} \to \Bl_{Z \times \{0\}}(X \times \A^1)
	\end{align*}
	over $\A^1_X$ induced by the virtual Cartier divisor on $D_{Z/X}$ over $(\A^1_X,Z\times \{0\})$ from Lemma \ref{Lem:DZCXasVCD}. We claim that $h$ induces an equivalence $D_{Z/X} \to B_{Z/X}$.
	
	The statement is local on $X$ and stable under base change. Hence, we may assume that $Z \to X$ is of the form $\{0\} \to \A^n$. In this case, it follows from the fact that for a surjection $A \to B$ of discrete rings, the deformation space $\Bl^\cl_{Z \times \{0\}}(X \times \A^1) \setminus \Bl^\cl_{Z \times \{0\}}(X \times \{0\})$ is isomorphic to the spectrum of the classical extended Rees algebra via the map $h$, see e.g.\ \cite[\S 5.1]{FultonIntersection}. In our case, these classical constructions coincide with the derived construction by \S \ref{Par:ClasRees}.	
\end{proof}

\appendix

\section{$M$-graded $R$-algebras as product-preserving presheaves}
\label{Par:AlgMRpres}
Fix a ring $R$ and a commutative monoid $M$. We present a proof of the fact that $\Alg^M_R \simeq \PPP_\Sigma(\Poly^M_R)$, as announced in \S \ref{Par:AlgMR}. We keep the notation from \S \ref{Par:AlgMR}. Our situation can then be summarized in the following commutative diagram:
\begin{equation}
	\label{Eq:DEdiag}
	\begin{tikzcd}
		\Poly \arrow[r, "\epsilon"] \arrow[d, "\delta"] & \Poly^M \arrow[d, "e"] \arrow[ddr, "E", bend left] \\
		\Poly_R \arrow[r, "d"] \arrow[drr, "D", bend right] & \Poly^M_R \arrow[dr, "h"] \\
		&& \Alg^M_R
	\end{tikzcd}
\end{equation}
consisting of functors which preserve finite coproducts, where $e$ and $E$ send $\Z[x(d)]$ to $R[x(d)]$, $d$ and $D$ send $R[x]$ to $R[x(0)]$, $h$ is the inclusion, $\epsilon$ sends $Z[x]$ to $Z[x(0)]$, and $\delta$ sends $\Z[x]$ to $R[x]$.

Recall that for any category $\CCC$ with a full subcategory $\CCC_0 \subset \CCC$, we have a restricted Yoneda functor $\CCC \to \PPP(\CCC_0)$, given by composing the Yoneda embedding $\CCC \to \PPP(\CCC)$ with the restriction functor $\PPP(\CCC) \to \PPP(\CCC_0)$. Since representable functors preserve limits, we in fact get a map $\CCC \to \PPP_\Sigma(\CCC_0)$. Applying this to $\Poly^M_R \subset \Alg^M_R$ gives us a functor $\psi: \Alg^M_R \to \PPP_{\Sigma}(\Poly^M_R)$ which sends $B \in \Alg^M_R$ to
\begin{align*}
	\psi(B): (\Poly^M_R)^\op &\to \Space \\ R[x(d_1),\dots,x(d_n)] &\mapsto B_{d_1} \times \dots \times B_{d_n}
\end{align*}

Let $\varphi$ be the colimit-preserving map $\PPP_\Sigma(\Poly^M_R) \to \Alg^M_R$ induced by the inclusion $\Poly^M_R \subset \Alg^M_R$ via \cite[Prop.\ 5.5.8.15]{LurieHTT}. We will show that $\varphi$ is a two-sided inverse of $\psi$.

\begin{Lem}
	\label{psiphiid}
	The composition $\psi\varphi$ is equivalent to the identity on $\PPP_\Sigma(\Poly_R)$.
\end{Lem}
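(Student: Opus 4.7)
The plan is to reduce the statement to the subcategory $\Poly^M_R$ by invoking the universal property of $\PPP_\Sigma$ together with the fact, already established in Lemma \ref{Lem:PolMRcompproj}, that objects of $\Poly^M_R$ are compact projective in $\Alg^M_R$.

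First I would recall that, by \cite[Prop.\ 5.5.8.22]{LurieHTT}, every object of $\PPP_\Sigma(\Poly^M_R)$ is canonically a sifted colimit of representables. So if I can show that both $\psi\varphi$ and $\id_{\PPP_\Sigma(\Poly^M_R)}$ preserve sifted colimits and agree on the image of the Yoneda embedding $y:\Poly^M_R \to \PPP_\Sigma(\Poly^M_R)$, then they are naturally equivalent.

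Preservation of sifted colimits is the key step. The functor $\varphi$ preserves all small colimits by construction. For $\psi$, note that $\psi(B)$ sends $P \in \Poly^M_R$ to $\Alg^M_R(P,B)$. Sifted colimits in $\PPP_\Sigma(\Poly^M_R)$ are computed pointwise by \cite[Prop.\ 5.5.8.10]{LurieHTT}, and by Lemma \ref{Lem:PolMRcompproj} each $P \in \Poly^M_R$ is compact projective in $\Alg^M_R$, so $\Alg^M_R(P,-)$ preserves sifted colimits. Therefore $\psi$ preserves sifted colimits, and hence so does $\psi\varphi$.

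It remains to check $\psi\varphi \circ y \simeq y$. By the construction of $\varphi$ as the colimit-preserving extension of the inclusion $h:\Poly^M_R \hookrightarrow \Alg^M_R$, we have $\varphi \circ y \simeq h$. Consequently $\psi\varphi \circ y \simeq \psi \circ h$, and for $P \in \Poly^M_R$ this is the presheaf $Q \mapsto \Alg^M_R(Q,P)$, i.e.\ $y(P)$ itself. The only subtle point --- which I would flag as the sole obstacle worth double-checking --- is making sure that the natural transformations implementing $\varphi \circ y \simeq h$ and $\psi \circ h \simeq y$ fit into a coherent equivalence $\psi\varphi \circ y \simeq y$ of functors (rather than merely objectwise equivalences), but this is automatic from the fact that both $\psi\varphi \circ y$ and $y$ are obtained by restricted Yoneda applied to a common fully faithful inclusion.
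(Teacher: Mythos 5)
Your argument is essentially the same as the paper's: both deduce that $\psi$ (hence $\psi\varphi$) preserves sifted colimits from Lemma \ref{Lem:PolMRcompproj}, observe that $\psi\varphi$ restricts to the identity on $\Poly^M_R$, and conclude by the universal property of $\PPP_\Sigma$ (the paper cites \cite[Prop.\ 5.5.8.15]{LurieHTT}). Your more explicit unpacking of why this suffices, and the coherence remark, are fine but not a different route.
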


\begin{proof}
	Observe that $\psi$ preserves sifted colimits by Lemma \ref{Lem:PolMRcompproj}. Therefore, $\psi\varphi$ preserves sifted colimits. Since $\psi\varphi$ is the identity on $\Poly_R^M$, it follows that $\psi\varphi$ is the identity on all of $\PPP_\Sigma(\Poly_R^M)$ by \cite[Prop.\ 5.5.8.15]{LurieHTT}.
\end{proof}

\begin{Lem}
	\label{Lem:phipsiM0}
	In the case $M=\{0\}$, the functor $\varphi:\PPP_\Sigma(\Poly_R) \to \Alg_R$ is a two-sided inverse of $\psi$.
\end{Lem}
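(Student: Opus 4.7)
The plan is to apply \cite[Prop.\ 5.5.8.22]{LurieHTT}, which characterizes presentable $\infty$-categories of the form $\PPP_\Sigma(\CCC_0)$ as those admitting a small full subcategory $\CCC_0$ of compact projective objects that generates under sifted colimits; the equivalence is then given by the restricted Yoneda functor. Applied to the inclusion $\Poly_R \subset \Alg_R$ in the case $M=\{0\}$, the compact-projective hypothesis is Lemma~\ref{Lem:PolMRcompproj}, and the conclusion is exactly that $\psi$ and $\varphi$ are mutual inverses. The only remaining task is therefore to show that $\Poly_R$ generates $\Alg_R$ under sifted colimits.

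To prove this generation, I would invoke the monadicity of the free-forgetful adjunction
\begin{align*}
F \coloneqq R \otimes_\Z (-) : \Alg \rightleftarrows \Alg_R : U,
\end{align*}
where $U$ is the projection from the coslice $\Alg_R = \Alg_{R/}$. The Barr--Beck--Lurie theorem applies: $U$ is conservative since equivalences in a coslice are detected on the underlying objects, and $U$ preserves $U$-split geometric realizations because the projection $\CCC_{c/} \to \CCC$ preserves all connected colimits (and $\bDelta^\op$ is connected). Consequently, every $A \in \Alg_R$ is the geometric realization of its canonical bar resolution by free algebras of the form $F(B) = R \otimes_\Z B$ with $B \in \Alg$.

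I would then combine this with the defining presentation $\Alg = \PPP_\Sigma(\Poly)$: every $B \in \Alg$ is a sifted colimit of polynomial $\Z$-algebras $\Z[x_1,\ldots,x_n]$. Since $F$ preserves all colimits (as a left adjoint) and sends $\Z[x_1,\ldots,x_n]$ to $R[x_1,\ldots,x_n] \in \Poly_R$, each $F(B)$ is itself a sifted colimit of objects of $\Poly_R$. Composing the two resolutions exhibits any $A \in \Alg_R$ as a (nested) sifted colimit of polynomial $R$-algebras, providing the required generation.

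The main step requiring care is the verification of the Barr--Beck--Lurie hypotheses, but both follow immediately from the general behaviour of coslice projections. The nesting of two sifted colimits is then harmless, since the sifted colimit closure of $\Poly_R$ in $\Alg_R$ is by construction closed under iteration of sifted colimits, so the combined presentation still places $A$ in that closure.
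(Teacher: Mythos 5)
Your proof is correct, but it takes a genuinely different route from the paper's. Both arguments ultimately invoke an HTT \S 5.5.8 recognition principle (``compact projective objects closed under finite coproducts that generate under sifted colimits give $\PPP_\Sigma$'') together with Barr--Beck--Lurie, but the monadic base is different. The paper works over $\cn\Mod_R$: it uses the adjunction $\Sym_R \dashv G: \cn\Mod_R \rightleftarrows \Alg_R$, cites \cite[Prop.\ 7.1.4.12]{LurieHA} to transport the compact projective generator $R$ of $\cn\Mod_R$ (from \cite[7.1.4.15]{LurieHA}) to a compact projective generator $R[x]$ of $\Alg_R$, and then applies \cite[Prop.\ 5.5.8.25]{LurieHTT}. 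You instead work over the absolute category $\Alg$: the coslice free--forgetful adjunction $R\otimes_\Z(-) \dashv U: \Alg \rightleftarrows \Alg_R$ is monadic (conservativity is trivial, and the coslice projection preserves all weakly contractible colimits, in particular geometric realizations), the bar resolution exhibits each $A\in\Alg_R$ as a geometric realization of free algebras $F(B_n)$, and the defining presentation $\Alg = \PPP_\Sigma(\Poly)$ together with cocontinuity of $F$ lets you resolve each $F(B_n)$ by objects of $\Poly_R$, after which closure of the sifted-colimit hull under further sifted colimits finishes the argument. Your route has the virtue of being more self-contained relative to the paper's own setup (the key input, $\Alg=\PPP_\Sigma(\Poly)$, is definitional here), whereas the paper's is shorter and leans on the pre-packaged transfer statement \cite[Prop.\ 7.1.4.12]{LurieHA} and the structure theory of $\cn\Mod_R$. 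One cosmetic remark: the standard condition for a coslice projection to preserve $K$-indexed colimits is that $K$ be weakly contractible rather than merely connected, though for $K=\bDelta^\op$ (which has an initial object) both hold, so no gap results.
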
	

\begin{proof}
	Consider the adjunction
	\begin{align*}
		\Sym : \cn\Mod_R \rightleftarrows \Alg_R  : G
	\end{align*} 
	from \cite[\S 25.2.2]{LurieSpectral}, where $G$ is the forgetful functor. Then $G$ preserves sifted colimits by the fact that $R[x]$ is compact projective and since $\Alg_R \to \Space$ factors through the conservative functor $\cn\Mod_R \to \Space$ via $G$. We can thus apply \cite[Prop. 7.1.4.12]{LurieHA} to this adjunction. Now since $R$ is a compact projective generator of $\cn\Mod_R$ by \cite[7.1.4.15]{LurieHA}, the object $R[x]$ is thus a compact projective generator of $\Alg_R$ by \cite[Prop. 7.1.4.12]{LurieHA}.
	
	By \cite[Prop.\ 5.5.8.25]{LurieHTT}, it now follows that $\varphi$ is an equivalence. By Lemma \ref{psiphiid} it has two-sided inverse $\psi$.
\end{proof}

\begin{Prop}
	\label{Prop:AlgMRYo}
	The functor $\psi:\Alg^M_R \to \PPP_\Sigma(\Poly^M_R)$ is an equivalence of categories with inverse $\varphi$.
\end{Prop}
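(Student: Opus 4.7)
The plan is to exhibit $\varphi \dashv \psi$ as an adjunction whose unit and counit are both equivalences. First, since by Remark \ref{Rem:AlgRMcoslice} the category $\Alg^M_R$ is the coslice $\Alg^M_{R/}$ of a presentable category, it is itself presentable; in particular it admits all small colimits and $\varphi$ (being a cocontinuous functor out of $\PPP_\Sigma(\Poly^M_R)$) admits a right adjoint. By uniqueness of adjoints together with the Yoneda lemma, this right adjoint is the restricted Yoneda $\psi$. Lemma \ref{psiphiid} identifies the unit $\eta : \id \to \psi\varphi$ as an equivalence, so $\varphi$ is fully faithful; it remains to show the counit $\varepsilon : \varphi\psi \to \id_{\Alg^M_R}$ is an equivalence.

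The key step is to observe that the family $\{R[x(d)] : d \in M\} \subseteq \Poly^M_R$ jointly detects equivalences in $\Alg^M_R$. Indeed, by Remark \ref{Rem:AlgRMcoslice}, a morphism in $\Alg^M_R = \Alg^M_{R/}$ is an equivalence iff the underlying morphism in $\Alg^M = \PPP_\Sigma(\Poly^M)$ is. Equivalences in the latter are detected pointwise on $\Poly^M$, hence (since $\Poly^M$ is generated under finite coproducts by the $\Z[x(d)]$, and product-preserving presheaves turn these into products of spaces) detected by the values at the $\Z[x(d)]$. Finally, the coproduct decomposition $R[x(d)] \simeq R \sqcup \Z[x(d)]$ in $\Alg^M$ identifies the coslice mapping space $\Alg^M_R(R[x(d)], B)$ with $\Alg^M(\Z[x(d)], B) \simeq B_d$, yielding the joint conservativity claim.

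With this in hand, for any $C \in \Alg^M_R$, the triangle identity
\[
\psi(\varepsilon_C) \circ \eta_{\psi(C)} \simeq \id_{\psi(C)}
\]
together with the fact that $\eta_{\psi(C)}$ is an equivalence (Lemma \ref{psiphiid}) forces $\psi(\varepsilon_C)$ to be an equivalence in $\PPP_\Sigma(\Poly^M_R)$. Evaluating at $R[x(d)]$ gives that $\Alg^M_R(R[x(d)], \varepsilon_C)$ is an equivalence for every $d \in M$, so by joint conservativity $\varepsilon_C$ is itself an equivalence. Hence $\psi$ and $\varphi$ are mutually inverse equivalences.

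The only mildly delicate point is the cocompleteness/accessibility of $\Alg^M_R$ needed to produce the right adjoint $\psi$ abstractly; once this is granted, the whole argument reduces to the formal interplay between Lemma \ref{psiphiid}, the triangle identities, and joint conservativity of $\{R[x(d)]\}_{d \in M}$, which is itself an easy unwinding of the coslice description.
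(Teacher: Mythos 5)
Your proposal is correct but takes a genuinely different route. The paper's proof exploits the fibre-product presentation $\Alg^M_R \simeq \Alg^M \times_{\Alg} \Alg_R$ and checks that $\varphi\psi$ agrees with the identity after applying each of the two projections $p$ and $q$, the $q$-direction invoking Lemma~\ref{Lem:phipsiM0} (the $M=\{0\}$ case), which itself rests on the free/forgetful monadicity input from \cite{LurieHA}. You instead exhibit $\varphi\dashv\psi$ directly, extract full faithfulness of $\varphi$ from Lemma~\ref{psiphiid}, and then show $\psi$ is conservative via the jointly-conservative family $\{R[x(d)]\}_{d\in M}$: the coproduct splitting $R[x(d)]\simeq R\sqcup\Z[x(d)]$ in $\Alg^M$ identifies $\Alg^M_R(R[x(d)],-)$ with the degree-$d$ functor, so conservativity of $\psi$ becomes transparent from the coslice description of Remark~\ref{Rem:AlgRMcoslice}. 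This bypasses Lemma~\ref{Lem:phipsiM0} entirely, and it mirrors the ``fully faithful left adjoint plus conservative right adjoint'' template that the paper itself deploys later to prove Theorem~\ref{Thm:Equiv}; the paper's version, by contrast, isolates the absolute cases as clean building blocks. One small precision worth flagging: Lemma~\ref{psiphiid} only asserts \emph{some} equivalence $\psi\varphi\simeq\id$, not that the unit is one; to conclude full faithfulness of $\varphi$ you should either invoke Lemma~\ref{Lem:ffadj}, or observe that the unit is an equivalence on representables and that both $\id$ and $\psi\varphi$ preserve sifted colimits.
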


\begin{proof}
	Keep the notation as in diagram (\ref{Eq:DEdiag}). By Lemma \ref{psiphiid}, it suffices to show  $\varphi\psi \simeq \id_{\Alg^M_R}$.
	Let $p:\Alg^M_R \to \Alg^M$ and $q:\Alg^M_R \to \Alg_R$ be the projections. By definition of $\Alg^M_R$, it suffices to show that $p\varphi\psi \simeq p$ and $q\varphi\psi \simeq q$. 
	
	For $p$, consider the following diagram
	\begin{center}
		\begin{tikzcd}
			\Alg^M_R \arrow[d, "p"] \arrow[r, "\psi"] & \PPP_\Sigma(\Poly^M_R) \arrow[r, "\varphi"] \arrow[d, "e^*"] & \Alg^M_R \arrow[r, "\psi"] \arrow[d, "p"] & \PPP_{\Sigma}(\Poly^M_R) \arrow[d, "e^*"] \\
			\Alg^M \arrow[r, "\psi'"] & \PPP_\Sigma(\Poly^M) \arrow[r, "\varphi'"] & \Alg^M \arrow[r, "\psi'"] & \PPP_{\Sigma}(\Poly^M)
		\end{tikzcd}
	\end{center}
	where $\varphi',\psi'$ are defined similarly as $\varphi, \psi$. Note that $\varphi'$ is the two-sided inverse of $\psi'$. Since the square on the left commutes, it suffices to show that the middle square commutes. Because $\psi'$ is invertible, the latter holds if the rectangle made up from the middle and the right square commutes.  And indeed this holds, since $\psi\varphi \simeq \id_{\PPP_\Sigma(\Poly^M_R)}$. For $q$ one uses a similar diagram, together with Lemma \ref{Lem:phipsiM0}.
\end{proof}

\section{Cell attachments and finiteness conditions}
We collect some additional results that came up on the road to the current constructions and arguments. Even though only \S \ref{Par:Gradn} and \S \ref{Par:FinQuot} have been used in the main text --- namely in the explicit formula of the Rees algebra in \S \ref{Par:ReesForm}, which is non-essential for the construction --- it seems that these results are interesting in their own right. Although most of what follows is well-known, there did not seem to be a satisfactory account in the literature of everything that follows.

\subsection{$\E_\infty$-algebras}
Write $\Einfty$ for the category of $\E$-infinity algebras, $\Ecninfty$  for the full subcategory of $\Einfty$ spanned by the connective objects. Let $\Theta: \Alg \to \Ecninfty$ the functor induced by $\Poly \subset \Ecninfty$, as in \cite[Con.\ 25.1.2.1]{LurieSpectral}. For $R \in \Alg$, write $\Einfty_R$ for the category $\Einfty_{\Theta(R)/}$ of $\E_\infty$-$R$-algebras, likewise $\Ecninfty_R \coloneqq \Ecninfty_{\Theta(R)/}$. 

The functor $\Theta$ induces a functor $\Alg_R \to \Ecninfty_R$, which we also write as $\Theta$. This functor has a left adjoint $\Theta_L: \Ecninfty_R \to \Alg_R$ by \cite[Prop.\ 25.1.2.4]{LurieSpectral}.

\subsection{Spectral Yoneda lemma}
Write $\Sp$ for the category of spectra, and $\Sp(-)$ for the stabilization construction. Let $\CCC$ be a stable category. The Yoneda embedding $\CCC \to \PPP(\CCC)$ into the category of space-valued presheaves on $\CCC$ induces an embedding
\begin{align*}
\CCC \simeq \Sp(\CCC_*) \to \Sp(\PPP(\CCC)_*) \simeq \Fun(\CCC^\op,\Sp)
\end{align*}
which induces a functor
\begin{align*}
\Map_\CCC(-,-):\CCC^\op \times \CCC \to \Sp
\end{align*}
See e.g.\ \cite[\S 2.3]{BlumbergK}. 

For $x,y \in \CCC$, we call $\Map_\CCC(x,y)$ the \textit{mapping spectrum}. It has the property that 
\begin{align}
\label{Eq:pinMap}
\pi_n\Map_\CCC(x,y) = \begin{cases}
\pi_n\CCC(x,y) & \text{ if } n \geq 0 \\
\pi_0\CCC(x,\Sigma^{-n} y) & \text{ if }n \leq 0
\end{cases}
\end{align}

\begin{Not}
	We write $\Map_R(-,-)$ for $\Map_{\Mod_R(-,-)}$ and $\Map(-,-)$ for $\Map_{\Sp}(-,-)$.
\end{Not}

\subsection{Tensoring modules and algebras over spectra and spaces}
Throughout, fix $R \in \Alg$. Let $\Free_R: \Mod_R \to \Einfty_R$ be the functor left adjoint to the forgetful functor $U:\Einfty_R \to \Mod_R$, see e.g.\ \cite[\S 4.4]{GepnerIntroduction} for explicit details. By \cite[\S III.1]{ElmendorfRings}, the forgetful functor $V:\Mod_R \to \Sp$ also has a left adjoint $R \wedge (-)$. 

\begin{Lem}
	\label{Lem:EnrAdj}
	The adjunction $R \wedge (-) \dashv V$ induces equivalences
	\begin{align*}
	\Map_R(R \wedge K, M) \simeq \Map(K,V(M))
	\end{align*}
	natural in $K \in \Sp$ and $M \in \Mod_R$.
\end{Lem}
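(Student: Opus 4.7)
The plan is to promote the defining space-level adjunction $(R\wedge-)\dashv V$ to an equivalence of mapping spectra, using that both $\Mod_R$ and $\Sp$ are stable and that $V$ is an exact functor. The fact that $V$ is exact follows from its being a right adjoint between stable $\infty$-categories (and thus preserving limits in general, and suspensions in particular), so we have natural equivalences $V(\Sigma^k M)\simeq \Sigma^k V(M)$ for all $k\in\Z$ and all $M\in\Mod_R$.

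The main calculation is then carried out $\pi_n$ by $\pi_n$ using the formula (\ref{Eq:pinMap}). For $n\geq 0$:
\begin{align*}
\pi_n\Map_R(R\wedge K, M) &\simeq \pi_n\Mod_R(R\wedge K, M) \\
&\simeq \pi_n\Sp(K, V(M)) \\
&\simeq \pi_n\Map(K, V(M))
\end{align*}
where the middle step uses the given adjunction on mapping spaces. For $n\leq 0$, we compute instead
\begin{align*}
\pi_n\Map_R(R\wedge K, M) &\simeq \pi_0\Mod_R(R\wedge K, \Sigma^{-n} M) \\
&\simeq \pi_0\Sp(K, V(\Sigma^{-n} M)) \\
&\simeq \pi_0\Sp(K, \Sigma^{-n} V(M)) \\
&\simeq \pi_n\Map(K, V(M))
\end{align*}
invoking exactness of $V$ in the penultimate step. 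All equivalences are natural in $K\in\Sp$ and $M\in\Mod_R$.

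To upgrade the collection of $\pi_n$-isomorphisms to an equivalence of spectra, I would first exhibit an actual natural transformation $\alpha_{K,M}\colon \Map_R(R\wedge K, M) \to \Map(K, V(M))$ at the level of spectra. This comes from the Yoneda picture used to define mapping spectra: the adjunction $R\wedge(-)\dashv V$, viewed through the functorial embedding $\CCC \simeq \Sp(\CCC_*)\hookrightarrow \Fun(\CCC^{\op},\Sp)$, produces such an $\alpha_{K,M}$ whose effect on $\pi_0$ recovers the counit of the adjunction. The above $\pi_n$-computation then shows $\alpha_{K,M}$ is a $\pi_*$-isomorphism of spectra, hence an equivalence.

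The main obstacle is the last bookkeeping step: identifying the candidate natural transformation at the spectrum level, as opposed to only knowing that the two spectra have abstractly isomorphic homotopy groups. Once one is comfortable using the universal property of stabilization and the functoriality of mapping spectra under adjunctions of stable $\infty$-categories, this becomes routine; alternatively one can bypass the construction of $\alpha$ altogether by invoking the general principle that any adjunction between stable presentable $\infty$-categories automatically enhances to a $\Sp$-enriched adjunction.
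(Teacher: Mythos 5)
Your proof is correct and is essentially the paper's: both rest on (\ref{Eq:pinMap}) together with the fact that the exact functor $V$ commutes with $\Omega^n$, and both ultimately produce the natural map from the unit of the adjunction. The paper simply presents it in the more economical order --- first writing the map down concretely as $\Map_R(R\wedge K,M)\xrightarrow{V}\Map(V(R\wedge K),V(M))\xrightarrow{\eta_K^*}\Map(K,V(M))$ and then verifying it is a $\pi_*$-isomorphism, rather than computing $\pi_*$ abstractly and backfilling the map afterwards; also it is the \emph{unit} $\eta_K\colon K\to V(R\wedge K)$, not the counit, whose use here recovers on $\pi_0$ the adjunction bijection $f\mapsto V(f)\circ\eta_K$.
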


\begin{proof}
	Let $\eta_K: K \to V(R\wedge K)$ be the unit of the adjunction $R\wedge (-) \dashv V$. We then have a map
	\begin{align*}
	\Map_R(R \wedge K,M) \xrightarrow{V} \Map (V(R\wedge K),V(M)) \xrightarrow{\eta_K^*} \Map(K,V(M)),
	\end{align*}
	which is an equivalence by (\ref{Eq:pinMap}), since $V(-)$ commutes with $\Omega^n$ for all $n$.
\end{proof}

\begin{Exm}
	Let $n\in \Z$, and put $\bbS^n \coloneqq \Sigma^n \bbS$, where $\bbS$ is the sphere spectrum.  We claim that $R \wedge \bbS^n$ is $R[n]$, where $R[n]$ is the shifted $R$-module $\Sigma^n R$ in the stable category $\Mod_R$. To this end, observe that 
	\begin{align*}
	\Map_R(R \wedge \bbS^n, M) \simeq \Map(\bbS^n,M) \simeq \Map(\bbS,\Omega^nM) \simeq \Omega^nM \simeq \Map_R(R[n],M)
	\end{align*}
	for all $R$-modules $M$, where we have used Lemma \ref{Lem:EnrAdj} in the first equivalence. 
\end{Exm}

The example shows that the adjunction $R \wedge (-) \dashv V$ restricts to an adjunction on connective objects. We can thus summarize the situation in the following diagram, where the horizontal arrows commute with the vertical ones
\begin{center}
	\begin{tikzcd}[column sep=large, row sep=large]
	\Space_*  \arrow[r,bend left= 15,"\Sigma^\infty"{name=A, above}] \arrow[d, equals] 
	& \Sp \arrow[l, bend left = 15, "\Omega^\infty"{name=B,below}] \ar[from=A, to=B, symbol=\dashv] \arrow[r,bend left= 15, "R\wedge(-)"{name=C, above} ]  
	& \Mod_R \arrow[l, bend left= 15, "V"{name=D,below}] \ar[from=C, to=D, symbol=\dashv] \arrow[r,bend left= 15,"\Free_R"{name=E, above}]  
	& \Einfty_R  \arrow[l, bend left = 15, "U"{name=F,below}] \ar[from=E, to=F, symbol=\dashv]\\
	\Space_* \arrow[r,bend left= 15,"\Sigma^\infty"{name=G, above}] 
	& \Sp^\cn \arrow[u, hook] \arrow[l, bend left = 15, "\Omega^\infty"{name=H,below}] \ar[from=G, to=H, symbol=\dashv] \arrow[r,bend left= 15, "R\wedge(-)"{name=I, above} ] 
	& \Mod_R^\cn \arrow[u, hook]\arrow[l, bend left= 15, "V"{name=J,below}] \arrow[r,bend left= 15,"\Free_R"{name=K, above}] \ar[from=I, to=J, symbol=\dashv]
	& \Ecninfty_R \arrow[u, hook] \arrow[l, bend left = 15, "U"{name=L,below}] \ar[from=K, to=L, symbol=\dashv]  \arrow[r,bend left= 15,"\Theta_L"{name=M, above}] 
	& \Alg_R \arrow[l, bend left = 15, "\Theta"{name=N,below}] \ar[from=M, to=N, symbol=\dashv]
	\end{tikzcd}
\end{center}
From the proof of \cite[Prop.\ 25.1.2.2]{LurieSpectral}, it follows that $U\Theta$ is the forgetful functor $\Alg_R \to \Mod_R^\cn$. Therefore, the composition $\Theta_L \Free_R$ is $\Sym_R$.


\begin{Not}
	For a pointed space $K$, write $R \wedge \Sigma^\infty K$ as $R \wedge K$, and put $R[K]\coloneqq \Sym_R(R \wedge  K)$.
\end{Not}
\begin{Prop}
	\label{Prop:AdjEqv}
	We have an equivalences $\Alg_R(R[K],A) \simeq \Space_*(K,A)$,
	natural in $A \in \Alg_R$ and $K \in \Space_*$.
\end{Prop}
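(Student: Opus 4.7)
The plan is to unpack the definition $R[K] = \Sym_R(R\wedge K) = \Theta_L \Free_R(R \wedge \Sigma^\infty K)$ and then chain together the four adjunctions on the bottom row of the big diagram just before the statement. More precisely, starting from $\Alg_R(R[K],A)$, I would apply successively $\Theta_L \dashv \Theta$, then $\Free_R \dashv U$, then $R \wedge (-) \dashv V$, and finally $\Sigma^\infty \dashv \Omega^\infty$, which yields natural equivalences
\begin{align*}
\Alg_R(R[K],A) &\simeq \Ecninfty_R(\Free_R(R \wedge \Sigma^\infty K), \Theta A) \\
&\simeq \Mod_R^\cn(R \wedge \Sigma^\infty K, U\Theta A) \\
&\simeq \Sp^\cn(\Sigma^\infty K, V U \Theta A) \\
&\simeq \Space_*(K, \Omega^\infty V U \Theta A).
\end{align*}
Each of the four adjunctions used here restricts to the connective setting, as indicated by the commutativity of the rows in the diagram above the statement.

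It then remains to identify $\Omega^\infty V U \Theta A$ with $A$ viewed as a pointed space. The paper already notes (citing the proof of \cite[Prop.\ 25.1.2.2]{LurieSpectral}) that $U\Theta : \Alg_R \to \Mod_R^\cn$ is the forgetful functor. Composing with the further forgetful functors $V:\Mod_R^\cn \to \Sp^\cn$ and $\Omega^\infty : \Sp^\cn \to \Space_*$ gives exactly the underlying pointed space of an $R$-algebra, with basepoint the additive unit. Naturality in $A$ and in $K$ is automatic because at every step the equivalences arise from adjunction units/counits, which are natural.

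The only potentially subtle point is the very last identification: one needs to be sure that the pointed space structure obtained on $A$ through the chain of forgetful functors is the one implicit in the notation $\Space_*(K,A)$ on the right-hand side. This is not really an obstacle so much as a convention to spell out, since both sides pick out the same underlying additive infinite loop space structure on $A$. No further calculation is required.
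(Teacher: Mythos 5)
Your proposal is correct and is precisely what the paper's one-word proof ``Immediate'' refers to: chaining the four adjunctions $\Theta_L \dashv \Theta$, $\Free_R \dashv U$, $R\wedge(-) \dashv V$, and $\Sigma^\infty \dashv \Omega^\infty$ from the bottom row of the displayed diagram, together with the observation that $U\Theta$ (and hence $\Omega^\infty V U\Theta$) is the forgetful functor. You have simply spelled out the details that the paper elides.
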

\begin{proof}
	Immediate.
\end{proof}
\begin{Cor}
	For all $R \in \Alg, A \in \Alg_R, K\in \Space_*$ and $n\in \N$ it holds
	\begin{enumerate}
		\item $\pi_0 \Alg_R(R[S^n],A) \simeq \pi_n(A)$;
		\item $A[K] \simeq A \otimes_\Z \Z[K]$;
		\item $R[S^0] \simeq R[x]$;
		\item $R[\Delta^n] \simeq R$
	\end{enumerate}
\end{Cor}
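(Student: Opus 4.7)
The plan is to deduce each item directly from Proposition \ref{Prop:AdjEqv} together with the definition $R[K] \coloneqq \Sym_R(R \wedge K)$, using short Yoneda arguments where convenient.

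For (1), I would apply Proposition \ref{Prop:AdjEqv} to obtain
\[
  \Alg_R(R[S^n], A) \simeq \Space_*(S^n, A),
\]
where $A$ is regarded as a pointed space via the composite $\Alg_R \xrightarrow{U\Theta} \Mod_R^\cn \xrightarrow{V} \Sp^\cn \xrightarrow{\Omega^\infty} \Space_*$ appearing in the large diagram just above. Taking $\pi_0$ on both sides and invoking the standard identification $\pi_0 \Space_*(S^n, X) = \pi_n(X)$ yields the result. For (3), I would compute directly: since $\Sigma^\infty S^0 \simeq \bbS$, we have $R \wedge S^0 \simeq R$ in $\Mod_R$, and hence $R[S^0] = \Sym_R(R)$. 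Since $\Sym_R = \Theta_L \Free_R$ and $\Sym_R(R)$ corepresents the forgetful functor $\Alg_R \to \Space$ (the composite of $\Free_R \dashv U$ with $\Theta_L \dashv \Theta$), this is the free $R$-algebra on one generator, i.e.\ $R[x]$.

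For (2), the approach is Yoneda. For any $B \in \Alg_A$, Proposition \ref{Prop:AdjEqv} with base $A$ gives $\Alg_A(A[K], B) \simeq \Space_*(K, B)$. On the other hand, the extension-restriction of scalars adjunction along $\Z \to A$ (Example \ref{Exm:fjadjs}(1)) combined with Proposition \ref{Prop:AdjEqv} over $\Z$ gives
\[
  \Alg_A(A \otimes_\Z \Z[K], B) \simeq \Alg_\Z(\Z[K], B) \simeq \Space_*(K, B).
\]
Both chains of equivalences are natural in $B$, so by Yoneda $A[K] \simeq A \otimes_\Z \Z[K]$.

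For (4), I again apply Proposition \ref{Prop:AdjEqv}: $\Alg_R(R[\Delta^n], A) \simeq \Space_*(\Delta^n, A)$. Since the pointed space $\Delta^n$ is contractible, this mapping space is contractible for every $A \in \Alg_R$. Thus $R[\Delta^n]$ corepresents the constant functor at a point on $\Alg_R$, which is already corepresented by the initial object $R$, and Yoneda gives $R[\Delta^n] \simeq R$. I expect no significant obstacle; the only care needed is the naturality bookkeeping in (2), which follows directly from naturality of the two constituent adjunctions in $R$.
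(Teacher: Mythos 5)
Your proposal is correct and is precisely the routine verification the paper intends: its own proof reads only ``Straightforward,'' and each of your four arguments (the adjunction of Proposition \ref{Prop:AdjEqv} plus $\pi_0\Space_*(S^n,X)\cong\pi_n(X)$ for (1), the Yoneda comparison of corepresented functors for (2) and (4), and $\Sigma^\infty S^0\simeq \bbS$ for (3)) is the expected one. No gaps.
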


\begin{proof} Straightforward. \end{proof}
\subsection{Attaching $n$-cells}
Throughout, fix $A \in \Alg_R$. 
\begin{Con}
	Let $\sigma: S^n \to A$ be a given map of pointed spaces. By Proposition \ref{Prop:AdjEqv}, $\sigma$ corresponds to a map $\sigma:R[S^n] \to A$ of $R$-algebras. Consider the pushout
	\begin{center}
		\begin{tikzcd}
		R[S^n] \arrow[d, "\sigma"] \arrow[r] & R[\Delta^{n+1}] \simeq R \arrow[d] \\
		A \arrow[r] & B
		\end{tikzcd}
	\end{center}
	We say that $B$ is obtained from $A$ by \textit{attaching an $(n+1)$-cell along $\sigma$}.
	
	We say that $A[x]$ is obtained from $A$ by attaching a $0$-cell.
\end{Con}

\subsection{Graded $n$-cells}
\label{Par:Gradn} Let $n \geq 0$. Since $\Alg_A(A[S^n],B) \simeq \Space_*(S^n,B)$, we can think of $A[S^n]$ as attaching a free cell to $A$ in level $n$. We want to generalize this to graded rings, and attach free cells of level $n$ in degree $d$. We need the following:

\begin{Lem}
	\label{Lem:SymZ}
	The forgetful functor $G:\Alg^\Z_A \to \cn\Mod^\Z_A$ has a left adjoint.
\end{Lem}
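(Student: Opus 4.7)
The plan is to build $\mathrm{Sym}^\Z_A$ by applying the universal property of $\PPP_\Sigma$ from \cite[Prop.\ 5.5.8.15]{LurieHTT}, in the same spirit as the construction of $\Phi:\Alg^\Z \to \Alg^{\G_m}$ in \S \ref{Subsec:AffGMtoGR}. For each $d \in \Z$ let $A(d) \in \cn\Mod^\Z_A$ denote the graded module with $A$ concentrated in degree $d$, and let $\mathrm{FreeGr}_A \subset \cn\Mod^\Z_A$ be the full subcategory spanned by finite direct sums of such $A(d)$. First I would check that each $A(d)$ is compact projective in $\cn\Mod^\Z_A$: the functor it corepresents is $M \mapsto \Omega^\infty M_d$, which preserves sifted colimits because these are computed degree-by-degree in $\cn\Mod^\Z_A$, and because $A$ is compact projective in $\cn\Mod_A$. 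Since moreover $M \simeq \bigoplus_d M_d$ is the sifted colimit of its truncated pieces, the $A(d)$'s (and their finite sums) generate $\cn\Mod^\Z_A$ under sifted colimits, so by \cite[Prop.\ 5.5.8.25]{LurieHTT} the inclusion induces an equivalence $\cn\Mod^\Z_A \simeq \PPP_\Sigma(\mathrm{FreeGr}_A)$.

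Second, I would define a finite-coproduct-preserving functor $\mathrm{FreeGr}_A \to \Alg^\Z_A$ sending $A(d)$ to the graded polynomial algebra $A[x(d)]$, and extend it uniquely via \cite[Prop.\ 5.5.8.15]{LurieHTT} to a colimit-preserving functor $\mathrm{Sym}^\Z_A:\cn\Mod^\Z_A \to \Alg^\Z_A$. To verify the adjunction $\mathrm{Sym}^\Z_A \dashv G$, it then suffices to produce a natural equivalence
\begin{align*}
\Alg^\Z_A(\mathrm{Sym}^\Z_A(A(d)),B) \simeq \cn\Mod^\Z_A(A(d),G(B))
\end{align*}
on generators $A(d)$, since both sides are colimit-preserving in the first argument. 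But by Lemma \ref{Lem:PolMRcompproj} the left-hand side is $B_d$ as a space, and the right-hand side is $\Omega^\infty G(B)_d \simeq B_d$, which identifies them compatibly with the $A$-module structure (as this structure on $B_d$ is induced in both cases from functoriality on $\Poly^\Z_R$).

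The main obstacle I anticipate is a certain amount of bookkeeping around the two different $\PPP_\Sigma$-descriptions: on the module side, $\cn\Mod^\Z_A$ is built from $\mathrm{FreeGr}_A$, while on the algebra side, $\Alg^\Z_A$ is built from $\Poly^\Z_R$ (or rather, $\Poly^\Z_A$ via the coslice of Remark \ref{Rem:AlgRMcoslice}). However, because both descriptions share the same indexing by degrees $d \in \Z$, and because the passage $A(d) \mapsto A[x(d)]$ matches on representing objects, the comparison of mapping spaces is essentially forced by Yoneda. Everything else is automatic from presentability.
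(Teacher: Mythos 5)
Your approach is genuinely different from the paper's, and it is worth comparing the two. The paper does not construct $\Sym^\Z_A$ at all; it simply invokes the adjoint functor theorem, checking that $G$ preserves limits (which are computed pointwise on $\Poly^\Z_A$, hence degree-wise, in both $\Alg^\Z_A \simeq \PPP_\Sigma(\Poly^\Z_A)$ and $\cn\Mod^\Z_A$) and $\kappa$-filtered colimits (by factoring the composite $\Alg^\Z_A \to \cn\Mod^\Z_A \to \cn\Mod_A$ through the forgetful functor $\Alg^\Z_A \to \Alg_A$, which is already a left adjoint, and using conservativity of $\cn\Mod^\Z_A \to \cn\Mod_A$). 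That argument is shorter, piggy-backs on Proposition \ref{Prop:AlgMRYo} which the paper already has, and avoids having to identify a generating subcategory of $\cn\Mod^\Z_A$. Your argument instead builds the left adjoint explicitly as a $\PPP_\Sigma$-extension; this is more work but buys you the explicit description of $\Sym^\Z_A$ on free graded modules, which makes the remark immediately following the lemma (that the underlying $A$-algebra of $\Sym^\Z_A(M)$ is $\Sym_A(M)$) transparent.

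Two places in your sketch need to be firmed up. First, the claim $\cn\Mod^\Z_A \simeq \PPP_\Sigma(\mathrm{FreeGr}_A)$: compact-projectivity of the $A(d)$ is fine, but the hypothesis in \cite[Prop.\ 5.5.8.25]{LurieHTT} that the generators detect equivalences (equivalently, generate under small colimits) should be stated and verified explicitly rather than waved past with the ``sifted colimit of truncated pieces'' phrase, since the truncated pieces $\bigoplus_{|d| \leq n} M_d$ are still not free and you need a two-stage colimit argument, or simply joint conservativity of $M \mapsto \Omega^\infty M_d$ (which is clear, as $\cn\Mod^\Z_A = \cn\Mod_A^{\times \Z}$ and $\Omega^\infty$ is conservative on connective modules). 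Second, and more importantly, the step ``it suffices to produce a natural equivalence on generators, since both sides are colimit-preserving in the first argument'' is where the real content lives: you need the equivalences $\Alg^\Z_A(A[x(d)],B) \simeq \Omega^\infty G(B)_d$ to be natural in $A(d) \in \mathrm{FreeGr}_A$ (not just objectwise in $d$), because the correct logic is that the restricted Yoneda embedding $\cn\Mod^\Z_A \to \PPP(\mathrm{FreeGr}_A)$ is fully faithful, and an equivalence of the two restricted presheaves of a putative right adjoint $F^R(B)$ and of $G(B)$ is what lets you conclude $F^R(B) \simeq G(B)$. Your parenthetical about both $A$-module structures coming ``from functoriality on $\Poly^\Z_R$'' indicates you see this, but establishing it honestly requires tracking the definition of the forgetful functor $G$ (via Remark \ref{Rem:Einfty}) against the presheaf model of $\Alg^\Z_A$, which is precisely the bookkeeping you flagged at the end and which the paper's route sidesteps entirely.
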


\begin{proof}
	By the adjoint functor theorem, it suffices to show that $G$ preserves limits and $\kappa$-filtered colimits for some regular cardinal $\kappa$. 
	
	Recall that the embedding $\Alg^\Z_A \simeq \PPP_\Sigma(\Poly^\Z_A) \to \PPP(\Poly^\Z_A)$ is a right adjoint (\cite[Prop.\ 5.5.8.10]{LurieHTT}), where we have used Proposition \ref{Prop:AlgMRYo} in the first equivalence. Hence limits in $\Alg^\Z_A$ are computed as limits of presheaves, i.e.,\ point-wise. It follows that $\Alg^\Z_A \to \cn\Mod^\Z_A$ preserves limits.
	
	Since the forgetful functor $\cn\Mod^\Z_A \to \cn\Mod_A$ is conservative, it suffices to show that $\gamma:\Alg^\Z_A \to \cn\Mod_A$ preserves $\kappa$-filtered colimits, for some $\kappa$. But $\gamma$ factorizes as the forgetful map $\Alg^\Z_A \to \Alg_A$, which is a left adjoint, followed by the map $\Alg_A \to \cn\Mod_A$, which preserves $\kappa$-filtered colimits for some $\kappa$. The claim follows.
\end{proof}	

\begin{Not}
	Write the left adjoint of the forgetful functor $\Alg^\Z_A \to \cn\Mod^\Z_A$ as $\Sym_A^\Z(-)$.
\end{Not}

Since the forgetful functor $\Alg^\Z_A \to \Alg_A$ is a left adjoint as well, the underlying $A$-algebra of $\Sym^\Z(M)$ is $\Sym(M)$.

\begin{Exm}
	If $M$ is concentrated in degree 0, i.e.,\ $M = M(0)$ for $M\in \Mod_A$, then it holds $\Sym^\Z_A(M) \simeq \Sym_A(M)(0)$. 
\end{Exm}

\begin{Con}
	For $K \in \Space_*$ and $d \in \Z$, we have a $\Z$-graded $A$-algebra $A[K(d)]$ with the universal property that
	\begin{align*}
	\Alg^\Z_A(A[K(d)],B) \simeq \Space_*(K,B_d)
	\end{align*}
	
	For existence, observe that we have adjunctions
	\begin{center}
		\begin{tikzcd}[column sep=large, row sep=large]
		\Space_* \arrow[r,bend left= 15,"A \wedge (-)"{name=A, above}]  & \Mod_A \arrow[r,bend left= 15,"{(-)(0)}"{name=C, above}] \arrow[l, bend left = 15, "\Omega^\infty V"{name=B,below}] \ar[from=A, to=B, symbol=\dashv] & \Mod^\Z_A \arrow[r,bend left= 15,"(-)_{\bullie - d}"{name=E, above}] \arrow[l, bend left = 15, "(-)_0"{name=D,below}] \ar[from=C, to=D, symbol=\dashv] & \Mod_A^\Z \arrow[l, bend left = 15, "(-)_{\bullie +d}"{name=F,below}]  \ar[from=E, to=F, symbol=\dashv]
		\end{tikzcd}
	\end{center}
	Here, $M_{\bullie + f}$ is the $\Z$-graded $A$-module which has $M_{d+f}$ in degree $d$ for any $\Z$-graded $A$-module $M$, and $N(0)$ is the $\Z$-graded $A$-module which is $N$ concentrated in degree 0, for $N \in \Mod_A$. 
	
	Now write $A \wedge_d K \coloneqq ( (A\wedge K)(0) )_{\bullie- d}$ for any $K \in \Space_*$, and define $A[K(d)] \coloneqq \Sym_A^\Z (A \wedge_d K)$. Then we have
	\begin{align*}
	\Alg^\Z_A(A[K(d)],B) \simeq \Mod^\Z_A(A\wedge_d K, B) \simeq  \Space_*(K,B_d)
	\end{align*}
\end{Con}

\begin{Not}
	We will write $A[S^n(d)]$ as $A[u]$, and say with words that $u$ is a free variable of level $n$ in degree $d$. More generally, we write 
	\begin{align*}
	A[S^{n_1}({d_1}),S^{n_2}(d_2), \cdots, S^{n_k}({d_k})]
	\end{align*}
	as $A[u_1,\dots,u_k]$, and say that $u_i$ is free in level $n_i$ in degree $d_i$.
\end{Not}

\subsection{Finite quotients}
\label{Par:FinQuot}

\begin{Not}
	We will use the following multi-index notation:
	\begin{itemize}
		\item We write $\underline{n}$ for a sequence of numbers $n_1,\dots,n_k$, and let $\underline{n}+1$ be $n_1+1,\dots,n_k+1$;
		\item For such given $\underline{n}$, we write $S^{\underline{n}}$ for the pointed space $S^{n_1} \vee \dots \vee S^{n_k}$;
		\item A collection of maps $\sigma_i: S^{n_i} \to A$ of pointed spaces is written as $\underline{\sigma}: S^{\underline{n}} \to A$;
		\item A collection of free variables $u_1,\dots,u_k$ in level $n_i$ and degree $d_i$ is written $\underline{u}$.
	\end{itemize}
\end{Not}

\begin{Def}
	A \textit{finite quotient} is a cell attachment of the form
	\begin{center}
		\begin{tikzcd}
		A[S^{\underline{n}}] \arrow[d, "\underline{\sigma}"] \arrow[r] & A[\Delta^{\underline{n}+1}] \arrow[d] \\
		A \arrow[r] & A/(\underline{\sigma})
		\end{tikzcd}
	\end{center}
	for some $\sigma_i:S^{n_i} \to A$ with $n_i \geq 0$.
\end{Def}

\begin{Rem}
	Let $\sigma:S^n \to A$ with $n\geq 0$ be a given pointed map, and $B \coloneqq A/(\sigma)$. Then $\pi_0(B) \cong \pi_0(A)/([\sigma])$, where $[\sigma]$ is the image of $\sigma$ in $\pi_0(A)$. Hence, if $n>0$, then $\pi_0(B) \cong \pi_0(A)$. 
	
	In fact, for all $m<n$ it holds that the map $\pi_m(A) \to \pi_m(B)$ is an isomorphism. 
	To see this, let $\tau_{\leq m}$ be the truncation functor. Then since $\tau_{\leq m}$ is a left adjoint, we have $\tau_{\leq m}(B) \simeq \tau_{\leq m}(A) \otimes_{\tau_{\leq m}(A[S^n])} \tau_{\leq m}(A)$. Then the claim follows from the fact that $\tau_{\leq m}(A[S^n]) \simeq \tau_{\leq m}(A)$, since 
	\begin{align*}
	\Alg_A(\tau_{\leq m}(A[S^n]),B) \simeq \Alg_A(A[S^n],B) \simeq * \simeq \Alg_A(\tau_{\leq m}(A),B)
	\end{align*}
	for any $m$-truncated $B\in \Alg_A$.
\end{Rem}

\begin{Lem}
	\label{Lem:UPfinquot}
	Let $A \to A/(\underline{\sigma})$ be a finite quotient. Then this map enjoys the following universal property: for $A$-algebras $B$ with structure map $f: A \to B$, it holds
	\begin{align*}
	\Alg_A(A/(\underline{\sigma}), B ) \simeq \{ f_* \underline{\sigma} \} \times_{\Space_*(S^{\underline{n}},B )} \{ \underline{0} \}
	\end{align*}
	where $\{f_* \underline{\sigma}\}$ and $\{\underline{0}\}$ are the points $(f\sigma_1,\dots,f\sigma_k)$ and $\{0,\dots,0\}$ in $\Space_*(S^{\underline{n}},B)$.
\end{Lem}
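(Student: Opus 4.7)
The plan is to apply the mapping space functor $\Alg_A(-,B)$ to the defining pushout square of $A/(\underline{\sigma})$, and then identify each of the four resulting mapping spaces and maps between them. Since $\Alg_A(-,B)$ sends colimits in the first variable to limits, the pushout
\begin{center}
\begin{tikzcd}
A[S^{\underline{n}}] \arrow[d, "\underline{\sigma}"] \arrow[r, "\iota"] & A[\Delta^{\underline{n}+1}] \arrow[d] \\
A \arrow[r] & A/(\underline{\sigma})
\end{tikzcd}
\end{center}
yields a Cartesian square
\begin{center}
\begin{tikzcd}
\Alg_A(A/(\underline{\sigma}),B) \arrow[d] \arrow[r] & \Alg_A(A[\Delta^{\underline{n}+1}],B) \arrow[d, "\iota^*"] \\
\Alg_A(A,B) \arrow[r, "\underline{\sigma}^*"] & \Alg_A(A[S^{\underline{n}}],B)
\end{tikzcd}
\end{center}

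First, I would identify the three outer corners. The corner $\Alg_A(A,B)$ is contractible, being canonically equivalent to the point $\{f\}$ (the structure map). For the corner $\Alg_A(A[S^{\underline{n}}],B)$, I would use that $A[S^{\underline{n}}] \simeq A[S^{n_1}]\otimes_A \cdots \otimes_A A[S^{n_k}]$ so by Proposition \ref{Prop:AdjEqv} this mapping space is equivalent to $\prod_i \Space_*(S^{n_i},B) \simeq \Space_*(S^{\underline{n}},B)$, using that $S^{\underline{n}} = \bigvee_i S^{n_i}$ turns wedge into product under $\Space_*(-,B)$. For the corner $\Alg_A(A[\Delta^{\underline{n}+1}],B)$, I would use that each $\Delta^{n_i+1}$ is contractible as a pointed space so that $A[\Delta^{\underline{n}+1}] \simeq A$ (e.g.\ by computing that $\Alg_A(A[\Delta^{n+1}],-) \simeq \Space_*(\Delta^{n+1},-) \simeq *$), whence this corner is also contractible.

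Next, I would identify the two bottom and right arrows as points in $\Space_*(S^{\underline{n}},B)$. The arrow $\underline{\sigma}^*:\{f\} \to \Space_*(S^{\underline{n}},B)$ sends $f$ to the composition $f\circ \underline{\sigma} = f_*\underline{\sigma}$ by definition of $\underline{\sigma}$. The arrow $\iota^*$ is induced by the map $\iota: A[S^{\underline{n}}] \to A[\Delta^{\underline{n}+1}]$, which under the adjunction Proposition \ref{Prop:AdjEqv} corresponds to the constant pointed map $S^{\underline{n}} \to \Delta^{\underline{n}+1}$ at the basepoint; hence $\iota^*$ lands at the null point $\underline{0} \in \Space_*(S^{\underline{n}},B)$.

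Assembling these identifications into the Cartesian square produces exactly
\[
\Alg_A(A/(\underline{\sigma}),B) \simeq \{f_*\underline{\sigma}\}\times_{\Space_*(S^{\underline{n}},B)} \{\underline{0}\},
\]
as desired. The only mildly subtle step is recognizing that $A[\Delta^{\underline{n}+1}] \simeq A$ in $\Alg_A$ and that $\iota$ corresponds under Proposition \ref{Prop:AdjEqv} to the null map; everything else is formal bookkeeping with the pushout and the adjunction.
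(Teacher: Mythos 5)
Your proposal is correct and uses essentially the same argument the paper intends: the paper's proof is the one-liner "This follows directly from the universal property of $A/(\sigma)$ as a pushout, together with Proposition \ref{Prop:AdjEqv}," and you simply unpack that by applying $\Alg_A(-,B)$ to the defining pushout square, identifying the three outer corners via Proposition \ref{Prop:AdjEqv} and contractibility of $A[\Delta^{\underline{n}+1}]$, and identifying the two arrows into $\Space_*(S^{\underline{n}},B)$. One small wording nit: the map $\iota:A[S^{\underline{n}}]\to A[\Delta^{\underline{n}+1}]$ is induced by the inclusion $S^{\underline{n}}\hookrightarrow\Delta^{\underline{n}+1}$, which is only \emph{homotopic} to the constant pointed map because $\Delta^{\underline{n}+1}$ is contractible; this is exactly what makes $\iota^*$ land at $\underline{0}$ after identifying $\Alg_A(A[\Delta^{\underline{n}+1}],B)$ with a point, so your conclusion stands.
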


\begin{proof}
	This follows directly from the universal property of $A/(\sigma)$ as a pushout, together with Proposition \ref{Prop:AdjEqv}.
\end{proof}

\begin{Cor}
	If $\sigma,\sigma':S^n\to A$ are pointed maps which are identical as elements of $\pi_n(A)$, then $A/\sigma \simeq A/\sigma'$.
\end{Cor}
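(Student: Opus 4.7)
The plan is to apply the universal property from Lemma \ref{Lem:UPfinquot} and the Yoneda lemma. By that universal property, for any $A$-algebra $f:A\to B$, we have
\[
\Alg_A(A/(\sigma),B) \simeq \{f_*\sigma\} \times_{\Space_*(S^n,B)} \{0\},
\]
and analogously with $\sigma'$ in place of $\sigma$. Hence it suffices to produce an equivalence of spaces
\[
\{f_*\sigma\} \times_{\Space_*(S^n,B)} \{0\} \;\simeq\; \{f_*\sigma'\} \times_{\Space_*(S^n,B)} \{0\}
\]
which is natural in $B \in \Alg_A$, and then invoke the Yoneda lemma to conclude $A/(\sigma) \simeq A/(\sigma')$.

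To build this equivalence, I would first unpack the hypothesis: $\sigma = \sigma'$ in $\pi_n(A) = \pi_0 \Space_*(S^n,A)$ means there exists a path $\gamma$ from $\sigma$ to $\sigma'$ in $\Space_*(S^n,A)$. For each $f:A\to B$, postcomposition gives a continuous map $f_*:\Space_*(S^n,A)\to\Space_*(S^n,B)$, so $f_*\gamma$ is a path from $f_*\sigma$ to $f_*\sigma'$ in $\Space_*(S^n,B)$. Recall that for any space $Y$ and any points $x,y,z\in Y$, a chosen path $y\simeq y'$ induces, by concatenation, an equivalence $\{x\}\times_Y\{y\} \simeq \{x\}\times_Y\{y'\}$ between the two path spaces. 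Applying this with $Y=\Space_*(S^n,B)$, $x=0$, $y=f_*\sigma$, $y'=f_*\sigma'$ and the path $f_*\gamma$ yields the desired equivalence of fibres.

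Next I would verify naturality in $B$. Given a morphism $g:B\to B'$ in $\Alg_A$, postcomposition $g_*$ carries $f_*\gamma$ to $(gf)_*\gamma$ compatibly, and the concatenation equivalence is itself natural with respect to maps of spaces. Thus the equivalences above assemble into a natural isomorphism of the two functors $\Alg_A\to\Space$ corepresented by $A/(\sigma)$ and $A/(\sigma')$. By the Yoneda lemma, this natural isomorphism is induced by a unique equivalence $A/(\sigma)\simeq A/(\sigma')$, proving the claim.

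The one step requiring a little care is the naturality verification, since a priori the path $\gamma$ is chosen at the level of $A$ once and for all; the crucial observation is exactly that $\gamma$ lives in $\Space_*(S^n,A)$ and we only transport it via the structure maps $f_*$ on the target, so compatibility with $g:B\to B'$ is automatic. There is no genuine obstacle beyond this bookkeeping.
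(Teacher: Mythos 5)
Your proof is correct and takes essentially the same approach as the paper, which simply cites the universal property from Lemma \ref{Lem:UPfinquot}; you have carefully fleshed out the implicit step that a path $\gamma$ from $\sigma$ to $\sigma'$ in $\Space_*(S^n,A)$ pushes forward along any $f:A\to B$ to yield a natural equivalence of the corepresented functors, whence Yoneda finishes the argument.
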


\begin{proof}
	This follows from the universal property from Lemma \ref{Lem:UPfinquot}.
\end{proof}

\begin{Exm}
	\label{Exm:mod0}
	Write $\sigma$ for the unique map $0: S^n \to A$. Then $A/(\sigma) \simeq A[S^{n+1}]$, since for any $A$-algebra $B$ we have
	\begin{align*}
	\Alg_A(A/(\sigma), B) \simeq \Omega \Space_*(S^n,B) \simeq \Space_*(S^{n+1},B) \simeq \Alg_A(A[S^{n+1}],B)
	\end{align*}
	by Lemma \ref{Lem:UPfinquot} and Proposition \ref{Prop:AdjEqv}.
\end{Exm}


\subsection{Finite cellular maps}
A map $A \to B$ is called \textit{finite cellular} if $B$ is obtained from $A$ by a finite number of cell attachments, including $0$-cells. Observe that being finite cellular is stable under base change and under composition. Also observe that the notion of being finite cellular does not depend on any possible base ring $R$: if $B$ is obtained from $A$ by attaching an $n$-cell in $\Alg_R$, then it is also obtained from $A$ by attaching an $n$-cell in $\Alg_{R'}$, for any $R' \to R$. In particular, we might as well work over $\Z$. 
\begin{Lem}
	\label{Lem:Cyl}
	Let $B$ be a finite cellular $A$-algebra. Then the multiplication map $\mu:B \otimes_A B \to B$ is also finite cellular.
\end{Lem}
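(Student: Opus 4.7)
The plan is to induct on the number of cells used to build $B$ from $A$. The base case $B = A$ is immediate since $\mu_A = \id_A$ is (vacuously) finite cellular. For the inductive step, I would assume $B$ is obtained from a finite cellular $A$-algebra $B'$ by a single cell attachment with $\mu_{B'}\colon B' \otimes_A B' \to B'$ already finite cellular, and treat the two kinds of attachments separately. Throughout, I use that finite cellular maps are stable under composition and base change, so pushing out a finite cellular map along any arrow again yields a finite cellular map.

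For a $0$-cell attachment $B = B'[x]$, one has $B \otimes_A B \simeq (B' \otimes_A B')[x_1, x_2]$, and I would factor $\mu_B$ as
\[
(B' \otimes_A B')[x_1, x_2] \xrightarrow{\mu_{B'} \otimes \id} B'[x_1, x_2] \xrightarrow{x_2 \mapsto x_1} B'[x].
\]
The first arrow is a pushout of $\mu_{B'}$, hence finite cellular by induction. The second arrow realizes $B'[x]$ as the finite quotient $B'[x_1, x_2]/(x_2 - x_1)$, i.e.\ a single $1$-cell attachment along the class $x_2 - x_1 \in \pi_0 B'[x_1, x_2]$.

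For an $(n+1)$-cell attachment $B = B'/(\sigma)$ with $\sigma\colon S^n \to B'$, commuting pushouts with the tensor product gives $B \otimes_A B \simeq (B' \otimes_A B')/(\sigma \otimes 1,\, 1 \otimes \sigma)$. Pushing the square defining $B \otimes_A B$ forward along $\mu_{B'}$ yields
\begin{center}
\begin{tikzcd}
B' \otimes_A B' \arrow[d, "\mu_{B'}"] \arrow[r] & B \otimes_A B \arrow[d, "\alpha"] \\
B' \arrow[r] & B'/(\sigma, \sigma)
\end{tikzcd}
\end{center}
Since $\mu_{B'}(\sigma \otimes 1) = \mu_{B'}(1 \otimes \sigma) = \sigma$, the bottom-right corner is $B'/(\sigma,\sigma)$, and two iterated applications of the pushout construction identify this with $(B'/\sigma)/(0\colon S^n \to B) \simeq B[S^{n+1}]$ via Example \ref{Exm:mod0}. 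Thus $\alpha$ is a pushout of $\mu_{B'}$, hence finite cellular. The remaining map $B[S^{n+1}] \to B$ classifies the zero map $S^{n+1} \to B$ and coincides with the finite quotient $B[S^{n+1}]/(\iota)$, where $\iota\colon S^{n+1} \to B[S^{n+1}]$ is the canonical level-$(n+1)$ generator; this is a single $(n+2)$-cell attachment, so $\mu_B = (B[S^{n+1}] \to B) \circ \alpha$ is finite cellular.

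The main subtlety I anticipate is cleanly identifying the pushout $B'/(\sigma,\sigma)$ with $B[S^{n+1}]$ and matching the resulting map $B[S^{n+1}] \to B$ with the $(n+2)$-cell attachment $B[S^{n+1}]/(\iota)$ in such a way that the composition $\pi \circ \alpha$ really recovers $\mu_B$. Both identifications are routine manipulations with the universal properties of cell attachments and finite quotients collected in the appendix, but they have to be tracked carefully to ensure that the two nullhomotopies of $\sigma$ entering $B \otimes_A B$ (via $\sigma \otimes 1$ and $1 \otimes \sigma$) produce exactly the free generator $\iota$ that $\pi$ then kills.
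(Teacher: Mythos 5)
Your proof is correct, and it reaches the same factorization of $\mu_B$ as the paper but settles the key technical step by a different argument. Both proofs, in effect, reduce to showing that for $B = B'/(\sigma)$ the fold map $B \otimes_{B'} B \to B$ is a single $(n+2)$-cell attachment, using that $\alpha\colon B\otimes_A B \to B'/(\sigma,\sigma)$ is a base change of $\mu_{B'}$. The paper does this with an explicit topological diagram: it factors the cell-attachment data defining $B\otimes_A B = A/(\sigma,\sigma)$ through the reduced cylinder (which the paper denotes $\Delta^1\wedge S^n$; note $\Delta^1_+\wedge S^n\simeq S^n$, not a point, which is essential for the diagram to commute), recognizes $S^{n+1}$ as the pushout of two cones over the cylinder, and reads off the final cell attachment from the pasting law. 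You instead invoke Example \ref{Exm:mod0} together with the Corollary after Lemma \ref{Lem:UPfinquot}: the image of $\sigma$ in $B$ is nullhomotopic, so $B'/(\sigma,\sigma) = B/(\sigma') \simeq B/(0) = B[S^{n+1}]$, and then the fold map kills the generator. Your route is more algebraic and avoids the suspension pushout, at the cost of the identification-tracking you flag as the main subtlety. Two remarks worth adding: first, the subtlety you worry about can be sidestepped entirely, since \emph{any} $B$-algebra retraction $\varphi\colon B[S^{n+1}]\to B$ of the unit is a single $(n+2)$-cell attachment (it exhibits $B$ as $B[S^{n+1}]/(\iota - u\varphi(\iota))$, using that $S^{n+1}$ is a cogroup since $n+1\geq 1$), so one does not actually need to verify $\varphi(\iota)=0$. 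Second, your induction treats the $0$-cell and positive-dimensional cell attachments directly in the inductive step, whereas the paper's inductive step reduces to its single-cell base case applied with $B'$ as the new ground ring; your version has the minor advantage of handling the $0$-cell attachment uniformly rather than implicitly.
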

\begin{proof}
	We induct on the number of cells attached to $A$ to obtain $B$. So first suppose that $B = A/(\sigma)$ for certain $\sigma:S^n \to A$. 
		
	Let $S^n \vee S^n \to \Delta^1 \wedge S^n$ be the map that sends one copy of $S^n$ to $\{0\} \times S^n$, and the other copy to $\{1\} \times S^n$. Using this map, consider the following commutative diagram
	\begin{center}
		\begin{tikzcd}
		A[S^n \vee S^n] \arrow[r] \arrow[d] & A[\Delta^{n+1}\vee \Delta^{n+1}]  \arrow[d]  &  \\
		A[\Delta^1 \wedge S^n] \arrow[d] \arrow[r] & A[S^{n+1}] \arrow[d] \arrow[r] & A[\Delta^{n+2}] \arrow[d]  \\
		A \arrow[r] & B\otimes_A B \arrow[r] & B
		\end{tikzcd}
	\end{center}
	The top left square is a pushout since $A[-]$ is a left adjoint. The two left squares together form a pushout by assumption on $B$. Hence the bottom left square is a pushout. Now the two bottom squares together from a pushout since $A[-]$ preserves equivalences an by assumption on $B$, and thus the bottom right square is a pushout. It follows that $B \otimes_A B \to B$ is finite cellular.

	The case $B = A[x]$ is straightforward.
	
	Now suppose that we have a finite cellular map $A \to B'$ for which $B' \otimes_A B' \to B'$ is finite cellular, and some $\tau:S^m \to B'$. Put $B \coloneqq B'/(\tau)$. Then since $B' \otimes_A B' \to B'$ is finite cellular, so is $f:B' \otimes_A B \to B$, since this notion is stable under taking pushouts. Now $\mu:B \otimes_A B \to B$ factorizes as
	\begin{align*}
		B \otimes_A B \simeq B \otimes_{B'} B' \otimes_A B \xrightarrow{ \id\otimes f} B \otimes_{B'} B \to B
	\end{align*}
	where the last map is the multiplication map of $B$ as $B'$ algebra. As such, this map is finite cellular by the first case. Also, $\id\otimes f$ is finite cellular since $f$ is, and since this notion is stable under base change. It follows that $\mu$ is finite cellular.
\end{proof}

\subsection{Finitely presented $A$-algebras}
\begin{Def} Let $B \in \Alg_A$ be given.
	\begin{itemize}
		\item We say that $B$ is  \textit{finitely presented} if it belongs to the smallest full subcategory of $\Alg_A$ which contains $A[x]$ and is closed under finite colimits.
		\item We say that $B$ is \textit{finite cellular} in $\Alg_A$ if the structure map $A \to B$ is finite cellular.
	\end{itemize}
	Write $\Alg_A^\fp$ for the full subcategory of $\Alg_A$ spanned by the finitely presented $A$-algebras.
\end{Def}

\begin{Lem}
	\label{Lem:FinCelMap}
	Let $\varphi:B \to C$ be a map between finite cellular $A$-algebras. Then $C$ is a finite cellular $B$-algebra.
\end{Lem}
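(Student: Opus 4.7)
The plan is to factor $\varphi$ as $B \xrightarrow{i} B \otimes_A C \xrightarrow{\pi} C$, where $i(b) = b \otimes 1$ and $\pi$ is the map induced by $\varphi$ on the first factor and $\mathrm{id}_C$ on the second (so $\pi(b \otimes c) = \varphi(b) c$). Once both $i$ and $\pi$ are shown to be finite cellular, $\varphi = \pi \circ i$ is finite cellular by stability of finite cellularity under composition, which was observed just above Lemma \ref{Lem:Cyl}.

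For $i$: since $B \otimes_A C$ is the pushout of $B \leftarrow A \to C$, the map $i$ is the base change of $A \to C$ along $A \to B$. It is therefore finite cellular by stability under base change, since $A \to C$ is finite cellular by hypothesis.

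For $\pi$: the key step is to establish that
\begin{equation*}
\begin{tikzcd}
B \otimes_A B \arrow[r, "\mathrm{id}_B \otimes \varphi"] \arrow[d, "\mu"'] & B \otimes_A C \arrow[d, "\pi"] \\
B \arrow[r, "\varphi"] & C
\end{tikzcd}
\end{equation*}
is a pushout square in $\Alg_A$, where $\mu$ is the multiplication of $B$. Granting this, Lemma \ref{Lem:Cyl} applied to the finite cellular $A$-algebra $B$ tells us $\mu$ is finite cellular, and then $\pi$ is finite cellular as its base change.

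To verify the pushout identification, I would use the equivalence $B \otimes_A C \simeq (B \otimes_A B) \otimes_B C$, where $B \otimes_A B$ is regarded as a right $B$-module via its second factor and $C$ as a left $B$-module via $\varphi$. Under this identification, the map $\mathrm{id}_B \otimes \varphi$ corresponds to extension of scalars on the second factor, so the pushout evaluates to
\begin{equation*}
B \otimes_{B \otimes_A B} \bigl( (B \otimes_A B) \otimes_B C \bigr) \simeq B \otimes_B C \simeq C,
\end{equation*}
as desired. The main obstacle is exactly this verification of the pushout square with its correct module structures; once it is in hand, the rest of the proof is a routine combination of Lemma \ref{Lem:Cyl} with the stability of finite cellularity under base change and composition.
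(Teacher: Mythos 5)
Your proof is correct and follows the same route as the paper: factor $\varphi$ through $B \to B \otimes_A C \to C$, identify the first map as a base change of $A \to C$, and identify the second as a base change (pushout) of the multiplication $\mu\colon B \otimes_A B \to B$, invoking Lemma~\ref{Lem:Cyl}. The only cosmetic difference is that you verify the cocartesian square for $\pi$ directly by manipulating relative tensor products, whereas the paper packages the same identification by pasting together three pushout squares into a single commutative diagram; the latter also makes the final homotopy $\pi \circ i \simeq \varphi$ visible at a glance, a point you state but do not spell out.
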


\begin{proof}
	Consider the following commutative diagram
	\begin{center}
		\begin{tikzcd}
		A \arrow[d] \arrow[r] & B \arrow[d] \\
		B \arrow[d] \arrow[r] & B \otimes_A B \arrow[r] \arrow[d] &  B \arrow[d] \\
		C \arrow[r] & B \otimes_A C \arrow[r] &  C
		\end{tikzcd}
	\end{center}
	consisting of pushout squares. Since $B\otimes_AB \to B$ is finite cellular by Lemma \ref{Lem:Cyl}, so is $B \otimes_A C \to C$. Likewise, since $A \to C$ is finite cellular, so is $B \to B \otimes_A C$. It follows that $B \to C$ is finite cellular. Also, since the composition $B \to B$ in the middle row is homotopic to the identity, the composition $B \to C$ is homotopic to $\varphi$. 
\end{proof}

\begin{Prop}
	\label{Prop:FPisFC}
	An $A$-algebra $B$ is  finitely presented if and only if it is finite cellular.
\end{Prop}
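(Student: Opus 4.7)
The plan is to prove each implication separately, using the characterization of finitely presented $A$-algebras as the smallest full subcategory of $\Alg_A$ containing $A[x]$ and closed under finite colimits.

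For the implication \emph{finite cellular implies finitely presented}, I would induct on the number of cells used to build $B$ from $A$. The base case $B = A$ is the empty colimit and is thus finitely presented. Each cell attachment is by definition a pushout, so the induction step reduces to verifying that the algebras $A[S^n]$ appearing in such pushout diagrams are finitely presented for every $n \geq 0$. This last fact I would establish by a secondary induction on $n$: the case $n=0$ is immediate since $A[S^0] \simeq A[x]$, and for $n \geq 1$ the pointed space $S^n$ is the pushout $\ast \leftarrow S^{n-1} \to \ast$ in $\Space_*$, so since $A[-]:\Space_* \to \Alg_A$ is a left adjoint by Proposition \ref{Prop:AdjEqv} and thus preserves colimits, we obtain $A[S^n] \simeq A \otimes_{A[S^{n-1}]} A$ as a pushout of finitely presented $A$-algebras by the inductive hypothesis.

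For the converse implication \emph{finitely presented implies finite cellular}, the plan is to show that the full subcategory $\mathcal{F}$ of $\Alg_A$ spanned by the finite cellular $A$-algebras contains $A[x]$ and is closed under finite colimits; minimality of $\Alg^\fp_A$ then forces $\Alg^\fp_A \subseteq \mathcal{F}$. Containment of $A[x]$ is immediate (one $0$-cell), and closure under finite colimits reduces to closure under pushouts together with containment of the initial object $A$, which is trivially finite cellular. For closure under pushouts, given a diagram $B \leftarrow C \to D$ with $B, C, D \in \mathcal{F}$, I would invoke Lemma \ref{Lem:FinCelMap} to realize $C \to B$ as a finite cellular map of $C$-algebras, then observe that each cell attachment in the sequence $C \to B$ base-changes along $C \to D$ to a cell attachment starting from $D$, so that $D \to B \otimes_C D$ is finite cellular. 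Composing with the finite cellular structure map $A \to D$ then places $B \otimes_C D$ in $\mathcal{F}$.

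The main obstacle is precisely this closure under pushouts: a naive attempt to paste together independent cellular presentations of $B$ and $D$ over $A$ fails, because the cells used to build $B$ from $A$ need not be compatible with the map $C \to D$. Lemma \ref{Lem:FinCelMap} is the crucial input that circumvents this issue, by re-expressing $B$ as built by further cell attachments starting from $C$ rather than from $A$, after which stability of pushouts under base change gives the rest.
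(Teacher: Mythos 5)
Your proof is correct and follows essentially the same route as the paper: the forward direction by induction on cell attachments (you spell out that $A[S^n]$ is finitely presented via the suspension pushout $S^n \simeq \ast \sqcup_{S^{n-1}} \ast$, which the paper treats as clear), and the converse by showing the finite cellular algebras are closed under pushouts in $\Alg_A$, with Lemma \ref{Lem:FinCelMap} as the key ingredient. Your variant applies that lemma to one leg of the span $B \leftarrow C \to D$ and base-changes along the other, whereas the paper applies it to both legs and forms the pushout over the common source; these are equivalent implementations of the same idea, both resting on stability of finite cellularity under base change and composition.
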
	

\begin{proof}
	It is clear that if $B$ is finite cellular, then it is of finite presentation. For the converse, let $\CCC$ be the full subcategory of $\Alg_A$ spanned by the finite cellular $A$-algebras. Then $A[x] \in \CCC \subset \Alg_A^\fp$. Hence, by definition of $\Alg_A^\fp$, it suffices to show that $\CCC$ is closed under finite colimits.
	
	By \cite[Cor.\ 4.4.2.4]{LurieHTT}, it suffices to show that $\CCC$ is closed under pushouts. First observe that for finite cellular $A$-algebras $B',B''$, the fibre product $B'\otimes_A B''$ is finite cellular over $A$ as well. This follows again from the fact that being finite cellular is stable under pushouts and under composition.
	
	Now for the general case, suppose we have maps $B \to B', B \to B''$ with $B',B,B''$ finite cellular. By Lemma \ref{Lem:FinCelMap}, we may assume that $B',B''$ are finite cellular $B$-algebras. By the previous case, we know that $B' \otimes_B B''$ is a finite cellular $B$-algebra as well, and thus we have a sequence $B = B_0 \to \dots \to B_n = B' \otimes_B B''$ where each $B_k \to B_{k+1}$ is obtained by attaching a finite number of cells of any dimension. Composing this sequence with a sequence which established $A \to B$ as finite cellular gives us what we want.
\end{proof}

\begin{Rem}
	\label{Rem:lfp}
	There is also the notion of a locally finitely presented $A$-algebra. This is an $A$-algebra $B$ which is a retract of a finitely presented $A$-algebra. Classically, for a retract $X \xrightarrow{i} Y \xrightarrow{p} X$ it holds that $X$ is the coequalizer of $ip$ and $\id_Y$, hence the difference between finitely presented and locally finitely presented disappears in this context. In the $\infty$-setting this approach does not work, essentially because $p$ need not be an epimorphism in this setting.
	
	Locally finitely presented $A$-algebras are precisely the compact objects in $\Alg_A$, i.e.,\ those $A$-algebras $B$ for which $\Alg_A(B,-)$ commutes with filtered colimits. 
\end{Rem}

\begin{Prop}
	\label{Prop:Fincellseq}
	Let $B$ be a finitely presented $A$-algebra. Then we can arrange the sequence $A=B_0 \to \dots \to B_n=B$ of cell attachments in such a way that each $B_k \to B_{k+1}$ is obtained by attaching a finite number of $k$-cells to $B_k$.
\end{Prop}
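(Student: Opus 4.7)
By Proposition~\ref{Prop:FPisFC}, $B$ is finite cellular over $A$, so there is a sequence $A = C_0 \to C_1 \to \cdots \to C_N = B$ in which each step attaches a single cell, either a $0$-cell $C_i \to C_i[x]$ or an $(m+1)$-cell $C_i \to C_i/(\sigma)$ along some $\sigma:S^m\to C_i$ with $m\geq 0$. My plan is to reorder the attachments so that the dimensions of the attached cells are non-decreasing, and then coalesce each maximal block of consecutive attachments of the same dimension into a single finite quotient. Once this is done, $B_k$ will be defined as the algebra reached after all attachments of dimension $\leq k$.

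The key input is a swap lemma: if a segment $C_{i-1}\to C_i\to C_{i+1}$ first attaches an $(m+1)$-cell $\sigma:S^m\to C_{i-1}$ and then a cell of strictly smaller dimension, we can replace it by a segment performing the lower-dimensional attachment first. If the second step is a $0$-cell, so $C_{i+1}=C_i[x]$, we take $C_{i-1}\to C_{i-1}[x]\to C_{i-1}[x]/(\sigma')$ with $\sigma'$ the composite $S^m\to C_{i-1}\to C_{i-1}[x]$; then $C_{i-1}[x]/(\sigma') \simeq (C_{i-1}/(\sigma))[x] = C_{i+1}$, since $(-)[x] = (-)\otimes_\Z \Z[x]$ preserves colimits. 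If the second step is an $(n+1)$-cell with $n<m$, write $C_{i+1}=C_i/(\tau)$ for $\tau:S^n\to C_i$. By the remark in \S\ref{Par:FinQuot}, the map $\pi_k C_{i-1}\to \pi_k C_i$ is an isomorphism for $k<m$, so $\tau$ lifts to a pointed map $\tilde\tau:S^n\to C_{i-1}$ with $\tau\simeq j\circ\tilde\tau$, where $j:C_{i-1}\to C_i$. Using that $\tau$ factors through $C_{i-1}$, rearranging tensor products gives
\begin{align*}
C_{i+1} &\simeq A[\Delta^{n+1}]\otimes_{A[S^n]} A[\Delta^{m+1}]\otimes_{A[S^m]} C_{i-1}\\
 &\simeq A[\Delta^{m+1}]\otimes_{A[S^m]} \bigl(A[\Delta^{n+1}]\otimes_{A[S^n]} C_{i-1}\bigr) \simeq \bigl(C_{i-1}/(\tilde\tau)\bigr)/(\tilde\sigma),
\end{align*}
where $\tilde\sigma$ is the image of $\sigma$ under $C_{i-1}\to C_{i-1}/(\tilde\tau)$.

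Iterating the swap lemma (a bubble-sort argument that strictly decreases the number of inversions at each step) produces a sequence of single-cell attachments whose dimensions are non-decreasing. Finally, a consecutive block of $(k+1)$-cell attachments along $\sigma_1,\dots,\sigma_\ell$ can be combined into one finite-quotient step using the identity $A[S^{n_1}\vee\cdots\vee S^{n_\ell}]\simeq A[S^{n_1}]\otimes_A\cdots\otimes_A A[S^{n_\ell}]$, which holds because $\Sym_A$ converts wedges to coproducts in $\Alg_A$, i.e.\ to tensor products over $A$. Hence the iterated pushout agrees with the single pushout defining the finite quotient by $(\sigma_1,\dots,\sigma_\ell)$, as required.

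The main obstacle is executing case (b) of the swap cleanly at the $\infty$-categorical level: one must track the chosen homotopy $\tau\simeq j\circ\tilde\tau$ through the associativity and commutativity isomorphisms of the tensor product in order to identify the iterated pushout realising $C_{i+1}$ with the pushout against $C_{i-1}/(\tilde\tau)$. Everything else---the existence of the lift $\tilde\tau$, termination of the bubble sort, and the coalescence of same-dimension blocks---is formal given the results already in the paper.
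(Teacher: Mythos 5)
Your proof is correct and follows essentially the same route as the paper: reduce to single-cell attachments via Proposition~\ref{Prop:FPisFC}, prove a swap lemma using the $\pi_k$-isomorphism below the attaching dimension to lift the attaching map, identify the two iterated pushouts (the pasting-of-pushouts computation you sketch is precisely the diagram the paper draws), and bubble-sort. The only organizational difference is that the paper first moves all $0$-cells to the front and then sorts the positive-dimensional cells, while you fold both cases into a single swap lemma and coalesce same-dimension blocks at the end; both handle the same content.
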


\begin{proof}
	Let a sequence of cell attachments $A = B_0 \to B_1 \to \dots \to B_n = B$ be given. 
	
	First suppose that there is some $i$ such that $B_{i-1} \to B_i \to B_{i+1}$ is of the form $B_{i-1} \to B_{i-1}/(\underline{\sigma}) \to (B_{i-1}/(\underline{\sigma}))[\underline{x}]$ for some pointed map $\underline{\sigma}:S^{\underline{n}} \to B_{i-1}$ and $\underline{x} = x_1,\dots,x_m$. Write also $\underline{\sigma}$ for the composition $S^{\underline{n}} \to B_{i-1} \to B_{i-1}[\underline{x}]$. Then $(B_{i-1}/(\underline{\sigma}))[\underline{x}] \simeq B_{i-1}[\underline{x}]/(\underline{\sigma})$. We can therefore replace $B_{i-1} \to B_{i-1}/(\underline{\sigma}) \to (B_{i-1}/(\underline{\sigma}))[\underline{x}]$ by $B_{i-1} \to B_{i-1}[\underline{x}] \to B_{i-1}[\underline{x}]/(\underline{\sigma})$. Repeating this procedure gives us that we can arrange the sequence in such a way that $B_0 \to B_{1}$ is of the form $A \to A[\underline{y}]$ for certain $\underline{y} = y_1 \dots y_l$ and that no other $B_k \to B_{k+1}, k>0$ is of this form.
	
	Now suppose that there is some $i$ such that $B_{i-1} \to B_i \to B_{i+1}$ is of the form $B_{i-1} \to B_{i-1}/(\sigma) \to (B_{i-1}/(\sigma))/(\tau)$, with $\sigma:S^n \to B_{i-1}$ and $\tau:S^m \to (B_{i-1})/(\sigma)$ with $n>m$. Using that $\pi_m(B_{i-1}) \cong \pi_m(B_{i-1}/(\sigma))$, take some $\tau':S^m \to B_{i-1}$ such that the composition $S^m \to B_{i-1} \to B_{i-1}/(\sigma)$ is equivalent to $\tau$. Write $\sigma'$ for the composition $S^n \to B_{i-1} \to B_{i-1}/(\tau')$. Consider the following diagram consisting of pushout squares	
	\begin{center}
		\begin{tikzcd}
		& B_{i-1}[S^n] \arrow[d, "\sigma"] \arrow[r, "0"] & B_{i-1} \arrow[d] \\
		B_{i-1}[S^m]\arrow[d, "0"] \arrow[r, "\tau'"] & B_{i-1} \arrow[d]\arrow[r] & B_{i-1}/(\sigma)\arrow[d] \\
		B_{i-1} \arrow[r] & B_{i-1}/(\tau') \arrow[r] & C
		\end{tikzcd}
	\end{center}
	where we denote a map $B_{i-1}[K] \to B_{i-1}$ induced by some $\alpha:K \to B_{i-1}$ simply by $\alpha$. Then the two bottom squares and the two right squares establish equivalences 
	\begin{align*}
	(B_{i-1}/(\sigma))/(\tau) \simeq C \simeq (B_{i-1}/(\tau'))/(\sigma')
	\end{align*}
	In the given sequence $B_0 \to \dots \to B_n$, we can thus interchange all cell attachments which are in the wrong order to get what we want.
\end{proof}

\bibliographystyle{style}
\bibliography{refs}

\end{document}